\documentclass[12pt]{article}

\usepackage{graphicx,amsmath,amsthm,amssymb,latexsym}

\usepackage[center,it]{titlesec}

\hyphenation{Schwarz-schild}

\addtolength{\hoffset}{-1cm}
\addtolength{\textwidth}{2cm}
\addtolength{\voffset}{-1cm}
\addtolength{\textheight}{2cm}

\DeclareMathOperator{\divergence}{div}
\DeclareMathOperator{\curl}{curl}

\DeclareMathOperator{\arsinh}{arsinh}
\DeclareMathOperator{\Area}{Area}
\DeclareMathOperator{\Ric}{Ric}
\DeclareMathOperator{\tr}{tr}

\newcommand{\ud}{\mathrm{d}}
\newcommand{\dm}[1]{\ud \mu_{#1}}

\newcommand{\Dp}{\mathcal{D}_+}
\newcommand{\Cb}{\underline{C}}
\newcommand{\tb}{\langle t \rangle}
\newcommand{\tu}{\underline{t}}

\newcommand{\us}{{u^\ast}}
\newcommand{\vs}{{v^\ast}}

\newcommand{\gs}{g\!\!\!/}
\newcommand{\hs}{h\!\!\!/}

\newcommand{\gsc}{\stackrel{\circ}{\gs}}

\newcommand{\nablas}{\nabla\!\!\!\!/\:}
\newcommand{\divs}{\divergence\!\!\!\!\!/\:}
\newcommand{\curls}{\curl\!\!\!\!\!/\:}
\newcommand{\ds}{\mathrm{d}\!\!\!/}
\newcommand{\Laplaces}{\triangle\!\!\!\!/\:}
\newcommand{\Gammas}{\Gamma\!\!\!\!/\:}
\newcommand{\thetas}{{\theta\!\!\!/}}

\newcommand{\gammac}{\stackrel{\circ}{\gamma}}

\newcommand{\nablac}{\stackrel{\circ}{\nabla}}

\newcommand{\divc}{\stackrel{\circ}{\divergence}}
\newcommand{\Laplacec}{\mathring{\triangle}}
\newcommand{\otimesh}{\hat{\otimes}}

\newcommand{\Lp}{L^\prime}
\newcommand{\Lb}{\underline{L}}

\newcommand{\Lbp}{\Lb^\prime}

\newcommand{\Db}{\underline{D}}
\newcommand{\Dbh}{\hat{\Db}}

\newcommand{\omegab}{\underline{\omega}}

\newcommand{\omegas}{\omega\!\!\!/}
\newcommand{\omegabs}{\omegab\!\!\!/}

\newcommand{\chib}{\underline{\chi}}
\newcommand{\chibh}{\hat{\chib}}
\newcommand{\chibp}{{\chib^\prime}}
\newcommand{\chih}{\hat{\chi}}
\newcommand{\chip}{{\chi^\prime}}

\newcommand{\chibhp}{{\chibh^\prime}}

\newcommand{\etab}{\underline{\eta}}
\newcommand{\betab}{\underline{\beta}}
\newcommand{\alphab}{\underline{\alpha}}

\newcommand{\lVerts}{\lVert\!\!\!\!-}
\newcommand{\rVerts}{\rVert\!\!\!\!-}
\newcommand{\nLp}[1]{\lVerts #1 \rVerts_{\mathrm{L}^{p}(S)}}
\newcommand{\nLpq}[2]{\lVerts #1 \rVerts_{\mathrm{L}^{#2}(S)}}

\newcommand{\Linfty}[1]{\lVert #1 \rVert_{\mathrm{L}^\infty(S)}}
\newcommand{\dLp}[3]{\lVerts #3 \rVerts_{\mathrm{L}^{#1}(S_{#2})}}
\newcommand{\Linf}[2]{\lVert #2 \lVert_{\mathrm{L}^\infty(S_{#1})}}

\newcommand{\mub}{\underline{\mu}}
\newcommand{\kappab}{\underline{\kappa}}

\newcommand{\Ceq}[1]{(#1) in \cite{C:09}}
 \newcommand{\CCh}[1]{Chapter~#1 in \cite{C:09}}
 \newcommand{\CLemma}[1]{Lemma~#1 in \cite{C:09}}
 \newcommand{\CProp}[1]{Proposition~#1 in \cite{C:09}}

\theoremstyle{plain}
\newtheorem*{theorem}{Theorem}
\newtheorem*{prop}{Proposition}
\newtheorem{proposition}{Proposition}[section]
\newtheorem{lemma}[proposition]{Lemma}
\newtheorem{corollary}[proposition]{Corollary}

\theoremstyle{definition}

\theoremstyle{remark}
\newtheorem{remark}[proposition]{Remark}

\numberwithin{equation}{section}
\setcounter{tocdepth}{2}

\begin{document}

\title{\textbf{\Large Optical functions in de Sitter}}
\author{Volker Schlue\bigskip\\{\small University of Melbourne}\\{\small volker.schlue@unimelb.edu.au}}
\date{{\small \today}}

\maketitle

\begin{abstract}
  This paper addresses pure gauge questions in the study of (asymptotically) de Sitter spacetimes.
  We construct global solutions to the eikonal equation on de Sitter, whose level sets give rise to double null foliations, and give detailed estimates for the structure coefficients in this gauge. We show two results which are relevant for the foliations used in the stability problem of the expanding region of Schwarzschild de Sitter spacetimes \cite{Schlue:16:Weyl}: (i)~Small perturbations of round spheres on the cosmological horizons lead to spheres that pinch off at infinity. (ii)~Globally well behaved double null foliations can be constructed from  infinity using a choice of spheres related to the level sets of a \emph{mass aspect function}.  While (i) shows that in the above stability problem a \emph{final gauge choice} is necessary, the proof of (ii) already outlines a strategy for the case of spacetimes with decaying, instead of vanishing, conformal Weyl curvature.
  
\end{abstract}

\tableofcontents

\section{Introduction}
\label{sec:intro}


The general covariance of the Einstein equations allows for various formulations of the evolution problem in general relativity depending on the choice of coordinates. While less important for the local evolution, specific gauge choices are crucial for  global evolution problems, and are related to identifying the gravitational degrees of freedom in a given setting.\footnote{This is evident in many recent works on the global evolution problem in various settings, see for instance \cite{L:17,HV:18,DHR:19,KS:18,RS:18a}; see also \cite{A:15,ABK:15}.} This paper addresses a number of pure gauge questions that arise in the global evolution problem for asymptotically de Sitter spacetimes in double null gauge.

Many of the concepts relevant to this paper already appear in the \emph{global analysis of asymptotically flat spacetimes}: Specifically 
in the original proof of the stability of Minkowski spacetime \cite{CK:93} a manifold $\mathcal{M}=\bigcup_{t\geq 0}\Sigma_t$ is constructed as a foliation with respect to the level sets of a \emph{maximal time function} $t$. The proof of global existence proceeds by a continuity argument in $t$, which relies on the careful construction of an \emph{optical function} $u$ on the ``last slice'' $\Sigma_{t_\ast}$.
In general, an \textbf{optical function} $u$ is a solution to the eikonal equation on a Lorentzian manifold $(\mathcal{M},g)$,
\begin{equation}\label{eq:intro:eikonal}
  g(\nabla u,\nabla u)=0\,.
\end{equation}
Its level sets are \emph{null hypersurfaces} generated by null geodesic segments of $g$.
Indeed, in the context of \cite{CK:93} the level sets $S_u^\ast$ of $u$ are chosen on $\Sigma_{t_\ast}$, and then extended to $t\leq t_\ast$ by solving \eqref{eq:intro:eikonal}, essentially by the method of characteristics, which also allows one to suitably propagate coordinates $(\vartheta^1,\vartheta^2)$ on $S_{u^\ast}$ thus complementing $(t,u)$ to a coordinate system on $\mathcal{M}$. This construction is an example of a \textbf{final gauge} choice, where $u$ is adapted to the late stages of evolution. In fact, $S_u^\ast$ is constructed by solving an equation of motion for surfaces on $\Sigma_{t_\ast}$, which is induced by a specific choice for the \emph{mass aspect function} $\mu$ which in turn ensures good asymptotic properties of the instrinsic geometry of the spheres $S_{u}^\ast$.

\begin{figure}[tb]
  \centering
  \includegraphics[scale=1.5]{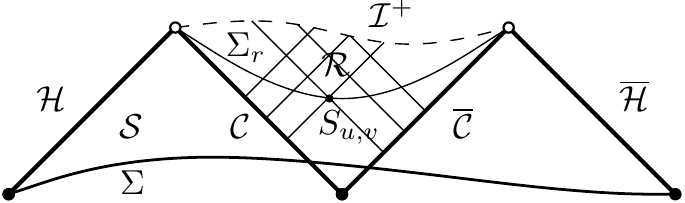}
  \caption{Global geometry of Schwarzschild de Sitter spacetimes.}
  \label{fig:sds:intro}
\end{figure}

This paper is motivated specifically by the global evolution problem for the Einstein equations with positive cosmological constant in \emph{the expanding region of Schwarzschild de Sitter spacetimes in double null gauge}; see Figure~\ref{fig:sds:intro}. In the approach taken in \cite{Schlue:16:Weyl}, the cosmological region $\mathcal{R}$  is viewed as the domain of development of characteristic initial data prescribed on the cosmological horizons $\mathcal{C}$, $\overline{\mathcal{C}}$, and foliated by the level sets of two optical functions $u$, and $v$,  intersecting in surfaces $S_{u,v}$ diffeomorphic to $\mathbb{S}^2$. A time function in $\mathcal{R}$ is obtained from the \emph{area radius} $r(u,v)$ of the spheres $S_{u,v}$,\footnote{The fact that $r$ \emph{is} increasing towards the future at every point $\mathcal{R}$ is a reflection of the \emph{expansion} of this region.}
\begin{equation}\label{eq:area:intro}
  \Area(S_{u,v})=4\pi r^2(u,v)\,,
\end{equation}
and provides a foliation $\mathcal{R}=\bigcup \Sigma_r$ of the cosmological region, similar to the maximal function $t$ in the asymptotically flat setting.
Moreover, by choosing coordinates $(\vartheta^1, \vartheta^2)$ on say the sphere $\mathcal{C}\cap\overline{\mathcal{C}}$, and suitable propagation along the characteristics, the metric on $\mathcal{R}$ is globally expressed in the form
\begin{equation}
  \label{eq:g:doublenull:intro}
  g=-4\Omega^2\ud u \ud v + \gs_{AB}\bigl(\ud \vartheta^A-b^A\ud v\bigr)\bigl(\ud \vartheta^B-b^B\ud v\bigr)\,.
\end{equation}

\begin{remark}[Double null gauge]
  This is an example of a \textbf{double null gauge} \cite{KN:03,C:09}.\footnote{In \cite{KN:03} it was used in particular to provide an alternative treatment of the exterior region of the spacetimes constructed in \cite{CK:93}, and its use of various \emph{renormalised mass aspect functions} play an important role in this paper as well.} The advantage of this gauge is that it respects the domain of dependence property of the Einstein equations, and consequently the (spacelike) hypersurfaces $\Sigma_r$ can in general be constructed locally. In closer analogy to the \emph{maximal gauge} of \cite{CK:93}, one could approach the stability problem in $\mathcal{R}$ in a \emph{constant mean curvature gauge}\footnote{See for instance \cite{RS:18a,RS:18b}, in particular the problem of ``synchronizing the singularity,'' which like the conformal boundary of $\mathcal{R}$, is a \emph{spacelike} hypersurface.}, namely define $\Sigma_r$ directly as CMC hypersurfaces, which however introduces an additional non-local aspect to the problem.
\end{remark}

In this paper we focus on the pure gauge aspect of the problem. In fact, \emph{we fix the metric to be} \textbf{de Sitter}, and we study the properties of global double null foliations, and their construction using global solutions to the eikonal equation on de Sitter.
We are interested in the behavior of the \textbf{structure coefficients} $\Gamma$ of \eqref{eq:g:doublenull:intro} \emph{depending on the choice of optical functions} $u$, and $v$. Moreover, we are interested in the geometry of the surfaces $S_{u,v}=C_u\cap \underline{C}_v$, the conformal properties of the induced metric $\gs_{u,v}$, and geometry of the hypersurfaces 
\begin{equation}\label{eq:Sigmar:intro}
  \Sigma_r=\bigcup_{\substack{(u,v):\\r_{u,v}=r}} S_{u,v}\,.
\end{equation}
The main aim is to identify sufficient conditions (on the level of the surfaces where $u$, and $v$ are chosen, either initially or finally) such that the spacelike hypersurface $\Sigma_r$ provide a well behaved foliation of $\mathcal{R}$, and in particular so that $\Sigma_\infty$ can be identified with the \emph{conformal boundary at infinity} ($\mathcal{I}^+$ in Figure~\ref{fig:sds:intro}) in a suitable sense.

\begin{remark}[Decay of the Weyl curvature]
  The results in this paper in particular provide a justification for the assumptions made on the structure coefficients in $\cite{Schlue:16:Weyl}$. It is proven therein that under suitable assumptions on $\|\Gamma\|_{\mathrm{L}^\infty(S_{u,v})}$, and $\|\nabla\Gamma\|_{\mathrm{L}^4(S_{u,v})}$, \emph{the conformal Weyl curvature of the spacetime decays} according to $\|W\|_{\mathrm{L}^4(S_{u,v})}\lesssim r_{u,v}^{-3}$. Here we give examples of explicit double null foliations of de Sitter whose structure  coefficients \emph{fail} to satisfy these assumptions, which shows in particular that there are obstructions which are unrelated to the specific decay of the Weyl curvature, because they occur already on \textbf{conformally flat} spacetimes.
  We then overcome  this obstacle \emph{in the simplest case}, namely on de Sitter spacetime, using a final gauge construction to prove the existence of global double null foliations which \emph{do} satisfy the assumptions made in \cite{Schlue:16:Weyl}.
\end{remark}

\begin{remark}[Stability of Kerr de Sitter]
  The nonlinear stability problem for the region $\mathcal{S}$, namely the domain to the past of the event horizon $\mathcal{H}$, and the cosmological horizon $\mathcal{C}$ can be treated independently of the above discussed problem for $\mathcal{R}$ due to the \emph{domain of dependence property} of the Einstein equations, and has been fully resolved in \cite{HV:18}. Their proof is based on a \emph{generalised harmonic gauge}, and also relies on a \emph{final gauge choice}, which is found iteratively as part of the analysis of the resonances of the linearised equations.
\end{remark}

\subsection{de Sitter space}
\label{sec:desitter:intro}

The simplest solution to the Einstein equations with positive cosmological constant $\Lambda$,
\begin{equation}
  \label{eq:eve:lambda:intro}
  \Ric(g)=\Lambda g
\end{equation}
 is \textbf{de Sitter space}, which plays  an analogous role to Minkowski space in the case $\Lambda=0$. While Minkowski space is flat, de Sitter is conformally flat. It can be realised  as an embedded (timelike) hypersurface in $5$-dimensional Minkowski space, with induced metric of the ambient Minkowski metric; see Fig.~\ref{fig:desitter:hyperboloid}.

\begin{figure}[tb]
  \centering
  \includegraphics[scale=0.8]{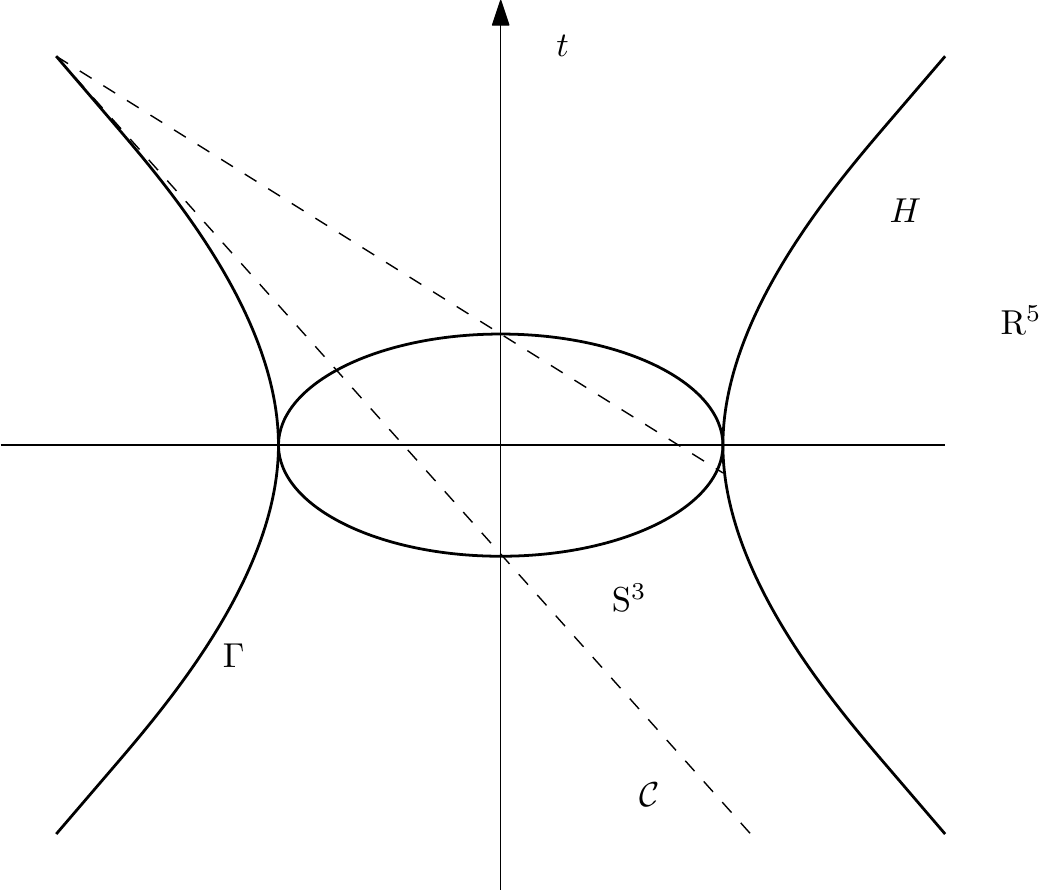}
  \caption{Standard model of de Sitter spacetime.}
  \label{fig:desitter:hyperboloid}
\end{figure}

 Concretely, we set $\Lambda/3=1$ for convenience. With $(t,x)\in\mathbb{R}^{1}\times\mathbb{R}^4$ we denote by
\begin{equation}\label{eq:H}
  H=\bigl\{(t,x):-t^2+\lvert x\rvert^2=1\bigr\}
\end{equation}
the timelike hyperboloid in the ambient Minkowski spacetime $(\mathbb{R}^{1+4},m)$ with metric $m=-\ud t^2+\lvert\ud x\rvert^2$.
The \emph{de Sitter space-time}   $(\mathcal{M}=H,h=m\rvert_H)$  has topology $\mathbb{R}\times\mathbb{S}^3$,  and can be viewed as the simplest model of \emph{closed} universe. Moreover its spatial sections $\Sigma_t=\{x\in\mathbb{R}^4:|x|^2=1+t^2\}$ are round $3$-spheres whose radius is rapidly increasing in time, which shows that this spacetime is manifestly \emph{expanding}.

In Section~\ref{sec:desitter:charts} we will discuss several different coordinates to represent the de Sitter metric.

We will first introduce \emph{stereographic coordinates} which show explicitly that the de Sitter metric is  \emph{conformal} to the Minkowski metric.\footnote{One can view the construction in Section~\ref{sec:stereographic} as the \emph{hyperbolic} analogue of the classical stereographic projection of the sphere on the plane.} That immediately implies that the \textbf{conformal Weyl curvature} of the de Sitter metric vanishes identically.

Second we view de Sitter as spherically symmetric space relative to a timelike geodesic $\Gamma$, see Fig.~\ref{fig:desitter:hyperboloid},
\begin{equation}
  \Gamma=\bigl\{(t,-\langle t\rangle,0,0,0):t\in\mathbb{R}\bigr\}\,.
\end{equation}
In this picture the relevance of de Sitter to the above discussion of the expanding region  of Schwarzschild de Sitter is particularly clear:
The past $\mathcal{S}$ of any observer $\Gamma$ is \emph{not} the entire spacetime, and thus has a future boundary called the \textbf{cosmological horizon} $\mathcal{C}$. (See also explicit discussion in Section~\ref{sec:spherical}.)

In Section~\ref{sec:static} we introduce coordinates relative to which the de Sitter metric  is static in $\mathcal{S}$, and expanding in the cosmological region $\mathcal{R}$ (namely the future of $\mathcal{C}\cup\overline{\mathcal{C}}$, where $\overline{\mathcal{C}}$ is the cosmological horizon of the antipodal observer $\overline{\Gamma}$), in the sense discussed for Schwarzschild de Sitter above.

This leads in particular to the Penrose diagram of de Sitter of Figure~\ref{fig:desitter:penrose}.


\begin{figure}[tb]
  \centering
  \includegraphics{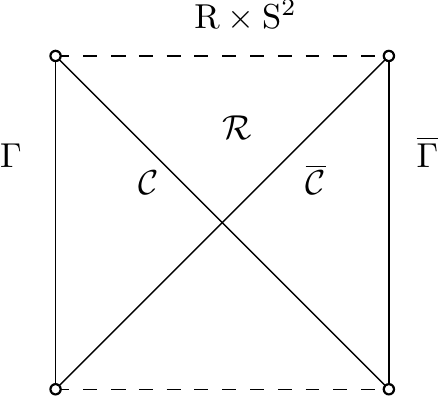}
  \caption{Penrose diagram of de Sitter}
  \label{fig:desitter:penrose}
\end{figure}

\begin{remark}
  The stability of de Sitter as a solution to \eqref{eq:eve:lambda:intro} is known since \cite{F:86}. Also \cite{HV:18} can be applied to the past of a point on the conformal boundary to yield convergence to de Sitter up to a gauge transformation. However, in \cite{F:86} \emph{asymptotic functional degrees of freedom} are present, namely the metric does not globally converge to de Sitter in a suitable gauge. This insight equally applies to the stability problem in $\mathcal{R}$ discussed above. See also \cite{R:08}.
\end{remark}

\subsection{Results for the eikonal equation on de Sitter}
\label{sec:results}

We now summarise the results in this paper on the global geometry of double null foliations in de Sitter spacetime.

Let $\Gamma$, $\overline{\Gamma}$ be antipodal timelike geodesics in the de Sitter spacetime $(H,h)$, and $\mathcal{C}$, $\overline{\mathcal{C}}$ their respective cosmological horizons, see Section~\ref{sec:desitter:intro} and Figures~\ref{fig:desitter:hyperboloid}, \ref{fig:desitter:penrose}.
For any solution $u$ to the eikonal equation
\begin{equation}
  \label{eq:eikonal:intro}
  h(\nabla u,\nabla u)=0
\end{equation}
we denote by $C_u$ the level set of $u$; see also  Section~\ref{sec:eikonal:general}.

\begin{figure}[tb]
  \centering
  \includegraphics[scale=1.2]{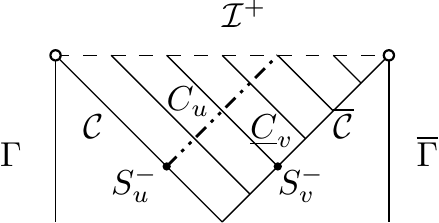}
  \caption{Family of shear-free null hypersurfaces $\Cb_v$.}
  \label{fig:Cv}
\end{figure}

Throughout this paper we fix a family of shear-free null hypersurfaces $\Cb_v$, which are always intersected transversally by the null hypersurfaces $C_u$. In fact, $\Cb_v$ are the level sets of a solution to the eikonal equation \eqref{eq:eikonal:intro}  which is chosen so that $\overline{\mathcal{C}}\cap\underline{C}_v=S_v^-$ are round spheres, see Figure~\ref{fig:Cv}. In other words, the hypersurfaces $\underline{C}_v$ are unchanged from the spherically symmetric foliation in Section~\ref{sec:spherical}.

The first observation is that the double foliation resulting from a choice of $u$ on $\mathcal{C}$ is generally not well behaved, in the sense that the geometry of the spheres $C_u\cap\underline{C}_v$ diverges from the spherically symmetric picture  as the conformal boundary $\mathcal{I}^+$ is approached.

\begin{prop} 
  Let $u$ be the solution to the eikonal equation \eqref{eq:eikonal:intro} corresponding to a choice of level sets $S_u^-=C_u\cap\mathcal{C}\simeq\mathbb{S}^2$ on $\mathcal{C}$.
  There exist arbitrarily small deformations of the round spheres $\mathbb{S}^2\leadsto S_u^-$, such that  $S_{u,v}:=C_u\cap \underline{C}_v$ has the property that for some $(u_+,v_+)$, $p\in S_{u_+,v_+}$, \[\text{Area}(S_{u_+,v_+})=\infty \qquad  \text{, while }\qquad  S_{u_+,v_+}\setminus{p}\subset H\,.\]
\end{prop}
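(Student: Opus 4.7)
\emph{Proof plan.} The plan is to exploit the conformal flatness of de Sitter. In the stereographic coordinates of Section~\ref{sec:stereographic}, $h=\Omega^2 m$ on an open subset of a Minkowski space $(\mathbb{R}^{1+3},m)$, and $\mathcal{I}^+$ corresponds to the locus where $\Omega\to\infty$. Since null hypersurfaces and their generators are conformal invariants, the hypersurfaces $C_u$ and $\Cb_v$ are simultaneously null for $h$ and $m$, and each generator is a reparametrized Minkowski straight null line. In particular, the shear-free family $\Cb_v$ is a spherically symmetric family of Minkowski null cones with vertex on the worldline $\overline{\Gamma}$, and $\mathcal{C}$ is a Minkowski past null cone from a point on $\mathcal{I}^+$.

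For the unperturbed round choice $S_u^-\simeq\mathbb{S}^2$, the hypersurface $C_u$ is itself a Minkowski null cone from a point on $\mathcal{I}^+$, and every $S_{u,v}=C_u\cap\Cb_v$ is a round sphere in the interior of $H$. In this round case all generators exit into $\mathcal{I}^+$ simultaneously at a critical level $v=v_0$; equivalently, the \emph{escape parameter} $v_\star:S_u^-\to(0,v_0]$, defined so that the generator of $C_u$ through $q\in S_u^-$ first meets $\mathcal{I}^+$ at $v=v_\star(q)$, is identically equal to $v_0$. The next step is to perturb: since $v_\star$ depends continuously on $S_u^-$ through its null tangent directions, a small deformation $\mathbb{S}^2\leadsto S_u^-$ produces a small perturbation of $v_\star$, and for a generic deformation whose profile is concentrated near a single point of $\mathbb{S}^2$ with a unique nondegenerate peak, the perturbed $v_\star$ is non-constant and attains a strict minimum $v_+<v_0$ at a single point $q_0\in S_u^-$ with nondegenerate Hessian. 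By construction, $S_{u_+,v_+}:=C_u\cap\Cb_{v_+}$ (with $u_+$ the $u$-value of the deformed sphere) then touches $\mathcal{I}^+$ at exactly the point $p$ on the generator through $q_0$, while $S_{u_+,v_+}\setminus\{p\}\subset H$.

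It remains to verify $\Area(S_{u_+,v_+})=\infty$. The induced metric on the sphere splits as $\gs_h=\Omega^2\gs_m$, so the area element satisfies $\dm{\gs_h}=\Omega^2\dm{\gs_m}$. Near $p$ the factor $\Omega$ blows up at the explicit rate dictated by its stereographic formula, while $\gs_m$ extends smoothly across $\mathcal{I}^+$ in the conformal completion and the tangency of $S_{u_+,v_+}$ to $\{\Omega=\infty\}$ is nondegenerate by construction. A direct computation in local charts near $p$ then gives $\int_{S_{u_+,v_+}}\Omega^2\dm{\gs_m}=\infty$. The main technical obstacle is the middle step: to make ``generic'' precise I must show that a dense subclass of arbitrarily small deformations really does produce a non-constant $v_\star$ whose minimum is a single nondegenerate point, rather than, say, a critical curve. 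This reduces to a Morse/transversality statement about how perturbations of $S_u^-$ propagate along straight Minkowski null lines and first contact $\mathcal{I}^+$, and should be straightforward once the picture of $C_u$ in the conformal chart is made fully explicit.
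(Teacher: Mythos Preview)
Your conformal-Minkowski approach is genuinely different from the paper's and, if completed, would prove more (genericity rather than mere existence). The paper does not perturb abstractly: it displaces the vertex of the outgoing cone by a small angle $\varphi$ off the geodesic $\Gamma$ in the ambient $\mathbb{R}^{1+4}$, producing null hypersurfaces $C_{\epsilon,\varphi}$ whose sections on $\Cb(0)$ are explicit ellipses $|x'|(\vartheta)\simeq 2/(|\epsilon|+\varphi\cos\vartheta)$. At $\varphi=|\epsilon|$ the section touches $\mathcal{I}^+$ at the single point $\vartheta=\pi$, and a one-line integration (Lemma~\ref{lemma:area}) gives $\Area=16\pi/(|\epsilon|^2-\varphi^2)\to\infty$; a separate short computation in Section~\ref{sec:ellipsoidal:explicit} checks that the trace on the cosmological horizon is an $O(\varphi)$ deformation of a round sphere. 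No Morse or transversality argument enters.

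The gap you flag is real and is more than a technicality. Continuity of $v_\star$ in the deformation gives smallness of the perturbation, not non-constancy: to conclude that the perturbed $v_\star$ is non-constant you must compute its linearisation at the round sphere and show it is a nontrivial operator on deformations, otherwise ``generic'' has no content. Likewise, asserting that a deformation ``concentrated near a single point with a nondegenerate peak'' yields a $v_\star$ with a single nondegenerate minimum presumes exactly the relation between the deformation profile and $v_\star$ that needs proof. Your area computation is heuristically right (quadratic tangency to $\{e^\Phi=0\}$ in the stereographic chart gives $e^{-2\Phi}\sim\rho^{-4}$ on the sphere, non-integrable in two dimensions) but is also only sketched. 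The shortest repair is to drop genericity and exhibit one explicit deformation with the required properties; the ellipsoidal family does this and collapses your argument to the paper's.
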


\begin{figure}[tb]
  \centering
  \includegraphics[scale=0.6]{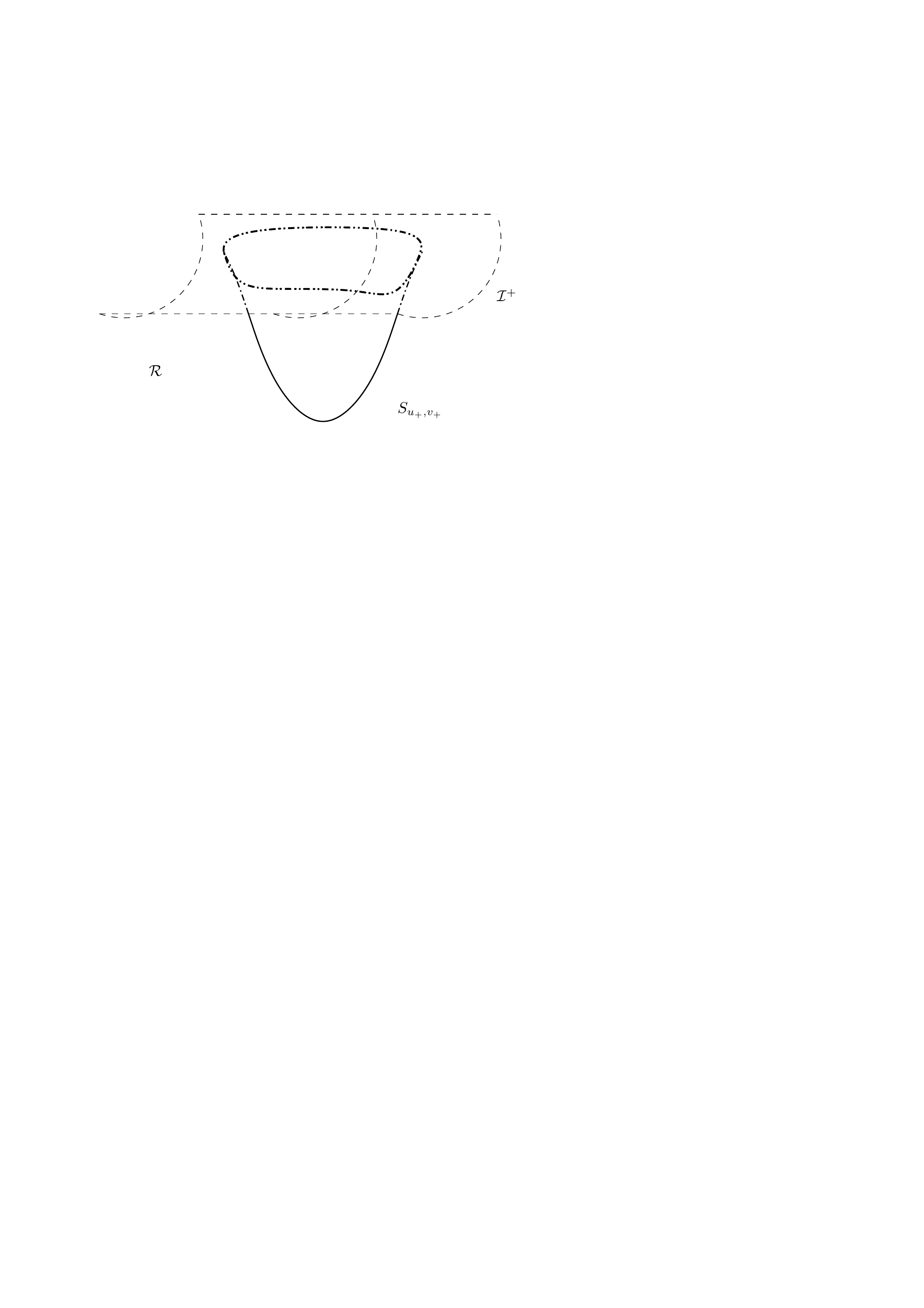}
  \caption{Example of a sphere of infinite area partially contained in the spacetime.}
  \label{fig:pinch}
\end{figure}

In other words, generically small deformations of the initial data  on $\mathcal{C}$ lead to spheres of intersection in the expanding region which ``pinch off'' at infinity, see Fig~\ref{fig:pinch}. In Section~\ref{sec:ellipsoidal} we will demonstrate this phenomenon explicitly for ellipsoidal deformations of the round spheres. These examples show in particular that a double null gauge chosen \emph{initially} generally does \emph{not} parametrize correctly future null infinity, i.e.~for these foliations it is \emph{not} true that $\mathcal{I}^+=\{r=\infty\}$.

In Section~\ref{sec:ellipsoidal:explicit} we derive explicitly the transformation of all optical structure coefficients, and characterise globally the gauge transformation induced by small ellipsoidal deformations on the cosmological horizons. In particular, the statement of the proposition follows from the estimates derived therein.

\bigskip

The second result states the existence of double null foliations which are free of the above pathological behavior.
The starting point of their construction is now a sphere $S_{u_+,v_+}^\infty\subset\mathcal{I}^+$ which is properly contained in $\mathcal{I}^+$, and further a prescription of spheres $S_{u,v_+}\subset \underline{C}_{v_+}\equiv\underline{C}_+$ which are the final level sets of an optical function $u$, for which we solve globally in the domain $\mathcal{D}_+$; see Figure~\ref{fig:D}.

\begin{theorem} [Informal version]
  For an open set of initial data on $\underline{C}_{+}$ there exists a solution to the eikonal equation on $\mathcal{D}_+$ with the property that all spheres $S_{u,v}=C_u\cap\underline{C}_{v}$ are \emph{uniformly} conformal to $\mathbb{S}^2$. 
\end{theorem}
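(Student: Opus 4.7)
The plan is to implement a final-gauge construction reading backwards from the conformal boundary, in the spirit of the ``last slice'' approach of \cite{CK:93} and making essential use of a renormalized mass aspect function in the style of \cite{KN:03}. Concretely, the optical function $u$ is defined first on the fixed shear-free hypersurface $\underline{C}_+$, and then extended to all of $\mathcal{D}_+$ by the method of characteristics.

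For the construction on $\underline{C}_+$: one prescribes the asymptotic sphere $S_{u_+,v_+}^\infty \subset \mathcal{I}^+$, and then generates the remaining family $\{S_{u,v_+}\}_{u<u_+}$ by transporting along $\underline{C}_+$ with a lapse chosen so that a suitable renormalized mass aspect function $\check\mu$ equals a prescribed target value on each $S_{u,v_+}$. Since $\check\mu$ is a second-order nonlocal functional of the embedding, this amounts to coupling an elliptic equation on each sphere with a transport equation in $u$ along $\underline{C}_+$, in direct analogy with the last slice problem of \cite{CK:93}. The mass aspect prescription is designed precisely to guarantee good asymptotic behavior of the intrinsic geometry of the spheres $S_{u,v_+}$ as $\mathcal{I}^+$ is approached, and in particular their uniform conformality to $\mathbb{S}^2$, in contrast with the generically degenerate behavior established in the earlier proposition.

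Having defined $u$ on $\underline{C}_+$, the extension to $\mathcal{D}_+$ is obtained by flowing along the null geodesics tangent to $\nabla u$; these rule the null hypersurfaces $C_u$, and $u$ then solves the eikonal equation by construction. No focusing occurs because $\tr\chi>0$ throughout $\mathcal{D}_+$ by the expansion of the cosmological region. To transfer uniform conformality from $\underline{C}_+$ to arbitrary spheres $S_{u,v}\subset\mathcal{D}_+$, one exploits two facts: (i) since $\underline{C}_v$ is shear-free, the conformal class of its cross-sections is preserved along the $\Lb$-generators, so it suffices to control the conformal factor as $u$ varies on each $\underline{C}_v$; (ii) along $C_u$, the shear $\hat\chi$ of the spheres satisfies the transport equation $\nabla_L\hat\chi+\tr\chi\cdot\hat\chi=0$ on de Sitter, the Weyl-curvature forcing being absent by conformal flatness. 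Integrating, with initial data on $\underline{C}_+$ controlled by the mass aspect prescription, one obtains decay of $\hat\chi$ that is integrable in $v$, and hence $\hat\gs_{u,v}$ stays uniformly equivalent to $\gsc$ after normalization by the area radius.

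The open-set statement then follows by a perturbation argument around the spherically symmetric reference foliation, where everything is explicit: the map from initial data at $\mathcal{I}^+$ to the structure coefficients is continuous in suitable norms, and an implicit function theorem gives a neighborhood of data for which uniform conformality persists. The main obstacle I anticipate is the global solvability of the renormalized mass aspect problem on $\underline{C}_+$ itself, together with uniform estimates on its solution up to and including $\mathcal{I}^+$: this is a coupled elliptic--transport system on a noncompact null hypersurface, and most of the work lies in establishing existence, uniqueness, and openness in this setting. Once this is in hand, propagation into the interior of $\mathcal{D}_+$ is comparatively soft, precisely because conformal flatness decouples the transport equations for $\hat\chi$ and $\hat\gs$ from curvature sources.
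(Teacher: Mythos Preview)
Your proposal misidentifies where the difficulty lies and how the conformal factor is actually controlled. Two concrete issues:

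\textbf{The emphasis is inverted.} You anticipate that the main obstacle is solving the mass aspect equation of motion on $\underline{C}_+$, and that ``propagation into the interior of $\mathcal{D}_+$ is comparatively soft.'' The paper does the opposite: it \emph{assumes} the foliation of $\underline{C}_+$ has been constructed with $r^3\nLq{\tilde\mu}+r^4\nLq{\nablas\tilde\mu}\lesssim 1$ (deferring the existence theory for the equation of motion to \cite{KN:03}), and the entire analytic content of the theorem is the propagation step. That step is not soft: it is a full bootstrap argument on the structure coefficients (Section~\ref{sec:BA}), closed by coupling transport equations along the generators to Hodge systems on the spheres (Section~\ref{sec:coupled}). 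Conformal flatness removes the Weyl source terms but does not decouple the null structure equations from one another; the quantities $2\omega-\Omega\tr\chi$, $\eta$, $\mu$, $\tr\chi\tr\chib$, $\Laplaces\omega$ remain nonlinearly coupled, and the ellipsoidal examples of Section~\ref{sec:ellipsoidal} show this coupling can drive the geometry to infinity even with $W\equiv 0$.

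\textbf{The mechanism for uniform conformality is not $\hat\chi$.} Your step (ii) proposes to control the conformal factor by integrating the Raychaudhuri-type equation for $\hat\chi$ along $C_u$. But in the paper the conformal factor $\psi$ in $\gs=r^2e^{2\psi}\gammac$ is propagated along the shear-free $\underline{C}_v$ via $2\Db\psi=\Omega\tr\chib-\overline{\Omega\tr\chib}$, and by the Codazzi equation $\ds(\Omega\tr\chib)=\Omega\tr\chib\,\eta$ in the shear-free case, so the relevant quantity is the torsion $\eta$, not $\hat\chi$ (see Lemma~\ref{lemma:prelim:conformal}). Control of $\eta$ at the required rate $|\Omega^2\eta|\lesssim 1$ comes from the Hodge system $\divs\eta=-\mu$, $\curls\eta=0$, which forces $r^3|\mu|\lesssim 1$; this in turn is obtained from the propagation equation for the renormalized mass aspect $\tilde\mu=\breve\mu-\overline{\breve\mu}$ coupled back to the elliptic estimate for $\eta$ (Proposition~\ref{prop:coupled:eta:mu:breve}). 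The ``open set'' is then those data on $\underline{C}_+$ satisfying the stated bounds on $\tilde\mu$, together with the smallness condition \eqref{D:small} on $\mathcal{D}_+$; no implicit function theorem is invoked.
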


\begin{figure}[tb]
  \centering
  \includegraphics{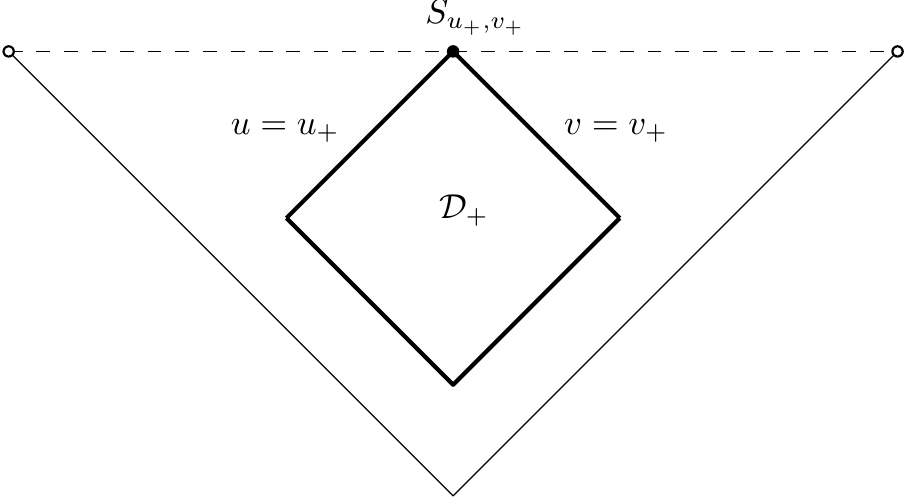}
  \caption{Domain for the construction of globally well-behaved double null foliations in the past of a sphere $S_{u,v}\subset \mathcal{I}^+$.}
  \label{fig:D}
\end{figure}

Recall that  for simplicity we have chosen the transversal null hypersurfaces $\underline{C}_v$ to  be shear-free, and thus restrict our attention to the construction of non-trivial optical function $u$.
While previously we have chosen \emph{initial} data on $\mathcal{C}$ such that  $\mathcal{C}\cap C_u=S^-_{u}$ is a small perturbation of a round sphere, we now choose \emph{final} data  on $\underline{C}_{+}$, such that $C_u\cap\underline{C}_{v_+}=S^+_{u}\simeq \mathbb{S}^2$ with the property that $S^+_{u}\to S^\infty_{u_+,v_+}\subset \mathcal{I}^+$, as $u\to u_+$.

The notion of ``uniformly conformal'' refers to \emph{uniform bounds} on the conformal factor $\psi$, when the geometry of  $(S_{u,v},\gs)$ is  compared to $(\mathbb{S}^2,\gammac)$:
\begin{equation}\label{eq:gs:intro}
  \gs_{u,v}=r^2(u,v)\,e^{2\psi}\,\gammac
\end{equation}
This form of the metric is immediate in the shear-free case,\footnote{See also Section~\ref{sec:shear-free:graphs}.} but more generally a consequence of the uniformization theorem in  Section~\ref{sec:uniformization}.
The application of the latter typically relies on good bounds for the Gauss curvature $K$ of $S_{u,v}$, which however is not easily obtained by propagation along the characteristics of $C_u$. A closely related quantity is the \emph{mass aspect function} which we define by
\begin{equation}\label{eq:mu:intro}
        \mu := K-\divs\eta+\frac{1}{4}\tr\chi\tr\chib-\frac{\Lambda}{3}
\end{equation}
Here $\eta$ is the \emph{torsion},\footnote{namely the torsion with respect to the null geodesic vectorfield $L'=-\ud u^\sharp$, $\eta\cdot X=\frac{1}{2}g(\nabla_X L',\Lb)$, where $g(L',\Lb)=-2$, $g(\Lb,X)=0$, a 1-form on $S_{u,v}$ whose non-vanishing is, geometrically, the obstruction to the integrability of the distribution of planes $(TS_{u,v})^\perp\subset TH$; see Chapter~1 of \cite{C:09}.} and $\tr\chi$, $\tr\chib$ are the outgoing, and ingoing null mean curvatures, respectively.
The crucial property of globally well-behaved double null foliations is the asymptotic behavior of $\mu$; see also the discussion in Section~\ref{sec:prelim}.

In fact, it turns out to be convenient to prescribe $S_u^+$ as the level sets of a related function $\breve{\mu}$,
\begin{equation}\label{eq:mu:breve:intro}
  \breve{\mu}:=K-\divs\eta+\frac{1}{2}\tr\chi\tr\chib-2\frac{\Lambda}{3}
\end{equation}
which ``decouples'' several equations in double null gauge. As observed in \cite{KN:03} the gauge choice that $\breve{\mu}$ equals its average $\overline{\breve{\mu}}$ on $S_u^+$ corresponds to an equation of motion for surfaces on $\underline{C}_+$ which determines $S_u^+$ from a single section, say  $S_{u_+}^+$. We will not study this equation of motion in this paper, but simply assume that a foliation of $\Cb_+$ by surfaces $S_u^+$ has been constructed and has the property that
\begin{equation}
  \label{eq:mu:tilde:intro}
  \tilde{\mu}:=\breve{\mu}-\overline{\breve{\mu}}\,,\qquad \overline{\breve{\mu}}(u,v)=\frac{1}{4\pi r^2(u,v)}\int_{S_{u,v}} \breve{\mu}\ \ud \mu_{\gs_{u,v_+}}
\end{equation}
satisfies suitable upper bounds. In this paper we use the dimensionless norms
\begin{equation}
  \dLp{4}{u,v}{\theta}:= \Bigl( \frac{1}{4\pi r^2}\int_{S_{u,v}}|\theta|^4_{\gs}\dm{\gs}\Bigr)^{1/4}\,.
\end{equation}

\begin{theorem}[Precise version]
  Suppose the surfaces $S_{u}^+\subset\Cb_+$ are prescribed such that
  \begin{equation}
    r^3 \dLp{4}{u,v_+}{\tilde{\mu}}+r^4 \dLp{4}{u,v_+}{\nablas \tilde{\mu}}\lesssim 1.
  \end{equation}
  Then the geometry of the surfaces $(S_{u,v}=C_u\cap\Cb_v,\gs_{u,v})$, where $\gs$ is given by \eqref{eq:gs:intro}, is controlled by
  \begin{equation}\label{eq:psi:intro}
         \Linf{u,v}{\psi}\lesssim 1\qquad          \dLp{4}{u,v}{r\ds\psi}\lesssim 1\qquad          \dLp{4}{u,v}{r^2\nablas^2\psi}\lesssim 1
       \end{equation}
       on a domain $\mathcal{D}_+$,
and moreover all bounds (\textbf{A:I,\underline{I},II,\underline{II}}) of Section~\ref{sec:BA} are satisfied, provided \eqref{D:small} holds, in particular
\begin{align}
  |2\omega-\Omega\tr\chi|\lesssim 1 &\qquad |2\omegab-\Omega\tr\chib|\lesssim 1\\
  r^3 \dLp{4}{u,v}{ \nablas\eta}\lesssim 1 &\qquad r^3 \dLp{4}{u,v}{ \nablas\etab}\lesssim 1\,.
\end{align}
\end{theorem}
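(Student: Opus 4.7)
The plan is a bootstrap (continuity) argument. I would define a subdomain $\mathcal{D}'\subset\Dp$ as the largest connected region abutting the final hypersurface $\Cb_+$ on which the bounds of (\textbf{A:I,\underline{I},II,\underline{II}}) hold with some fixed constant $C_\ast$, and then improve them to $C_\ast/2$, which by a standard openness/closedness argument forces $\mathcal{D}'=\Dp$. The eikonal equation itself is solved by the method of characteristics starting from the prescribed foliation $\{S^+_u\}$ of $\Cb_+$; provided the bootstrap bounds hold, the null generators of $C_u$ do not focus or cross in $\Dp$, so $u$ is well-defined globally.

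First I would establish all the required bounds on the initial hypersurface $\Cb_+$ itself. Since $\Cb_+$ is shear-free, $\chibh=0$, and $\Omega\tr\chib$ is explicit from the spherically symmetric parametrisation of Section~\ref{sec:spherical}, the only nontrivial input is the geometry of the final leaves $S^+_u$, which is encoded in $\tilde{\mu}$. The definition \eqref{eq:mu:breve:intro} together with the hypothesis $r^3\dLp{4}{u,v_+}{\tilde{\mu}} + r^4\dLp{4}{u,v_+}{\nablas\tilde{\mu}}\lesssim 1$ and the shear-free structure controls $K$ up to its mean on each $S_{u,v_+}$. The uniformization theorem of Section~\ref{sec:uniformization} then produces the bounds \eqref{eq:psi:intro} for $\psi$ on $\Cb_+$, while Hodge theory (div-curl systems on $S_{u,v_+}$ sourced by $\tilde{\mu}$ and $\nablas\tilde{\mu}$) gives the corresponding bounds on $\nablas\eta$ and $\nablas\etab$.

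The heart of the argument is the propagation step from $\Cb_+$ into the past. Along each $\Cb_v$ (in the $\Lp$ direction), the Raychaudhuri equation for $\tr\chi$ has only a constant curvature source (the cosmological term) on the conformally flat background, yielding the bound $|2\omega-\Omega\tr\chi|\lesssim 1$, which in turn feeds the Codazzi equation for $\chih$; dually, $\tr\chib$ along each $C_u$ is already known from the spherical structure of $\Cb_v$. The crucial quantity is the renormalised mass aspect $\tilde{\mu}$, which satisfies a transport equation whose right-hand side is quadratic in bootstrap-controlled quantities with favourable $r$-weights, so that $r^3\dLp{4}{u,v}{\tilde{\mu}}\lesssim 1$ is preserved into the interior. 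Re-applying uniformization pointwise on each $S_{u,v}$ then produces \eqref{eq:psi:intro} throughout $\Dp$, and the remaining derivative bounds on $\eta$, $\etab$ follow from commuted transport equations together with Codazzi.

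The main obstacle is the tight consistency of $r$-weights: every null transport equation is schematically of the form $\Lp\Phi + c\,\Omega\tr\chi\cdot\Phi=\text{nonlinear}$, and since $\Omega\tr\chi\sim 2/r$ in the expanding region, the homogeneous solution scales as $r^{-2c}$, so the bootstrap closes only when the nonlinear source decays one power faster than this scaling. The estimates for $\tilde{\mu}$ and $\nablas\eta$ sit precisely on this borderline, and their closure is exactly where the smallness condition \eqref{D:small} is used to absorb the various constants in the nonlinear estimates. Once the bounds in (\textbf{A:I,\underline{I},II,\underline{II}}) are strictly improved, the continuity argument extends them to all of $\Dp$, yielding the theorem.
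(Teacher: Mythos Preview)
Your proposal is correct and follows essentially the same bootstrap strategy as the paper: assume (\textbf{A:I,\underline{I},II,\underline{II}}) on a subdomain, derive $\mathrm{L}^\infty$ and $\mathrm{L}^4$ estimates from the null transport equations, feed these into the uniformization theorem to control $\psi$ and hence validate the Calderon--Zygmund estimates on $(S_{u,v},\gs)$, and finally close via the coupled transport--Hodge system for $(\tilde{\mu},\eta)$ using the smallness \eqref{D:small}.

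One imprecision worth flagging: the bound $|2\omega-\Omega\tr\chi|\lesssim 1$ does \emph{not} follow from the Raychaudhuri equation with ``only a constant curvature source.'' It comes from the $\Db$-propagation equation \eqref{eq:Db:omega:trchi} (note: along $\Cb_v$ the direction is $\Lb$, not $\Lp$), whose right-hand side is $-2\Omega^2\bigl(-\breve{\mu}+|\eta|^2-2(\eta,\etab)+|\etab|^2\bigr)$; closing it requires $\Omega^2|\breve{\mu}|\lesssim 1$, which is itself tied to the Gauss equation and the torsion system (Lemma~\ref{lemma:null:expansion:mu} and Section~\ref{sec:L:infty:omega}). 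So the mass aspect function already enters at this stage, not only in the later $\tilde{\mu}$-transport step --- this is precisely why the paper singles out $\breve{\mu}$ and $\tilde{\mu}$ as the organizing quantities rather than treating the $2\omega-\Omega\tr\chi$ bound as a direct consequence of conformal flatness.
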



\begin{remark}
  This theorem in particular provides a justification for the assumptions made in \cite{Schlue:16:Weyl}.
  Therein we proved a theorem on the condition that bounds of the form (\textbf{A:I,\underline{I},II,\underline{II}}) are verified,
  that states that the Weyl curvature of a spacetime -- expressed in double null gauge -- decays with respect to a foliation of the form \eqref{eq:Sigmar:intro}. The above  theorem  indicates \emph{consistency}, in the context of the Einstein equations. Namely to prove the global existence of the developments considered in \cite{Schlue:16:Weyl} --- corresponding to the regions $\mathcal{R}$ above --- we need to recover assumptions of the form  (\textbf{A:I,\underline{I},II,\underline{II}}), which correspond directly to geometric properties of solutions to the eikonal equation, from knowledge of data and the behavior of the Weyl curvature of the spacetime. Now the main obstacles in doing so already appear to leading order, meaning they are unrelated to the specific decay rate of the Weyl curvature, but already occur when the conformal Weyl curvature vanishes identically, namely in the setting of de Sitter which is treated in this paper.

\end{remark}

\subsection{Overview}

The geometry of the de Sitter spacetime is described from several points  of view in Section~\ref{sec:desitter:charts}.
The construction of spherical and ellipsoidal double null foliations in Section~\ref{sec:desitter:double}, and the proof of the Proposition, are explicit, and can be read independently of the rest of the paper.

In Section~\ref{sec:global:discussion} we discuss the setup relevant for Theorem, and give a fast paced preliminary discussion --- assuming familiarity with the null structure equations given in Section~\ref{sec:null:structure} --- highlighting the \emph{necessary} behavior of various geometric quantities, for the conclusions of the Theorem to hold. This informs the bootstrap assumptions presented in Section~\ref{sec:BA}, and explains the role of the mass aspect function in this problem.

The overall argument for the proof of the Theorem follows the strategy in \cite{C:09}:
Based on the bootstrap assumptions we derive $\mathrm{L}^\infty$ and $\mathrm{L}^4$ estimates for the connection coefficients from propagation equations in Sections~\ref{sec:Linfty}, \ref{sec:L4}. These give sufficient control on the conformal factor in \eqref{eq:gs:intro}, see in particular the discussion of the uniformization theorem in Section~\ref{sec:uniformization},  to derive the validity of various elliptic estimates for Hodge systems on $(S_{u,v},\gs)$ (by reduction to the Calderon-Zygmund estimates on $\mathbb{S}^2$), which are discussed in Section~\ref{sec:elliptic}. In Section~\ref{sec:coupled} we then turn to the analysis of the coupled systems, of propagation equations along the characteristics and elliptic systems on the spheres, which together with a suitable choice of data at infinity yield the improvement of the initial bootstrap assumptions.

A general discussion of the eikonal equation is given in Section~\ref{sec:eikonal:general}, and we have included a discussion of the sections of shear free null hypersurfaces as graphs over round spheres in Section~\ref{sec:shear-free:graphs}.








\begin{quote}
  \textbf{Acknowledgements.}
  I would like to thank Mihalis Dafermos for an enlightening discussion at the ``Nonlinear Waves Trimester'' at the IH\'ES in 2016 which motivated some of the considerations in this paper. 
  The author gratefully acknowledges the support of the \emph{Fondation Sciences Math\'ematiques de Paris} and \emph{ERC consolidator grant EPGR 725589}, and the support of \emph{ERC advanced grant 291214 BLOWDISOL} and \emph{ERC Starting Grant StabMAEinstein}, as well as the hospitality of the \emph{Institut Henri Poincar\'e} in the fall semester 2016, and the \emph{Institut Mittag Leffler} in the fall semester 2019, where part of this research was conducted.
\end{quote}

\section{Construction of global solutions from infinity}
\label{sec:global:discussion}


\subsection{Set-up of the characteristic problem}

Let $(H,h)$ be de Sitter spacetime. We will express $g=h$ in double null coordinates $(u,v;\vartheta^1,\vartheta^2)$ on a domain $(u,v)\in\mathcal{D}_+$, where for fixed $u_+>0$, and  $v_+>0$,
\begin{equation}
  \mathcal{D}_+:=\bigl\{(u,v):   0 \leq u \leq u_+, 0 \leq v \leq v_+\bigr\}\,.
\end{equation}

The metric takes the form \eqref{eq:g:doublenull:intro}.
Here $u$, and $v$, are solutions to the eikonal equation \eqref{eq:eikonal:intro}, which are increasing to the future.
We denote the level sets of $u$, a null hypersurface, by $C_u\subset H$, similary the level set of $v$ by $\Cb_v\subset H$.
We refer the reader to Chapter~1 in \cite{C:09} for basic definitions of all geometric quantities, and assume familiarity with the notation introduced therein.

The hypersurfaces $C_+:=C_{u_+}$, and $\underline{C}_+:=\underline{C}_{v_+}$ play the role  of a ``last slice'', in the sense that we construct solutions to the eikonal equation in the past of $C_+\cup\Cb_+$. In all constructions $S_{u,v}=C_u\cap\Cb_v$ are compact spacelike surfaces diffeomorphic to $\mathbb{S}^2$, and the area radius is defined by \eqref{eq:area:intro}. The coordinates  and $(\vartheta^1,\vartheta^2)$ are chosen  on $S_{+}$, then first transported along $C_+$, then along $\underline{C}_{v}$; see Chapter~{1.4} in \cite{C:09} for further details.

We are interested in the case that $S_+:= C_+\cap\Cb_+$ can be identified with a sphere $S_+^\infty\subset\mathcal{I}^+$,\footnote{In this case $C_+$ and $\Cb_+$ in fact do not intersect in $H$, see Section~\ref{sec:desitter:double}.}  in particular  $r(u_+,v_+)=\infty$. Moreover we take $C_+$ to be spherically symmetric, and $\Cb_v$ to be shear-free, as described next in Section~\ref{sec:setup:initial:data}.




\subsubsection{Data on $C_+$}
\label{sec:setup:initial:data}

Let $p\in H\subset\mathbb{R}^{4+1}$ be a point in de Sitter, and $C_+=C_p^+\cap H$ where $C_p^+$ is the future-directed cone emanating from $p$ in the ambient Minkowski spacetime $\mathbb{R}^{4+1}$.

A preferred foliation of the null hypersurface $C_+$ is obtained by viewing the spacetime $H$ as spherically symmetric with respect to a timelike geodesic $\Gamma$ passing through $p$. By a suitable choice of cartesian coordinates $(t,x,x')\in\mathbb{R}^{1+1+3}$  we can take $\Gamma(t)=(t,-\langle t\rangle,0)$, and $p=\Gamma(t_p)$.

Consider the null hypersurfaces $\underline{C}(t)=C_{\underline{\Gamma}(t)}^+\cap H$ emanating from the antipodal geodesic $\underline{\Gamma}(t)=(t,\langle t\rangle,0)$. The sections $S_t^+=C_+\cap\underline{C}(t)$ give a preferred foliation of $C_+$ by round 2-spheres.

In fact by Lemma~\ref{lemma:levels} the null hypersurfaces $C(t)=C_{\Gamma(t)}^+\cap H$, and $\Cb(t)$, are the level sets $C_u$, and $\Cb_v$ of the  functions
\begin{equation}\label{eq:uv:intro}
  u(t,x,x')=-\frac{1}{4}\log\Bigl(\frac{t+x}{t-x}\frac{|x'|+1}{|x'|-1}\Bigr)\,,\quad
  v(t,x,x')=-\frac{1}{4}\log \Bigl(\frac{t-x}{t+x}\frac{|x'|+1}{|x'|-1}\Bigr)\,.
\end{equation}
In these double null coordinates the metric $h$ takes the form \eqref{eq:metric:double:spherical}.

In the following we will keep the null hypersurface $C_+=C(\Gamma(t_p))=C_{u_+}$, and the global solution to the eikonal equation $v$, whose level sets trace out a preferred family of sections on $C_+$.

\smallskip

We compute \textbf{on }$\mathbf{C_+}$:

\begin{equation}
  \gs_{(u_+,v)}=r^2(u_+,v)\gammac
\end{equation}

\begin{equation}
  \chih=0\qquad 
      \tr\chi=\frac{2}{r}\Omega
\end{equation}


\begin{equation}
\Omega^2=r^2-1\qquad 
    \omega=r
  \end{equation}

  

  \begin{equation}
    \eta=0\qquad 
    \etab=-\eta=0
  \end{equation}

  \begin{equation}
    K=\frac{1}{r^2}
  \end{equation}

\begin{equation}\label{eq:tr:chib:data}
  \tr\chib=\frac{2}{r}\Omega
\end{equation}

The null hypersurfaces $\Cb_v$ are shear-free which can also be proven purely from the data on $C_+$:

\begin{lemma}\label{lemma:shear-free}
  Let $\Omega^2 \chibh\rvert_{S_{u_+,v_+}}=0$ then $\chibh=0$ everywhere, i.e.~the null hypersurfaces $\Cb_v$ are all shear-free.
\end{lemma}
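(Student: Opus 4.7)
The plan is a two-step transport argument that exploits the conformal flatness of de Sitter (every Weyl-curvature component vanishes) together with the explicit spherically symmetric data on $C_+$ recorded just above. Once those two inputs are inserted into the standard null structure equations, the evolution of $\chibh$ becomes governed by homogeneous linear transport equations, first along the generators of $C_+$ and then along the generators of each $\Cb_v$, and ODE uniqueness upgrades vanishing at a single sphere to vanishing everywhere.

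First I would propagate the hypothesis along $C_+$ using the null structure equation for $\chibh$ in the outgoing direction $L$, which has the schematic form
\[
\nablas_L\chibh + \tfrac{1}{2}\tr\chi\,\chibh - 2\omega\,\chibh = -\nablas\,\otimesh\,\eta - \eta\,\otimesh\,\eta + \tfrac{1}{2}\tr\chib\,\chih - (\text{Weyl term}).
\]
By the data of Section~\ref{sec:setup:initial:data} one has $\chih=0$ and $\eta=0$ on $C_+$, so the $\nablas\otimesh\eta$, $\eta\otimesh\eta$ and $\tr\chib\,\chih$ contributions all vanish, and the Weyl source vanishes because de Sitter is conformally flat. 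The equation therefore collapses to a homogeneous linear transport equation for (a suitable conformal rescaling of) $\chibh$ along each generator of $C_+$. Inserting the hypothesis $\Omega^2\chibh|_{S_{u_+,v_+}}=0$ as terminal data and invoking uniqueness gives $\Omega^2\chibh\equiv 0$ on all of $C_+$, and hence $\chibh|_{S_{u_+,v}}=0$ for every $v\in[0,v_+]$.

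In the second step I propagate along the generators of each $\Cb_v$ using the Raychaudhuri--Sachs equation
\[
\nablas_{\Lb}\chibh + \tr\chib\,\chibh - 2\omegab\,\chibh = -\alphab,
\]
where $\alphab$ is the corresponding ingoing Weyl component, which again vanishes on de Sitter. This is once more a homogeneous linear ODE for $\chibh$ along each null generator of $\Cb_v$, and the vanishing of $\chibh$ on the $u=u_+$ cross-section of $\Cb_v$ established in step one propagates by uniqueness to $\chibh=0$ throughout $\Cb_v$, which is the required conclusion.

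The main technical point, and the one most likely to cause trouble, is the bookkeeping of conformal weights near the conformal boundary: at $S_{u_+,v_+}\subset\mathcal{I}^+$ one has $r=\infty$ and $\Omega^2=r^2-1\to\infty$, so $\chibh$ itself is not manifestly regular there and the natural evolving quantity is the rescaling $\Omega^2\chibh$ appearing in the hypothesis. The $L$-transport equation in step one must accordingly be rewritten in terms of this rescaled tensor, using the explicit values $\omega=r$, $\tr\chi=\tr\chib=2\Omega/r$ and $\Omega^2=r^2-1$ on $C_+$, so that it becomes a well-posed final-value problem with vanishing data at $r=\infty$. Beyond this single renormalisation check the argument is purely algebraic: every term capable of sourcing $\chibh$ in either transport equation is eliminated by the spherically symmetric $C_+$-data or by the vanishing of the de Sitter Weyl tensor.
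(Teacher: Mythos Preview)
Your proposal is correct and follows essentially the same two-step transport argument as the paper: first propagate $\Omega^2\chibh$ along $C_+$ using that the source terms in the $D$-equation for $\Omega\chibh$ (which involve $\etab$, not $\eta$, though both vanish on $C_+$) disappear, then propagate along each $\Cb_v$ using $\hat{\Db}\chibhp=0$ (since $\alphab=0$) and Gronwall. Your explicit discussion of the conformal-weight renormalisation near $r=\infty$ is exactly the reason the paper works with $\Omega^2\chibh$ and $\Omega^2\chibhp$ throughout.
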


\begin{proof}
  Since  $\eta=\etab=0$, $\chih=0$ on $C_0$, first note that \eqref{eq:D:chib} reduces in view of \eqref{eq:D:Omega:tr:chib} to
  \begin{equation}
    \hat{D}(\Omega\chibh)=0\qquad \text{: on }C_+
  \end{equation}
  Therefore, again using that $\chih=0$ on $C_+$:
  \begin{equation}
    D|\Omega^2\chibh|^2=(2\omega-\Omega\tr\chi)|\Omega^2\chibh|^2
  \end{equation}
  which obviously implies $\chibh=0$ on $C_+$ provided $\Omega^2\chibh\bigr\rvert_{S_{u_+,v_+}}=0$.

  Now we can use \eqref{eq:Db:chibh} to derive
 \begin{equation}
   \Db|\Omega^2\chibhp|^2=2(2\omega-\Omega\tr\chib)|\Omega^2\chibhp|^2-2\Omega\chibh^{\sharp\sharp}(\Omega^2\chibhp,\Omega^2\chibhp)
 \end{equation}
 which implies
 \begin{equation}
   |\Db|\Omega^2\chibhp|^2|\leq 2\bigl(\Omega|\chibh|+|2\omega-\Omega\tr\chi|\bigr)|\Omega^2\chibhp|^2
 \end{equation}
 Hence by Gronwall's inequality $\chibh\equiv 0$.
\end{proof}


\subsubsection{Data on $\Cb_+$}
\label{sec:data:Cb}

Let $\Cb_+$ be the ``last'' null hypersurface $\Cb_v$ that intersects $C_+$, namely $\Cb_+=\Cb_{v_+}$ where $v_+$ is chosen such that $S_+=C_+\cap\Cb_+$ has infinite area, i.e.~$r(u_+,v_+)=\infty$. We will prescribe the level sets of $u$ on $\Cb_+$, and extend $u$ as a solution to the eikonal equation \eqref{eq:eikonal:intro} to the past of $C_+\cup\Cb_+$.

The Theorem makes reference to a foliation of $\Cb_+$ by level sets $S_u^+$ relative to which the mass aspect function $\mu$ defined in \eqref{eq:mu:intro}, or more precisely the renormalised quantity $\breve{\mu}$ of \eqref{eq:mu:breve:intro}, is asymptotically constant.
A few comments are in order about how to achieve such a foliation ``on the last slice'':
Since 
\begin{equation}
    \eta+\etab=2\ds\log\Omega\label{eq:eta:etab:log:Omega}\\
  \end{equation}
  we have
  \begin{equation}
  \breve{\mu}+\breve{\mub}=-2\rho[W]+(\chih,\chibh)+\frac{1}{2}\tr\chi\tr\chib-2-2\Laplaces\log\Omega\label{eq:Laplace:log:Omega}
\end{equation}
where $\mub$ is the conjugate mass aspect function defined in \eqref{eq:mub}.
Here $\rho[W]=0$, and $\chibh=0$. Moreover since $\breve{\mub}$, and $\tr\chi\tr\chib$, are independent of $\Omega$, the prescription of $\breve{\mu}$ yields an equation for $\Omega$.\footnote{Since $\etab$, and $\tr\chi\tr\chib$, and hence $\mub$ do not depend on $\Omega$, the function $\mub$ on $S_u^+$ can be computed from knowledge of the section $S_u^+$ alone. See Section~3.3.3 in \cite{KN:03} for details.} For example the choice $\breve{\mu}=\overline{\breve{\mu}}$ yields the \textbf{equation of motion of surfaces}:
\begin{equation}\label{eq:motion}
  2\Laplaces\log\Omega=\frac{1}{4}\Bigl(\tr\chi\tr\chib-\overline{\tr\chi\tr\chib}\Bigr)+\divs\etab
\end{equation}
In other words given a section $S_u^+$ this elliptic equation determines the lapse of the foliation of $\Cb_+$ by the surfaces $S_u^+$,
and thus can be viewed as defining a flow of surfaces along the generators of $\Cb_+$.
We do not study this geometric flow in this paper, but the existence theory is developed in \cite{KN:03} (in the asymptotically flat setting).

While this is a problem of \emph{constructing the initial (or final) data}, we focus in this paper on controlling the solutions to the eikonal equation with this prescribed data. In fact, instead of working with a ``constant mass aspect foliation'' we will  assume directly that the foliation is such that
\begin{equation}
  \Linf{u,v_+}{\breve{\mu}-\overline{\breve{\mu}}}\leq C/r^3(u,v_+)\,.
\end{equation}

Further assumptions on the foliation of $\Cb_{v_+}$ are made in Sections~\ref{sec:L4}, and \ref{sec:coupled}. It can however be expected that these properties follow from the analysis of \eqref{eq:motion}, and a suitable choice of a single section $S_{u}^+$; see Section~3 in \cite{KN:03}.
  
\subsubsection{de Sitter values}

For future reference we note here de Sitter values in the spherically symmetric double null foliation \eqref{eq:uv:intro}, c.f.~Section~\ref{sec:spherical}:
  \begin{gather}
        \Omega^2=r^2-1\\
    \tr\chi=\tr\chib=\frac{2}{r}\Omega\\
    \omega=\omegab=r
  \end{gather}



  \subsection{Bootstrap assumptions}
  \label{sec:BA}

We will make the following bootstrap assumptions on the null structure coefficients:  

  \begin{equation}\label{BA:log:Omega:r}
  \bigl\lvert \log \Bigl( \frac{\Omega}{r} \Bigr) \bigr\rvert \leq \Delta_0 \tag{\textbf{A:0}}
\end{equation}

\begin{subequations}
  \begin{align}
     \tr\chi &\geq 1 \label{BA:tr:chi} \tag{\textbf{A:I}.i}\\
 \tr\chib &\geq 1 \label{BA:tr:chib} \tag{\textbf{A:\underline{I}}.i}
\end{align}
\end{subequations}

\begin{subequations}
  \begin{align} 
    | 2\omega-\Omega\tr\chi| &\leq \Delta_I \label{BA:omega:tr:chi} \tag{\textbf{A:I}.ii}\\
    | 2\omegab-\Omega\tr\chib| &\leq \Delta_I \label{BA:omegab:tr:chib} \tag{\textbf{A:\underline{I}}.ii}
\end{align}
\end{subequations}

\begin{subequations}
  \begin{align} 
    | \Omega\tr\chi-\overline{\Omega\tr\chi} | &\leq \Delta_I \label{BA:tr:chi:average} \tag{\textbf{A:I}.iii}\\
     | \Omega\tr\chib-\overline{\Omega\tr\chib} | &\leq \Delta_I \label{BA:tr:chib:average} \tag{\textbf{A:\underline{I}}.iii}
\end{align}

\end{subequations}

\begin{subequations}
  \begin{align}
    \Omega^2 |\ds\omega| &\leq \Delta_{II} \label{BA:Omega:ds:omega} \tag{\textbf{A:II}.i}\\
      \Omega^2 |\ds\omegab| &\leq \Delta_{II} \label{BA:Omega:ds:omegab} \tag{\textbf{A:\underline{II}}.i}
\end{align}

\end{subequations}

\begin{subequations}
    \begin{align}
            \dLp{4}{u,v}{\Omega^3 \nablas^2 \omega} &\leq \Delta_{II} \label{BA:Omega:dd:omega} \tag{\textbf{A:II}.ii}\\
      \dLp{4}{u,v}{\Omega^3 \nablas^2 \omegab} &\leq \Delta_{II} \label{BA:Omega:dd:omegab} \tag{\textbf{A:\underline{II}}.ii}
    \end{align}

\end{subequations}

Many of these assumptions will be recovered under a smallness condition on the domain in $(u,v)$ coordinates, and we will refer to this condition as
\begin{equation}
  u_+-u\ll 1\qquad v_+-v\ll 1 \label{D:small} \tag{\boldmath{$\epsilon$}}
\end{equation}



\subsection{Preliminary discussion}
\label{sec:prelim}

Consider the domain $\mathcal{D}_+$ is the past of $C_+\cup\Cb_+$ foliated by the transversal null hypersurfaces $C_u$, $\Cb_v$, $u\leq u_+$, $v\leq v_+$. As we have seen above we can achieve the shear-free case
\begin{equation}
  \chibh =0\,,
\end{equation}
 by prescribing spherically symmetric data for $v$ on $C_+$, and $\Omega^2\chibh\rvert_{S_{+}}=0$. From non-trivial data for $u$ on $\Cb_+$ we will obtain non-trivial spheres of intersection $S_{u,v}=C_u\cap\Cb_v$, and we are interested in  the  geometry of the spheres $S_{u,v}$ as the area radius $r(u,v)\to \infty$. Here we give a preliminary discussion under what conditions these spheres remain conformal to the round sphere with a uniformly bounded conformal factor, and what are the necessary conditions for  $r^2(u,v) K(u,v)$, where $K$ is the Gauss curvature, to be uniformly bounded on $\mathcal{D}_+$.


\subsubsection{Estimates of the conformal factor}
\label{sec:prelim:conformal}


We have 
\begin{equation}
  \Db\gs=2\Omega\chib=\Omega\tr\chib\gs
\end{equation}
and since $\gs_{u_+,v}=r^2(u_+,v)\gammac$, we know that $\gs$ is always conformal to $\gammac$.
We parametrize the conformal factor by
\begin{equation}\label{eq:conformal:factor}
  \gs=r^2e^{2\psi}\gammac
\end{equation}
then we find that
\begin{equation}\label{eq:Db:psi:shear-free}
  2 \Db\psi = \Omega\tr\chib-\overline{\Omega\tr\chib}
\end{equation}
and
\begin{equation}
  2 \Db\ds\psi = \ds\bigl(\Omega\tr\chib\bigr)
\end{equation}

We could in fact exploit the Codazzi equations here, which imply in the shear-free case that
\begin{equation}\label{eq:codazzi:shear-free}
  \ds(\Omega\tr\chib)=\Omega\tr\chib\eta
\end{equation}
hence
\begin{equation}\label{eq:Db:dpsi:shear-free}
  \Db(\Omega^2|\ds\psi|^2)=(2\omegab-\Omega\tr\chib)\Omega^2|\ds\psi|^2+\Omega\tr\chib (\Omega\ds\psi,\Omega\eta)
\end{equation}

Moreover
\begin{equation}
  \begin{split}
    2\Db\nablas\ds\psi&=\nablas(\Omega\tr\chib\eta)+i\cdot \ds\psi\qquad |i|\leq |\nablas(\Omega\tr\chib)|\\
    &=\Omega\tr\chib\, \eta\cdot\eta+\Omega\tr\chib\nablas\eta+i\cdot \ds\psi
\end{split}
\end{equation}
hence
\begin{multline}\label{eq:Db:ddpsi:shear-free}
  2\Db\bigl(\Omega^4|\nablas\ds\psi|^2\bigr)=2(2\omegab-\Omega\tr\chib)\Omega^4|\nablas\ds\psi|^2\\+\Omega\tr\chib\Bigl(\Omega^2 \nablas\ds\psi,\Omega\eta\cdot(\Omega\eta+\Omega\ds\psi)+\Omega^2\nablas\eta\Bigr)
\end{multline}

\begin{lemma}\label{lemma:prelim:conformal}
  Suppose $\gs_{u_+,v}=r^2(u_+,v)\gammac$, and $\chibh=0$ on $\mathcal{D}_+$.
  Moreover assume that \emph{(\textbf{A:\underline{I}})} holds on $\mathcal{D}_+$, and:
  \begin{subequations}
  \begin{align}
    |\Omega^2\eta|&\lesssim 1\\
    \dLp{4}{u,v}{\Omega^3\nablas\eta}&\lesssim  1
  \end{align}
\end{subequations}
Then
\begin{equation}
  |\psi|+\Omega|\ds\psi|+\dLp{4}{u,v}{\Omega^2\nablas^2\psi}\lesssim u_+-u
\end{equation}

\end{lemma}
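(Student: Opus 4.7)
The plan is to integrate the three propagation identities \eqref{eq:Db:psi:shear-free}, \eqref{eq:Db:dpsi:shear-free}, \eqref{eq:Db:ddpsi:shear-free} along the incoming hypersurfaces $\Cb_v$ starting from the last slice $C_+$, where the prescribed data $\gs_{u_+,v}=r^2(u_+,v)\gammac$ forces $\psi$, $\ds\psi$, and $\nablas^2\psi$ to vanish identically; three successive Gronwall arguments then produce the three claimed bounds in turn. The pointwise bound on $\psi$ is immediate: \eqref{eq:Db:psi:shear-free} combined with \eqref{BA:tr:chib:average} gives $|\Db\psi|\leq\Delta_I/2$, so $|\psi(u,v,\vartheta)|\lesssim u_+-u$.

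For $\Omega|\ds\psi|$ I would apply Gronwall to \eqref{eq:Db:dpsi:shear-free}. The linear coefficient $|2\omegab-\Omega\tr\chib|$ is controlled by \eqref{BA:omegab:tr:chib}, and the source factorises as
\begin{equation*}
\bigl|\Omega\tr\chib\,(\Omega\ds\psi,\Omega\eta)_{\gs}\bigr|\leq(\tr\chib)\,(|\Omega^2\eta|_{\gs})\,(\Omega|\ds\psi|_{\gs})\lesssim \Omega|\ds\psi|_{\gs},
\end{equation*}
using the hypothesis $|\Omega^2\eta|\lesssim 1$ together with a pointwise bound $\tr\chib\lesssim 1$. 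This last bound follows from \eqref{BA:log:Omega:r} (giving $\Omega\sim r$) combined with the data on $C_+$ (where $\Omega\tr\chib=2\Omega^2/r\lesssim r$) propagated via \eqref{BA:tr:chib:average}, which together control $\overline{\Omega\tr\chib}$ throughout $\mathcal{D}_+$. Setting $f:=\Omega^2|\ds\psi|^2$, the differential inequality $|\Db f|\lesssim f+\sqrt{f}$ with $f\rvert_{u=u_+}=0$ Gronwalls to $\Omega|\ds\psi|\lesssim u_+-u$.

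For the $L^4$ bound on $h:=\Omega^2|\nablas\ds\psi|_{\gs}$ I would convert \eqref{eq:Db:ddpsi:shear-free} into an evolution equation for the normalised norm $(\dLp{4}{u,v}{h})^4=(4\pi r^2)^{-1}\int_{S_{u,v}}h^4\,\dm{\gs}$, using $\Db\,\dm{\gs}=\Omega\tr\chib\,\dm{\gs}$ together with $\Db r^2=r^2\overline{\Omega\tr\chib}$; the cross-term $\Omega\tr\chib-\overline{\Omega\tr\chib}$ is bounded by $\Delta_I$ via \eqref{BA:tr:chib:average}, making the linear coefficient $O(\Delta_I)$ after invoking \eqref{BA:omegab:tr:chib}. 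The source terms of \eqref{eq:Db:ddpsi:shear-free} obey
\begin{equation*}
\Omega\tr\chib\,|\Omega\eta|^2\lesssim \Omega^{-1},\qquad \Omega\tr\chib\,|\Omega\eta|\,|\Omega\ds\psi|\lesssim u_+-u
\end{equation*}
pointwise (the second using the previous step), while for $\Omega\tr\chib\cdot\Omega^2\nablas\eta$ I exploit the algebraic identity $\Omega\tr\chib\cdot\Omega^2\nablas\eta=\tr\chib\cdot\Omega^3\nablas\eta$ to obtain an $L^4$-bound $\lesssim 1$ directly from the hypothesis on $\Omega^3\nablas\eta$. A H\"older estimate and a second Gronwall argument then yield $\dLp{4}{u,v}{h}\lesssim u_+-u$. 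The main technical point is precisely this $\Omega$-weight matching: the hypothesis controls $\Omega^3\nablas\eta$ in $L^4$, whereas the propagation equation naively involves $\Omega^2\nablas\eta$, and the missing power of $\Omega$ is recovered only because $\Omega\tr\chib$ grows linearly in $\Omega$ near $\mathcal{I}^+$---recognising this cancellation is what makes the estimate close at leading order.
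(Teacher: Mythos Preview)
Your proposal is correct and follows essentially the same route as the paper: both arguments integrate the three propagation identities \eqref{eq:Db:psi:shear-free}, \eqref{eq:Db:dpsi:shear-free}, \eqref{eq:Db:ddpsi:shear-free} backwards from the trivial data on $C_+$, controlling the linear coefficient via \eqref{BA:omegab:tr:chib} and extracting the extra $\Omega$-weight from $\Omega\tr\chib$ exactly as you describe. The paper packages the $L^4$ step by directly invoking its general Gronwall Lemma~\ref{lemma:dLp:gronwall}, while you unfold the same mechanism by hand; your justification of the pointwise bound $\tr\chib\lesssim 1$ (which the paper also uses without comment) is slightly roundabout---the cleaner route is the propagation identity \eqref{eq:Db:trchib} with $\chibh=0$, giving $|\tr\chib-\tr\chib_+|\lesssim u_+-u$ directly from \eqref{BA:omegab:tr:chib}---but the conclusion is the same.
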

\begin{proof}
  Note that $\psi=0$ on $C_+$. Hence the bound for $\psi$ follows immediately from \eqref{eq:Db:psi:shear-free}.

  From \eqref{eq:Db:dpsi:shear-free} and Lemma~\ref{lemma:ode} below we infer
  \begin{equation*}
    \Omega|\ds\psi|\lesssim  e^{2\Delta (u_+-u)}\int_u^{u_+} \sup \tr\chib \Omega^2|\eta| \ud u
  \end{equation*}

  Finally let us apply Lemma~\ref{lemma:dLp:gronwall} to \eqref{eq:Db:ddpsi:shear-free} to infer that
  \begin{multline*}
      \dLp{4}{u,v}{\Omega^2\nablas\ds\psi} 
      \lesssim e^{\Delta u_+}\int_u^{u_+}  \Linf{u',v}{\Omega^2\eta}\bigl(\Linf{u',v}{\Omega\eta}+\Linf{u',v}{\Omega\ds\psi}\bigr)\\+\dLp{4}{u',v}{\Omega^3\nablas\eta}\ud u'
\end{multline*}
\end{proof}

\begin{remark}
The significance of the above bounds on $\psi$ is that under these uniforms bounds on $\psi$ we can establish the elliptic estimates for the conformally covariant systems on the sphere in Section~\ref{sec:elliptic}, in particular the Calderon-Zygmund estimate of Lemma~\ref{lemma:calderon:div:curl}.
\end{remark}

\begin{remark}
  The Lemma shows in particular the \emph{necessary} behavior of the torsion $\eta$. The analysis of the coupled sytem in Section~\ref{sec:coupled:mu:tilde} will provide these bounds. See also Section~\ref{sec:Linfty:eta}, and Section~\ref{sec:L4:nablas:eta}, using the propagation equations for $\eta$, and $\etab$.
\end{remark}

\subsubsection{Null expansions}

One of the most important bootstrap assumption is \eqref{BA:omega:tr:chi}, \eqref{BA:omegab:tr:chib}.
In Section~\ref{sec:L:infty:omega} we will see that the propagation equation for $2\omega-\Omega\tr\chi$ involves
\begin{equation}\label{eq:mu:breve}
          \breve{\mu}:=K-\divs\eta+\frac{1}{2}\tr\chi\tr\chib-2
\end{equation}

\begin{lemma}\label{lemma:null:expansion:mu}
  Suppose (\textbf{A:\underline{I,II}}) hold, and moreover $\Omega^2 | \breve{\mu} | \lesssim 1 $, 
then
\begin{equation*}
   |2\omega-\Omega\tr\chi|\lesssim u_+-u
\end{equation*}

\end{lemma}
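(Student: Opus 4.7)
The plan is to close the bound for $X := 2\omega - \Omega\tr\chi$ by integrating a propagation equation along $\Cb_v$. The key structural observation, which motivates the definition \eqref{eq:mu:breve} of $\breve{\mu}$ in the first place, is that combining the null structure equations for $\Db\omega$ and $\Db(\Omega\tr\chi)$ yields schematically
\begin{equation*}
  \Db X = 2\Omega^2 \breve{\mu} + E,
\end{equation*}
where $E$ collects lower order terms. Precisely because $\breve{\mu}$ packages the curvature scalar $\rho[W]$ together with $K$, $\divs\eta$ and $\tfrac12\tr\chi\tr\chib$, no curvature term survives on the right hand side beyond what is absorbed into $\Omega^2 \breve{\mu}$; any other ``mass aspect'' prescription would leave a bare $\rho[W]$ or quadratic shear contribution, which in the general setting cannot be pointwise controlled. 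Here we benefit from two simplifications: on de Sitter $\rho[W]\equiv 0$, and by Lemma~\ref{lemma:shear-free} we have $\chibh\equiv 0$, while $\chih=0$ on $C_+$, so the shear-shear terms drop out.

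Once this propagation equation is in hand, the argument is a Gr\"onwall inequality in $u$, integrating from the final data on $C_+$ at $u=u_+$ downward. The principal source $2\Omega^2\breve{\mu}$ is uniformly bounded by the hypothesis $\Omega^2|\breve{\mu}|\lesssim 1$, so its contribution after integration along $\Cb_v$ is $O(u_+-u)$. The remainder $E$ splits into (i) a piece linear in $X$ itself, with coefficient bounded using \eqref{BA:tr:chib} and \eqref{BA:omegab:tr:chib} from (\textbf{A:\underline{I}}); (ii) explicit lower-order combinations of $\omega,\omegab,\Omega\tr\chi,\Omega\tr\chib$ and their angular derivatives, controlled pointwise by the $\Omega^2|\ds\omega|, \Omega^2|\ds\omegab|$ bounds from (\textbf{A:\underline{II}}.i) together with (\textbf{A:\underline{I}}). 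The initial data for $X$ on $C_+$ is $2/r$, as computed in Section~\ref{sec:setup:initial:data}; this is bounded (vanishing at $S_+$) and is absorbed into the implicit constant in the regime \eqref{D:small}. Running Gr\"onwall then gives
\begin{equation*}
  |X|(u,v) \lesssim |X|(u_+,v) + \int_u^{u_+} \bigl( 2\Omega^2|\breve{\mu}| + |E_{\text{lot}}|\bigr)\, \ud u' \cdot e^{C(u_+-u)} \lesssim u_+ - u,
\end{equation*}
since $e^{C(u_+-u)}\sim 1$ under \eqref{D:small}.

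The main obstacle is not the Gr\"onwall mechanics but verifying carefully that every piece of $E$ is indeed at worst linear in $X$ with a benign coefficient, or lower-order and uniformly bounded. In particular, one must check that no term of the form $\Omega^{-2}\cdot(\text{quadratic in }X)$ sneaks in through the manipulation that converts the $\Db(\Omega\tr\chi)$ equation into an equation for $X$. Once this structural accounting is done, the hypothesis $\Omega^2|\breve{\mu}|\lesssim 1$ is exactly what is needed to make the main term borderline-integrable, producing the linear-in-$u_+-u$ estimate stated in the lemma.
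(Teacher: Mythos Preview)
Your overall strategy—deriving a $\Db$-propagation equation for $X=2\omega-\Omega\tr\chi$, identifying the principal source $2\Omega^2\breve{\mu}$, and integrating from $C_+$—is exactly the paper's argument (see \eqref{eq:Db:omega:trchi:simple} in Section~\ref{sec:L:infty:omega}, together with the identity $K+\divs\eta=-\breve{\mu}$ in the shear-free conformally flat setting).

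However, your description of the remainder $E$ is wrong on both counts. The actual equation is
\[
  \Db X \;=\; 2\Omega^2\breve{\mu}\;-\;2\Omega^2\lvert\eta-\etab\rvert^2,
\]
so (i) there is \emph{no} piece linear in $X$, and the Gr\"onwall step you set up is unnecessary; (ii) $E$ consists purely of the quadratic \emph{torsion} term $-2\Omega^2|\eta-\etab|^2$, not of $\omega,\omegab,\Omega\tr\chi,\Omega\tr\chib$ or their angular derivatives. Controlling this term therefore requires pointwise bounds on $\eta,\etab$, which are not among the bootstrap assumptions directly but are obtained from them via the torsion propagation equations \eqref{eq:propagation:torsion} (cf.~Lemma~\ref{lemma:torsion} and Section~\ref{sec:torsion:alternate}): under (\textbf{A:\underline{I},\underline{II}}) one gets $\Omega^2|\eta|\lesssim 1$, hence $\Omega^2|\eta-\etab|^2\lesssim r^{-2}$, which integrates to $O(u_+-u)$. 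Your ``main obstacle'' paragraph anticipates a linear-in-$X$ coupling that never materializes, while the genuine point—that the torsion contribution is quadratically small and thus harmless—is missed.
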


\begin{proof}
See Section~\ref{sec:L:infty:omega}.  
\end{proof}

In other words to recover \eqref{BA:omega:tr:chi}, it is \emph{necessary} that $|\breve{\mu}|\lesssim r^{-2}$.

\subsubsection{Torsion system}
\label{sec:prelim:torsion}

We saw above that to obtain the required bounds on the conformal factor  it is necessary to assume that
\begin{equation}
  \Linf{u,v}{\Omega^2 \eta}+\dLp{4}{u,v}{\Omega^3\nablas\eta}\lesssim 1
\end{equation}

Recall that the torsion is defined by
\begin{equation}
  \eta(X)=\frac{1}{2}g(\nabla_X L',\Omega^2 \Lb')
\end{equation}
where $L'$, and $\Lb'$ are the null geodesic vectorfields $L'=-2\ud u^\sharp$, $\Lb'=-2\ud v^\sharp$.

We shall now discuss how these bounds are obtained from systems of elliptic equations on the sphere coupled to propagation equations.

From the above definition of \eqref{eq:mu:breve}, in the case $\rho[W]=0$, $\sigma[W]=0$, $\chibh=0$,
\begin{align}
  \divs\eta&=-\breve{\mu}+\frac{1}{4}\tr\chi\tr\chib-1\\
  \curls\eta&=0
\end{align}
However, $\breve{\mu}$ cannot decay faster than  $|\breve{\mu}|\lesssim r^{-2}$ (consider the spherically symmetric case), so the above system cannot yield the desired bound on $\eta$.




We define the  \emph{mass aspect function}  by
    \begin{equation}\label{eq:mu}
        \mu := K-\divs\eta+\frac{1}{4}\tr\chi\tr\chib-1=-\rho[W]+\frac{1}{2}(\chih,\chibh)-\divs\eta\,.
    \end{equation}

    \begin{remark}
      Note that in the case of a section $S_{u,v}$ contained in the shear-free null hypersurface $\Cb_v$,  $\chibh=0$, in an  ambient spacetime of vanishing Weyl curvature, $\rho[W]=0$, we have
    \begin{equation}
      \int_{S_{u,v}}\mu\dm{\gs}=0
    \end{equation}
    This can be interpreted that \emph{no mass} is contained in the spheres $S_{u,v}$ in de Sitter.
    For related notions of mass, and mass aspect, in asymptotically de Sitter spacetimes see  \cite{ST:15,T:15,CJK:13,CI:16,CGNP:18,CWY:16a,CWY:16b}.
  \end{remark}

Thus in the shear-free case on de Sitter $\eta$ is determined from
\begin{equation}\label{eq:div:curl:eta:intro}
  \divs\eta=-\mu\qquad \curls\eta=0
\end{equation}
and Lemma~\ref{lemma:calderon:div:curl} --- which applies if the conclusions of Lemma~\ref{lemma:prelim:conformal} hold --- implies
        \begin{equation}
      \dLp{4}{u,v}{r^2 \eta}+\dLp{4}{u,v}{r^3\nablas\eta}\lesssim  \dLp{4}{u,v}{r^3\mu}
    \end{equation}
Thus it is \emph{necessary} to establish that $r^3|\mu|\lesssim 1$.

See  Section~\ref{sec:propagation:gauss}, \ref{sec:propagation:mu} for the derivation of the propagation equation of the  mass aspect function.
We note here under what further assumptions it can be expected that $\mu$ has the desired property.

\begin{lemma}\label{lemma:mu}
  Suppose  (\textbf{A:0},\textbf{A:I}) hold. Let us also assume that:
\begin{subequations}
\begin{gather}
  \Omega^2|\chih|  \lesssim 1\qquad 
  \Omega^2 |\etab|+\Omega^2|\eta|\lesssim 1 \label{eq:mu:chi}\\
  \Omega|\nablas(\Omega\chih)| \lesssim 1\qquad \Omega^2|\nablas\eta|\lesssim 1\\  
 \Omega^4|\Laplaces \log \Omega| \lesssim 1
\end{gather}
\end{subequations}
Then
\begin{equation}
  r^3|\mu|\leq C
\end{equation}

\end{lemma}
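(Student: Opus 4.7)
The approach is to propagate $\mu$ from $C_+$, where it vanishes identically. Substituting the de Sitter data listed in Section~\ref{sec:setup:initial:data}, namely $K=1/r^2$, $\eta=0$, $\tr\chi=\tr\chib=2\Omega/r$, $\Omega^2=r^2-1$, into the definition \eqref{eq:mu} with $\Lambda/3=1$ gives
$$\mu\bigr|_{C_+} = \frac{1}{r^2} + \frac{r^2-1}{r^2} - 1 = 0,$$
so $\mu$ must be controlled entirely by its $D$-propagation off $C_+$. The first step is therefore to invoke the propagation equation for $\mu$ derived in Section~\ref{sec:propagation:mu}. In the present shear-free, conformally flat setting ($\chibh=0$, $\rho[W]=0$) this equation should take the schematic form
$$D\mu + \tfrac{3}{2}\,\Omega\tr\chi\,\mu = F,$$
where $F$ gathers quadratic products in $\chih$, $\eta$, $\etab$, $\nablas\eta$, together with a $\Laplaces\log\Omega$ contribution entering via the conformal decomposition of $K$ in \eqref{eq:Laplace:log:Omega}, and lower-order linear terms proportional to $\mu$ itself.

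Using $Dr=\tfrac{1}{2}\Omega\tr\chi\,r$, the equation rewrites as $D(r^3\mu)=r^3 F$; combining with the trivial initial value on $C_+$ and integrating backwards from $u'=u_+$ to $u'=u$ yields
$$r^3|\mu|(u,v)\leq \int_u^{u_+} |r^3 F|(u',v)\,du'.$$
The core of the argument is then to verify $|r^3 F|\lesssim 1$ on $\mathcal{D}_+$. Applying (\textbf{A:0}) to replace $\Omega$ by $r$ up to a constant, and inserting the pointwise hypotheses $|\chih|,|\eta|,|\etab|\lesssim r^{-2}$, $|\nablas\eta|\lesssim r^{-2}$, $|\nablas(\Omega\chih)|\lesssim r^{-1}$, and $|\Laplaces\log\Omega|\lesssim r^{-4}$, each quadratic term in $F$ carries two factors whose $r$-weights combine to at least $r^{-4}$, with an additional transport factor $\Omega\tr\chi\lesssim 1$ or $\Omega\lesssim r$ providing exactly the power needed for $r^3 F$ to be bounded. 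A Gronwall step absorbs the linear-in-$\mu$ contributions, and the bound on the length of the $u$-interval under \eqref{D:small} converts the integrand estimate into the uniform conclusion $r^3|\mu|\lesssim 1$.

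The main obstacle is the structural verification that the propagation equation for $\mu$ is free of terms of the form $\Omega\tr\chi\cdot (\text{undifferentiated } K)$, or of unbalanced terms $\Omega\tr\chi$ alone, which would reduce the decay to $r^{-2}$ and give only $r^2|\mu|\lesssim 1$ --- as is indeed the situation for $\breve{\mu}$ in Lemma~\ref{lemma:null:expansion:mu}. Precisely this algebraic cancellation, arising from the choice of coefficient $\tfrac{1}{4}\tr\chi\tr\chib$ and constant $-\Lambda/3$ in \eqref{eq:mu} (versus $\tfrac{1}{2}\tr\chi\tr\chib$ and $-2\Lambda/3$ in $\breve{\mu}$), is what enables the extra power of $r$ for the mass aspect function, and is the central point that Section~\ref{sec:propagation:mu} must establish when commuting $D$ past $\divs\eta$, combining the Codazzi equation with $DK$, and invoking $\rho[W]=0$.
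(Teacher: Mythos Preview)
Your overall strategy---propagate $\mu$ using its transport equation with the $\tfrac{3}{2}\Omega\tr\chi$ coefficient, observe that the source terms are bounded under the stated hypotheses, and conclude by Gronwall---is exactly what the paper does. Your identification of the key structural point (that the coefficient $\tfrac{1}{4}\tr\chi\tr\chib$ in the definition of $\mu$, as opposed to $\tfrac{1}{2}$ in $\breve{\mu}$, produces the extra factor of $r$ decay) is also correct and is precisely what equation~\eqref{eq:D:mu:all} verifies.

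However, there is a direction error. The operator $D$ is the $L$-derivative, i.e.\ differentiation in $v$ along the generators of $C_u$; the correct initial surface for a $D$-equation is therefore $\Cb_+$, not $C_+$. Your computation $\mu\rvert_{C_+}=0$ is right, but $C_+$ is the initial surface for the $\Db$-direction, and you then integrate $\int_u^{u_+}\ud u'$, which corresponds to $\Db$, not $D$. The paper's proof uses the $D$-equation for $\Omega^3\mu$ (displayed in the proof, derived from \eqref{eq:D:mu:all}), where the coefficient becomes $\tfrac{3}{2}(2\omega-\Omega\tr\chi)$ and is bounded by \eqref{BA:omega:tr:chi}; the integration is implicitly in $v$ from $\Cb_+$, and the initial data there is not discussed because this is a preliminary heuristic lemma. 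If you want to exploit the vanishing on $C_+$, you would instead need the $\Db$-propagation equation for $\mu$; this is not the one written down in Section~\ref{sec:propagation:mu}, though in the shear-free case it simplifies considerably (cf.~\eqref{eq:Db:mub:shear-free} for the conjugate $\mub$).

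A smaller point: $Dr=\tfrac{r}{2}\overline{\Omega\tr\chi}$, not $\tfrac{r}{2}\Omega\tr\chi$; the rewriting $D(r^3\mu)=r^3F$ therefore acquires a correction term $\tfrac{3}{2}r^3(\overline{\Omega\tr\chi}-\Omega\tr\chi)\mu$, which is harmless under \eqref{BA:tr:chi:average} but should be mentioned. The paper sidesteps this by using the weight $\Omega^3$ instead of $r^3$, so that $D\Omega^3=3\omega\Omega^3$ combines cleanly with the $-\tfrac{3}{2}\Omega\tr\chi$ term to give the bounded factor $\tfrac{3}{2}(2\omega-\Omega\tr\chi)$.
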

\begin{proof}
By the propagation equation for $\mu$,
\begin{equation*}
  \begin{split}
    D(\Omega^3\mu)=& \frac{3}{2} \bigl(2\omega-\Omega\tr\chi\bigr) \Omega^3\mu-\frac{1}{2}\Omega^4\tr\chi\bigl(\divs\eta+\divs\etab+|\etab|^2)-\frac{1}{4}\tr\chib\Omega^4|\chih|^2\\
        &+2\Omega\divs(\Omega\chih)\cdot \Omega^2(\eta-\etab)+2(\Omega^2\chih,\Omega^2\nablas\eta)+2\Omega^4\chih(\etab,\etab)
  \end{split}
\end{equation*}
which implies the statement of the Lemma.
\end{proof}


Alternatively, we return to \eqref{eq:mu:breve}, and pass from $\breve{\mu}$ to
\begin{equation}
  \tilde{\mu}=\breve{\mu}-\overline{\breve{\mu}}
\end{equation}
which satisfies:
\begin{equation}
  \divs\eta = -\tilde{\mu}+\frac{1}{4}\Bigl(\tr\chi\tr\chib-\overline{\tr\chi\tr\chib}\Bigr)\,.
\end{equation}
The resulting system for $\breve{\mu}$ has advantages compared  to the corresponding system for $\mu$,\footnote{In the context of a double null foliation, the system for $\breve{\mu}$  decouples the dependence on $\nablas^2\log\Omega$, which  already featured as  one of the assumptions in Lemma~\ref{lemma:mu}. C.f.~Section~4.3.9 in  \cite{KN:03}.} see Section~\ref{sec:prelim:decoupling}.

\subsubsection{Bounds on the null lapse}

Finally we note a conclusion about the null lapse that can be achieved under a suitable gauge condition.
Recall that a foliation of $\Cb_+$ by surfaces $S_u^+$ can be constructed by solving \eqref{eq:motion}, where we are free to prescribe the average of $\log\Omega$ on $S_u^+$.

In view of \eqref{eq:eta:etab:log:Omega}, Lemma~\ref{lemma:morrey} implies
\begin{equation}
  \Linfty{\log\Omega-\overline{\log\Omega}}\lesssim \nLp{r \nablas\log\Omega}\lesssim \nLp{r \eta}+\nLp{r \etab}\lesssim r^{-1}
\end{equation}
so the required bounds on $\eta$ and $\etab$ also imply that
\begin{equation}
  r \Linfty{\log\bigl(\frac{\Omega}{r}\bigr)-\overline{\log\bigl(\frac{\Omega}{r}\bigr)}}\lesssim 1\,.
\end{equation}

\begin{lemma}\label{lemma:log:Omega:r:average}
  Assume \eqref{BA:omega:tr:chi}.
 Set $\overline{\log r^{-1}\Omega}=0$: on $\Cb_+$.
Then
\begin{equation}
  \overline{\log\Bigl(\frac{\Omega}{r}\Bigr)}\lesssim r^{-1}
\end{equation}

\end{lemma}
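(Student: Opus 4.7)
The plan is to set up and integrate a transport ODE for the mean $\overline{\log(\Omega/r)}$ along the null generators of each $C_u$ (parametrized by $v$), using the gauge data $\overline{\log(\Omega/r)}(u,v_+)=0$ on $\Cb_+$ and the bootstrap \eqref{BA:omega:tr:chi} to control the source term.

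First, I establish the pointwise identity. With the conventions of the paper (in which $D\log\Omega=\omega$, as one verifies against the spherically symmetric values $\Omega^2=r^2-1$, $\omega=r$, $\tr\chi=2\Omega/r$ of Section~\ref{sec:setup:initial:data}), and using
\[
  D\log r \;=\; \tfrac12\,\overline{\Omega\tr\chi},
\]
which follows from differentiating $4\pi r^2=\operatorname{Area}(S_{u,v})$ together with $D(\dm{\gs})=\Omega\tr\chi\,\dm{\gs}$, I obtain
\[
  D\log(\Omega/r) \;=\; \tfrac12(2\omega-\Omega\tr\chi) \;+\; \tfrac12(\Omega\tr\chi-\overline{\Omega\tr\chi}).
\]

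Averaging over $S_{u,v}$ and applying the commutator formula $D\overline{f}-\overline{Df}=\overline{f(\Omega\tr\chi-\overline{\Omega\tr\chi})}$ with $f=\log(\Omega/r)$, and noting that the oscillation $\Omega\tr\chi-\overline{\Omega\tr\chi}$ has vanishing mean, yields
\[
  D\,\overline{\log(\Omega/r)} \;=\; \tfrac12\,\overline{2\omega-\Omega\tr\chi} \;+\; \overline{\bigl(\log(\Omega/r)-\overline{\log(\Omega/r)}\bigr)\bigl(\Omega\tr\chi-\overline{\Omega\tr\chi}\bigr)}.
\]
The first term is $O(\Delta_I)$ by \eqref{BA:omega:tr:chi}, and the commutator is quadratic in oscillations: one factor is $O(r^{-1})$ by the Morrey--Sobolev estimate for $\log\Omega-\overline{\log\Omega}$ established in the paragraph immediately preceding the Lemma, and the other is controlled by \eqref{BA:tr:chi:average}, so that the commutator contributes a lower-order term.

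Integrating the ODE in $v$ backward along $C_u$ from $v=v_+$, where the left-hand side vanishes by the imposed gauge, gives $|\overline{\log(\Omega/r)}(u,v)|\lesssim v_+-v$. The conversion $v_+-v\lesssim r^{-1}(u,v)$ follows from $Dr=\tfrac{r}{2}\overline{\Omega\tr\chi}$ combined with \eqref{BA:tr:chi} and \eqref{BA:log:Omega:r}, which imply $Dr\gtrsim r^2$ near $v_+$ and hence $r^{-1}(u,v)\gtrsim v_+-v$. The main obstacle is the clean treatment of the commutator between averaging and the transport operator, together with handling the limit $v\uparrow v_+$ where $r\to\infty$ along generators approaching $S_+$; both fit within the bootstrap framework already in force in the paper.
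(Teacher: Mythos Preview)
The paper does not include a proof of this lemma; it is stated without argument in the preliminary discussion of Section~\ref{sec:prelim}. Your approach is correct and is the natural one: the pointwise identity $D\log(\Omega/r)=\tfrac12(2\omega-\Omega\tr\chi)+\tfrac12(\Omega\tr\chi-\overline{\Omega\tr\chi})$ is right, the commutator $D\overline f-\overline{Df}=\overline{(f-\overline f)(\Omega\tr\chi-\overline{\Omega\tr\chi})}$ is correctly applied, and integrating backward from $\Cb_+$ (where the average vanishes by the gauge choice) followed by the conversion $v_+-v\lesssim r^{-1}$ of Lemma~\ref{lemma:comparison} closes the estimate.

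One remark: the lemma as stated hypothesizes only \eqref{BA:omega:tr:chi}, but your argument (and any honest argument) also invokes \eqref{BA:tr:chi:average} for the oscillation $\Omega\tr\chi-\overline{\Omega\tr\chi}$, the Morrey bound on $\log\Omega-\overline{\log\Omega}$ from the preceding paragraph (which rests on the $\eta,\etab$ control), and \eqref{BA:log:Omega:r}, \eqref{BA:tr:chi} for the $r$--$v$ comparison. This is entirely consistent with the informal bootstrap context of Section~\ref{sec:prelim}, and you rightly flag it; it is not a defect of your proof but of the terse hypothesis in the lemma statement.
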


 In particular, the Lemma shows that with this gauge choice
  \begin{equation}
    \frac{\Omega}{r}=1+\mathcal{O}(r^{-1})\,.
  \end{equation}

\section{Representations of the de Sitter geometry}
\label{sec:desitter:charts}



\subsection{Stereographic coordinates}
\label{sec:stereographic}

Let us choose coordinates $(t,x,x')\in\mathbb{R}^5$, with $t,x\in\mathbb{R}$, and $x'\in\mathbb{R}^3$, and fix $(0,-1,0)$ as the base point of the projection. We then project the hyperboloid $H$ on the plane $x=1$, which is $3+1$-dimensional Minkowski space with the induced metric of the ambient space. In other words, we assign to every point $(t,x,x')\in H$ coordinates $(u,y)$, such that $(u,1,y)$ is on the line from $(0,-1,0)$ to $(t,x,x')$, c.f.~Fig~\ref{fig:desitter:projection}. This implies that for some $\lambda\in\mathbb{R}$: $\lambda\bigl((u,1,y)-(0,-1,0)\bigr)=(t,x,x')-(0,-1,0)$, or
\begin{equation}
  \lambda u=t\qquad 2\lambda=x+1\qquad \lambda y=x'\,.
\end{equation}
\begin{figure}[tb]
  \centering
  \includegraphics{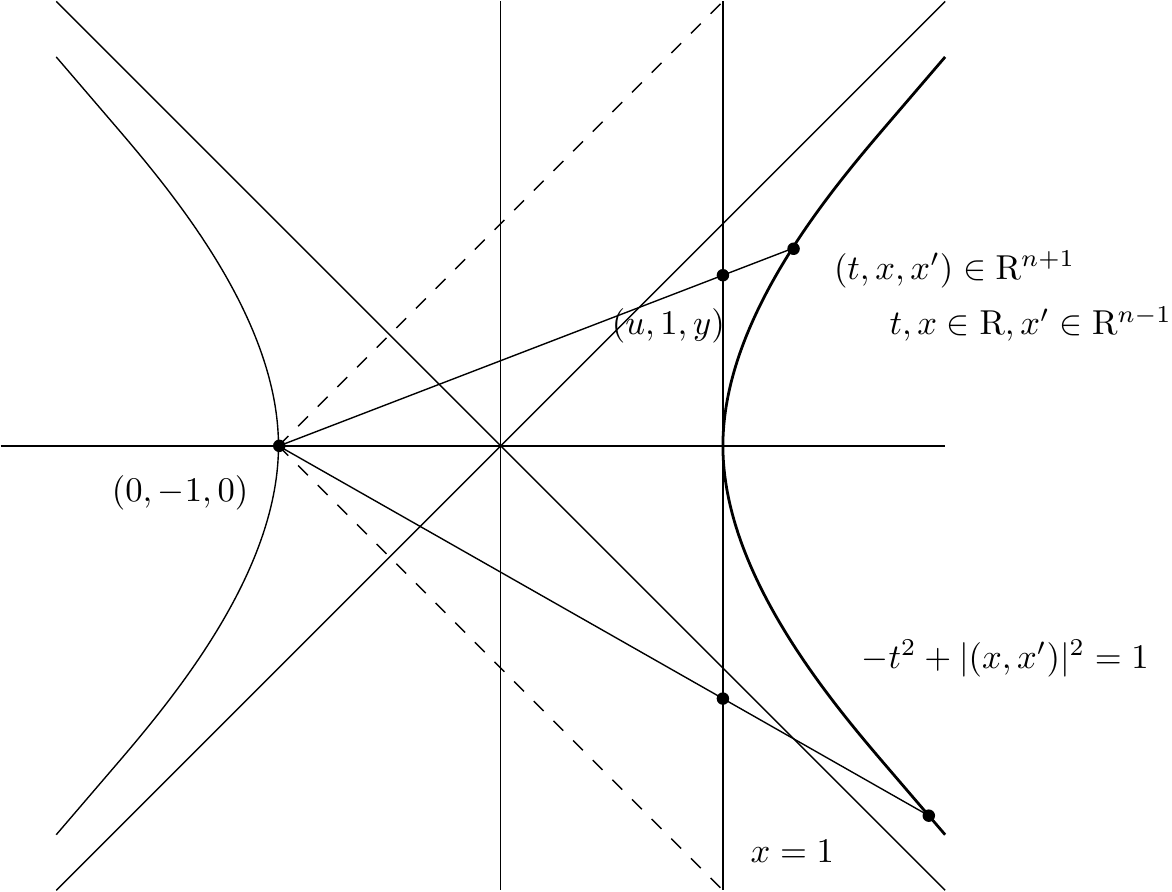}
  \caption{Stereographic projection of de Sitter spacetime on Minkowski space.}
  \label{fig:desitter:projection}
\end{figure}
Moreover, since $(t,x,x')\in H$, namely
\begin{equation}
  1=-t^2+x^2+\lvert x'\rvert^2=-\lambda^2u^2+(2\lambda-1)^2+\lambda^2\lvert y\rvert^2
\end{equation}
we obtain 
\begin{equation}
  \lambda=\frac{4}{4-u^2+\lvert y\rvert^2}
\end{equation}
and conclude that
\begin{equation}
  t=\frac{4u}{4-u^2+\lvert y\rvert^2}\,,\quad x=\frac{4+u^2-\lvert y\rvert^2}{4-u^2+\lvert y\rvert^2}\,,\quad x'=\frac{4y}{4-u^2+\lvert y\vert^2}\,.
\end{equation}

Note that the map thus defined
\begin{equation}
  \phi:(u,y)\mapsto (t,x,x')
\end{equation}
only maps a subset of $x=1$ onto a subset of $H$, c.f.~Fig.~\ref{fig:desitter:projection:domain}. More precisely, the domain is
\begin{equation}
  \mathcal{D}=\bigl\{(u,y):-u^2+\lvert y\rvert^2>-4\bigr\}\subset\mathbb{R}^{1+3}
\end{equation}
and the image is
\begin{equation}
  \mathcal{R}=\bigl\{(t,x,x')\in H: x>-1\bigr\}\subset H\subset\mathbb{R}^{5}\,.
\end{equation}

\begin{figure}[tb]
  \centering
  \includegraphics{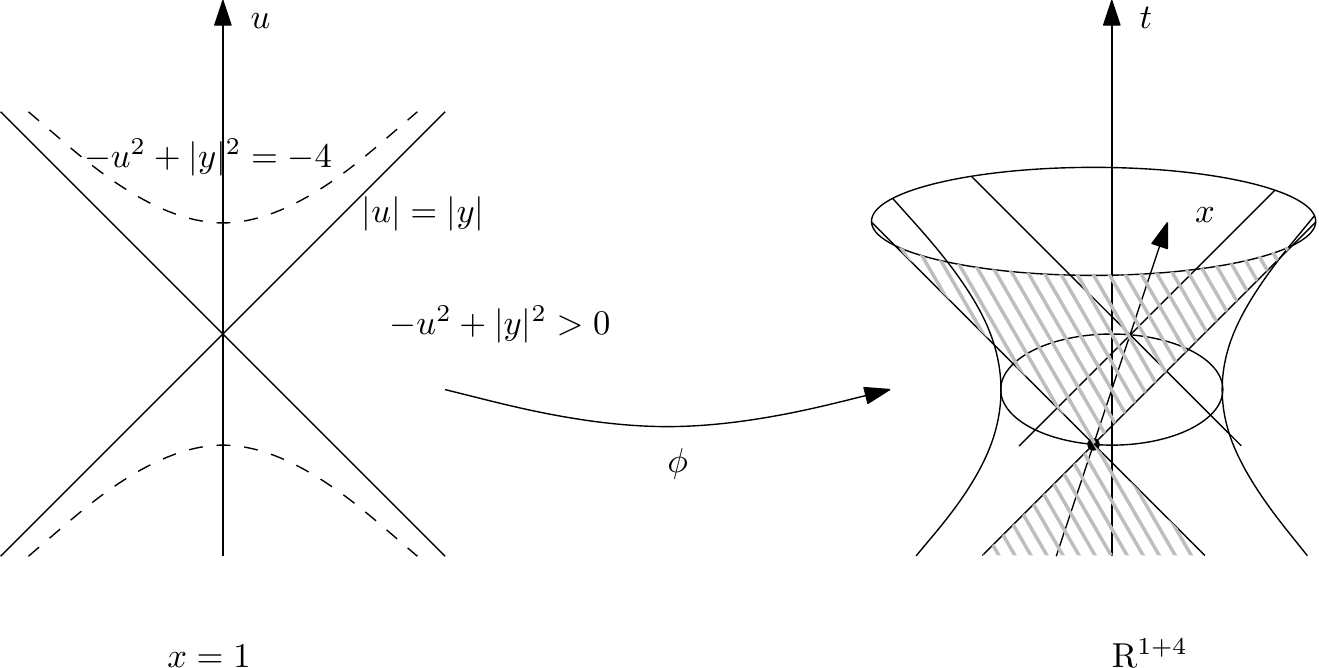}
  \caption{Domain and range of stereographic projection of de Sitter. The domain of $\phi$ is the region bounded by two sheets of spacelike hyperbolas in $3+1$-dimensional Minkowski space (dashed curves), and the range of $\phi$ is the time-like hyperboloid in $R^{5}$ minus the future and past of the base point (shaded region).}
  \label{fig:desitter:projection:domain}
\end{figure}

\begin{lemma}\label{lemma:desitter:stereo}
  In $(u,y)$ coordinates the de Sitter metric takes the form
\begin{equation}
  h=e^{-2\Phi(u,y)}\Bigl(-\ud u^2+\lvert \ud y\rvert^2\Bigr)\,,
\end{equation}
where 
\begin{equation}
  e^{\Phi(u,\phi)}=1+\frac{1}{4}(-u^2+\lvert y\rvert^2)\,.
\end{equation}
\end{lemma}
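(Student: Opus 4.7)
The plan is to verify the formula by direct pullback of the ambient Minkowski metric $m=-\ud t^2+\lvert \ud x\rvert^2$ under the parametrization $\phi:(u,y)\mapsto(t,x,x')=(\lambda u,\,2\lambda-1,\,\lambda y)$, where
\[
\lambda=\lambda(u,y)=\frac{4}{4-u^2+\lvert y\rvert^2}=e^{-\Phi(u,y)}.
\]
The key observation is that the defining constraint $-t^2+x^2+|x'|^2=1$ translates into the algebraic identity $\lambda^2(-u^2+|y|^2)+(2\lambda-1)^2=1$, equivalently $\lambda(4-u^2+|y|^2)=4$, and differentiating this will force the $\ud\lambda$ terms in the expanded pullback to cancel.

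Concretely, I would first compute
\[
\ud\lambda=\frac{\lambda^2}{2}\bigl(u\,\ud u-y\cdot\ud y\bigr),\qquad\text{equivalently}\qquad -u\,\ud u+y\cdot\ud y=-\tfrac{2}{\lambda^2}\,\ud\lambda,
\]
so that in particular
\[
2\lambda\,\ud\lambda\bigl(-u\,\ud u+y\cdot\ud y\bigr)=-\frac{4}{\lambda}\,\ud\lambda^2.
\]
Next I would compute the three differentials $\ud t=\lambda\,\ud u+u\,\ud\lambda$, $\ud x=2\,\ud\lambda$, $\ud x'=\lambda\,\ud y+y\,\ud\lambda$, and expand
\[
-\ud t^2+\ud x^2+|\ud x'|^2=\lambda^2\bigl(-\ud u^2+|\ud y|^2\bigr)+(4-u^2+|y|^2)\,\ud\lambda^2+2\lambda\,\ud\lambda\bigl(-u\,\ud u+y\cdot\ud y\bigr).
\]

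Using $4-u^2+|y|^2=4/\lambda$, the second term equals $(4/\lambda)\,\ud\lambda^2$, which is exactly cancelled by the third term as computed above. Hence
\[
\phi^{\ast}m=\lambda^2\bigl(-\ud u^2+|\ud y|^2\bigr)=e^{-2\Phi}\bigl(-\ud u^2+|\ud y|^2\bigr),
\]
which is the claimed formula. The only real subtlety is the bookkeeping of the $\ud\lambda$ terms, and this is handled in one line once the identity $-u\,\ud u+y\cdot\ud y=-2\lambda^{-2}\ud\lambda$ is in hand; no further work is required since $h=m\rvert_{H}$ is by definition the pullback of $m$ under $\phi$ on $\mathcal{R}=\phi(\mathcal{D})$.
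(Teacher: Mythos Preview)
Your proof is correct and follows the same overall strategy as the paper: both compute the pullback $\phi^\ast m$ of the ambient Minkowski metric. The paper writes out the full Jacobian matrix $\partial(t,x,x')/\partial(u,y)$ entry by entry and then contracts to obtain $g_{uu}$, $g_{ui}$, $g_{ij}$ separately. Your version is organized more cleanly around the single scalar $\lambda=e^{-\Phi}$: by differentiating the defining relation $\lambda(4-u^2+|y|^2)=4$ you obtain $-u\,\ud u+y\cdot\ud y=-2\lambda^{-2}\ud\lambda$, which makes the cancellation of the $\ud\lambda^2$ and cross terms immediate in one line rather than implicit in the component arithmetic. The content is the same; your packaging is a bit more efficient.
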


Given that the map $\phi:\mathcal{D}\to\mathcal{R}\subset H$ is explicit, and $h=m\rvert_H$ in induced by the Minkowski metric, the proof of the Lemma is an elementary calculation of the components of
\begin{equation}
  h(v_,w)=m(\ud\phi \cdot v,\ud\phi\cdot w)\,,\qquad v,w\in \mathrm{T}_{(u,y)}R^{3+1}\,.
\end{equation}

\begin{proof}
Consider a vector
\begin{equation}
  v=v^u\frac{\partial}{\partial u}+v^i\frac{\partial}{\partial y^i}\in\mathrm{T}_{(u,y)}R^{3+1}
\end{equation}
in the $x=1$ plane at a point $(u,y)$. The differential of $\phi$, $\ud \phi$ will send this vector to vector at $\phi(u,y)\in \mathbb{R}$ tangential to $H$,
\begin{equation}
  \ud \phi \cdot v\in\mathrm{T}_{\phi(u,y)}H\,,
\end{equation}
and the de Sitter metric on $H$ is simply the induced Minkowski metric $m$ of the ambient space $\mathbb{R}^{1+4}$.
We thus wish to calculate the components of
\begin{equation}
  g(v,w)=m(\ud\phi \cdot v,\ud\phi\cdot w)\,,
\end{equation}
which are
\begin{subequations}
  \begin{gather}
    g_{uu}=m(\ud\phi\cdot \frac{\partial}{\partial u},\ud\phi\cdot \frac{\partial}{\partial u})=m_{\mu\nu}\frac{\partial\phi^\mu}{\partial u}\frac{\partial\phi^\nu}{\partial u}\\
        g_{ui}=m(\ud\phi\cdot \frac{\partial}{\partial u},\ud\phi\cdot \frac{\partial}{\partial y^i})=m_{\mu\nu}\frac{\partial\phi^\mu}{\partial u}\frac{\partial\phi^\nu}{\partial y^i}\\
            g_{ij}=m(\ud\phi\cdot \frac{\partial}{\partial y^i},\ud\phi\cdot \frac{\partial}{\partial y^j})=m_{\mu\nu}\frac{\partial\phi^\mu}{\partial y^i}\frac{\partial\phi^\nu}{\partial y^j}\,.
  \end{gather}
\end{subequations}
Since
\begin{multline}
\begin{pmatrix}
  \frac{\partial t}{\partial u} & \frac{\partial t}{\partial y^i}\\
  \frac{\partial x}{\partial u} & \frac{\partial x}{\partial y^i} \\
  \frac{\partial {x'}^k}{\partial u} & \frac{\partial {x'}^k}{\partial y^i}
\end{pmatrix}
=\frac{4}{\bigl(4-u^2+\lvert y\rvert^2\bigr)^2}\times\\\times
\begin{pmatrix}
  4+u^2+\lvert y\rvert^2 & -2u y^i\\
  4u & -4y^i\\
  2uy^k & (4-u^2+\lvert y\rvert^2)\delta_{ik}-2 y^i y^k
\end{pmatrix}
\end{multline}
we obtain from the above
\begin{subequations}
  \begin{gather}
    g_{uu}=-\frac{16}{\bigl(4-u^2+\lvert y\rvert^2\bigr)^2}\\
    g_{ui}=0\\
    g_{ij}=\frac{16}{\bigl(4-u^2+\lvert y\rvert^2\bigr)^2}\delta_{ij}
  \end{gather}
\end{subequations}
or 
\begin{equation}
  g=\frac{1}{\bigl(1+\frac{1}{4}(-u^2+\lvert y\rvert^2)\bigr)^2}\Bigl(-\ud u^2+\lvert \ud y\rvert^2\Bigr)\,.
\end{equation}

\end{proof}

In view of the conformal property of the Weyl curvature $W$, it follows in particular that
\begin{equation}
  W[h]=e^{-2\Phi}W[m]=0
\end{equation}
namely that $h$ is conformally flat.

\subsection{Static coordinates}
\label{sec:static}

The idea is to introduce a \emph{spherical} coordinate system relative to a fixed observer.
We choose this observer, or the origin of this coordinate system to be the curve
\begin{equation}
  t\mapsto(t,x=\sqrt{1+t^2},x'=0)
\end{equation}
in the coordinates of the ambient space $\mathbb{R}^5$.
Each level set of $t$ in $H$ is an $\mathbb{S}^3$ of radius $1+t^2$:
\begin{equation}
  x^2+\lvert x'\rvert^2=1+t^2
\end{equation}
We can think of that $\mathbb{S}^3$ as $(0,\pi)\times\mathbb{S}^2$, where then $\lvert x'\rvert$ is the radius of the $\mathbb{S}^2$, c.f.~Fig.~\ref{fig:S3}.
\begin{figure}[tb]
  \centering
  \includegraphics[scale=0.7]{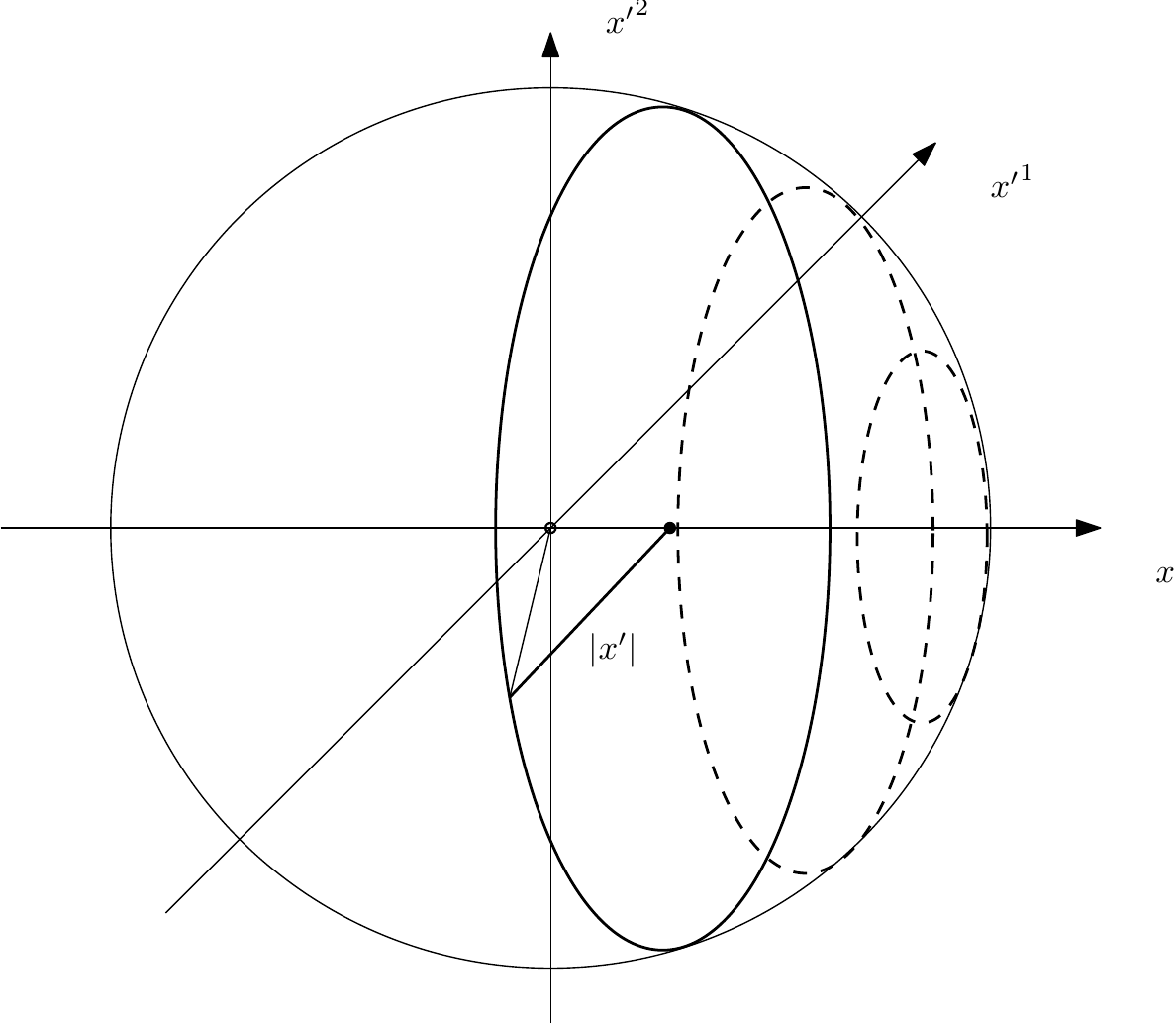}
  \caption{$\mathbb{S}^3$ as spherically symmetric space foliated by $\mathbb{S}^2$ of radius $|x'|$.}
  \label{fig:S3}
\end{figure}
Thus we introduce as one coordinate
\begin{equation}\label{eq:desitter:r:x}
  r=\lvert x'\rvert=\frac{4\lvert y\rvert}{4-u^2+\lvert y\rvert^2}
 \end{equation}
 \begin{figure}[tb]
   \centering
  \includegraphics[scale=0.8]{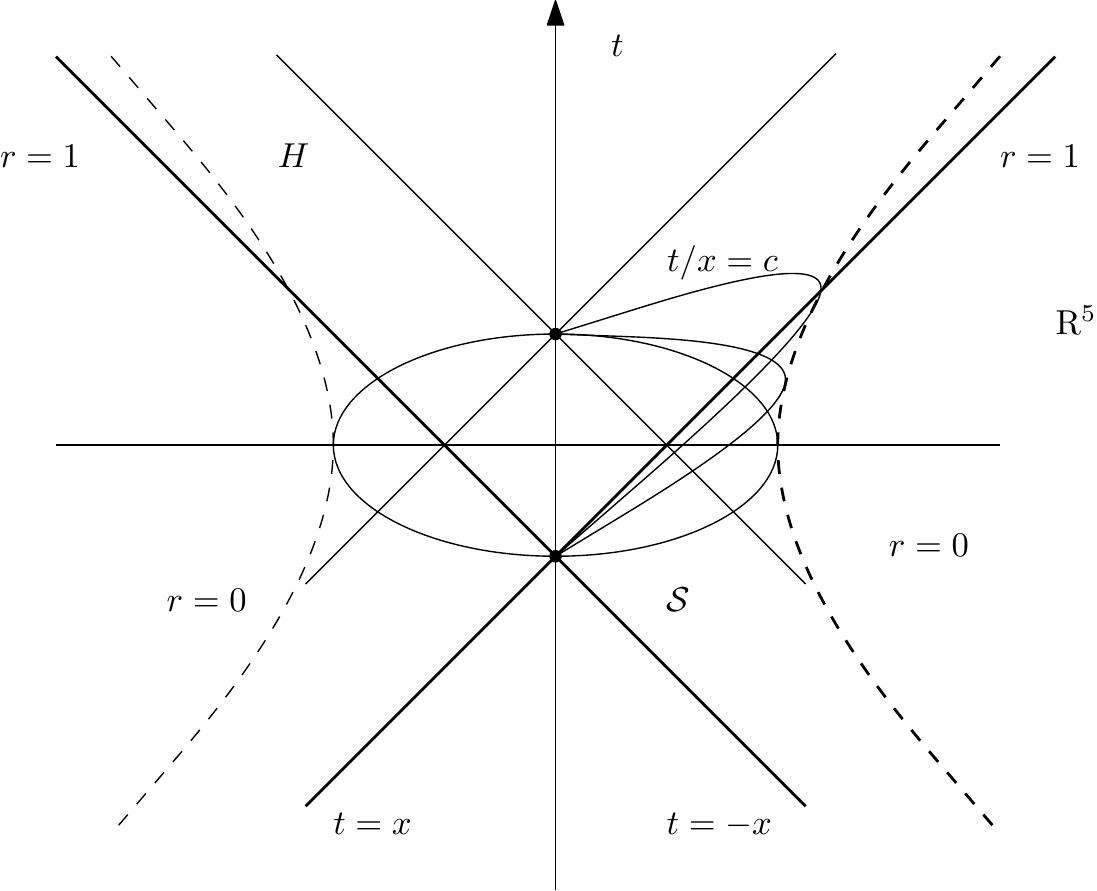}
   \caption{Level sets of $r$ in de Sitter space.}
   \label{fig:desitter:r}
\end{figure}

In addition we introduce a new time coordinate $t'$ which is constant on the level sets of $t/x$. 
An appropriate choice is 
\begin{equation}\label{eq:t:x}
  t'=\frac{1}{2}\log\Bigl\lvert\frac{x+t}{x-t}\Bigr\rvert=\frac{1}{2}\log\Bigl\lvert\frac{(2+u)^2-\lvert y\rvert^2}{(2-u)^2-\lvert y\rvert^2}\Bigr\rvert\,.
\end{equation}

\begin{lemma}
  The de Sitter metric in $(t',r)$ coordinates reads
\begin{equation}\label{eq:metric:desitter:static}
  h=-\bigl(1-r^2\bigr)\ud {t'}^2+\frac{1}{1-r^2}\ud r^2+r^2\mathring{\gamma}\,.
\end{equation}

\end{lemma}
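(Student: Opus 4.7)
The cleanest approach is to return to the ambient representation $h=m|_H$ and change variables directly from Minkowski coordinates $(t,x,x')$ on $\mathbb{R}^{1+4}$ to the spherical-static coordinates $(t',r,\omega)$, where $r=|x'|$, $\omega=x'/|x'|\in\mathbb{S}^2$, and $t'$ is defined by \eqref{eq:t:x}. This bypasses the stereographic form and exploits the fact that the Minkowski metric pulls back under an explicit parametrization of $H$.

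First I would restrict to the static region $\mathcal{S}:=\{x>|t|\}\cap H$, where the defining equation of $H$ forces $x^{2}-t^{2}=1-r^{2}>0$ and hence $r<1$. The identity $\tanh t'=t/x$, equivalent to \eqref{eq:t:x}, combined with $x^{2}-t^{2}=1-r^{2}$, yields the parametrization
\begin{equation*}
  t = \sqrt{1-r^2}\,\sinh t',\qquad x = \sqrt{1-r^2}\,\cosh t',\qquad x' = r\,\omega,
\end{equation*}
which covers $\mathcal{S}$ smoothly away from the timelike worldline $\Gamma=\{r=0\}$, and in particular provides $(t',r,\omega)$ with the expected ranges.

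Next I would pull back $m=-\ud t^{2}+\ud x^{2}+|\ud x'|^{2}$ under this map. Direct differentiation of the expressions for $t$ and $x$ produces mixed $\ud r\,\ud t'$ terms which cancel exactly between $-\ud t^{2}$ and $\ud x^{2}$, while the identity $\cosh^{2}t'-\sinh^{2}t'=1$ collapses the remaining coefficients to
\begin{equation*}
  -\ud t^2 + \ud x^2 \;=\; -(1-r^2)\,\ud {t'}^{\,2} + \tfrac{r^2}{1-r^2}\,\ud r^2 .
\end{equation*}
The spatial piece decomposes as $|\ud x'|^{2}=\ud r^{2}+r^{2}\mathring{\gamma}$ via the standard spherical decomposition of $\mathbb{R}^{3}$. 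Summing, the coefficient of $\ud r^{2}$ becomes $\tfrac{r^{2}}{1-r^{2}}+1=\tfrac{1}{1-r^{2}}$, and \eqref{eq:metric:desitter:static} follows.

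The computation is purely algebraic and I anticipate no genuine obstacle; the only point worth flagging is that the hyperbolic parametrization of $(t,x)$ is real-valued precisely in $\mathcal{S}$. In the cosmological region $\mathcal{R}=\{|t|>|x|\}\cap H$, where $r>1$, one would use instead $t=\sqrt{r^{2}-1}\,\cosh t'$, $x=\sqrt{r^{2}-1}\,\sinh t'$ with a correspondingly modified $t'$, and the identical computation with $1-r^{2}$ replaced by $-(r^{2}-1)$ gives the \emph{expanding} form of the metric in which $\partial_{r}$ becomes timelike and $\partial_{t'}$ spacelike. This is the representation relevant to the later cosmological analysis.
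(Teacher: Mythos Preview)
Your proof is correct and takes a genuinely different route from the paper. The paper works through the stereographic coordinates $(u,y)$ established in Lemma~\ref{lemma:desitter:stereo}: it expresses $\ud r$ and $\ud t'$ in terms of $\ud u$ and $\ud|y|$, then inverts the linear system to rewrite the conformally flat form $h=e^{-2\Phi}(-\ud u^2+|\ud y|^2)$ in the new variables. Your approach bypasses the stereographic chart entirely by parametrizing $H$ directly as $t=\sqrt{1-r^2}\sinh t'$, $x=\sqrt{1-r^2}\cosh t'$, $x'=r\omega$ and pulling back the ambient Minkowski metric. This is more elementary and the algebra is shorter, since the hyperbolic identity $\cosh^2-\sinh^2=1$ does the cancellation in one step; the paper's route, on the other hand, stays within the chain of representations it has already set up and makes the relation between the conformal factor $e^{-2\Phi}$ and the lapse $1-r^2$ explicit. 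Your remark on the analytic continuation to $r>1$ via the dual parametrization is also apt and anticipates the form used in Section~\ref{sec:spherical}.
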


\begin{proof}
  Let us  express the metric on $\mathbb{R}^{1+3}$ in spherically symmetric form,
\begin{equation}
  -\ud u^2+|\ud y|^2=-\ud u^2+\ud |y|^2+|y|^2\stackrel{\circ}{\gamma}\,,
\end{equation}
where $\stackrel{\circ}{\gamma}$ is the standard metric on $\mathbb{S}^2$; (we keep here $|y|$ as the notation for the radial variable, to avoid introducing more notation for yet another variable, say $\rho=|y|$).
Now we calculate
\begin{equation}
  \ud r=\frac{4}{\bigl(4-u^2+|y|^2\bigr)^2}\bigl(2u|y|\ud u+(4-u^2-|y|^2)\ud |y|\bigr)
\end{equation}
\begin{equation}
  \ud t'=\frac{4}{\bigl((2+u)^2-|y|^2\bigr)\bigl((2-u)^2-|y|^2\bigr)}\bigl((4-u^2-|y|^2)\ud u+2u|y|\ud|y|\bigr)
\end{equation}
and therefore clearly 
\begin{multline}
  -\frac{1}{16}\bigl((2+u)^2-|y|^2\bigr)^2\bigl((2-u)^2-|y|^2\bigr)^2\ud {t'}^2+\frac{1}{16}\bigl(4-u^2+|y|^2)^4\ud r^2=\\
  =\Bigl((4-u^2+|y|^2)^2-16 |y|^2\Bigr)\bigl(-\ud u^2+\ud |y|^2\bigr)
\end{multline}
which indeed yields
\begin{multline}
  g=\frac{16}{\bigl(4-u^2+|y|^2\bigr)^2}\bigl(-\ud u^2+|\ud y|^2\bigr)=\\
    =-\frac{\bigl(4-u^2+|y|^2)^2-16|y|^2}{(4-u^2+|y|^2)^2}\ud {t'}^2+\frac{\bigl(4-u^2+|y|^2\bigr)^2}{(4-u^2+|y|^2)^2-16|y|^2}\ud r^2+\frac{16|y|^2}{(4-u^2+|y|^2)^2}\stackrel{\circ}{\gamma}
\end{multline}
or
\begin{equation}\label{eq:metric:desitter:static}
g=-\bigl(1-r^2\bigr)\ud {t'}^2+\frac{1}{1-r^2}\ud r^2+r^2\mathring{\gamma}\,.
\end{equation}

\end{proof}

We have derived this form of the metric in the domain
\begin{equation}
  \mathcal{S}=\Bigl\{(t,x,x')\in H : x>|t|\Bigr\}
\end{equation}
None of the metric coefficients depend on $t'$, and $\partial_{t'}$ is orthogonal to the level sets of $t'$, thus the spacetime metric is indeed \emph{static} on $\mathcal{S}$.

The metric in this form is \emph{spherically symmetric}, the center $r=0$ being a time-like curve that lies opposite to the base point of the stereographic projection; 
since the latter was chosen arbitrarily, static coordinate patches can be introduced for any given timelike geodesic in $H$.
 The static region $\mathcal{S}$ is the intersection of the past and future of the central line $r=0$. (This is particularly easy to see in the stereographic coordinates.)

The boundary of $\mathcal{S}$ is a bifurcate null hypersurface $\mathcal{C}$: the \emph{cosmological horizon}. Indeed, $r=1$ implies $t=\lvert x\rvert$ which is a null line, namely  the straight lines in $\mathbb{R}^5$ that rules the hyperboloid $H$. In the stereographic picture this is the set $u=\pm\lvert 2-| y|\rvert$, a null hypersurface in $\mathbb{R}^{1+3}$.

We summarize the causal geometry thus described in the Penrose diagram; see Fig.~\ref{fig:desitter:penrose}.


\section{Explicit double null foliations of de Sitter}
\label{sec:desitter:double}


A convenient feature of the representation of de Sitter as a time-like hyperboloid embedded in Minkowski space is that the \emph{causal structure} of de Sitter is then induced by the ambient Minkowski space. In particular cones in Minkowski space emanating from a point on the hyperboloid trace out null hypersurfaces in de Sitter spacetime. This yields an elegant approach to construct solutions of the eikonal equation in de Sitter, which we will employ in particular to construct \emph{non-}spherical double null foliations.

\subsection{Spherically symmetric foliations}
\label{sec:spherical}

We first construct spherically symmetric double null foliations by intersecting cones emanating from antipodal lines.
This will lead to coordinates such that the metric takes the form
\begin{equation}\label{eq:metric:double:spherical}
  h=-4\bigl(r^2-1\bigr)\ud u^\ast \ud v^\ast +r^2\stackrel{\circ}{\gamma}\,.
\end{equation}
Most importantly, we will find the function $u^\ast,v^\ast:H\to\mathbb{R}$ explicitly in terms of the ambient coordinates, which will be used in Section~\ref{sec:ellipsoidal:explicit}.

For any point $(t,x,x')\in\mathbb{R}\times\mathbb{R}\times\mathbb{R}^3$ let us denote by $C^\pm_{(t,x,x')}$ the forward/backward cone emanating from $(t,x,x')$ in $\mathbb{R}^{1+4}$:
\begin{equation}
  C^\pm_{(t,x,x')}=\Bigl\{(t',y,y'):  t'-t=\pm \sqrt{(y-x)^2+\lvert y'-x'\rvert^2}\Bigr\}
\end{equation}
Now consider the two anti-podal geodesics $\Gamma,\underline{\Gamma}$ in $H$:
\begin{equation}
  \Gamma\cup\underline{\Gamma}=H\cap\{(t,x,x'):x'=0\}=\{(t,x,0):-t^2+x^2=1\}
\end{equation}
In fact let us define,
\begin{equation}
  \Gamma(t)=(t,-\tb,0)\qquad   \underline{\Gamma}(t)=(t,\tb,0)\qquad \tb=\sqrt{1+t^2}
\end{equation}
and
\begin{equation}
 C(t)=C^+_{\Gamma(t)}\cap H\qquad \underline{C}(t)=C^+_{\underline{\Gamma}(t)}\cap H
\end{equation}

\begin{lemma}\label{lemma:C:t}
  The null hypersurfaces $C(t)$ and $\underline{C}(\tu)$ are given by
  \begin{gather}
    C(t)=\Bigl\{(s,x,x'): x=-\frac{1+st}{\tb}, |x'|=\frac{s-t}{\tb}, s\geq t\Bigr\}\\
    \underline{C}(\tu)=\Bigl\{(s,x,x'): x=\frac{1+s\tu}{\langle \tu\rangle}, |x'|=\frac{s-\tu}{\langle \tu \rangle}, s\geq \tu\Bigr\}
\end{gather}
Moreover, for any $t+\tu<0$ the intersection $S(t,\tu)=C(t)\cap\underline{C}(\tu)$ is a round $2$-spheres of radius
\begin{equation}
  |x'|=-\frac{\langle t\rangle+\langle \tu \rangle}{\tu+t}
\end{equation}

\end{lemma}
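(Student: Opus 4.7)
The plan is to exploit the ambient Minkowski description: a point $(s,x,x')$ lies on $C(t) = C^+_{\Gamma(t)} \cap H$ iff both the cone relation $(s-t)^2 = (x+\tb)^2 + |x'|^2$ (with $s \geq t$) and the hyperboloid relation $-s^2 + x^2 + |x'|^2 = 1$ are satisfied. Subtracting these two \emph{quadratic} equations cancels the $x^2$ and $|x'|^2$ terms, producing a single \emph{linear} equation in $x$. Using $\tb^2 = 1 + t^2$, this solves explicitly to $x = -(1+st)/\tb$. Substituting back into the hyperboloid equation and computing $\tb^2(1+s^2) - (1+st)^2 = (s-t)^2$ gives $|x'| = (s-t)/\tb$, as claimed. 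The formula for $\Cb(\tu)$ follows immediately by the reflection $x \mapsto -x$, which exchanges $\Gamma$ and $\underline{\Gamma}$ and preserves $H$.

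For the intersection $S(t,\tu) = C(t) \cap \Cb(\tu)$, matching the $x$-coordinate yields the linear equation $-(1+st)\langle\tu\rangle = (1+s\tu)\tb$, whose unique solution (after rationalizing the denominator using $\tb^2 = 1+t^2$ and $\langle\tu\rangle^2 = 1+\tu^2$) is
\[
s = \frac{t\tu - 1 - \tb\langle\tu\rangle}{t+\tu}\,.
\]
Plugging this into $(s-t)/\tb$, the numerator simplifies as
\[
s - t = \frac{-\tb^2 - \tb\langle\tu\rangle}{t+\tu} = -\frac{\tb(\tb+\langle\tu\rangle)}{t+\tu}\,,
\]
so $|x'| = -(\tb+\langle\tu\rangle)/(t+\tu)$, matching the stated formula. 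One checks the same value results from $(s-\tu)/\langle\tu\rangle$, confirming consistency of the two cone constraints. Since $s$, $x$, and $|x'|$ are pinned down uniquely while $x' \in \mathbb{R}^3$ is free on the Euclidean sphere of radius $|x'|$, the intersection $S(t,\tu)$ sits inside the $2$-plane $\{s = \text{const}, x = \text{const}\}$ as a round $2$-sphere. The assumption $t+\tu < 0$ is needed both for $|x'|>0$ and to guarantee $s \geq \max(t,\tu)$, i.e.~that the intersection actually lies in the \emph{future} cones.

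The only potential obstacle is algebraic bookkeeping: the key simplification $\tb^2(1+s^2) - (1+st)^2 = (s-t)^2$ (and the analogous step when inverting for $s$) relies on the identity $\tb^2 - t^2 = 1$ characterizing the hyperboloid. There is no geometric or analytic subtlety — the whole argument is a consequence of the fact that intersecting any Minkowski cone with $H$ reduces one quadratic to a linear equation, which is what makes these explicit formulas possible in the first place.
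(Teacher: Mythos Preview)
Your proof is correct and follows essentially the same approach as the paper: subtract the cone equation from the hyperboloid relation to get a linear equation for $x$, substitute back for $|x'|$, and then match coordinates on the intersection to solve for $s$ and hence the radius. The only cosmetic differences are that the paper first solves for $s$ in terms of $x$ before inverting (whereas you go directly to $x$), and the paper matches the two expressions for $|x'|$ rather than for $x$ when computing the intersection; your explicit use of the reflection $x\mapsto -x$ to obtain $\Cb(\tu)$ is slightly cleaner than the paper's parallel computation.
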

\begin{remark}
  In particular ``future null infinity'' can be identified with\footnote{The null hypersurfaces $C(t)$ and $\underline{C}(-t)$ \emph{do not} intersect, in other words $S(t,-t)$ is not a sphere in $H$, and can thus only be thought of as ``attached'' to $H$ in a larger space $H\cup\mathcal{I}^+$.}
  \begin{equation}
    \mathcal{I}^+\dot{=}\bigcup_{t\in\mathbb{R}}S(t,-t)
  \end{equation}

\end{remark}

\begin{proof}
If $(s,x,x')\in C(t)$ then it satisfies the equations
\begin{gather}
  s-t=\sqrt{(x+\langle t\rangle)^2+|x'|^2}\\
  -s^2+x^2+|x'|^2=1\label{eq:sx:hyperboloid}
\end{gather}
which implies
\begin{equation}\label{eq:sx:sx}
  s=-\frac{1+x\tb}{t}
\end{equation}
Similarly, for $(s,x,x')\in \underline{C}(\tu)$,
\begin{equation}\label{eq:sx:sx:b}
  s=-\frac{1-x\langle \tu\rangle}{\tu}
\end{equation}
Solving these for $x$ we obtain, respectively
\begin{equation}
  x=-\frac{1+st}{\tb}\qquad x=\frac{1+s\tu}{\langle \tu \rangle}
\end{equation}
It also follows from \eqref{eq:sx:hyperboloid} and \eqref{eq:sx:sx}, and \eqref{eq:sx:sx:b} respectively, that
\begin{equation}
  |x'|=\frac{|\tb+x|}{|t|}\qquad   |x'|=\frac{|\langle \tu\rangle-x|}{|\tu|}
\end{equation}
and thus, respectively,
\begin{equation}\label{eq:sx:xprime}
  |x'|=\frac{|t-s|}{\tb}\qquad |x'|=\frac{|\tu-s|}{\langle \tu\rangle}
\end{equation}
Since $s>t$, and $s>\tu$, respectively we can drop the absolute value accordingly.
For $(s,x,x')\in S(t,\tu)$ then both formulas for $x'$ are satisfied and we can solve for
\begin{equation}
  s=-\frac{1+\tb\langle\tu\rangle-t\tu}{\tu+t}
\end{equation}
Inserting back into the formula for \eqref{eq:sx:xprime} we obtain
\begin{equation}
  |x'|=-\frac{\tb+\langle \tu\rangle}{\tu+t}\,.
\end{equation}
\end{proof}

The \emph{cosmological horizons} of the observers $\Gamma$, and $\underline{\Gamma}$ are:
\begin{gather}
  \mathcal{C}=\lim_{t\to-\infty} C_{\Gamma(t)}^+\cap H=\Bigl\{(s,x,x'): x=s, |x'|=s, s\in\mathbb{R}\Bigr\}\\
  \overline{\mathcal{C}}=\lim_{t\to-\infty} C_{\underline{\Gamma}(t)}^+\cap H=\Bigl\{(s,x,x'): x=-s, |x'|=1, s\in\mathbb{R}\Bigr\}
\end{gather}

Recall that with $t'=t'(x,t)$ defined as in \eqref{eq:t:x} the metric in $(t',r)$-coordinates takes the form \eqref{eq:metric:desitter:static};
in fact in the region $t>|x|$, let us set
\begin{equation}\label{eq:t:x:sign}
  t'=\frac{1}{2}\log \frac{t-x}{t+x}
\end{equation}
Since null coordinates are given by
\begin{gather}
  u^\ast=\frac{1}{2}\bigl(r^\ast+t'\bigr)\qquad v^\ast=\frac{1}{2}\bigl(r^\ast-t'\bigr) \label{eq:u:ast:v:ast:r:t}\\
  -4(r^2-1)\ud u^\ast \ud v^\ast = -\frac{1}{r^2-1}\ud r^2+(r^2-1)\ud t'^2
\end{gather}
where
\begin{equation}\label{eq:r:ast:r}
  r^\ast=-\int_r^\infty \frac{\ud r}{r^2-1}=\frac{1}{2}\log  \frac{r-1}{r+1}
\end{equation}
we are led to the following functions whose level sets are the null hypersurfaces $C_t$:
\begin{lemma}\label{lemma:levels}
  Let $u,v:H\to\mathbb{R}$ be the following functions:
  \begin{subequations}
    \begin{gather}
      u(t,x,x')=\frac{\lvert x'\rvert+1}{\lvert x'\rvert-1}\frac{t+x}{t-x}\\
      v(t,x,x')=\frac{\lvert x'\rvert+1}{\lvert x'\rvert-1}\frac{t-x}{t+x}
    \end{gather}
  \end{subequations}
  then for all $t,\underline{t}\in\mathbb{R}$,
  \begin{subequations}
    \begin{gather}
      C(t)=\Bigl\{(t,x,x'):u(t,x,x')=\frac{\langle t\rangle- t}{\langle t\rangle+ t}\Bigr\}\\
      \underline{C}(t)=\Bigl\{(t,x,x'):v(t,x,x')=\frac{\tb-t}{\tb+t}\Bigr\}
    \end{gather}
  \end{subequations}

\end{lemma}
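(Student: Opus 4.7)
The plan is to take the explicit parametrization of $C(t)$ provided by Lemma~\ref{lemma:C:t}, substitute it into the definition of $u$, and show that the result is independent of the affine parameter $s$ along the null generators and simplifies to the claimed constant $(\langle t\rangle - t)/(\langle t\rangle + t)$. Concretely, on $C(t)$ one has $x = -(1+st)/\langle t\rangle$ and $|x'| = (s-t)/\langle t\rangle$ with $s \geq t$, so both ratios $(s+x)/(s-x)$ and $(|x'|+1)/(|x'|-1)$ appearing in $u$ are functions of $s$ alone (the angular coordinates on the 2-sphere of intersection drop out), and the task is to verify that the $s$-dependence cancels in the product.

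The whole computation is driven by the single algebraic identity $(\langle t\rangle - t)(\langle t\rangle + t) = \langle t\rangle^2 - t^2 = 1$. Setting $a := \langle t\rangle + t$ (so $1/a = \langle t\rangle - t$), a short calculation yields
\[ s + x = \frac{s - a}{a\langle t\rangle}, \qquad s - x = \frac{as+1}{\langle t\rangle}, \qquad |x'|+1 = \frac{as+1}{a\langle t\rangle}, \qquad |x'|-1 = \frac{s-a}{\langle t\rangle}. \]
The $s$-dependent factors $(s - a)$ and $(as+1)$ each appear once in the numerator and once in the denominator of the product defining $u$, so they cancel, leaving $u(s,x,x') = 1/a^2 = (\langle t\rangle - t)^2$. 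One final invocation of the same identity rewrites $(\langle t\rangle - t)^2 = (\langle t\rangle - t)/(\langle t\rangle + t)$, matching the asserted value.

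The statement for $v$ on $\underline{C}(\underline{t})$ follows at once by the discrete symmetry $x \mapsto -x$ of the ambient Minkowski space, which interchanges $\Gamma$ and $\underline{\Gamma}$, the functions $u$ and $v$, and the cones $C(t)$ and $\underline{C}(t)$; the calculation is formally identical. To upgrade the inclusion $C(t) \subset \{u = (\langle t\rangle - t)/(\langle t\rangle + t)\}$ to equality, one observes that both sides are connected smooth three-dimensional hypersurfaces of $H$ ($C(t)$ being a null cone in $H$, and the level set being non-degenerate since $\ud u$ does not vanish where $u$ is defined), so the inclusion between manifolds of the same dimension forces equality; alternatively, the map $t \mapsto (\langle t\rangle - t)/(\langle t\rangle + t)$ is a bijection $\mathbb{R}\to (0,\infty)$, so the level sets of $u$ and the cones $C(t)$ partition the same subregion of $H$.

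The main obstacle, if any, is purely bookkeeping: one must organize the algebra around the identity $(\langle t\rangle - t)(\langle t\rangle + t) = 1$ so that the $s$-dependent factors cancel transparently. There is no analytic difficulty — once the substitution $a = \langle t\rangle + t$ is made, the cancellation is immediate.
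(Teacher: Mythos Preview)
Your proposal is correct and follows essentially the same approach as the paper: both proofs substitute the explicit parametrization of $C(t)$ from Lemma~\ref{lemma:C:t} into the definition of $u$, verify the result is independent of the affine parameter $s$, and then invoke the reflection $x\mapsto -x$ to handle $\underline{C}(t)$. Your algebra is organized more cleanly --- the substitution $a=\langle t\rangle+t$ makes the cancellation of the $s$-dependent factors $(s-a)$ and $(as+1)$ immediate, whereas the paper multiplies the two ratios into a quotient of quadratics in $s$ and then observes that the linear coefficients coincide --- but the underlying idea is identical.
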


\begin{proof}
By Lemma~\ref{lemma:C:t} we have that for any $(s,x,x')\in C(t)$:
  \begin{subequations}
    \begin{gather}
      \frac{\lvert x'\rvert+1}{\lvert x'\rvert-1}=\frac{s-(t-\langle t\rangle)}{s-(t+\langle t\rangle)}\\
      \frac{s+x}{s-x}=\frac{s\, (\tb-t)-1}{s\,(\tb+t)+1}
    \end{gather}
  \end{subequations}
and thus by direct computation:
\begin{equation}
  \frac{s+x}{s-x}\frac{\lvert x'\rvert+1}{\lvert x'\rvert-1}=\frac{\tb-t}{\tb+t}\frac{s^2+\bigl((\tb-t)-(\tb-t)^{-1}\bigr)s-1}{s^2+\bigl((t+\tb)^{-1}-(t+\tb)\bigr)s-1}
\end{equation}
and the result follows, because the coefficient to both quadratics in the second term $=-2t$.
Moreover it follows from Lemma~\ref{lemma:C:t} that if $(s,x,x')\in \underline{C}(\tu)$, then $(s,-x,x')\in C(\tu)$, so
\begin{equation}
  v(t,x,x')=u(t,-x,x')=\frac{\tb-t}{\tb+t}\,.
\end{equation}

\end{proof}

Note in particular that
\begin{subequations}
\begin{gather}
  \lim_{t\to-\infty}u\rvert_{C_t}=\infty\qquad \lim_{t\to 0}u\rvert_{C_t}=1\qquad   \lim_{t\to\infty}u\rvert_{C_t}=0\\
  \lim_{t\to-\infty}v\rvert_{\underline{C}_t}=\infty\qquad\lim_{t\to 0}v\rvert_{\underline{C}_t}=1\qquad   \lim_{t\to\infty}v\rvert_{\underline{C}_t}=0
\end{gather}
\end{subequations}
A suitable set of null coordinates are thus obtained by setting
\begin{equation}
  2 u^\ast(t,x,x')=-\frac{1}{2}\log u(t,x,x') \qquad 2 v^\ast(t,x,x')=-\frac{1}{2}\log   v(t,x,x')
\end{equation}
which then have the range $(-\infty,\infty)$.

Since apparently 
\begin{equation}
  2 u^\ast\rvert_{C(t)}=\frac{1}{2}\log u^{-1}\rvert_{C(t)}=\arsinh(t)
\end{equation}
we will simply denote by 
\begin{subequations}
\begin{gather}
  C_{u^\ast}=C(\sinh(2u^\ast))\\
    C_{v^\ast}=\Cb(\sinh (2v^\ast))
\end{gather}
\end{subequations}
and
\begin{equation}
  S_{u^\ast,v^\ast}=C_{u^\ast}\cap\Cb_{v^\ast}
\end{equation}

\begin{corollary}\label{cor:t:x:in:S:u:v}
  If  $(t,x,x')\in S_{u^\ast,v^\ast}$ then
  \begin{gather}
      t=-\frac{\cosh(u_\ast-v_\ast)}{\sinh(u_\ast+v_\ast)} \qquad     x=\frac{\sinh(u_\ast-v_\ast)}{\sinh(u_\ast+v_\ast)}\\
    |x'|=-\frac{\cosh(u_\ast+v_\ast)}{\sinh(u_\ast+v_\ast)}=\Bigl( \tanh |r^\ast| \Bigr)^{-1}
  \end{gather}

\end{corollary}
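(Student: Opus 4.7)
The plan is to read off the formulas in Corollary~\ref{cor:t:x:in:S:u:v} directly from Lemma~\ref{lemma:C:t} by setting $t=\sinh(2u^\ast)$ and $\tu=\sinh(2v^\ast)$, which encodes the correspondence $C_{u^\ast}=C(\sinh(2u^\ast))$, $\Cb_{v^\ast}=\Cb(\sinh(2v^\ast))$ established immediately before the statement. Then $\tb=\cosh(2u^\ast)$ and $\langle\tu\rangle=\cosh(2v^\ast)$, so the task reduces to simplifying
\begin{equation*}
  s=-\frac{1+\cosh(2u^\ast)\cosh(2v^\ast)-\sinh(2u^\ast)\sinh(2v^\ast)}{\sinh(2u^\ast)+\sinh(2v^\ast)}\,,\quad
  |x'|=-\frac{\cosh(2u^\ast)+\cosh(2v^\ast)}{\sinh(2u^\ast)+\sinh(2v^\ast)}\,.
\end{equation*}

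First, using the addition formula $\cosh(A)\cosh(B)-\sinh(A)\sinh(B)=\cosh(A-B)$ together with $1+\cosh(2\theta)=2\cosh^{2}(\theta)$, the numerator of $s$ becomes $2\cosh^{2}(u^\ast-v^\ast)$. Similarly the sum-to-product identities
\begin{equation*}
  \sinh(2u^\ast)+\sinh(2v^\ast)=2\sinh(u^\ast+v^\ast)\cosh(u^\ast-v^\ast)\,,\quad
  \cosh(2u^\ast)+\cosh(2v^\ast)=2\cosh(u^\ast+v^\ast)\cosh(u^\ast-v^\ast)
\end{equation*}
give the common denominator, and after cancelling $\cosh(u^\ast-v^\ast)$ the displayed formulas for $t$ (identified with $s$) and $|x'|$ drop out immediately.

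For $x$, I would substitute $x=-(1+st)/\tb$ from Lemma~\ref{lemma:C:t} and use the above expression for $s$ to reduce to evaluating $\sinh(u^\ast+v^\ast)-\sinh(2u^\ast)\cosh(u^\ast-v^\ast)$. Expanding via the product-to-sum formula $\sinh(A)\cosh(B)=\tfrac{1}{2}[\sinh(A+B)+\sinh(A-B)]$ and then applying $\sinh(A)-\sinh(B)=2\cosh(\tfrac{A+B}{2})\sinh(\tfrac{A-B}{2})$, this expression simplifies to $-\cosh(2u^\ast)\sinh(u^\ast-v^\ast)$, after which the factor of $\cosh(2u^\ast)=\tb$ cancels and we are left with $x=\sinh(u^\ast-v^\ast)/\sinh(u^\ast+v^\ast)$.

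For the final identity involving $r^\ast$, recall from \eqref{eq:u:ast:v:ast:r:t} that $u^\ast+v^\ast=r^\ast$; the formula for $|x'|$ then reads $|x'|=-\coth(r^\ast)$, and since $r^\ast\leq 0$ by \eqref{eq:r:ast:r} we have $\tanh|r^\ast|=-\tanh(r^\ast)$, giving $|x'|=(\tanh|r^\ast|)^{-1}$ as claimed. No step here is conceptually hard; the only care needed is in tracking signs (both in the choice $t'=\tfrac12\log\frac{t-x}{t+x}$ made in \eqref{eq:t:x:sign} and in the negativity of $r^\ast$), so that is the main point at which a slip could occur.
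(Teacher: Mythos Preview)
Your argument is correct and, for the formulas for $t$ and $x$, takes a different route from the paper. You work entirely from the explicit intersection formulas derived in the proof of Lemma~\ref{lemma:C:t} (the value of $s$ and the relation $x=-(1+st)/\tb$), reducing everything to hyperbolic sum-to-product identities. The paper instead obtains $t$ and $x$ by combining the coordinate relation $u^\ast-v^\ast=t'=\tfrac12\log\frac{t-x}{t+x}$ with the hyperboloid constraint $-t^2+x^2+|x'|^2=1$ (using the already-established $|x'|$), which gives $t/x$ and $x^2$ and then requires a separate sign check (via $x=1$ on $\Cb(0)$). Your approach has the advantage that no sign ambiguity ever arises, since the formulas inherited from Lemma~\ref{lemma:C:t} are already determinate; the paper's approach has the advantage of tying the result more transparently to the definitions \eqref{eq:u:ast:v:ast:r:t}, \eqref{eq:t:x:sign} of the double null coordinates. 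For $|x'|$ and the $r^\ast$ identity the two proofs are essentially the same.
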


\begin{proof}
Let us begin with the formula for the radius:
  
  On one hand, we know that
  \begin{equation*}
    S_{u^\ast,v^\ast}=C(\sinh 2u^\ast)\cap \Cb(\sinh 2v^\ast)
  \end{equation*}
  and thus by Lemma~\ref{lemma:C:t},
  \begin{equation*}
  |x'|=-\frac{\langle \sinh 2u^\ast \rangle+\langle \sinh 2v^\ast \rangle}{\sinh 2u^\ast+\sinh 2v^\ast}=-\frac{ \cosh 2u^\ast + \cosh 2v^\ast}{\sinh 2u^\ast+\sinh 2v^\ast}
\end{equation*}
On the other hand it follows directly from \eqref{eq:u:ast:v:ast:r:t} that
\begin{equation*}
  u^\ast+v^\ast=\frac{1}{2}\log \frac{r-1}{r+1}
\end{equation*}
and hence
\begin{equation*}
  r=-\frac{e^{2u^\ast}+e^{-2v^\ast}}{e^{2u^\ast}-e^{-2v^\ast}}=-\frac{\cosh{u^\ast+v^\ast}}{\sinh{u^\ast+v^\ast}}
\end{equation*}
To see that the two formulas are the same simply insert \eqref{eq:u:ast:v:ast:r:t} in the first one, to obtain also
\begin{equation*}
  |x'|=-\frac{\cosh r^\ast}{\sinh r^\ast}
\end{equation*}
This formula follows of course also directly from \eqref{eq:r:ast:r}.

Let us now turn to the formulas for $t$, and $x$:

By \eqref{eq:u:ast:v:ast:r:t} and \eqref{eq:t:x:sign} we have
\begin{equation*}
  u_\ast-v_\ast=\frac{1}{2}\log \frac{t-x}{t+x}
\end{equation*}
hence
\begin{equation*}
  \frac{t}{x}=-\frac{1}{\tanh(u_\ast-v_\ast)}\,.
\end{equation*}
Since $(t,x,x')\in H$ we can now solve for $x$:
\begin{equation*}
  x^2=\frac{\sinh^2(u_\ast-v_\ast)}{\sinh^2(u_\ast+v_\ast)}
\end{equation*}
and infer the sign from the condition that for $v_\ast=0$ we must have $x=1$.
The formula for $t$ then follows.

\end{proof}

Finally, these are of course precisely the double null coordinates introduced above, such that the metric takes the form \eqref{eq:metric:double:spherical}, namely
\begin{equation}
  h=-4\Omega^2\ud u^\ast \ud v^\ast +r^2\stackrel{\circ}{\gamma}_{AB}\ud \vartheta^A\ud\vartheta^B
\end{equation}
where now
\begin{equation}\label{eq:Omega:ast}
  \Omega^2(t,x,x')=r^2-1=\lvert x'\rvert^2-1\,.
\end{equation}

\subsection{Examples of ellipsoidal double null foliations}
\label{sec:ellipsoidal}


We shall now construct a double null foliation for which the intersections are not spheres but \emph{ellipsoids}. Recall that the spherically symmetric foliation was constructed by considering intersections of cones emanating from antipodal geodesics $\Gamma$, $\underline{\Gamma}$. We shall now consider cones emanating from points which are slightly displaced from these geodesics.

For simplicity let us leave the null hypersurfaces $\Cb(t)$ emanating from points $\underline{\Gamma}(t)=(t,\langle t\rangle,0)$ unchanged. In fact, let us fix in particular the null hypersurface $\Cb(0)=C^+_{\underline{\Gamma}(0)}\cap H$ emanating from $(0,1,0)$, and observe that $\Cb(0)$ is contained in the hyperplane $x=1$:
\begin{equation}
  \Cb(0)=C^+_{\underline{\Gamma}(0)}\cap H=\Bigl\{(t,1,x'):t=\lvert x'\rvert\Bigr\}
\end{equation}
Note that indeed on $\Cb(0)$ we have that $v\rvert_{\Cb(0)}=1$, and thus $v^\ast\rvert_{\Cb(0)}=0$.
On $\Cb(0)$ we also have a simple relation between $v^\ast$ and $t=\lvert x'\vert>1$:
\begin{equation}
  u^\ast=\frac{1}{2}\log\frac{t-1}{t+1}\qquad\text{: on }\Cb(0)
\end{equation}
or
\begin{equation}\label{eq:r:Cb:zero}
  \lvert x'\rvert = t =-\frac{e^{2 u^\ast}+1}{e^{2 u^\ast}-1}\qquad u^\ast\in(-\infty,0)\qquad\text{: on }\Cb(0)\,.
\end{equation}

In Section~\ref{sec:spherical} we considered the cones intersected with $H$ and vertices on $\Gamma$:
\begin{equation}
  C(\epsilon)=C^+_{\Gamma(\epsilon)}\cap H\,,\quad \Gamma(\epsilon)=(\epsilon,-\sqrt{1+\epsilon^2},0)\,,\quad \epsilon<0
\end{equation}
Then the intersection of the null hypersurfaces $C(\epsilon)$ and $\Cb(0)$ is a sphere
\begin{equation}
  S(\epsilon)=\Cb(0)\cap C(\epsilon)=\Bigl\{(t,1,x'):t=\lvert x'\rvert=\frac{1+\sqrt{1+\epsilon^2}}{\lvert \epsilon\rvert}\Bigr\}
\end{equation}
Also note that $u^\ast\rvert_{C(\epsilon)}= \frac{1}{2}\epsilon+\mathcal{O}(\epsilon^2)$.
In particular the ``sphere at infinity'' where $\underline{C}(0)$, and $C(0)$ ``meet in the Penrose diagram'' is identified with $S(0)$.

Let us now consider a null hypersurface in $H$ emanating from a vertex slightly displaced from $\Gamma$:
\begin{equation}
  C(\epsilon,\delta)= C^+_{(\epsilon,x(\epsilon,\delta),x'_\delta)}\cap H\qquad \epsilon<0,0<\delta<\sqrt{1+\epsilon^2}
\end{equation}
where $x'_\delta\in\mathbb{R}^3$ with $\lvert x'_\delta\rvert=\delta$,  and $x(\epsilon ,\delta)=-\sqrt{1+\epsilon^2-\delta^2}$.
We also introduce an angle in the plane spanned by $x_\delta'$, and $x'$ such that
\begin{equation}
  \langle x',x'_\delta \rangle =\lvert x'\rvert\lvert x'_\delta\rvert\cos\vartheta
\end{equation}
It is then easy to calculate that
\begin{equation}
\begin{split}
  S(\epsilon,\delta)&=C(\epsilon,\delta)\cap \Cb(0)\\&=\Bigl\{(t,1,x'): \langle x',x'_\delta\rangle = \lvert x'\rvert \delta \cos\vartheta, \lvert x'\rvert(\vartheta)=t(\vartheta)=\frac{1+\sqrt{1+\epsilon^2-\delta^2}}{\lvert \epsilon\rvert+\delta\cos\vartheta}\Bigr\} 
\end{split}
\end{equation}
This means in particular that \emph{all sections $S{(\epsilon,\delta)}$ are ellipsoids} (with eccentricity $\delta/\epsilon$). Moreover, for fixed $\epsilon<0$ the deformation of the sphere $S(\epsilon)$ (which lies in $\Cb(0)$ ``away from infinity'') to the ellipsoid $S(\epsilon,\delta)$ cannot ``move any point to infinity'' \emph{as long as $\delta<\lvert\epsilon\rvert$.} However, if $\delta=\lvert\epsilon\rvert$ then $S(\epsilon,\delta)$ ``touches infinity'' at exactly one point (the antipodal point to $x_\delta'$); meanwhile it is clear that the intersections of ${C}(\delta,\epsilon)$ with the cosmological horizon $\underline{\mathcal{C}}$ is a ``small deformation'' of $\underline{\mathcal{C}}\cap {C}(\epsilon)$ (as we will show below). Finally, if $\delta>\lvert\epsilon\rvert$ then only the ``hemisphere'' $0\leq \vartheta\leq \arccos\epsilon/\delta$ of $S(\epsilon,\delta)$ remains in the spacetime. This case occurs also when we move the sphere $S({\epsilon,\delta})$ ``to infinty'' by taking $\epsilon\to 0$ while keeping $\delta>0$ fixed.

\begin{figure}[bt]
  \centering
  \includegraphics[scale=1.5]{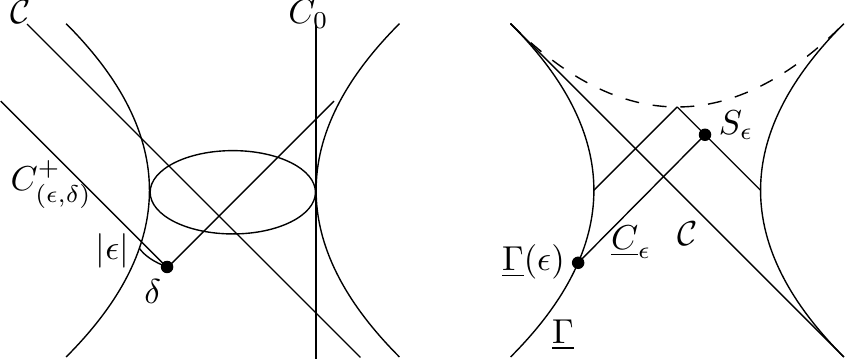}
  \caption{Construction of a double null foliation in de Sitter with ellipsoidal sections depicted in the ambient $4+1$-dimensional Minkowski space (left) and the Penrose diagram (right).}
  \label{fig:foliation:ellipsoid}
\end{figure}

\subsection{Explicit parametrizations of ellipsoidal foliations}
\label{sec:ellipsoidal:explicit}

Another way to parametrize the transformation of the foliation above is to introduce new coordinates $(t,\mathring{x},\mathring{x}')$ such that the vertex of $C(\epsilon,\delta)$ is at $(\epsilon,-\langle \epsilon\rangle,0)$ in the new coordinates: this can obviously be achieved by a \emph{rotation}. Then in these coordinates we can write down the level set of the cone emanating from this point, which we then can express in the original coordinates $(t,x,x')$ as desired.

To that end, consider coordinates $(t,\mathring{x},\mathring{x}')$ as follows:
\begin{equation}
  (t,\mathring{x},\mathring{x}'_1,\mathring{x}'_2,\mathring{x}'_3)=(t,x\cos\varphi-x'_1\sin\varphi,x\sin\varphi+x'_1\cos\varphi,x'_2,x'_3)
\end{equation}
Then, for a given $0<\varphi<\pi$ (now playing the role of $\delta$ above), consider the cone with vertex at $o=(t=\epsilon,\mathring{x}=-\langle\epsilon\rangle,\mathring{x}'=0)$, $\epsilon<0$. According to Lemma~\ref{lemma:levels}
\begin{equation}
        C_{\epsilon,\varphi}:=C_{o}^+\cap H=\Bigl\{(t,x,x'):u_\varphi(t,x,x')=\frac{\langle \epsilon\rangle+\lvert \epsilon\rvert}{\langle \epsilon\rangle-\lvert \epsilon\rvert}\Bigr\}
\end{equation}
where
\begin{equation}
\begin{split}
     u_\varphi(t,x,x')&=u(t,\mathring{x}(x,x'),\mathring{x}'(x,x'))\\
&=\frac{x\cos\varphi-x'_1\sin\varphi+t}{x\cos\varphi-x'_1\sin\varphi-t}\frac{1+\lvert \mathring{x}'\rvert}{1-\lvert \mathring{x}'\rvert}\,, \quad \mathring{x}'=(x\sin\varphi+x'_1\cos\varphi,x'_2,x'_3)
   \end{split}
 \end{equation}
  We shall express the level sets of $u_\varphi$ in $(u,v;\vartheta^1,\vartheta^2)$ coordinates. In fact, let $(\vartheta^1,\vartheta^2)$ be polar coordinates on the spheres of radius $r=\lvert x'\rvert$ such that $x_1'=r\cos\vartheta^1, x_2'=r\sin\vartheta^1\cos\vartheta^2, x_3'=r\sin\vartheta^1\sin\vartheta^2$.
\begin{lemma}
  Let $(u^\ast,v^\ast;\vartheta^1,\vartheta^1)$ be the (spherical) double null coordinates introduced in Section~\ref{sec:spherical}. Then for any $\epsilon<0$, and $-\pi<\varphi<\pi$, the level sets of
\begin{equation}
  2 u^\ast_\varphi(u^\ast,v^\ast;\vartheta^1,\vartheta^2)=-\frac{1}{2}\log  u_\varphi(u^\ast,v^\ast;\vartheta^1,\vartheta^2)
\end{equation}
are null hypersurfaces in $H$, where
\begin{multline}
  u_\varphi(u^\ast,v^\ast;\vartheta^1,\vartheta^2)=\frac{\sinh(\us-\vs)\cos\varphi+\cosh(\us+\vs)\cos\vartheta^1\sin\varphi-\cosh(\us-\vs)}{\sinh(\us-\vs)\cos\varphi+\cosh(\us+\vs)\cos\vartheta^1\sin\varphi+\cosh(\us-\vs)}\times\\
  \times \frac{\sinh(\us+\vs)-|r'_\varphi(u^\ast,v^\ast,\vartheta^1)|}{\sinh(\us+\vs)+|r'_\varphi(u^\ast,v^\ast,\vartheta^1)|}
\end{multline}
where $r_\varphi'$ is given by \eqref{eq:r:phi:prime}.

In fact,
\begin{equation}
  \Bigl\{(u^\ast,v^\ast;\vartheta^1,\vartheta^2):u_\varphi(u^\ast,v^\ast;\vartheta^1,\vartheta^2)=\frac{\langle \epsilon\rangle+\lvert \epsilon\rvert}{\langle \epsilon\rangle-\lvert \epsilon\rvert}\Bigr\}=C_{\epsilon,\varphi}\,.
\end{equation}
\end{lemma}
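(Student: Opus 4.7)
The plan is to reduce the claim to Lemma~\ref{lemma:levels} via a Lorentz rotation, and then to perform the coordinate substitution dictated by Corollary~\ref{cor:t:x:in:S:u:v}.

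First I would observe that the map $R_\varphi : (t,x,x'_1,x'_2,x'_3)\mapsto (t,\mathring{x},\mathring{x}'_1,\mathring{x}'_2,\mathring{x}'_3)$ with $\mathring{x}=x\cos\varphi-x'_1\sin\varphi$, $\mathring{x}'_1=x\sin\varphi+x'_1\cos\varphi$, $\mathring{x}'_2=x'_2$, $\mathring{x}'_3=x'_3$ is an isometry of the ambient Minkowski space $(\mathbb{R}^{1+4},m)$, which moreover preserves the hyperboloid $H$ (since $R_\varphi$ is a spatial rotation that fixes $-t^2+x^2+|x'|^2$). Hence $u_\varphi=u\circ R_\varphi$ with $u$ as in Lemma~\ref{lemma:levels}, and the level set $\{u_\varphi = (\langle\epsilon\rangle+|\epsilon|)/(\langle\epsilon\rangle-|\epsilon|)\}$ is the $R_\varphi^{-1}$-image of $C(\epsilon)\cap H$, which is by definition the cone $C_{\epsilon,\varphi}$. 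This gives both the nullness of the level sets of $u_\varphi$ (hence of $u^\ast_\varphi$, which differs by a strictly monotone transformation) and the final identification claim of the lemma.

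Next I would compute the explicit form in double null coordinates. Using $x'_1 = |x'|\cos\vartheta^1$ and $(x'_2)^2+(x'_3)^2 = |x'|^2\sin^2\vartheta^1$, we get
\begin{equation*}
\mathring{x} = x\cos\varphi-|x'|\cos\vartheta^1\sin\varphi,\qquad
|\mathring{x}'|^2 = (x\sin\varphi+|x'|\cos\vartheta^1\cos\varphi)^2 + |x'|^2\sin^2\vartheta^1\,.
\end{equation*}
Invoking Corollary~\ref{cor:t:x:in:S:u:v} to write $t,x,|x'|$ as explicit functions of $(\us,\vs)$, the denominator $\sinh(\us+\vs)$ is common, and multiplying numerator and denominator of $(t+\mathring{x})/(t-\mathring{x})$ by $\sinh(\us+\vs)$ yields precisely
\begin{equation*}
\frac{t+\mathring{x}}{t-\mathring{x}} = \frac{\sinh(\us-\vs)\cos\varphi+\cosh(\us+\vs)\cos\vartheta^1\sin\varphi-\cosh(\us-\vs)}{\sinh(\us-\vs)\cos\varphi+\cosh(\us+\vs)\cos\vartheta^1\sin\varphi+\cosh(\us-\vs)}\,.
\end{equation*}
Defining $r'_\varphi(\us,\vs,\vartheta^1) := |\mathring{x}'|$, evaluated via Corollary~\ref{cor:t:x:in:S:u:v}, produces the explicit formula \eqref{eq:r:phi:prime} referenced in the statement. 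The second factor $(|\mathring{x}'|+1)/(|\mathring{x}'|-1)$ is converted by multiplying numerator and denominator by $\sinh(\us+\vs)$, which throughout the cosmological region $r>1$ is strictly negative (from $r^\ast = \frac{1}{2}\log\frac{r-1}{r+1} < 0$), so this multiplication flips the inequality and yields the stated form $\frac{\sinh(\us+\vs)-|r'_\varphi|}{\sinh(\us+\vs)+|r'_\varphi|}$ with the absolute value reflecting that $|\mathring{x}'|$ is taken as a nonnegative quantity.

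The only real obstacle is bookkeeping of signs: the hyperbolic identities themselves are routine, but one must consistently use $\sinh(\us+\vs)<0$ to convert between expressions like $t=-\cosh(\us-\vs)/\sinh(\us+\vs)$ (positive in the cosmological region) and the symmetric form in the stated formula. Beyond these sign choices, the computation is a direct half-page algebraic substitution, and no additional geometric input is required.
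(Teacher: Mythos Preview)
Your approach is essentially identical to the paper's: both reduce to Lemma~\ref{lemma:levels} via the rotation $R_\varphi$, then substitute $(t,x,|x'|)$ in terms of $(\us,\vs)$ using Corollary~\ref{cor:t:x:in:S:u:v}, compute the two factors separately, and track the sign $\sinh(\us+\vs)<0$. Your isometry observation makes the nullness of the level sets slightly more explicit than the paper does. One bookkeeping slip: your displayed expression for $(t+\mathring{x})/(t-\mathring{x})$ is off by a sign (it equals the \emph{negative} of the stated fraction), but since you pair it with $(|\mathring{x}'|+1)/(|\mathring{x}'|-1)$ rather than the paper's $(1+|\mathring{x}'|)/(1-|\mathring{x}'|)$, the two extra signs cancel in the product and the final formula for $u_\varphi$ is correct.
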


\begin{proof}
  Recall that Corollary~\ref{cor:t:x:in:S:u:v} allows us to substitute for $(t,x,|x'|)$ in terms of $(\us,\vs)$, if $(t,x,x')\in S_{\us,\vs}$.

As already discussed above, the null hypersurface $C_{\epsilon,\varphi}$ emanates from the point $(t,\mathring{x},\mathring{x}')=(\epsilon, -\langle\epsilon\rangle,0)$, $\epsilon<0$, and is given according to Lemma~\ref{lemma:levels} as the level set
\begin{equation}\label{eq:v:phi:epsilon}
        u(t,\mathring{x},\mathring{x}')=\frac{\mathring{x}+t}{\mathring{x}-t}\frac{1+\lvert \mathring{x}'\rvert}{1-\lvert \mathring{x}'\rvert}=\frac{\langle \epsilon\rangle+\lvert \epsilon\rvert}{\langle \epsilon\rangle-\lvert \epsilon\rvert}
\end{equation}
Since
\begin{subequations}
  \begin{gather}
    \mathring{x}=x\cos\varphi-x'_1\sin\varphi\\
    \mathring{x}_1'=x\sin\varphi+x'_1\cos\varphi\\
    \lvert\mathring{x}'\rvert=\sqrt{(x^2-x_1'^2)\sin^2\varphi+2xx_1'\sin\varphi\cos\varphi+\lvert x'\rvert^2}\\
    x_1'=\lvert x'\rvert \cos\vartheta^1
  \end{gather}
\end{subequations}
we can find explicitly:
\begin{equation}
  u_\varphi(t,x,x')=u(t,\mathring{x}(x,x'),\mathring{x}(x,x'))
\end{equation}
First note that
\begin{equation}
  \frac{\mathring{x}+t}{\mathring{x}-t}=\frac{\sinh(\us-\vs)\cos\varphi+\cosh(\us+\vs)\cos\vartheta^1\sin\varphi-\cosh(\us-\vs)}{\sinh(\us-\vs)\cos\varphi+\cosh(\us+\vs)\cos\vartheta^1\sin\varphi+\cosh(\us-\vs)}
\end{equation}
and secondly,
\begin{subequations}
\begin{equation}
  \frac{1+\lvert \mathring{x}'\rvert}{1-\lvert \mathring{x}'\rvert}=\frac{|\sinh(\us+\vs)|+|r'_\varphi(u^\ast,v^\ast,\vartheta^1)|}{|\sinh(\us+\vs)|-|r'_\varphi(u^\ast,v^\ast,\vartheta^1)|}
\end{equation}
where
\begin{multline}\label{eq:r:phi:prime}
  {r'_\varphi}^2=\sinh^2(\us-\vs)\sin^2\varphi-\cosh^2(\us+\vs)\cos^2\vartheta^1\sin^2\varphi\\
  -2\sinh(\us-\vs)\cosh(\us+\vs)\cos\vartheta^1\sin\varphi\cos\varphi+\cosh^2(\us+\vs)
\end{multline}
\end{subequations}
Since by definition $r^\ast<0$, $|\sinh(\us+\vs)|=-\sinh(\us+\vs)$.
\end{proof}

\subsubsection{Small displacement angles}

We are interested in the intersections $\underline{S}_{\epsilon,\varphi}:=C_{\epsilon,\varphi}\cap \underline{\mathcal{C}}$ and $S_{\epsilon,\varphi}^\infty:=C_{\epsilon,\varphi}\cap \Cb(0)$,
and will study these in the ``small displacement angle approximation'' when $|\varphi|\ll 1$.

$S_{\epsilon,\varphi}^\infty$: On $\Cb(0)$ we have $\vs=0$ and the formula for $u_\varphi$ reduces to:
\begin{multline}
  u_\varphi(u^\ast,0;\vartheta^1,\vartheta^2)=\frac{\sinh(\us)\cos\varphi+\cosh(\us)\cos\vartheta^1\sin\varphi-\cosh(\us)}{\sinh(\us)\cos\varphi+\cosh(\us)\cos\vartheta^1\sin\varphi+\cosh(\us)}\times\\
  \times \frac{\sinh(\us)-|r'_\varphi(u^\ast,0,\vartheta^1)|}{\sinh(\us)+|r'_\varphi(u^\ast,0,\vartheta^1)|}
\end{multline}
where
\begin{multline}
  {r'_\varphi}^2=\sinh^2(\us)\sin^2\varphi-\cosh^2(\us)\cos^2\vartheta^1\sin^2\varphi\\
  -2\sinh(\us)\cosh(\us)\cos\vartheta^1\sin\varphi\cos\varphi+\cosh^2(\us)
\end{multline}
For small $|\varphi|\ll 1$, and $|\epsilon|\ll 1$ the sphere $S_{\epsilon,\varphi}^\infty$ is thus well approximated by
\begin{equation}
  S_{\epsilon,\varphi}^\infty \simeq \Bigl\{ (\us,0;\vartheta^1,\vartheta^2) : u_\varphi^\infty = \frac{1+|\epsilon|}{1-|\epsilon|}\Bigr\}
\end{equation}
where
\begin{equation}
  u_\varphi^\infty(u^\ast;\vartheta^1,\vartheta^2)=\frac{\us+\cos\vartheta^1\varphi-1}{\us+\cos\vartheta^1\varphi+1} \frac{\us-1}{\us+1}
\end{equation}
Hence we easily find an approximate solution for $\us(\vartheta^1)$ on $S_{\epsilon,\varphi}^\infty$:
\begin{equation} \label{eq:us:theta:approx}
  2\us(\vartheta^1)\simeq -|\epsilon|-\varphi\cos\vartheta^1
\end{equation}

$\underline{S}_{\epsilon,\varphi}$: The intersection of $C_{\epsilon,\varphi}$ with the horizon $\underline{\mathcal{C}}$ can be viewed as the limiting sphere
\begin{equation}
  \underline{S}_{\epsilon,\varphi} = \lim_{\vs\to-\infty}C_{\epsilon,\varphi}\cap \Cb_{\vs}
\end{equation}
and is thus well approximated by
\begin{equation}
  \underline{S}_{\epsilon,\varphi} \simeq \Bigl\{ (\us,-\infty;\vartheta^1,\vartheta^2) : 2\us =- |\epsilon| \Bigr\}
\end{equation}

This can easily be verified as follows: As above $\underline{S}_{\epsilon,\varphi}$ is approximated by the level set
\begin{equation}\label{eq:ub:phi:level}
\underline{u}_\varphi(\us;\vartheta^1,\vartheta^2) = \frac{1+|\epsilon|}{1-|\epsilon|}  
\end{equation}
where $\underline{u}_{\varphi}$ is obtained from $u_\varphi$ by taking $\vs\ll \us\leq 0$, or replacing $\sinh(\us-\vs)$ by $\sinh(|\vs|)$, etc. Then taking the formal limit $|\vs|\to\infty$, we are left with an expression which we further approximate for small angles $|\varphi| \ll 1$:
\begin{equation*}
  \underline{u}_\varphi(u^\ast;\vartheta^1,\vartheta^2)=\frac{1+\cos\vartheta^1\varphi-1}{1+\cos\vartheta^1\varphi+1}\frac{-1 +\cos\vartheta^1\varphi-1}{-1 -\cos\vartheta^1\varphi+1}
\end{equation*}
To linear order in $\epsilon$ and $\varphi$ the equation \eqref{eq:ub:phi:level} is then identically satisfied, for all $\vartheta^1$.
Thus this level set of $\underline{u}_\varphi$ is well approximated by a level set of $\us$.

\smallskip
In conclusion, we have considered a change of the double null foliation from one with spherical sections $S_{\us,\vs}$ constructed from the intersection of cones  centered on antipodal curves $\Gamma$ and $\underline{\Gamma}$, to one with ellipsoidal sections constructed from the intersection of cones $C^+_{o(\varphi)}$ centered at points which are displaced from the antipodal curves by an angle $\varphi$.
We considered the intersection of a level set of the new optical function $u_\varphi$ induced from a cone with vertex on the sphere $t=\epsilon<0$ with 1) the cosmological horizon $\underline{\mathcal{C}}$ and 2) the null hypersurface $\Cb(0)$, which intersects $C(0)$ in a sphere at infinity. For small displacement angles $|\varphi|\ll 1$, we find that 1) $\underline{S}_{\epsilon,\varphi}$ is well approximated by $S_{\frac{\epsilon}{2},-\infty}$ namely the corresponding section of the spherical foliation while 2) the sphere $S_{\epsilon,\varphi}^\infty$ is parametrized by \eqref{eq:us:theta:approx}. In particular for $\varphi=|\epsilon|$ then $\us(\pi)=0$, and $S_{\epsilon,\varphi}^\infty$ touches infinity exactly at one point. For $\varphi>|\epsilon|$ an entire annular region of $S_{\epsilon,\varphi}^\infty$ is not contained in $H$.

\subsubsection{Transformation of the optical structure coefficients}

We will now derive the transformation formulas for the optical structure coefficients in the small angle approximation for the examples constructed above. See Section~\ref{sec:null:structure} for the definition of structure coefficients referred to in this section.

In the previous section we have constructed an explicit family of gauge transformations (parametrized by $\lvert\varphi\rvert<\pi$)
\begin{equation}\label{eq:gauge:change}
  \begin{split}
    u^\ast &\mapsto u=u_\varphi^\ast(u^\ast,v^\ast;\vartheta^1_\ast,\vartheta^2_\ast)\\
    v^\ast &\mapsto v=v^\ast\\
    \vartheta_\ast^A&\mapsto \vartheta^A=\vartheta_\ast^A
  \end{split}
\end{equation}
such that the new level sets $C_u$ of $u$ are again null hypersurfaces in de Sitter.
We have seen that for small $2u=\epsilon<0,\lvert\epsilon\rvert \ll 1$ the intersection of the null hypersurface $C_u$ with the cosmological horizon $\underline{\mathcal{C}}$ is a small deformation of the round sphere, while the intersection $S_{u,0}$ of $C_u$ with a fixed incoming null hypersurface $\Cb_0$ going to infinity is a large ellipsoidal deformation of a sphere near infinity, which contains points (first a point, and then annular regions surrounding this point) which ``run off to infinity'' as $\epsilon \to 0$, (while keeping $\varphi$ fixed).

We shall now calculate explicitly the transformations of all optical structure coefficients associated to the gauge transformation \eqref{eq:gauge:change}, at least for ``small displacement angles'', i.e.~for $\lvert\varphi\rvert\ll 1$; (the parameter $\varphi$ measures the displacement of the basepoint of the cones, see Fig.~\ref{fig:foliation:ellipsoid}, and corresponds to the eccentricity of the ellipsoids). We are interested in the details of the gauge transformation on the sphere near infinity, i.e.~for $2u=\epsilon, \lvert\epsilon\rvert\ll 1$.

We begin with the calculation of the null normals; in general, given two optical functions $u$, $v$ we define the corresponding null geodesic vectorfields by
\begin{equation}
  L^\prime=-2(\ud u)^\sharp\qquad \underline{L}^\prime=-2(\ud v)^\sharp\,.
\end{equation}
The null lapse is then defined by
\begin{equation}
  \Omega^2=-\frac{2}{g(\Lp,\Lbp)}
\end{equation}
and
\begin{equation}
  L=\Omega^2\Lp\qquad \Lb=\Omega^2\Lbp\,.
\end{equation}

\begin{lemma}\label{lemma:transformation:Omega}
  For small displacement angles $\lvert \varphi\rvert \ll 1$, the null vectorfields on $S_{u,0}$ for $2u=\epsilon<0$, $\lvert\epsilon\rvert\ll 1$ are given by
\begin{equation}
  L\simeq\frac{\partial}{\partial{v^\ast}}+\varphi\sin\vartheta^1\frac{\partial}{\partial \vartheta^1}\qquad \Lb\simeq \frac{\partial}{\partial{u^\ast}}
\end{equation}
up to terms quadratic in $(\varphi,\epsilon)$. Moreover $\Omega\simeq\Omega_\ast\simeq r$.
\end{lemma}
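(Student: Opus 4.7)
The plan is to exploit the fact that only the null function $u$ has changed from the spherical foliation of Section~\ref{sec:spherical}: since $v = \vs$, the spherical inverse metric obtained from \eqref{eq:metric:double:spherical} yields $\Lbp = -2(\ud \vs)^\sharp = \Omega_\ast^{-2}\partial_{\us}$ exactly, so the whole computation reduces to controlling the Jacobian of the change of optical function $\us \mapsto u = u_\varphi^\ast$ on $S_{u,0}$.

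First I would Taylor expand $u_\varphi^\ast = -\tfrac{1}{4}\log u_\varphi$ jointly in $(\us, \vs, \varphi)$ about the origin, using the explicit formulas for $u_\varphi$ and $|r_\varphi'|$ from the previous Lemma. Both factors of $u_\varphi$ tend to $-1$ as these quantities vanish, so each linearizes by $\tfrac{x-1}{x+1} \simeq -1 + 2x$. After tracking the $O(\varphi)$ correction to $|r_\varphi'|$, one finds that the $\varphi\cos\vartheta^1$ contributions in the second factor cancel exactly, leaving
\[
  u_\varphi^\ast = \us + \tfrac{1}{2}\varphi\cos\vartheta^1 + O(\text{quadratic in }(\us, \vs, \varphi)).
\]
On $S_{u,0}$ we have $\vs = 0$ and $\us$ of order $(\epsilon, \varphi)$ by \eqref{eq:us:theta:approx}, so the error is quadratic in $(\varphi, \epsilon)$ as required.

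From this, the Jacobian entries read $A := \partial u/\partial \us = 1 + O$, $\partial u/\partial \vs = O$, $\partial u/\partial \vartheta^1 = -\tfrac{1}{2}\varphi\sin\vartheta^1 + O$, $\partial u/\partial \vartheta^2 = O$, with quadratic errors throughout. Feeding this into $\Lp = -2(\ud u)^\sharp$ via the spherical inverse metric produces
\[
  \Lp = \frac{1}{\Omega_\ast^2}\partial_{\vs} + \frac{\varphi\sin\vartheta^1}{r^2}\partial_{\vartheta^1} + \cdots,
\]
while pairing with $\Lbp$ gives $g(\Lp,\Lbp) = -2A/\Omega_\ast^2$, hence $\Omega^2 = \Omega_\ast^2/A = \Omega_\ast^2(1 + O(\text{quadratic}))$. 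Since $r$ is large on $S_{u,0}$, the relation $\Omega_\ast^2 = r^2 - 1$ gives $\Omega_\ast^2/r^2 = 1 + O(\epsilon^2)$, so multiplying $\Lp$ and $\Lbp$ by $\Omega^2$ yields the claimed $L \simeq \partial_{\vs} + \varphi\sin\vartheta^1 \partial_{\vartheta^1}$ and $\Lb \simeq \partial_{\us}$ to the required order, together with $\Omega \simeq \Omega_\ast \simeq r$.

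The main obstacle is purely bookkeeping: tracking all cross-terms of order $\varphi \us$, $\varphi \vs$ and $\varphi^2$ through the product of the two factors and through the square root defining $|r_\varphi'|$, and in particular verifying the cancellation in the second factor that yields the clean linear answer in $u_\varphi^\ast$. No genuine analytic difficulty appears beyond this careful expansion.
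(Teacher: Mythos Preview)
Your proposal is correct and follows essentially the same approach as the paper: both compute the partial derivatives of $u_\varphi^\ast$ with respect to $(\us,\vs,\vartheta)$ in the small-parameter regime, raise the index via the spherically symmetric inverse metric to obtain $\Lp$, and then use $\Omega^2=-2/g(\Lp,\Lbp)$ to pass to $L,\Lb$. The only cosmetic difference is that you first expand $u_\varphi^\ast$ itself and then read off its derivatives, whereas the paper differentiates the explicit formula for $\log u_\varphi$ term by term and approximates each derivative separately; the resulting values $\partial_{\us}u\simeq 1$, $\partial_{\vs}u\simeq 0$, $\partial_{\vartheta^1}u\simeq-\tfrac12\varphi\sin\vartheta^1$ coincide. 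One minor remark: the ``cancellation'' you anticipate in the second factor is actually vacuous, since $|r_\varphi'|=1+O(\text{quadratic})$ has no linear $\varphi\cos\vartheta^1$ term to begin with --- all the $\varphi$-dependence in $r_\varphi'^2$ enters through $\sin^2\varphi$ or $\sinh(\us-\vs)\sin\varphi$, both already second order.
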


\begin{remark}
  The function $r$ that appears in the approximation for $\Omega$ is by no means constant on $S_{0,\epsilon}$. We will derive below an explicit dependence of $r(\vartheta^1)$ for small displacement angles, using the formulas obtained in Section~\ref{sec:ellipsoidal:explicit}.
\end{remark}

\begin{proof}
  Let us first calculate the derivatives of $u=u_\varphi^\ast$ on $\Cb_0$ where  $\vs=0$.
  In doing so we immediately employ the approximation $|\varphi|\ll 1$, and $|\us|\ll 1$:
\begin{align*}
  \partial_{u^\ast}u_\varphi^\ast\rvert_{v^\ast=0}=&-\frac{1}{4}\partial_{\us}\log u_\varphi\rvert_{\vs=0}\\
  \simeq&-\frac{1}{4}\bigl(\us+\cos\vartheta^1\varphi-1\bigr)^{-1}\bigl(1-\us\bigr)\\
  &+\frac{1}{4}\bigl(\us+\cos\vartheta^1\varphi+1\bigr)^{-1}\bigl(1+\us\bigr)\\
                                                   &-\frac{1}{4}\bigl(\us-1\bigr)^{-1}\bigl(1+\cos\vartheta^1\varphi-\us\bigr)\\
  &+\frac{1}{4}\bigl(\us+1\bigr)^{-1}\bigl(1   -\cos\vartheta^1\varphi+\us\bigr)\\
    \simeq& 1
\end{align*}
where we used that
\begin{gather*}
  \lvert r'_\varphi\rvert_{\vs=0}\simeq 1\\
    \partial_{\us}{r'_\varphi}\rvert_{\vs=0}\simeq   -\cos\vartheta^1\varphi+\us
  \end{gather*}
  Similarly,
  \begin{align*}
    \partial_{v^\ast}u_\varphi^\ast\rvert_{v^\ast=0} &\simeq 0\\
    \partial_{\vartheta^1}u_\varphi^\ast\rvert_{\vs=0} &\simeq -\frac{1}{2}\varphi\sin\vartheta^1
  \end{align*}
  where we used that
\begin{align*}
  \partial_{\vs}{r'_\varphi}\rvert_{\vs=0}\simeq &  \cos\vartheta^1\varphi+\us\\
      \partial_{\vartheta^1}{r'_\varphi}\rvert_{\vs=0}\simeq & 0
  \end{align*}
Since
\begin{gather*}
  g^{u^\ast u^\ast}=0\quad g^{u^\ast v^\ast}=-\frac{1}{2}\frac{1}{\Omega_\ast^2}\quad g^{v^\ast v^\ast}=0\\
  g^{u^\ast\vartheta^A_\ast}=0\quad g^{v^\ast\vartheta^A_\ast}=0\\
  g^{\vartheta^{1}_\ast\vartheta^1_\ast}=\frac{1}{r^2}\quad  g^{\vartheta^1_\ast\vartheta^2_\ast}=0 \quad  g^{\vartheta^{2}_\ast\vartheta^2_\ast}=\frac{1}{r^2}\frac{1}{\sin^2\vartheta^1_\ast}
\end{gather*}
where $\Omega_\ast$ refers to the null lapse \eqref{eq:Omega:ast},
it follows in particular that
\begin{equation}
  \Lp\simeq \frac{1}{\Omega_\ast^2} \partial_{v^\ast}+\frac{1}{r^2}\varphi\sin\vartheta^1\partial_{\vartheta^1}\qquad \Lbp = \frac{1}{\Omega^2_\ast}\partial_{u^\ast}
\end{equation}

Furthermore, in the above approximation
\begin{equation}
  \Omega^2=-\frac{2}{g(\Lp,\Lbp)}\simeq \Omega^2_\ast
\end{equation}
Therefore it immediately follows that
\begin{equation}
  L=\Omega^2\Lp\simeq\partial_{v^\ast}+\varphi\sin\vartheta^1\partial_{\vartheta^1}\qquad \Lb=\Omega^2\Lbp\simeq \partial_{u^\ast}\,.
\end{equation}

\end{proof}

It is now straighforward to calculate various connection coefficients:
\begin{subequations}\label{eq:null:structure:coeff:def}
  \begin{gather}
    \omega=L\log\Omega\qquad \omegab=\Lb\log\Omega\\
    \hat{\omega}=\frac{1}{\Omega}\omega\qquad\hat{\omegab}=\frac{1}{\Omega}\omegab\\
    \chi(X,Y)=g(\nabla_X (\Omega\Lp),Y)\qquad    \chib(X,Y)=g(\nabla_X (\Omega\Lbp),Y)
  \end{gather}
\end{subequations}

\begin{lemma}\label{lemma:transformation:chi}
 For small displacement angles $\lvert\varphi\rvert\ll 1$, 
the gauge transformation \eqref{eq:gauge:change} induces the following transformations of the null structure coefficients \eqref{eq:null:structure:coeff:def}, on $S_{u,0}$ for $2u=\epsilon<0$, with $\lvert\epsilon\rvert\ll 1$:
\begin{subequations}
\begin{gather}
      \hat{\omega}_\ast\mapsto\hat{\omega}\simeq\hat{\omega}_\ast\simeq 1\qquad     \hat{\omegab}_\ast\mapsto\hat{\omegab}\simeq\hat{\omegab}_\ast\simeq 1\\
  \tr\chi_\ast\mapsto\tr\chi\simeq 2+\frac{2\varphi}{r} \cos\vartheta^1\qquad \tr\chib_\ast\mapsto \tr\chib\simeq\tr\chib_\ast\simeq 2\\
  \hat{\chi}\simeq 0 \qquad \hat{\chib}\simeq 0
\end{gather}
\end{subequations}
\end{lemma}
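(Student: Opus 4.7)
The strategy is to leverage Lemma~\ref{lemma:transformation:Omega}, which supplies the explicit small-angle expressions $L\simeq\partial_{v^\ast}+\varphi\sin\vartheta^1\partial_{\vartheta^1}$, $\Lb\simeq\partial_{u^\ast}$, and $\Omega\simeq\Omega_\ast\simeq r$ on $S_{u,0}$ in the spherical double null coordinates of Section~\ref{sec:spherical}. All structure coefficients can then be computed in the background metric~\eqref{eq:metric:double:spherical}, retaining only terms of first order in $\varphi$ and $\epsilon$.

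The computations of $\omega$ and $\omegab$ are immediate. Since $\Omega\simeq\Omega_\ast=\sqrt{r^2-1}$ depends only on $u^\ast+v^\ast$, the extra tangential term $\varphi\sin\vartheta^1\partial_{\vartheta^1}$ in $L$ annihilates $\log\Omega$, leaving $\omega\simeq\omega_\ast$ and similarly $\omegab\simeq\omegab_\ast$. Combined with the spherical values $\omega_\ast=\omegab_\ast=r$ recorded in Section~\ref{sec:setup:initial:data}, and with the fact that $r\gg 1$ in the regime $|\epsilon|\ll 1$ (manifest from $u^\ast+v^\ast=\tfrac{1}{2}\log\tfrac{r-1}{r+1}$), this gives $\hat\omega=\omega/\Omega\simeq r/\sqrt{r^2-1}\simeq 1$, and analogously $\hat\omegab\simeq 1$.

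For the null second fundamental forms I would invoke Koszul's identity, which for $X,Y\in TS_{u,0}$ --- exploiting $g(L,X)=g(L,Y)=0$ and $[X,Y]\in TS_{u,0}$ --- reduces to
\begin{equation*}
2\,\Omega\,\chi(X,Y)=L\,g(X,Y)+g([X,L],Y)+g([Y,L],X),
\end{equation*}
and analogously for $\chib$ with $L$ replaced by $\Lb$. By \eqref{eq:us:theta:approx} the sphere $S_{u,0}$ is the graph $u^\ast=\epsilon/2-\tfrac{\varphi}{2}\cos\vartheta^1$, so its tangent space is spanned to leading order by $T_1=\tfrac{\varphi\sin\vartheta^1}{2}\partial_{u^\ast}+\partial_{\vartheta^1}$ and $T_2=\partial_{\vartheta^2}$, with induced metric $g(T_1,T_1)\simeq r^2$, $g(T_2,T_2)\simeq r^2\sin^2\vartheta^1$, $g(T_1,T_2)=0$, i.e.~$\gs\simeq r^2\gammac$. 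A direct bracket computation in the old coordinates gives $[T_1,L]\simeq\varphi\cos\vartheta^1\,\partial_{\vartheta^1}$, while $[T_2,L]$, $[T_1,\Lb]$ and $[T_2,\Lb]$ are quadratic in the small parameters and can be dropped.

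Substituting into the Koszul identity and using $L(r^2)=\Lb(r^2)=2r(r^2-1)\simeq 2r^3$, the bracket terms contribute an extra $\varphi\cos\vartheta^1\cdot r^2$ to $\Omega\,\chi(T_1,T_1)$ and a matching contribution to $\Omega\,\chi(T_2,T_2)$; tracing against $\gs^{AB}\simeq r^{-2}\gammac^{AB}$ yields $\Omega\tr\chi\simeq 2r+2\varphi\cos\vartheta^1$, and hence $\tr\chi\simeq 2+\tfrac{2\varphi}{r}\cos\vartheta^1$. The corresponding computation for $\chib$ has no bracket contribution and simply reproduces the spherical value $\tr\chib\simeq 2$. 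The off-diagonal components $\chi(T_1,T_2)$ and $\chib(T_1,T_2)$ vanish to leading order by the orthogonality $g(\partial_{\vartheta^1},\partial_{\vartheta^2})=0$, while the diagonal components of $\chi$ and $\chib$ agree with $\tfrac{1}{2}\tr\chi\,\gs$ and $\tfrac{1}{2}\tr\chib\,\gs$ to the same order, so $\chih,\chibh\simeq 0$. The main point requiring care is consistent bookkeeping: every approximation must be tracked simultaneously in the two small parameters $\varphi$ and $\epsilon$ (equivalently in $\varphi$ and $1/r$), so that no neglected $O(\varphi^2)$ or $O(\varphi/r^2)$ term is inadvertently promoted to leading order.
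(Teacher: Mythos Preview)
Your proposal is correct and follows essentially the same route as the paper: both use Lemma~\ref{lemma:transformation:Omega} for $\omega,\omegab$, then compute $\chi,\chib$ via the first-variation (Koszul/Lie-derivative) formula applied to the tangent frame $T_A=\partial_{\vartheta^A}\rvert_{u,v}$ obtained from the graph relation \eqref{eq:us:theta:approx}, with the key commutator $[T_1,L]\simeq\varphi\cos\vartheta^1\,\partial_{\vartheta^1_\ast}$. One small expository slip: the $\varphi$-correction in $\Omega\chi(T_2,T_2)$ does not come from the brackets (indeed $[T_2,L]=0$) but from the tangential piece $\varphi\sin\vartheta^1\,\partial_{\vartheta^1_\ast}$ of $L$ acting on $g(T_2,T_2)=r^2\sin^2\vartheta^1_\ast$; this does not affect the result.
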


\begin{proof}

Clearly, by Lemma~\ref{lemma:transformation:Omega}
\begin{gather*}
  \omega=D\log\Omega\simeq \omega_\ast=r\qquad \omegab=\Db\log\Omega\simeq \omegab_\ast=r\\
  \hat{\omegab}=\frac{1}{\Omega}\omegab\simeq\frac{1}{\Omega_\ast}\omegab_{\ast}\simeq 1
\end{gather*}

Next we calculate the components of the null second fundamental forms using the first variational formulas
\begin{gather*}
  D\gs=2\Omega\chi\qquad \Db\gs=2\Omega\chib\\
  2\Omega\chi(\partial_{\vartheta^A},\partial_{\vartheta^B})=L(g(\partial_{\vartheta^A},\partial_{\vartheta^B}))-g([L,\partial_{\vartheta^A}],\partial_{\vartheta^B})-g(\partial_{\vartheta^A},[L,\partial_{\vartheta^B}])
\end{gather*}
Here $\partial_{\vartheta^A}$ are the angular vectorfields in $(u,v;\vartheta^1,\vartheta^2)$ coordinates, and thus \emph{tangential} to the spheres $S_{u,v}=C_u\cap C_v$, as required. In view of \eqref{eq:us:theta:approx}, which describes the dependence of $u^\ast(\vartheta^1)$ on the sphere $S_{u,0}$, we note in particular that
\begin{equation}\label{eq:partial:theta}
  \frac{\partial}{\partial \vartheta^1}\Bigr\rvert_{u=\epsilon,v=0}=\frac{\partial}{\partial\vartheta^1_\ast}+\frac{\varphi}{2}\sin\vartheta^1\frac{\partial}{\partial u_\ast}
\end{equation}
We then find a non-vanishing commutator
\begin{equation*}
  [L,\partial_{\vartheta^1}]\simeq -\varphi\cos\vartheta^1\frac{\partial}{\partial \vartheta^1_\ast}
\end{equation*}
and obtain with
\begin{equation*}
  g_{\vartheta^1\vartheta^1}=r^2\quad g_{\vartheta^2\vartheta^2}=r^2\sin^2\vartheta^1
\end{equation*}
that
\begin{gather*}
    2\Omega\chi_{11}  \simeq 2r\Omega_\ast^2+2r^2 \varphi \cos\vartheta^1\\
    2\Omega\chi_{12} \simeq 0\\
    2\Omega\chi_{22} \simeq  2r\Omega_\ast^2\sin^2\vartheta^1+2\varphi r^2\sin^2\vartheta^1\cos\vartheta^1
\end{gather*}
Hence
\begin{gather*}
  \Omega\tr\chi\simeq \frac{2}{r}\Omega^2_\ast+2 \varphi  \cos\vartheta^1\\
  \tr\chi\simeq 2+\frac{2\varphi}{r}\cos\vartheta^1
\end{gather*}
and
\begin{equation*}
  \hat{\chi}_{AB}=\chi_{AB}-\frac{1}{2}\tr\chi \,g_{AB}\simeq 0\,.
\end{equation*}

\end{proof}

We emphasize that in the statements of Lemma~\ref{lemma:transformation:Omega}, \ref{lemma:transformation:chi}, the radius $r$ is a \emph{function} on $S_{\epsilon,\varphi}^\infty$.
In fact, we have seen in \eqref{eq:r:Cb:zero} that,
\begin{equation}
  r=-\frac{e^{2\us}+1}{e^{2\us}-1}\qquad \text{: on }\Cb(0)
\end{equation}
and moreover, for small displacement angles we have found in \eqref{eq:us:theta:approx}  the following relation between $u^\ast$ and $\vartheta^1$ on $S_{\epsilon,\varphi}^\infty$, $\lvert\epsilon\rvert\ll 1$,
\begin{equation}\label{eq:us:approx:S:infty}
  2\us(\vartheta^1)\simeq -|\epsilon|-\varphi\cos\vartheta^1
\end{equation}
Thus
\begin{equation}\label{eq:r:theta:S}
  r(\vartheta^1) \simeq \frac{2}{\lvert \epsilon\rvert+\varphi\cos\vartheta^1} \quad\text{: on }S_{\epsilon,\varphi}^\infty
\end{equation}
We summarize this formula for future reference in
\begin{corollary}
  On $S_{\epsilon,\varphi}^\infty$,
  \begin{equation*}
    \Omega(\vartheta^1) \simeq r(\vartheta^1) \simeq \frac{2}{\lvert \epsilon\rvert+\varphi\cos\vartheta^1}\,.
  \end{equation*}
\end{corollary}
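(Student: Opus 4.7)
The plan is to combine three ingredients already assembled in the preceding discussion: the identification $\Omega \simeq \Omega_\ast \simeq r$ from Lemma~\ref{lemma:transformation:Omega}, the explicit formula \eqref{eq:r:Cb:zero} for $r$ as a function of $u^\ast$ on the reference null hypersurface $\Cb(0)$, and the small angle expression \eqref{eq:us:approx:S:infty} for $u^\ast(\vartheta^1)$ characterizing the sphere $S_{\epsilon,\varphi}^\infty \subset \Cb(0)$.

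First, I would note that since $S_{\epsilon,\varphi}^\infty \subset \Cb(0)$, the value of the area radius $r$ at a point of $S_{\epsilon,\varphi}^\infty$ is given by the restriction formula \eqref{eq:r:Cb:zero}, i.e.~$r = -\,(e^{2u^\ast}+1)/(e^{2u^\ast}-1)$, evaluated at the value $u^\ast = u^\ast(\vartheta^1)$ determined by the intersection $C_{\epsilon,\varphi} \cap \Cb(0)$. By \eqref{eq:us:approx:S:infty}, in the regime $|\epsilon|,|\varphi|\ll 1$ we have $2u^\ast(\vartheta^1) \simeq -|\epsilon|-\varphi\cos\vartheta^1$, which is itself small on $S_{\epsilon,\varphi}^\infty$.

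Next, I would Taylor expand the right-hand side of \eqref{eq:r:Cb:zero} for small $u^\ast$: writing $e^{2u^\ast} = 1 + 2u^\ast + O((u^\ast)^2)$ one finds
\begin{equation*}
 r \;=\; -\,\frac{2 + 2u^\ast + O((u^\ast)^2)}{2u^\ast + O((u^\ast)^2)} \;\simeq\; -\frac{1}{u^\ast}\,,
\end{equation*}
to leading order in $u^\ast$. Substituting $u^\ast = -\tfrac{1}{2}(|\epsilon|+\varphi\cos\vartheta^1)$ from \eqref{eq:us:approx:S:infty} gives the claimed formula $r(\vartheta^1) \simeq 2/(|\epsilon|+\varphi\cos\vartheta^1)$. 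Finally, invoking $\Omega \simeq \Omega_\ast \simeq r$ from Lemma~\ref{lemma:transformation:Omega} yields the corresponding statement for the null lapse.

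There is no serious obstacle here beyond keeping track of the order of approximation: one must check that the linear-in-$(\epsilon,\varphi)$ expression \eqref{eq:us:approx:S:infty} for $u^\ast$ is compatible with the leading-order expansion $r \simeq -1/u^\ast$, in the sense that the error terms neglected in both steps are of the same (quadratic) order in $(\epsilon,\varphi)$, so that the final asymptotic identity $r \simeq 2/(|\epsilon|+\varphi\cos\vartheta^1)$ is self-consistent on the full range $\vartheta^1 \in [0,\pi]$ with $\varphi < |\epsilon|$ (i.e.~away from the pinch-off angle where the denominator vanishes). This is essentially bookkeeping rather than genuine mathematical work, and explains why the statement is phrased as a Corollary rather than a Lemma.
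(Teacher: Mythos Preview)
Your proposal is correct and follows essentially the same approach as the paper: the derivation there is given in the text immediately preceding the Corollary, combining \eqref{eq:r:Cb:zero}, the small-angle formula \eqref{eq:us:approx:S:infty}, and the identification $\Omega\simeq\Omega_\ast\simeq r$ from Lemma~\ref{lemma:transformation:Omega} in precisely the order you describe. The Corollary is stated without a separate proof environment because it merely records the outcome of that discussion.
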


\begin{remark}
  It is also clear from the formula that $S_{\epsilon,\varphi}^\infty$ is approximately an ellipsoid with eccentricity $|\varphi/\epsilon|$, which opens up to a paraboloid as $|\epsilon|\searrow |\varphi|$.
\end{remark}

Next we calculate the volume element on $S_{\epsilon,\varphi}^\infty$.

\begin{lemma}\label{lemma:area}
  The volume element on $S_{\epsilon,\varphi}^\infty$ is given by
  \begin{equation}
    \ud \mu_{\gs}=r^2(\vartheta^1)\sin\vartheta^1\ud\vartheta^1\wedge\ud\vartheta^2\\
  \end{equation}
provided that $\lvert\varphi\rvert\ll 1$, $\lvert \epsilon\rvert\ll 1$.
In particular,
\begin{equation}
  \text{Area}[S_{\epsilon,\varphi}^\infty]=\frac{16\pi}{\lvert\epsilon\rvert^2-\varphi^2}
\end{equation}
\end{lemma}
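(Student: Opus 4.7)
The plan is to compute the induced metric $\gs$ on $S_{\epsilon,\varphi}^\infty$ in the coordinates $(\vartheta^1,\vartheta^2)$ using the push-forward of the coordinate vectorfields from the spherically symmetric system, and then integrate. The key observation is that on $\Cb(0)$ we have $\vs=0$, so a point of $S_{\epsilon,\varphi}^\infty$ is determined in $(\us,\vs;\vartheta^1_\ast,\vartheta^2_\ast)$ coordinates by the relation $u_\varphi^\ast(\us,0;\vartheta^1_\ast,\vartheta^2_\ast)=\epsilon/2$, which in the small-displacement regime yields \eqref{eq:us:approx:S:infty}. Consequently the coordinate vectorfields on $S_{\epsilon,\varphi}^\infty$ are given by \eqref{eq:partial:theta} for $\partial_{\vartheta^1}$, while $\partial_{\vartheta^2}\simeq\partial_{\vartheta^2_\ast}$ to leading order since $\us$ does not depend on $\vartheta^2$.

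Next I compute the components of $\gs$ using the spherically symmetric form \eqref{eq:metric:double:spherical}. Because $g_{u_\ast u_\ast}=0$, $g_{u_\ast v_\ast}=-2\Omega_\ast^2$, and $g_{u_\ast\vartheta^A_\ast}=g_{v_\ast \vartheta^A_\ast}=0$, and because $\partial_{\vs}$ does not appear in the push-forward of $\partial_{\vartheta^A}$, every correction from the $\frac{\varphi}{2}\sin\vartheta^1\,\partial_{u_\ast}$ term in \eqref{eq:partial:theta} either contracts with $g_{u_\ast u_\ast}=0$ or with $g_{u_\ast \vartheta^A_\ast}=0$. Hence, to the working order,
\begin{equation*}
  \gs_{11}\simeq r^2,\qquad \gs_{12}\simeq 0,\qquad \gs_{22}\simeq r^2\sin^2\vartheta^1,
\end{equation*}
where $r=r(\vartheta^1)$ is the function on $S_{\epsilon,\varphi}^\infty$ given by \eqref{eq:r:theta:S}. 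Therefore $\det\gs\simeq r^4\sin^2\vartheta^1$ and $\ud\mu_{\gs}=r^2(\vartheta^1)\sin\vartheta^1\,\ud\vartheta^1\wedge\ud\vartheta^2$, which is the first assertion.

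For the area I then integrate using \eqref{eq:r:theta:S}:
\begin{equation*}
  \text{Area}[S_{\epsilon,\varphi}^\infty]=2\pi\int_0^{\pi}\frac{4\sin\vartheta^1}{(|\epsilon|+\varphi\cos\vartheta^1)^2}\,\ud\vartheta^1.
\end{equation*}
The substitution $s=|\epsilon|+\varphi\cos\vartheta^1$, $\ud s=-\varphi\sin\vartheta^1\,\ud\vartheta^1$, reduces this to an elementary integral $\tfrac{8\pi}{\varphi}\int_{|\epsilon|-\varphi}^{|\epsilon|+\varphi}\tfrac{\ud s}{s^2}=\tfrac{16\pi}{|\epsilon|^2-\varphi^2}$, valid precisely in the regime $|\varphi|<|\epsilon|$ where $r(\vartheta^1)$ remains finite for all $\vartheta^1\in[0,\pi]$.

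The only step requiring mild care is verifying that the corrections neglected in the approximation $|\varphi|,|\epsilon|\ll 1$ do not spoil the leading-order computation of $\gs_{AB}$ uniformly on $S_{\epsilon,\varphi}^\infty$; this is transparent because the dangerous terms are contracted against $g_{u_\ast u_\ast}=0$, so the errors are genuinely of higher order in $(\varphi,\epsilon)$ and do not accumulate when integrated against a factor of $r^2$ that is singular only in the excluded regime $|\varphi|\geq|\epsilon|$.
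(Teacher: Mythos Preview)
Your proof is correct and follows essentially the same route as the paper: both use \eqref{eq:partial:theta} together with the vanishing of $g_{u_\ast u_\ast}$ and $g_{u_\ast\vartheta^A_\ast}$ to conclude $\gs_{AB}=g(\partial_{\vartheta^A_\ast},\partial_{\vartheta^B_\ast})=r^2\gammac_{AB}$, and then integrate $r^2(\vartheta^1)\sin\vartheta^1$ over the sphere. Your version is simply more explicit about why the $\partial_{u_\ast}$ correction drops out and about the substitution in the final integral.
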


\begin{remark}
  Note that as expected
  \begin{equation*}
    \text{Area}[S_{\epsilon,\varphi}^\infty]\longrightarrow \infty\qquad \lvert\epsilon\rvert\searrow  \lvert\varphi\rvert
  \end{equation*}
  while keeping the displacement angle $\varphi$ fixed.
\end{remark}

\begin{proof}
  It follows from \eqref{eq:partial:theta} that
  \begin{equation*}
    \gs_{AB}=g(\partial_{\vartheta^A},\partial_{\vartheta^B})=g(\partial_{\vartheta^A_\ast},\partial_{\vartheta^B_\ast})
  \end{equation*}
hence
\begin{equation*}
  \sqrt{\det\gs}=r^2(\vartheta^1)\sin\vartheta^1
\end{equation*}
Therefore
\begin{equation*}
  \text{Area}[S_{\epsilon,\varphi}^\infty]=8\pi\int_0^\pi\frac{\sin\vartheta\ud\vartheta}{\bigl(\lvert\epsilon\rvert+\varphi\cos\vartheta\bigr)^2}=\frac{16\pi}{\lvert\epsilon\rvert^2-\varphi^2}
\end{equation*}

\end{proof}

Finally we calculate the average of the above transformed null structure coefficients:

\begin{lemma} \label{lemma:average:chi}
   We have, for small displacement angles $\lvert\varphi\rvert\ll 1$, the following averages on $S_{\epsilon,\varphi}^\infty$, provided $\lvert\epsilon\rvert\ll 1$:
    \begin{gather}
      \overline{\Omega}\simeq \frac{2\lvert\epsilon\rvert}{\lvert\epsilon\rvert^2-\varphi^2}\\
    \overline{\Omega\tr\chi}\simeq\frac{4\lvert\epsilon\rvert}{\lvert\epsilon\rvert^2-\varphi^2}
    -2\lvert\epsilon\rvert+\frac{1}{\varphi}\bigl(\lvert\epsilon\rvert^2-\varphi^2\bigr)\log\frac{\lvert\epsilon\rvert-\varphi}{\lvert\epsilon\rvert+\varphi}\\
    \overline{\Omega\tr\chib} \simeq \frac{4\lvert\epsilon\rvert}{\lvert\epsilon\rvert^2-\varphi^2}
  \end{gather}

\end{lemma}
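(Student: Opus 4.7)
The plan is to reduce each of the three averages to a one–dimensional elementary integral in the angle $\vartheta^1$, by combining the three explicit ingredients already available on $S_{\epsilon,\varphi}^\infty$: the area element $\ud\mu_{\gs}=r^2(\vartheta^1)\sin\vartheta^1\,\ud\vartheta^1\wedge\ud\vartheta^2$ from Lemma~\ref{lemma:area}, the pointwise formula $\Omega(\vartheta^1)\simeq r(\vartheta^1)\simeq 2/(\lvert\epsilon\rvert+\varphi\cos\vartheta^1)$ from the Corollary preceding it, and the expressions $\tr\chib\simeq 2$, $\tr\chi\simeq 2+2\varphi\cos\vartheta^1/r$ from Lemma~\ref{lemma:transformation:chi}. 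Dividing by the area $16\pi/(\lvert\epsilon\rvert^2-\varphi^2)$ then yields the three claimed formulas.

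First I compute $\overline{\Omega}$. The integrand $\Omega\cdot r^2\sin\vartheta^1$ simplifies to $8\sin\vartheta^1/(\lvert\epsilon\rvert+\varphi\cos\vartheta^1)^3$; integrating over $\vartheta^2\in[0,2\pi]$ and substituting $u=\lvert\epsilon\rvert+\varphi\cos\vartheta^1$ reduces the $\vartheta^1$–integral to $\int_{\lvert\epsilon\rvert-\varphi}^{\lvert\epsilon\rvert+\varphi} u^{-3}\,\ud u/\varphi$, which evaluates in closed form and gives $\int_S\Omega\,\ud\mu_{\gs}\simeq 32\pi\lvert\epsilon\rvert/(\lvert\epsilon\rvert^2-\varphi^2)^2$. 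Dividing by the area yields $\overline{\Omega}\simeq 2\lvert\epsilon\rvert/(\lvert\epsilon\rvert^2-\varphi^2)$. Because $\tr\chib\simeq 2$ is constant, the formula for $\overline{\Omega\tr\chib}=2\overline{\Omega}$ follows immediately.

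For $\overline{\Omega\tr\chi}$ I write $\Omega\tr\chi\simeq 2\Omega+2\varphi\cos\vartheta^1\,(\Omega/r)\simeq 2\Omega+2\varphi\cos\vartheta^1$, so
\[
\overline{\Omega\tr\chi}\simeq 2\overline{\Omega}+2\varphi\,\overline{\cos\vartheta^1}.
\]
The remaining average reduces via the same substitution to
\[
\int_{\lvert\epsilon\rvert-\varphi}^{\lvert\epsilon\rvert+\varphi}\frac{u-\lvert\epsilon\rvert}{u^2}\,\ud u
=\Bigl[\log u+\frac{\lvert\epsilon\rvert}{u}\Bigr]_{\lvert\epsilon\rvert-\varphi}^{\lvert\epsilon\rvert+\varphi},
\]
i.e.\ a logarithmic term plus a rational term. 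Combining with the area factor produces the $\log\bigl((\lvert\epsilon\rvert\mp\varphi)/(\lvert\epsilon\rvert\pm\varphi)\bigr)$ contribution and the constant $-2\lvert\epsilon\rvert$ appearing in the statement, delivering the third formula.

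No genuine obstacle arises: the only subtlety is bookkeeping of signs (recall $\lvert\epsilon\rvert>\varphi>0$, so $u>0$ throughout and the log is real) and keeping track of the approximation $\Omega\simeq r$ with errors quadratic in $(\epsilon,\varphi)$, so that each $\simeq$ above is controlled to the same order as in Lemmas~\ref{lemma:transformation:Omega}–\ref{lemma:area}. All three averages then follow from a single family of elementary integrals of the form $\int(\lvert\epsilon\rvert+\varphi\cos\vartheta)^{-k}\sin\vartheta\,\ud\vartheta$ and $\int\cos\vartheta\,(\lvert\epsilon\rvert+\varphi\cos\vartheta)^{-k}\sin\vartheta\,\ud\vartheta$ with $k=2,3$.
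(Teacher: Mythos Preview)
Your proposal is correct and follows essentially the same approach as the paper: both reduce the averages to the elementary one-dimensional integrals $\int(\lvert\epsilon\rvert+\varphi\cos\vartheta)^{-k}\sin\vartheta\,\ud\vartheta$ via the area element of Lemma~\ref{lemma:area}, the identification $\Omega\simeq r$, and the decomposition $\overline{\Omega\tr\chi}\simeq 2\overline{\Omega}+2\varphi\,\overline{\cos\vartheta^1}$ coming from Lemma~\ref{lemma:transformation:chi}. The only cosmetic difference is that the paper phrases the evaluation of $\overline{\cos\vartheta^1}$ as an integration by parts, whereas you carry it out directly via the substitution $u=\lvert\epsilon\rvert+\varphi\cos\vartheta^1$.
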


\begin{remark}
    Note that in particular,
  \begin{equation*}
    \overline{\Omega\tr\chi}[S_{\epsilon,\varphi}^\infty]\longrightarrow \infty\qquad \lvert\epsilon\rvert\searrow \lvert\varphi\rvert
  \end{equation*}
  at the same rate as the area, namely
  \begin{equation*}
    \lim_{\lvert\epsilon\rvert\searrow \lvert\varphi\rvert}\frac{\overline{\Omega\tr\chi}}{\text{Area}[S_{\epsilon,\varphi}^\infty]}=\frac{\lvert\varphi\rvert}{4\pi}
  \end{equation*}

\end{remark}

\begin{proof}
We denote for brevity by $A_{\epsilon,\varphi}=\text{Area}[S_{\epsilon,\varphi}^\infty]$.
  Let us first calculate
  \begin{equation*}
    \overline{\Omega}=\frac{1}{A_{\epsilon,\varphi}}\int_{S_{\epsilon,\varphi}}\Omega\ud\mu_{\gs}\simeq\frac{2\pi}{A_{\epsilon,\varphi}}\int_0^\pi r^3(\vartheta)\sin\vartheta\ud\vartheta
    =\frac{2\lvert\epsilon\rvert}{\lvert\epsilon\rvert^2-\varphi^2}
  \end{equation*}
where we used the result of Lemma~\ref{lemma:area}.

Next we look at
\begin{equation*}
  \overline{\cos\vartheta^1}=\frac{1}{A_{\epsilon,\varphi}}\int_{S_{\epsilon,\varphi}^\infty}\cos\vartheta^1\ud\mu_{\gs}(\vartheta^1)
\end{equation*}
Integration by parts yields
\begin{equation*}
  \int_{S_{\epsilon,\varphi}^\infty}\cos\vartheta^1\ud\mu_{\gs}(\vartheta^1)=8\pi\int_0^\pi\frac{\cos\vartheta\sin\vartheta}{\Bigl(\lvert\epsilon\rvert+\varphi\cos\vartheta\Bigr)^2}\ud\vartheta
  =-\frac{8\pi}{\varphi}\frac{2\lvert\epsilon\rvert}{\lvert\epsilon\rvert^2-\varphi^2}+\frac{8\pi}{\varphi^2}\log\frac{\lvert\epsilon\rvert-\varphi}{\lvert\epsilon\rvert+\varphi}
\end{equation*}
and thus
\begin{equation*}
    \overline{\cos\vartheta^1}=-\frac{|\epsilon|}{\varphi}+\frac{1}{2\varphi^2}\bigl(|\epsilon|^2-\varphi^2\bigr)\log\frac{\lvert\epsilon\rvert-\varphi}{\lvert\epsilon\rvert+\varphi}
\end{equation*}
Note in particular that, 
\begin{equation*}
   \varphi\,\overline{\cos\vartheta^1}\longrightarrow \epsilon \qquad\lvert\epsilon\rvert\searrow\lvert\varphi\rvert
\end{equation*}
The stated formulas then follow from Lemma~\ref{lemma:transformation:chi}, according to which
\begin{gather*}
  \overline{\Omega\tr\chi}\simeq \overline{2\Omega}+\overline{\frac{4\varphi}{r}\Omega \cos\vartheta^1}\simeq 2\overline{\Omega}+4\varphi\overline{\cos\vartheta^1}\\
   \overline{\Omega\tr\chib}\simeq \overline{2\Omega}\,.
\end{gather*}

\end{proof}

Lastly we discuss the conformal geometry of the spheres $S_{\epsilon,\varphi}$. The relevance of the behavior of the conformal factor will be explained in Section~\ref{sec:uniformization} in the context of a more general construction of solutions to the eikonal equation.

Here we introduce a function $\Phi:S_{\epsilon,\varphi}\to \mathbb{R}$ such that
\begin{equation}
  K[e^{2\Phi}\gs]=1
\end{equation}
If we take $\Phi=\Psi-\log r_{\epsilon,\varphi}$, where $4\pi r_{\epsilon,\varphi}^2=\Area[S_{\epsilon,\varphi}]$, then we can easily infer a formula for $\Phi$ from the fact that here $\gs=r^2(\vartheta^1)\gammac$, where $r(\vartheta^1)$ is known. In fact,
\begin{equation}
    e^{2\Psi}=\frac{\Area[S_{\epsilon,\varphi}]}{4\pi r^2(\vartheta^1)}\simeq \frac{(|\epsilon|+\varphi\cos\vartheta^1)^2}{|\epsilon|^2-\varphi^2}
\end{equation}
and we see that
\begin{equation}
    e^{2\Psi}(\vartheta^1=0)=\frac{|\epsilon|+\varphi}{|\epsilon|-\varphi}\qquad   e^{2\Psi}(\vartheta^1=\pi)=\frac{|\epsilon|-\varphi}{|\epsilon|+\varphi}
  \end{equation}
  Thus $\Psi$ is \emph{unbounded} from above and below, because here
\begin{equation}
  \Psi(\vartheta^1=0)\to \infty \qquad \Psi(\vartheta^1=\pi)\to -\infty \qquad (|\epsilon|\searrow \varphi)
\end{equation}

Therefore one can also not expect to have good bounds on the comparsion of $r\overline{\Omega\tr\chi}$ and $\Omega^2$, or good control on $\Omega-\overline{\Omega}$ in terms of $r$. Indeed, for these foliations
\begin{equation}
  \Omega^{-2}\frac{r}{2}\overline{\Omega\tr\chi} \simeq \frac{|\epsilon|(|\epsilon|+\varphi\cos\vartheta^1)}{|\epsilon|^2-\varphi^2}=\begin{cases}
  \frac{|\epsilon|}{|\epsilon|-\varphi}\to \infty & (\vartheta^1=0) \\   \frac{|\epsilon|}{|\epsilon|+\varphi}\to \frac{1}{2}&  (\vartheta^1=\pi)\end{cases}
\end{equation}
and
\begin{gather}
  \frac{\Omega-\overline{\Omega}}{r}\simeq-\varphi\frac{\varphi+|\epsilon|\cos\vartheta^1}{|\epsilon|^2-\varphi^2}\simeq\begin{cases}
  -\frac{\varphi}{|\epsilon|-\varphi}\to\infty & (\vartheta^1=0) \\ -\frac{\varphi}{|\epsilon|+\varphi}\to -\frac{1}{2} &(\vartheta^1=\pi) \end{cases}
\end{gather}

\section{The eikonal equation and the optical structure equations}

In Section~\ref{sec:eikonal:general} we summarise a few facts relevant for the proof of the main Proposition, used in Section~\ref{sec:desitter:double} for the explicit construction of global solutions to the eikonal equation on de Sitter, or \emph{initial} data gauge. In Section~\ref{sec:null:structure} we begin the discussion proper of the null structure equations in this setting relevant for the proof of the main Theorem, namely the construction of families of \emph{final} data gauges.

\subsection{The eikonal equation}
\label{sec:eikonal:general}

The detailed study of the geometry of  null hypersurfaces $C$ in de Sitter $(H,h)$ is central to this paper.
We restrict ourselves to the case where $C$ is a non-critical level set of a differentiable function $u:H\to \mathbb{R}$. Then $u$ is the solution to the eikonal equation
\begin{equation}\label{eq:eikonal}
  h^{\mu\nu} \partial_\mu u\partial_\nu u =0
  \end{equation}
  and will then be referred to as an \emph{optical function} in de Sitter.
  We recall that a general solution to \eqref{eq:eikonal} can be constructed by the method of characteristics from a congruence of null geodesic segments. In that regard the following elementary observation will be useful:
  \begin{lemma}
    The de Sitter hyperboloid $H$ is a ``totally null geodesic'' submanifold of $(\mathbb{R}^{3+1},m)$, i.e.~every \emph{null} geodesic in $H$ is straight line in $\mathbb{R}^{4+1}$.
  \end{lemma}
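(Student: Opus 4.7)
The plan is to use the Gauss formula relating the ambient Minkowski connection on $\mathbb{R}^{4+1}$ to the induced Levi-Civita connection on $H$, and then observe that the second fundamental form of $H$ contracted with a null tangent vector vanishes.

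First I would record the ambient-normal data. The hyperboloid \eqref{eq:H} is a level set of $f(t,x)=-t^2+|x|^2$, so a smooth unit normal vectorfield to $H$ in $(\mathbb{R}^{4+1},m)$ is $N=(t,x)$; one checks $m(N,N)=-t^2+|x|^2=1$ on $H$, and that for every $X\in TH$ one has $m(X,N)=0$ (by differentiating $-t^2+|x|^2=1$ along $X$). Since $N$ is essentially the ``position vector'', its ambient covariant derivative is the identity: $\nabla^m_X N=X$ for every tangent $X$ (the components of $N$ in the affine coordinates are simply those coordinates).

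Next, I would compute the vector-valued second fundamental form of $H$ using $m(X,N)=0$ for tangent $X$. Differentiating $m(Y,N)=0$ along $X$ gives
\begin{equation*}
  m(\nabla^m_X Y, N) \;=\; -m(Y,\nabla^m_X N) \;=\; -m(Y,X).
\end{equation*}
Combined with the Gauss decomposition $\nabla^m_X Y=\nabla^h_X Y+\mathrm{II}(X,Y)$ into tangent and normal parts, and since $m(N,N)=1$, this yields
\begin{equation*}
  \mathrm{II}(X,Y)\;=\;-m(X,Y)\,N.
\end{equation*}

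Finally, let $\gamma(s)$ be a null geodesic in $(H,h)$ with velocity $\dot\gamma$. As a geodesic of $h$ it satisfies $\nabla^h_{\dot\gamma}\dot\gamma=0$, and the Gauss formula gives
\begin{equation*}
  \ddot\gamma \;=\; \nabla^m_{\dot\gamma}\dot\gamma \;=\; \mathrm{II}(\dot\gamma,\dot\gamma)\;=\;-m(\dot\gamma,\dot\gamma)\,N.
\end{equation*}
The nullity of $\gamma$ means $h(\dot\gamma,\dot\gamma)=m(\dot\gamma,\dot\gamma)=0$, so $\ddot\gamma\equiv 0$ in $\mathbb{R}^{4+1}$, i.e.~$\gamma$ is an affinely parametrised straight line of the ambient Minkowski spacetime. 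There is no real obstacle here; the only subtlety is bookkeeping the sign of $m(N,N)$ and noting that the obstruction $-m(\dot\gamma,\dot\gamma)N$ is precisely what forces timelike and spacelike geodesics of $H$ (which are hyperbolas and circles in $\mathbb{R}^{4+1}$) to curve, while null geodesics feel no such force.
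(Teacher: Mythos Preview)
Your proof is correct and takes a genuinely different route from the paper's argument. The paper proceeds by explicit construction: it uses the Lorentz symmetry of $(\mathbb{R}^{4+1},m)$ to reduce to the point $p=(0,1,0,0,0)$, writes down the straight line $\gamma(t)=(t,1,Nt)$ through $p$ with null tangent $\partial_t+N$, checks directly that this line stays on $H$, and then observes that since $\nabla^m_{\dot\gamma}\dot\gamma=0$ and the induced connection is the tangential projection of the ambient one, the line is also a null geodesic of $(H,h)$; uniqueness of geodesics finishes it. Your argument instead computes the second fundamental form of $H$ once and for all as $\mathrm{II}(X,Y)=-m(X,Y)N$ (the hyperboloid is totally umbilic), and then the Gauss formula immediately gives $\ddot\gamma=-m(\dot\gamma,\dot\gamma)N$, which vanishes exactly when $\dot\gamma$ is null. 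Your approach is shorter, requires no symmetry reduction, and makes transparent \emph{why} nullity is the key hypothesis --- the normal curvature term is proportional to $m(\dot\gamma,\dot\gamma)$ --- while the paper's approach is more hands-on and yields the explicit parametrisation of the null rays as a byproduct, which is what the paper actually uses downstream in Section~\ref{sec:desitter:double}.
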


  \begin{proof}
    Let $p\in H$, where $H$ is expressed in ambient coordinates $(t,x)$ by \eqref{eq:H}.
    By a rotation of the axes we can first achieve that $p$ has the coordinates
    \begin{equation*}
      x:=x^1\geq 1\qquad {x'}^i:=x^{i+1}=0\qquad i=1,2,3
    \end{equation*}
    If $x\neq 1$, we can apply a Lorentz boost with velocity $v=t/x$, namely a coordinate change $(t,x)\to(\tilde{t},\tilde{x})$,
    \begin{equation*}
      \tilde{t}=\frac{t- v x}{\sqrt{1-v^2}}\qquad \tilde{x}=\frac{x-vt}{\sqrt{1-v^2}}
    \end{equation*}
    which achieves that $p$ is at $(\tilde{t}=0,\tilde{x}=1)$. Of course, also in the new coordinates $H$ is given by \eqref{eq:H}
    which shows that w.l.o.g.~we can assume that $p$ has the coordinates $(t=0,x=1,x'=0)$.
    
    The tangent space to $H$ at $p$, is then clearly spanned by
    \begin{equation*}
      \mathrm{T}_p H=\langle \frac{\partial}{\partial t},\frac{\partial}{\partial {x'}^i}:i=1,2,3\rangle
    \end{equation*}
    Therefore any line in $\mathrm{T}_pH\cap C_p$, where $C_p$ is the light cone in $T_p\mathbb{R}^{3+1}$, can be identified with a vector
    \begin{equation*}
      L=\frac{\partial}{\partial t}+N\qquad N=N^i\frac{\partial}{\partial {x'}^i}\qquad \sum_{i=1}^3 (N^i)^2=1
    \end{equation*}
    The straight line emanating from $p$ with initial tangent vector $L$ is given by
    \begin{equation*}
      \gamma(t)=\Bigl(t,x=1,x'=Nt\Bigr)
    \end{equation*}
    and obviously $\gamma(t)\in H$ for all $t\in\mathbb{R}$. Moreover, by direct computation
    \begin{equation*}
      \dot{\gamma}(t)=\frac{\partial}{\partial t}+N^i\frac{\partial}{\partial {x'}^i}
    \end{equation*}
    and thus
    \begin{equation*}
      \nabla_{\dot{\gamma}}\dot{\gamma}=0
    \end{equation*}
    where $\nabla$ refers to the trivial connection of $\mathbb{R}^{4+1}$ in cartesian coordinates. By uniqueness of the connection $\stackrel{(h)}{\nabla}$ on $H$, and $h=m\rvert_H$, we have $\stackrel{(h)}{\nabla}=\Pi \nabla$, where $\Pi$ is the projection to the tangent space of $H$ at any given point, and thus also 
    \begin{equation*}
      \stackrel{(h)}{\nabla}_{\dot{\gamma}}\dot{\gamma}=\Pi \nabla_{\dot{\gamma}}\dot{\gamma}=0
    \end{equation*}
    which says that $\gamma(t)$ is also a geodesic in $H$, which of course is a null geodesic: $h(\dot{\gamma},\dot{\gamma})=m(\dot{\gamma},\dot{\gamma})=0$.

  \end{proof}

  \subsubsection{Null geodesic congruences}

  The solutions to \eqref{eq:eikonal} can now be constructed, at least locally, as follows:
  Let $S\subset H$ be a closed spacelike surface of codimension $2$ in $H$.
  For simplicity, let us take $S\subset \mathbb{S}^3$ to be a surface diffeomorphic to $\mathbb{S}^2$ in $H\cap\{t=0\}$. Let $N$ be the unit normal vectorfield to $S$ in $\mathbb{S}^3$, i.e.~for $p\in S$, and $\mathrm{T}_pS$ viewed as a subspace of $\mathrm{T}_p \mathbb{S}^3$, we have $\mathrm{T}_pS^\perp=\langle N\rangle$. $\mathbb{S}^3\setminus S$ has two components and we choose $N$ to have an orientation in the sense that it points to the same component everywhere on $S$. Let
  \begin{equation*}
    L=\frac{\partial}{\partial t}+N
  \end{equation*}
  and let $\Gamma_p^\epsilon$ be the null geodesic segment emanating from a point $p=(t=0,x_p)\in S$ with initial tangent vector $L$.
    We have seen above that $L_p\in\mathrm{T}_pH$  and $\Gamma_p$ is a straight line
  \begin{equation*}
    \Gamma_p^\epsilon := \Bigl\{ (t,x_p+N_pt) : 0\leq t\leq\epsilon \Bigr\}
  \end{equation*}
  in the ambient $\mathbb{R}^{3+1}$.
  Then define
  \begin{equation*}
    C=\bigcup_{p\in S} \Gamma_p^\epsilon
  \end{equation*}
  and set $u=0$ on $C$. In fact, we could choose a smooth family of surfaces $S_\delta\subset \mathbb{S}^3$, with $S_0=S$, which foliate a neighborhood of $S\subset\mathbb{S}^3$. Then for each $\delta$ we can repeat the construction above to obtain the surfaces $C_\delta=\bigcup_{p\in S_\delta}\Gamma_p^\epsilon$, and set $u=\delta$ on $C_\delta$. Then, by definition,
  \begin{equation*}
    T_pC = \Bigl\{ X\in T_p H : \ud u\cdot X=X u=0 \Bigr\}
  \end{equation*}
  which shows that $\mathrm{T}_p C$ is the orthogonal complement of 
  \begin{equation*}
    L'=-\ud u^\sharp\qquad {L'}^\mu = - h^{\mu\nu}\partial_\nu u
  \end{equation*}
  in $\mathrm{T}_p H$. It remains to show that $L'\in T_pC\subset\mathrm{T}_pH$, which then implies
  \begin{equation*}
    0=h(L',L')=h^{\mu\nu}\partial_\mu u\partial_\nu u\,,
  \end{equation*}
  namely $u$ is a solution to \eqref{eq:eikonal}, as desired. The former is so because $C_\delta$ are \emph{null} hypersurfaces, i.e. $T_pC_\delta$ are null hyperplanes; indeed by construction the tangent space $T_pC$ is spanned by $L_p$ and $T_pS$, and here $L'$ is colinear to $L_p$ at each point.

  We see that as long as the null rays with tangents $L_p=\partial_t +N_p$ over $p\in S$ do not intersect, the function $u$ remains smooth; this can always be arranged for $\epsilon >0$ sufficiently small. For suitable choices of surfaces $S_\delta$, as considered below, the normal null rays may in fact never intersect, in which case this construction yields global solutions.


  \subsection{Null structure equations}
  \label{sec:null:structure}
    

The structure equations of a double null foliation are presented here in close analogy to  \cite{C:09}. Definitions are not repeated from Chapter~1 in \cite{C:09}, but attention is given to differences that occur in the $\Lambda>0$ setting. The equations will be reduced to the case of vanishing Weyl curvature.

\subsubsection{Metric}

  \begin{equation}
    D\gs=2\Omega\chi\qquad \Db\gs=2\Omega\chib
  \end{equation}
  \begin{equation}\label{eq:D:Omega}
    D \log \Omega = \omega\qquad \Db\log\Omega = \omegab
\end{equation}
\begin{equation}\label{eq:D:r}
  D r=\frac{r}{2}\overline{\Omega\tr\chi}\qquad    \Db r=\frac{r}{2}\overline{\Omega\tr\chib}
\end{equation}

\subsubsection{Null second fundamental form}

The null second fundamental form satisfies the propagation equations:
\begin{subequations}\label{eq:D:chi}
  \begin{gather}
  D\chi=\omega\chi+\Omega\bigl(\chi\times\chi-\alpha\bigr)\\
  \hat{D}\chih=\omega\chih-\Omega\alpha\label{eq:Dh:chih}\\
  D\tr\chi=\omega\tr\chi-\Omega(\chi,\chi)\label{eq:D:tr:chi}
\end{gather}
\end{subequations}
Here $\alpha[W]=0$.

Moreover  the Codazzi equations read:
\begin{align}
  \divs\chi-\ds\tr\chi+\chi^\sharp\cdot\zeta-\tr\chi\,\zeta&=-\beta\label{eq:codazzi:chi}\\
  \divs\chib-\ds\tr\chib-\chib^\sharp\cdot\zeta+\tr\chib\zeta&=\betab\label{eq:codazzi:chib}
\end{align}
\begin{align}
  \divs(\Omega\chih)-\frac{1}{2}\ds(\Omega\tr\chi)-\Omega\chih^\sharp\cdot\etab+\frac{1}{2}\Omega\tr\chi\,\etab&=-\Omega\beta\label{eq:codazzi:chih}\\
  \divs(\Omega\chibh)-\frac{1}{2}\ds(\Omega\tr\chib)-\Omega\chibh^\sharp\cdot\eta+\frac{1}{2}\Omega\tr\chib\eta&=\Omega\betab\label{eq:codazzi:chibh}
\end{align}
Here $\beta[W]=0$, $\betab[W]=0$.

\begin{remark}
  In the shear-free case $\chibh=0$ the conjugate Codazzi equation reduces to:
  \begin{equation}
    \label{eq:codazzi:chib:shear-free}
    \ds(\Omega\tr\chib)=\Omega\tr\chib\eta
  \end{equation}

\end{remark}

Furthemore
\begin{gather}
  \chibp=\Omega^{-1}\chib\\
  \Db\chibp=\Omega^2\chibp\times\chibp-\alphab\label{eq:Db:chibp}\\
  \Db\tr\chibp=-\frac{1}{2}\Omega^2(\tr\chibp)^2-\Omega^2|\chibhp|^2\\
  \Dbh\chibhp=0\label{eq:Db:chibh}
\end{gather}
and here $\alphab[W]=0$.

We also note
\begin{equation}
  D (\Omega\chib) = \Omega^2 \Bigl\{\nablas\etab+\tilde{\nablas}\etab+2\etab\otimes\etab+\frac{1}{2}\bigl(\chib\times\chi+\chi\times\chib\bigr)+\rho[W]\gs+2\frac{\Lambda}{3}\gs\Bigr\}\label{eq:D:chib}
\end{equation}

\subsubsection{Null expansions}

We have
\begin{align}
  D\omegab &= \Omega^2\Bigl(2(\eta,\etab)-\vert\eta\rvert^2-\rho[W]+\frac{\Lambda}{3}\Bigr) \label{eq:D:omegab}\\
  D(\Omega\tr\chib) &= \Omega^2\Bigl(-(\chib,\chi)+2\divs\etab+2|\etab|^2+2\rho[W]+4\frac{\Lambda}{3}\Bigr) \label{eq:D:Omega:tr:chib}
\end{align}

Here $\rho[W]=0$, and we set
\begin{equation}
  \frac{\Lambda}{3}=1
\end{equation}

The Gauss equation reads:
\begin{equation}\label{eq:gauss:hat}
    K+\frac{1}{4}\tr\chi\tr\chib-1=-\rho[W]+\frac{1}{2}(\chih,\chibh)
\end{equation}

Therefore
\begin{equation}
  D\bigl(2\omegab-\Omega\tr\chib\bigr)= -2\Omega^2\Bigl(-(\chibh,\chih)+K+\divs\etab-2(\eta,\etab)+\vert\eta\rvert^2+\vert\etab\rvert^2+3\rho[W]\Bigr)
\end{equation}

The conjugate equations are:
\begin{equation}
    \Db\omega= \Omega^2\Bigl(2(\eta,\etab)-\lvert\etab\rvert^2-\rho[W]+\frac{\Lambda}{3}\Bigr)\label{eq:Db:omega}
\end{equation}
\begin{equation}\label{eq:Db:ds:omega}
  \Db\ds\omega=2\Omega^2\ds\log\Omega\Bigl(2(\eta,\etab)-\lvert\etab\rvert^2+\frac{\Lambda}{3}\Bigr)+\Omega^2\Bigl(2(\nablas\eta,\etab)+2(\eta,\nablas\etab)-2(\etab,\nablas\etab)\Bigr)
\end{equation}

\subsubsection{Torsion}

The propagation equations for the torsion are:
\begin{subequations}\label{eq:propagation:torsion}
\begin{align}
  D\eta&=\Omega\bigl(\chi^\sharp\cdot\etab-\beta\bigr)\label{eq:D:eta}\\
  \Db\etab&=\Omega\bigl(\chib^\sharp\cdot \eta+\betab\bigr)\\
  D\etab&=-\Omega\bigl(\chi^\sharp\cdot\etab-\beta\bigr)+2\ds\omega\label{eq:D:etab}\\
  \Db\eta&=-\Omega\bigl(\chib^\sharp\cdot\eta+\betab\bigr)+2\ds\omegab\label{eq:Db:eta}
\end{align}
\end{subequations}
Here $\beta[W]=0$, and $\betab[W]=0$.


Moreover
 \begin{gather}
   \eta=\zeta+\ds\log\Omega\qquad
   \etab=-\zeta+\ds\log \Omega\\
    \ds\log\Omega=\frac{1}{2}\bigl(\eta+\etab\bigr)\label{eq:ds:log:Omega:eta}
\end{gather}
and
 \begin{gather}
   D\ds\log \Omega=\ds\omega\\
   D\zeta=-\ds\omega-\Omega\chi^\sharp\cdot\bigl(\zeta-\ds\log\Omega\bigr)-\Omega \beta
 \end{gather}


\subsection{Gauss curvature and mass aspect function}

As explained in Section~\ref{sec:results} --- c.f.~discussion following the informal statement of the Theorem --- control on the conformal factor relies on sufficient bounds for the Gauss curvature. While the Gauss curvature $K(u,v)$ of $S_{u,v}$ satisfies a propagation equation along the generators of $C_u$, and $\Cb_v$, the equation loses derivatives, which can be avoided for a specific renormalised quantity which we discuss in Section~\ref{sec:propagation:gauss}; this cancellation in the propagation equation was first observed in \cite{CK:93}. This further leads to the mass aspect functions, whose propagation equations are discussed in Section~\ref{sec:propagation:mu}, \ref{sec:prelim:decoupling}.

\subsubsection{Propagation equation for Gauss curvature}
\label{sec:propagation:gauss}

Let us define
\begin{equation}\label{eq:kappa}
  \kappa:=K-\divs\eta\,.
\end{equation}

As derived in \Ceq{5.28} the propagation equation for the Gauss curvature reads
\begin{equation}
  \begin{split}
    DK+\Omega\tr\chi K&=\divs\divs(\Omega\chi)-\Laplaces(\Omega\tr\chi)\\
    &=\divs\divs(\Omega\chih)-\frac{1}{2}\Laplaces(\Omega\tr\chi)
  \end{split}
\end{equation}

Recall now that \eqref{eq:D:eta} does not contain a $\ds\omega$ term.
  We can now also rewrite the propagation  equation for $\eta$ using the Codazzi equation \eqref{eq:codazzi:chi},
  \begin{equation}
    D\eta=\divs(\Omega\chi^\sharp)-\ds(\Omega\tr\chi)+\Omega\tr\chi\etab
  \end{equation}
  and thus, using \Ceq{6.107}, we arrive at \Ceq{6.108}:
  \begin{equation}
    \begin{split}
      -D\divs\eta=&-\divs\divs\bigl(\Omega\chi^\sharp\bigr)+\Laplaces\bigl(\Omega\tr\chi\bigr)\\
      &+\divs\bigl(2\Omega\chih\cdot \eta-\Omega\tr\chi\etab\bigr)+\Omega\tr\chi\divs\eta
    \end{split}
  \end{equation}  
  and we conclude that the propagation equation for $K-\divs\eta$ does not lose derivatives:
  \begin{lemma}\label{lemma:D:kappa}
    \begin{subequations}
    \begin{gather}
      D\kappa+\Omega\tr\chi\kappa=\divs j\qquad j=\Omega\bigl(2\chih\cdot \eta-\tr\chi\etab\bigr)\\
      D\bigl(\Omega^2\kappa\bigr)=\bigl(2\omega-\Omega\tr\chi\bigr)\Omega^2\kappa+\Omega^2\divs j
    \end{gather}
  \end{subequations}
\end{lemma}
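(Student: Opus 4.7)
The plan is to add the propagation equation for $K$ to the propagation equation for $-\divs\eta$ that are already displayed just above the statement of the Lemma, exploiting a cancellation of the top-order terms. Writing the first equation as
\begin{equation*}
  DK+\Omega\tr\chi K=\divs\divs(\Omega\chih)-\frac{1}{2}\Laplaces(\Omega\tr\chi)\,,
\end{equation*}
and the second as
\begin{equation*}
  -D\divs\eta=-\divs\divs(\Omega\chi^\sharp)+\Laplaces(\Omega\tr\chi)+\divs\bigl(2\Omega\chih\cdot\eta-\Omega\tr\chi\etab\bigr)+\Omega\tr\chi\divs\eta\,,
\end{equation*}
their sum yields $D\kappa+\Omega\tr\chi\kappa$ on the left-hand side once we pair the $\Omega\tr\chi K$ and $\Omega\tr\chi\divs\eta$ terms.

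The key identity is the trace-free decomposition $\chi=\chih+\frac{1}{2}\tr\chi\,\gs$, which gives
\begin{equation*}
  \divs\divs(\Omega\chi^\sharp)=\divs\divs(\Omega\chih)+\frac{1}{2}\Laplaces(\Omega\tr\chi)\,.
\end{equation*}
Substituting this into the sum cancels both $\divs\divs(\Omega\chih)$ and the $\Laplaces(\Omega\tr\chi)$ contributions, leaving exactly $\divs(2\Omega\chih\cdot\eta-\Omega\tr\chi\etab)=\divs j$ on the right-hand side. This establishes the first form of the propagation equation and is the only nontrivial point; there are no derivative losses, which is the main purpose of the computation.

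For the renormalised form, I will simply multiply by $\Omega^2$ and use $D\Omega^2=2\omega\Omega^2$ from \eqref{eq:D:Omega}, to compute
\begin{equation*}
  D(\Omega^2\kappa)=2\omega\,\Omega^2\kappa+\Omega^2 D\kappa=(2\omega-\Omega\tr\chi)\,\Omega^2\kappa+\Omega^2\divs j\,,
\end{equation*}
which yields the second asserted identity. I anticipate no obstacle beyond verifying the algebraic cancellation of the top-order terms; the entire argument is a direct combination of formulas already in hand.
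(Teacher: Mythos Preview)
Your proof is correct and follows essentially the same approach as the paper: adding the two displayed propagation equations for $K$ and $-\divs\eta$ and observing the cancellation of the top-order terms, then multiplying by $\Omega^2$ and using $D\log\Omega=\omega$ for the renormalised form. You have in fact supplied more detail than the paper, which simply states that adding the equations yields the result without spelling out the trace-free decomposition identity.
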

\begin{proof}
  Adding the propagation equations for $K$ and $\divs\eta$ derived above, we obtain
  \begin{equation}
  \begin{split}
    D\bigl(K-\divs\eta\bigr)=&\divs\bigl(2\Omega\chih\cdot \eta-\Omega\tr\chi\etab\bigr)\\
    &-\Omega\tr\chi K+\Omega\tr\chi\divs\eta
  \end{split}
\end{equation}
which is already the equation for $\kappa=K-\divs\eta$.
  
\end{proof}

Let us also derive the conjugate equation, namley the propagation equation for
\begin{equation}\label{eq:kappab}
  \kappab:=K-\divs\etab
\end{equation}

First we rewrite the propagation equation for $\etab$ using the Codazzi equations:
\begin{equation}
  \Db\etab=\divs\Omega\chib-\ds(\Omega\tr\chib)+\Omega\tr\chib\eta
\end{equation}
This step is essential and uses the Einstein equations and the fact that the Weyl curvature vanishes.
Then it follows  from the propagation equation for the Gauss curvature that
\begin{equation}\label{eq:Db:kappab}
  \Db\kappab+\Omega\tr\chib\kappab=\divs\bigl(2\Omega\chibh^\sharp\cdot\etab-\Omega\tr\chib\eta\bigr)
\end{equation}

\begin{remark}
  Note that in the shear-free case $\chibh=0$ we can use the Codazzi equation \eqref{eq:codazzi:chib:shear-free} again and the propagation equations for $\etab$, and $\kappab$ reduces to
  \begin{align}
    \Db\etab&=-\frac{3}{2}\Omega\tr\chib\eta\\
    \Db|\etab|^2+\Omega\tr\chib|\etab|^2&=-3\Omega\tr\chib(\eta,\etab)\\
    \Db\kappab+\Omega\tr\chib\kappab&=-\Omega\tr\chib\bigl(|\eta|^2+\divs\eta\bigr)    \label{eq:Db:kappab:shear-free}
  \end{align}

\end{remark}

\subsubsection{Propagation equation for mass aspect function}
\label{sec:propagation:mu}

Recall the mass aspect function \eqref{eq:mu}.
Also set
\begin{equation}\label{eq:mub}
  \mub := -\rho[W]+\frac{1}{2}(\chih,\chibh)-\divs\etab
\end{equation}

\begin{lemma}\label{lemma:D:mu}
  \begin{equation}
    \begin{split}
      D(\Omega^2\mu)=&\bigl(2\omega-\Omega\tr\chi\bigr)\Omega^2\mu+\Omega^2\divs\bigl(2\chih\cdot \Omega\eta-\tr\chi\Omega\etab\bigr)\\
      &-\frac{1}{2}\Omega\tr\chi\bigl(\Omega^2\mub-|\Omega\etab|^2\bigr)-\frac{1}{4}\Omega\tr\chib|\Omega\chih|^2
  \end{split}
\end{equation}
  \begin{equation}
    \begin{split}
      \Db(\Omega^2\mub)=&\bigl(2\omegab-\Omega\tr\chib\bigr)\Omega^2\mub+\Omega^2\divs\bigl(2\chibh\cdot \Omega\etab-\tr\chib\Omega\eta\bigr)\\
      &-\frac{1}{2}\Omega\tr\chib\bigl(\Omega^2\mu-|\Omega\eta|^2\bigr)-\frac{1}{4}\Omega\tr\chi|\Omega\chibh|^2
  \end{split}
\end{equation}

\end{lemma}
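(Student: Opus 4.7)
The plan is to derive the lemma from Lemma~\ref{lemma:D:kappa} by exploiting the algebraic decomposition
\[
\mu = \kappa + \tfrac{1}{4}\tr\chi\tr\chib - 1,
\]
which is built into the definition \eqref{eq:mu}. Writing $\Omega^2\mu = \Omega^2\kappa + \tfrac{1}{4}(\Omega\tr\chi)(\Omega\tr\chib) - \Omega^2$ and applying $D$, three pieces must be handled: $D(\Omega^2\kappa)$ is given directly by Lemma~\ref{lemma:D:kappa}, $D\Omega^2 = 2\omega\Omega^2$ follows from \eqref{eq:D:Omega}, and the product $(\Omega\tr\chi)(\Omega\tr\chib)$ is treated by the Leibniz rule.

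For the Leibniz piece I would use \eqref{eq:D:Omega} together with \eqref{eq:D:tr:chi} to obtain $D(\Omega\tr\chi) = 2\omega\Omega\tr\chi - \Omega^2(\chi,\chi)$, and apply \eqref{eq:D:Omega:tr:chib} directly for $D(\Omega\tr\chib)$. Expanding the inner products $(\chi,\chi) = |\chih|^2 + \tfrac{1}{2}(\tr\chi)^2$ and $(\chi,\chib) = (\chih,\chibh) + \tfrac{1}{2}\tr\chi\tr\chib$, each carries a $-\tfrac{1}{8}\Omega^3(\tr\chi)^2\tr\chib$-contribution, which combine into $-\tfrac{1}{4}\Omega^3(\tr\chi)^2\tr\chib$; this is exactly what is needed to match the $(2\omega - \Omega\tr\chi)\cdot\tfrac{1}{4}\Omega^2\tr\chi\tr\chib$ piece extracted when factoring the target $(2\omega-\Omega\tr\chi)\Omega^2\mu$.

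After this cancellation the coefficients of $2\omega - \Omega\tr\chi$ assemble into $(2\omega - \Omega\tr\chi)\Omega^2\mu$, and the remaining residue splits into $-\tfrac{1}{4}\Omega^3\tr\chib|\chih|^2$ together with $\tfrac{1}{2}\Omega^3\tr\chi\bigl[\rho[W] - \tfrac{1}{2}(\chih,\chibh) + \divs\etab + |\etab|^2\bigr]$. By \eqref{eq:mub} the bracket is precisely $-\mub + |\etab|^2$, yielding $-\tfrac{1}{2}\Omega\tr\chi\bigl(\Omega^2\mub - |\Omega\etab|^2\bigr)$ and completing the first equation.

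The conjugate identity for $\Db(\Omega^2\mub)$ follows by the symmetric argument, starting from \eqref{eq:Db:kappab} in place of Lemma~\ref{lemma:D:kappa} and using the companion identity $\mub = \kappab + \tfrac{1}{4}\tr\chi\tr\chib - 1$, which follows from \eqref{eq:kappab} and the Gauss equation \eqref{eq:gauss:hat} in the same way. The main obstacle is purely bookkeeping of the many $\Omega$-weighted terms produced by the Leibniz expansion of $(\Omega\tr\chi)(\Omega\tr\chib)$; once the $(\tr\chi)^2\tr\chib$-cancellation is recognized, the identification of the $\mub$-structure in the residue is forced by \eqref{eq:mub}, since $\divs\etab$ is the only derivative-losing term produced by \eqref{eq:D:Omega:tr:chib} and it appears with precisely the coefficient required to reconstruct $\mub$.
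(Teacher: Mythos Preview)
Your proposal is correct and follows essentially the same route as the paper: both derive $D\mu$ from the decomposition $\mu=\kappa+\tfrac{1}{4}\tr\chi\tr\chib-1$, Lemma~\ref{lemma:D:kappa}, and a Leibniz computation for $D(\tr\chi\tr\chib)$ that produces the $\mub$-structure via \eqref{eq:D:Omega:tr:chib}. The only cosmetic difference is that the paper computes $D(\tr\chi\tr\chib)$ by writing $\tr\chi\tr\chib=\tr\chip\cdot\Omega\tr\chib$ (so the $\omega$-terms are absent in the intermediate step and enter only at the end via $D\log\Omega=\omega$), whereas you work directly with $D\bigl[(\Omega\tr\chi)(\Omega\tr\chib)\bigr]$ and track the $\omega$-weights throughout; the $(\tr\chi)^2\tr\chib$-cancellation you identify is exactly the same one.
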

\begin{proof}
  Write $\tr\chi\tr\chib=\tr\chip\Omega\tr\chib$ and use \eqref{eq:D:Omega:tr:chib}:
  \begin{equation}\label{eq:D:trchitrchib}
    D\bigl(\tr\chi\tr\chib)=-2\Omega\tr\chi\bigl(\mub-|\etab|^2\bigr)-4\Omega\tr\chi\bigl(\frac{1}{4}\tr\chi\tr\chib-1\bigr)-\Omega\tr\chib|\chih|^2
  \end{equation}
  using the definition of $\underline{\mu}$. Hence by Lemma~\ref{lemma:D:kappa}
  \begin{equation*}
    \begin{split}
      D\mu&=D\kappa+\frac{1}{4}D(\tr\chi\tr\chib)\\
      &=-\Omega\tr\chi \mu+\divs j-\frac{1}{2}\Omega\tr\chi(\mub-|\etab|^2)-\frac{1}{4}\Omega\tr\chib|\chih|^2
    \end{split}
  \end{equation*}
  and the statement of the Lemma follows from $D\log\Omega=\omega$.
  Similarly for the conjugate equation.
\end{proof}

The propagation equation for $\mu$ as written above couples to the conjugate mass aspect function $\mub$. We can rewrite this equations as follows:
    \begin{equation}\label{eq:D:mu:all}
      \begin{split}
        D\mu=&-\Omega\tr\chi \mu+\divs j-\frac{1}{2}\Omega\tr\chi(\mub-|\etab|^2)-\frac{1}{4}\Omega\tr\chib|\chih|^2\\
        =&-\frac{3}{2}\Omega\tr\chi \mu-\frac{1}{2}\Omega\tr\chi\bigl(\divs\eta+\divs\etab+|\etab|^2)-\frac{1}{4}\Omega\tr\chib|\chih|^2\\
        &+2\divs(\Omega\chih)\cdot (\eta-\etab)+2(\Omega\chih,\nablas\eta)+2\Omega\chih(\etab,\etab)
      \end{split}
    \end{equation}
    where
    \begin{equation}
    \mub + \divs\etab = \mu + \divs\eta 
\end{equation}
and we used that by the Codazzi equation \eqref{eq:codazzi:chi},
\begin{equation}\label{eq:codazzi:chi:j}
    2\divs(\Omega\chih)-\ds(\Omega\tr\chi)-2\Omega \chih^\sharp\cdot\etab+\Omega \tr\chi\,\etab=0
  \end{equation}
  to rewrite
\begin{equation}\label{eq:div:j}
  \begin{split}
    \divs j=&\divs\bigl(2\Omega\chih\cdot \eta-\Omega\tr\chi\etab\bigr)\\
    =&2\divs(\Omega\chih)\cdot \eta+2(\Omega\chih,\nablas\eta)-\ds(\Omega\tr\chi)\cdot \etab-\Omega\tr\chi\divs\etab\\
    =&2\divs(\Omega\chih)\cdot (\eta-\etab)+2(\Omega\chih,\nablas\eta+\etab\otimes\etab)-\Omega\tr\chi\bigl(\divs\etab+|\etab|^2)\,.
  \end{split}
\end{equation}


In the preliminary result Lemma~\ref{lemma:mu} we have seen that a bound on $r^3\mu$ can be obtained at the cost of introducing an assumption on  $\Laplaces\log\Omega$, which arises here from the $\divs\eta+\divs\etab$ term. Another strategy is to introduce the  ``modified mass aspect function'' $\breve{\mu}$, analogous to \cite{KN:03}, to cancel this term altogether. We discuss this in the next Section~\ref{sec:prelim:decoupling}.

\subsubsection{Decoupling of the mass aspect functions}
\label{sec:prelim:decoupling}

Recall Lemma~\ref{lemma:D:kappa}, according to which we have
\begin{equation}
      D\kappa+\Omega\tr\chi\kappa=2\divs(\Omega\chih)\cdot (\eta-\etab)+2(\Omega\chih,\nablas\eta+\etab\otimes\etab)-\Omega\tr\chi\bigl(\divs\etab+|\etab|^2)
    \end{equation}
    where we used the result of the calculation \eqref{eq:div:j}.
    Recalling the formula \eqref{eq:D:trchitrchib}, we see that to cancel the $\divs\etab$ term on the right hand side we should consider the quantity
    \begin{equation}\label{eq:mu:breve}
      \begin{split}
        \breve{\mu}:=&\kappa+\frac{1}{2}\tr\chi\tr\chib-2=K-\divs\eta+\frac{1}{2}\tr\chi\tr\chib-2\\
        =&-\rho[W]+\frac{1}{2}(\chih,\chibh)-\divs\eta+\frac{1}{4}\tr\chi\tr\chib-1
    \end{split}
  \end{equation}
which statisfies
\begin{align}
  D\breve{\mu}+\Omega\tr\chi\breve{\mu}=&-\Omega\tr\chi\bigl(-\rho[W]+\frac{1}{2}(\chih,\chibh)\bigr)-\frac{1}{2}\Omega\tr\chib|\chih|^2+\breve{m}\\
  \breve{m}=&2\divs(\Omega\chih)\cdot (\eta-\etab)+2(\Omega\chih,\nablas\eta+\etab\otimes\etab)
\end{align}

In particular in the shear-free case on de Sitter:
\begin{lemma}\label{lemma:D:mu:breve}
\begin{subequations}
  \begin{align}
  D\bigl(\Omega^2\breve{\mu}\bigr)=&\bigl(2\omega-\Omega\tr\chi\bigr)\Omega^2\breve{\mu}-\frac{1}{2}\Omega\tr\chib\Omega^2|\chih|^2+\Omega^2\breve{m}\\
  \breve{m}=&2\divs(\Omega\chih)\cdot (\eta-\etab)+2(\Omega\chih,\nablas\eta+\etab\otimes\etab) 
\end{align}
\end{subequations}
\end{lemma}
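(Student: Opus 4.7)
The plan is to compute $D\breve{\mu}$ directly from the definition $\breve{\mu}=\kappa+\tfrac12\tr\chi\tr\chib-2$ by adding the evolution equation for $\kappa=K-\divs\eta$ (Lemma~\ref{lemma:D:kappa}) and half of the evolution equation for $\tr\chi\tr\chib$ given in \eqref{eq:D:trchitrchib}, and then to pass to $D(\Omega^2\breve{\mu})$ using $D\log\Omega=\omega$.

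First I will expand $\divs j$ as already done in \eqref{eq:div:j} to rewrite Lemma~\ref{lemma:D:kappa} in the form
\begin{equation*}
  D\kappa+\Omega\tr\chi\,\kappa=\breve{m}-\Omega\tr\chi\bigl(\divs\etab+|\etab|^2\bigr),
\end{equation*}
where $\breve{m}=2\divs(\Omega\chih)\cdot(\eta-\etab)+2(\Omega\chih,\nablas\eta+\etab\otimes\etab)$. The $\etab\otimes\etab$ absorbs the $\ds(\Omega\tr\chi)\cdot\etab$ term via Codazzi \eqref{eq:codazzi:chi:j}, exactly as in \eqref{eq:div:j}. In parallel, in the shear-free case on de Sitter ($\chibh=0$, $\rho[W]=0$) the definition \eqref{eq:mub} reduces to $\mub=-\divs\etab$, so \eqref{eq:D:trchitrchib} becomes
\begin{equation*}
  \tfrac{1}{2}D(\tr\chi\tr\chib)=\Omega\tr\chi\bigl(\divs\etab+|\etab|^2\bigr)-\Omega\tr\chi\bigl(\tfrac12\tr\chi\tr\chib-2\bigr)-\tfrac12\Omega\tr\chib|\chih|^2.
\end{equation*}

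The decisive observation is that on adding the two displays, the $\pm\Omega\tr\chi(\divs\etab+|\etab|^2)$ terms cancel — this is precisely the cancellation that motivates the renormalisation $\kappa\leadsto\breve{\mu}$ — and one is left with
\begin{equation*}
  D\breve{\mu}+\Omega\tr\chi\,\breve{\mu}=\breve{m}-\tfrac12\Omega\tr\chib|\chih|^2.
\end{equation*}
Finally, since $D\Omega^2=2\omega\Omega^2$ by \eqref{eq:D:Omega}, multiplying the previous equation by $\Omega^2$ and combining with $D(\Omega^2\breve{\mu})=2\omega\Omega^2\breve{\mu}+\Omega^2 D\breve{\mu}$ yields the asserted identity with factor $(2\omega-\Omega\tr\chi)$ in front of $\Omega^2\breve{\mu}$. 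There is no genuine obstacle here; the only substantive point is the cancellation of the $\divs\etab$ terms, which is a bookkeeping exercise made transparent by the preliminary formulas \eqref{eq:div:j} and \eqref{eq:D:trchitrchib} together with the shear-free de Sitter reduction $\mub=-\divs\etab$.
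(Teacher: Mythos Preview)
Your proposal is correct and follows essentially the same approach as the paper's own derivation in Section~\ref{sec:prelim:decoupling}: expand $\divs j$ via \eqref{eq:div:j} in Lemma~\ref{lemma:D:kappa}, add half of \eqref{eq:D:trchitrchib}, observe the cancellation of the $\Omega\tr\chi(\divs\etab+|\etab|^2)$ terms, then multiply by $\Omega^2$. The only cosmetic difference is that the paper first writes the general equation for $D\breve{\mu}$ (retaining the $-\Omega\tr\chi(-\rho[W]+\tfrac12(\chih,\chibh))$ term) and then specialises to the shear-free de Sitter case, whereas you invoke $\mub=-\divs\etab$ one step earlier; the content is identical.
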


  This equation is suitable to integrate from $\Cb_+$, however it cannot be expected that $\Omega^3\breve{\mu}$ is finite; indeed even with respect to the spherically symmetric foliation of de Sitter we have
  \begin{equation}
    \breve{\mu}=-\frac{1}{r^2}\,.
  \end{equation}
  The defining property could be taken to be $\breve{\mu}=\overline{\breve{\mu}}$ which in the shear-free case on de Sitter becomes 
\begin{equation}
  \divs\eta=\frac{1}{4}\bigl(\tr\chi\tr\chib-\overline{\tr\chi\tr\chib}\bigr)\,.
\end{equation}

Let us then consider the propagation equation for $\breve{\mu}-\overline{\breve{\mu}}$:
  \begin{gather}
    D\bigl(\breve{\mu}-\overline{\breve{\mu}}\bigr)+\Omega\tr\chi\bigl(\breve{\mu}-\overline{\breve{\mu}}\bigr)=\breve{r}-\overline{\breve{r}}+\breve{m}-\overline{\breve{m}}\label{eq:D:tilde:mu}\\
    \breve{r}:=-\Omega\tr\chi\bigl(-\rho[W]+\frac{1}{2}(\chih,\chibh)\bigr)-\frac{1}{2}\Omega\tr\chib|\chih|^2
  \end{gather}
For brevity, we define
\begin{equation}\label{eq:tilde:mu}
  \tilde{\mu}:=\breve{\mu}-\overline{\breve{\mu}}\,.
\end{equation}





In analogy to $\breve{\mu}$ we can introduce the \emph{conjugate} function:
\begin{equation}\label{eq:mub:breve}
\underline{\breve{\mu}}         :=-\rho[W]+\frac{1}{2}(\chih,\chibh)-\divs\etab+\frac{1}{4}\tr\chi\tr\chib-1
\end{equation}


Also note
\begin{equation}
  \begin{split}
    \Db\bigl(\tr\chi\tr\chib\bigr)=&-2\Omega\tr\chib\Bigl(\breve{\mu}-|\eta|^2+\frac{1}{4}\tr\chib\tr\chi-1\Bigr)-\Omega\tr\chi|\chibh|^2\\
    =&-2\Omega\tr\chib\Bigl(\mu-|\eta|^2+\frac{1}{2}\tr\chib\tr\chi-2\Bigr)-\Omega\tr\chi|\chibh|^2
  \end{split}
\end{equation}
which we will use in the form
\begin{equation}\label{eq:Db:trchitrchib:general}
  \Db\bigl(\frac{1}{4}\tr\chi\tr\chib-1\bigr)+\Omega\tr\chib\Bigl(\frac{1}{4}\tr\chib\tr\chi-1\Bigr)=-\frac{1}{2}\Omega\tr\chib\Bigl(\mu-|\eta|^2\Bigr)-\frac{1}{4}\Omega\tr\chi|\chibh|^2
\end{equation}

Also,
\begin{equation}\label{eq:breve:mu:mub}
  \breve{\mu}+\breve{\mub}=-2\rho[W]+(\chih,\chibh)-\divs\eta-\divs\etab+\frac{1}{2}\tr\chi\tr\chib-2
\end{equation}




\subsubsection{Angular derivatives of the null expansions}





Let us also derive the propagation equation for $\Laplaces\omegab$.
From
\begin{equation}
  D\omegab=\Omega^2\Bigl(2(\eta,\etab)-|\eta|^2+1\Bigr)
\end{equation}
we derive first
\begin{equation}
  D\ds\omegab=2\Omega^2\bigl(\ds\log\Omega\bigr)\bigl(2(\eta,\etab)-|\eta|^2+1\bigr)+\Omega^2\bigl(2(\nablas\eta,\etab-\eta)+2(\eta,\nablas\etab)\bigr)
\end{equation}
Thus by \Ceq{6.107} applied to $\xi=\ds\omegab$,
\begin{equation}
  \begin{split}
    D\Laplaces\omegab =& D\divs\ds\omegab \\
    =& \divs D\ds\omegab -2\divs\bigl(\Omega\chih^\sharp\cdot \ds\omegab\bigr) -\Omega\tr\chi\divs\ds\omegab
  \end{split}
\end{equation}
hence
\begin{multline}\label{eq:D:Laplace:omegab}
  D\Laplaces\omegab +\Omega\tr\chi\Laplaces\omegab= -2\divs\bigl(\Omega\chih^\sharp\cdot \ds\omegab\bigr)\\
  +2\Omega^2 \bigl(2|\nablas\log\Omega|^2+\Laplaces\log\Omega\bigr)\bigl(2(\eta,\etab)-|\eta|^2+1\bigr)\\
  +8\Omega^2\bigl(\ds\log\Omega\bigr)\cdot\bigl((\nablas\eta,\etab-\eta)+(\eta,\nablas\etab)\bigr)\\
  +2\Omega^2\Bigl(\bigl(\Laplaces\eta,\etab-\eta\bigr)+\bigl(\nablas\eta,\nablas(\etab-\eta)\bigr)+\bigl(\nablas\eta,\nablas\etab\bigr)+\bigl(\eta,\Laplaces\etab\bigr)\Bigr)
\end{multline}

Similarly, the propagation equation for $\Laplaces\omega$,  in the shear-free case reduces to
\begin{equation}\label{eq:Db:omegabs:shear-free}
  \begin{split}
    \Db\bigl(\Omega^2\Laplaces\omega\bigr)&-\bigl(2\omegab-\Omega\tr\chib\bigr)\Omega^2\Laplaces\omega =
    2\Omega^4\bigl(2|\ds\log\Omega|^2+\Laplaces\log\Omega\bigr)\Bigl(2(\eta,\etab)-\lvert\etab\rvert^2+1\Bigr)\\
    &+2\Omega^4\Bigl(\ds\log\Omega,2(\nablas\eta,\etab)+2(\eta,\nablas\etab)-2(\etab,\nablas\etab)\Bigr)\\
        &+\Omega^4\Bigl(2(\Laplaces\eta,\etab)+2(\eta-\etab,\Laplaces\etab)\Bigr)+2\Omega^4\Bigl(2(\nablas\eta,\nablas\etab)-|\nablas\etab|^2\Bigr)
  \end{split}
\end{equation}

These propagation equations will be used in Section~\ref{sec:coupled:null:expansion}.


\subsection{$\mathrm{L}^\infty$ estimates}
\label{sec:Linfty}



We begin with a Gronwall Lemma that will be used frequently for the o.d.e.'s considered in this section.

\begin{lemma}\label{lemma:ode}
  Let  $f\geq 0$ be a positive function on $I=[u,u_+]$ which satisfies the inequality
  \begin{equation}
    \bigl\lvert \frac{\ud}{\ud u} f^2 \bigr\rvert \leq  2f (a f+ b)
  \end{equation}
  where $a,b\geq 0$ are bounded functions on $I$.
  Then
  \begin{equation}
    f(u)\leq e^{A(u)}\Bigl[f(u_+)+\int_u^{u_+}b(u') e^{-A(u')} \ud u'\Bigr]
  \end{equation}
  where $A(u)=\int_u^{u_+}a(u')\ud u'$.
In particular, if $f(u_+)=0$, and $a_M=\sup_I|a|$, $b_M=\sup_I |b|$, then
\begin{equation}
  f(u)\leq e^{a_M u_+} b_M (u_+-u)\,.
\end{equation}
  Note also that in the case $f(u_+)\neq 0$, but  $u_+-u\ll 1$ we have
\begin{equation}
  |f(u_+)-f(u)|\leq e^{A(u)}\int_u^{u_+}b(u')e^{-A(u')}\ud u'+f(u_+)a_M (u_+-u)+\mathcal{O}(u_+-u)^2
\end{equation}

\end{lemma}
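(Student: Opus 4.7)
The inequality is not immediately an ODE inequality for $f$, because the chain rule $\frac{d}{du} f^2 = 2 f \, f'$ only produces $|f'| \leq a f + b$ after dividing by $2f$, which is illegal at zeros of $f$. The plan is therefore to regularise, apply the standard integrating factor trick, and pass to the limit.

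First I would set $f_\epsilon := \sqrt{f^2+\epsilon^2}$ for $\epsilon > 0$. Then $f_\epsilon > 0$ everywhere on $I$, and since $\epsilon$ is constant,
\begin{equation*}
    \Bigl|\frac{d}{du} f_\epsilon^2\Bigr| = \Bigl|\frac{d}{du} f^2\Bigr| \leq 2 f ( a f + b) \leq 2 f_\epsilon (a f_\epsilon + b),
\end{equation*}
using $f \leq f_\epsilon$ and $a,b \geq 0$. Since $f_\epsilon > 0$, this rearranges to $|f_\epsilon'| \leq a f_\epsilon + b$, and in particular $f_\epsilon' \geq -a f_\epsilon - b$.

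Next I would introduce the integrating factor $\phi_\epsilon(u) := f_\epsilon(u)\, e^{-A(u)}$. Since $A'(u) = -a(u)$,
\begin{equation*}
  \phi_\epsilon'(u) = \bigl(f_\epsilon'(u) + a(u) f_\epsilon(u)\bigr)\, e^{-A(u)} \geq -b(u)\, e^{-A(u)}.
\end{equation*}
Integrating from $u$ to $u_+$, and using $A(u_+) = 0$ so $\phi_\epsilon(u_+) = f_\epsilon(u_+)$,
\begin{equation*}
  f_\epsilon(u) \leq e^{A(u)}\Bigl[ f_\epsilon(u_+) + \int_u^{u_+} b(u')\, e^{-A(u')}\, du'\Bigr].
\end{equation*}
Sending $\epsilon \to 0$ (by monotone convergence of $f_\epsilon \searrow f$) yields the stated inequality.

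For the two corollary bounds, the case $f(u_+) = 0$ is a direct estimate: $A(u) - A(u') = \int_u^{u'} a \leq a_M (u_+ - u) \leq a_M u_+$, hence $e^{A(u) - A(u')} \leq e^{a_M u_+}$, and integration gives $f(u) \leq b_M e^{a_M u_+} (u_+ - u)$. For the case $u_+ - u \ll 1$ with $f(u_+) \neq 0$, one writes $f(u) - f(u_+) \leq f(u_+)(e^{A(u)} - 1) + e^{A(u)} \int_u^{u_+} b e^{-A} du'$ and Taylor-expands $e^{A(u)} = 1 + A(u) + O(A(u)^2)$ with $A(u) \leq a_M (u_+ - u)$. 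The reverse inequality $f(u_+) - f(u) \leq \cdots$ is obtained symmetrically from $f_\epsilon' \leq a f_\epsilon + b$, by the same integrating factor argument run in the opposite direction.

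The only genuine issue is the regularisation step that justifies dividing by $f$; everything else is the standard Gronwall integrating factor on a backwards interval, adapted to the sign convention $A'(u) = -a(u)$.
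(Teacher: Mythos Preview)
The paper states this lemma without proof, treating it as a standard Gronwall-type result. Your argument is correct and supplies exactly the details one would expect: the regularisation $f_\epsilon=\sqrt{f^2+\epsilon^2}$ cleanly handles the possible zeros of $f$, and the integrating factor computation with $A'(u)=-a(u)$ is carried out with the correct signs for the backwards interval $[u,u_+]$. The two corollary bounds follow as you indicate.
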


The following comparison Lemma states the relation between length in $u$, $v$ and $r$.

\begin{lemma}\label{lemma:comparison}
  Suppose \eqref{BA:log:Omega:r}, and \eqref{BA:tr:chi}, \eqref{BA:tr:chib} hold. Then
  \begin{equation}
    u_+-u\lesssim \frac{1}{r}\qquad v_+-v\lesssim \frac{1}{r}
  \end{equation}

\end{lemma}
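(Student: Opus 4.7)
The plan is to integrate the propagation equations \eqref{eq:D:r} for the area radius along the null generators toward the ``last slice'' $C_+\cup\Cb_+$. Since $r$ depends only on $(u,v)$, the null Lie derivatives reduce to coordinate derivatives, $Dr = \partial_v r$ and $\Db r = \partial_u r$, so the task is to turn \eqref{eq:D:r} into an ODE inequality in $v$ (respectively $u$) and invert it.

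For the bound on $v_+-v$ I would combine the bootstrap assumptions as follows. By \eqref{BA:log:Omega:r} one has the pointwise estimate $\Omega \geq e^{-\Delta_0} r$ on each sphere $S_{u,v}$, and \eqref{BA:tr:chi} gives $\tr\chi\geq 1$ pointwise; since $r$ is constant on $S_{u,v}$, this yields $\overline{\Omega\tr\chi}\geq e^{-\Delta_0} r$. Substituting into \eqref{eq:D:r} produces the Riccati-type differential inequality
\begin{equation*}
  \partial_v\Bigl(-\tfrac{1}{r}\Bigr) \;=\; \tfrac{1}{r^2}\,\partial_v r \;\geq\; \tfrac{1}{2} e^{-\Delta_0}\,.
\end{equation*}
Integrating in $v$ from $v$ to $v_+$ at fixed $u$, and using only the one-sided fact that $1/r(u,v_+)\geq 0$ (it does not matter whether $r(u,v_+)$ is finite or infinite — only at $u=u_+$ is it forced to be infinite), I obtain $1/r(u,v)\geq \tfrac12 e^{-\Delta_0}(v_+-v)$, which is exactly $v_+-v \lesssim 1/r$. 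The argument for $u_+-u$ is the conjugate one: starting from $\Db r = \frac{r}{2}\overline{\Omega\tr\chib}$ and using \eqref{BA:tr:chib} in place of \eqref{BA:tr:chi}, the same integration gives $u_+-u\lesssim 1/r$.

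There is really no substantive obstacle here: the content of the lemma is precisely that the bootstrap assumptions \eqref{BA:log:Omega:r}, \eqref{BA:tr:chi}, \eqref{BA:tr:chib} are quantitatively strong enough to produce a Riccati lower bound $\partial_v r\gtrsim r^2$ (and $\partial_u r\gtrsim r^2$) along the null generators, whose reciprocal is the linear bound claimed. No angular integration, commutator identity, or Gronwall-type argument such as Lemma~\ref{lemma:ode} is required; the one-sidedness of the inequality plus $1/r\geq 0$ at the final slice does all the work.
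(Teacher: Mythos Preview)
Your proof is correct and is essentially the same as the paper's. The paper writes the same integral as a change of variables, $v_+-v=\int_r^{r_+}\frac{2}{r}\frac{\ud r}{\overline{\Omega\tr\chi}}\lesssim \frac{1}{r}$, which is exactly your Riccati inequality $\partial_v(-1/r)\gtrsim 1$ written in the $r$-variable; both use the same pointwise bound $\overline{\Omega\tr\chi}\gtrsim r$ coming from \eqref{BA:log:Omega:r} and \eqref{BA:tr:chi}.
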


\begin{proof}
  From \eqref{eq:D:r} we have
  \begin{equation}
    v_+-v=\int_v^{v_+}\ud v=\int_r^{r_+}\frac{2}{r}\frac{\ud r}{\overline{\Omega\tr\chi}}\lesssim \frac{1}{r}
  \end{equation}
 and similarly for $u_+-u$.
\end{proof}

\subsubsection{$\Omega$}

Consider the bootstrap assumption \eqref{BA:log:Omega:r}.

  By \eqref{eq:D:Omega} and \eqref{eq:D:r} we have
\begin{equation}\label{eq:D:Omega:r}
  D\Bigl(\frac{\Omega^2}{r^2}\Bigr)=\frac{\Omega^2}{r^2}\bigl(2\omega-\overline{\Omega\tr\chi}\bigr)
\end{equation}
hence  \eqref{BA:omega:tr:chi}, \eqref{BA:tr:chi:average} directly imply
\begin{equation}
  \lvert \log \Bigl( \frac{\Omega^2}{r^2} \Bigr) \rvert \leq \lvert \log \Bigl( \frac{\Omega^2_+}{r^2_+} \Bigr) \rvert+\Delta_I (v_+-v)\,.
\end{equation}

This shows that \eqref{BA:log:Omega:r} can be recovered from (\textbf{A:I}); in fact, we we may choose $r_+:=r\rvert_{\Cb_+}$, and $\Omega_+=\Omega\rvert_{\Cb_+}$ such that
\begin{equation}
  \overline{\log\Bigl(\frac{\Omega_+}{r_+}\Bigr)}=0\,.
\end{equation}

Alternatively we can assume (\textbf{A:\underline{I}}) then our assumptions on the data at $C_+$ suffices to show:

\begin{lemma}\label{lemma:BA:0}
  Suppose \eqref{BA:omegab:tr:chib}, and \eqref{BA:tr:chib:average} holds, and $\Omega^2= r^2-1$ on $C_+$. Then \eqref{BA:log:Omega:r} holds on $\Dp$ for some constant $\Delta_0>0$.
\end{lemma}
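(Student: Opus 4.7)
The plan is to mirror the first part of the discussion, but now integrating in the conjugate $\Db$ direction from $C_+$ (where the null lapse is prescribed explicitly) into $\Dp$, rather than from $\Cb_+$. Combining the conjugate equations $\Db\log\Omega = \omegab$ in \eqref{eq:D:Omega} and $\Db r = \tfrac{r}{2}\overline{\Omega\tr\chib}$ in \eqref{eq:D:r} yields the analogue of \eqref{eq:D:Omega:r}, namely
\[
  \Db\Bigl(\frac{\Omega^2}{r^2}\Bigr) = \frac{\Omega^2}{r^2}\bigl(2\omegab - \overline{\Omega\tr\chib}\bigr),
\]
or equivalently $\Db\log(\Omega^2/r^2) = 2\omegab - \overline{\Omega\tr\chib}$.

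By the triangle inequality, the right-hand side is controlled by \eqref{BA:omegab:tr:chib} and \eqref{BA:tr:chib:average}: $|2\omegab - \overline{\Omega\tr\chib}| \leq |2\omegab - \Omega\tr\chib| + |\Omega\tr\chib - \overline{\Omega\tr\chib}| \leq 2\Delta_I$ on $\Dp$. Integrating along the generator of $\Cb_v$ from $u_+$ down to any $u \in [0,u_+]$ therefore gives
\[
  \Bigl|\log\frac{\Omega^2(u,v)}{r^2(u,v)} - \log\frac{\Omega^2(u_+,v)}{r^2(u_+,v)}\Bigr| \leq 2\Delta_I (u_+ - u) \leq 2\Delta_I u_+.
\]

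The boundary term on $C_+$ is known explicitly: the data prescription of Section~\ref{sec:setup:initial:data} gives $\Omega^2 = r^2 - 1$, so $\log(\Omega^2/r^2)|_{C_+} = \log(1 - r^{-2})$, which is nonpositive and bounded below by $\log(1 - r_{\min}^{-2})$ provided $r \geq r_{\min} > 1$ along $C_+ \cap \Dp$. Setting $\Delta_0 := \tfrac{1}{2}\lvert \log(1 - r_{\min}^{-2})\rvert + \Delta_I u_+$ then yields \eqref{BA:log:Omega:r}, after halving.

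The only real obstacle is to ensure $r$ remains bounded away from $1$ on the portion of $C_+$ traced out by $v \in [0, v_+]$. In the spherically symmetric foliation of $C_+$ constructed in Section~\ref{sec:setup:initial:data}, $r \to \infty$ as $v \to v_+$ (approaching $S_+ \subset \mathcal{I}^+$), while for $v \in [0, v_+)$ the area radius is a strictly positive, continuous function, so $r_{\min} > 1$ holds automatically in $\Dp$ provided $C_+ \cap \Dp$ stays in the expanding region away from the cosmological horizon---which is consistent with the smallness condition \eqref{D:small} and implicit in the setup of the characteristic problem.
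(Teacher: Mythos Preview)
Your proof is correct and follows essentially the same approach as the paper: derive the conjugate propagation equation $\Db\log(\Omega^2/r^2) = 2\omegab - \overline{\Omega\tr\chib}$, bound the right-hand side by $2\Delta_I$ via \eqref{BA:omegab:tr:chib} and \eqref{BA:tr:chib:average}, and integrate from $C_+$ where $\Omega^2 = r^2 - 1$. You are in fact somewhat more careful than the paper about the boundary term, making explicit the need for $r$ to stay bounded away from $1$ on $C_+\cap\Dp$; the paper simply records that $\log(\Omega_+/r_+)\to 0$ as $v\to v_+$ and leaves the uniform bound for finite $v$ implicit in the setup of the expanding region.
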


\begin{proof}
We have
\begin{equation*}
  \Db\log\frac{\Omega^2}{r^2}=2\omegab-\overline{\Omega\tr\chib}
\end{equation*}
hence
\begin{equation*}
  \lvert \log \Bigl( \frac{\Omega}{r} \Bigr) \rvert \leq  \lvert \log \Bigl( \frac{\Omega_+}{r_+} \Bigr) \rvert +2\Delta_I (u_+-u)
\end{equation*}
where $\log (\frac{\Omega_+}{r_+}) \to 0$ as $v\to v_+$.
\end{proof}

\subsubsection{$\chib$}

\begin{lemma}
  Assume \eqref{BA:tr:chib}, and \eqref{BA:tr:chib:average},  then for $u_+-u\ll 1$, 
  \begin{equation}
  |\tr\chib-2\frac{\Omega}{r}|\lesssim \frac{1}{r}
\end{equation}

\end{lemma}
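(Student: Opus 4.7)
The plan is to identify a logarithmic quantity whose $\underline{D}$-derivative is controlled by the bootstrap assumption \eqref{BA:tr:chib:average} alone, and which vanishes on the initial slice $C_+$. Specifically, I will set
\[
z := \log\Bigl(\frac{r\,\tr\chib}{2\Omega}\Bigr),
\]
which is well-defined since $\tr\chib\geq 1$ by \eqref{BA:tr:chib} and $\Omega,r>0$, and which vanishes identically on $C_+$ by the spherically symmetric data \eqref{eq:tr:chib:data}. Control on $z$ then translates directly into the claimed control on $\tr\chib - 2\Omega/r$.

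The key computation is $\underline{D} z$. In the shear-free case ($\chibhp = 0$) the propagation equation \eqref{eq:Db:chibp} together with $\tr\chibp = \Omega^{-1}\tr\chib$ gives the Riccati-type identity
\[
\underline{D}\log\tr\chib = \omegab - \tfrac{1}{2}\Omega\tr\chib.
\]
Combining this with $\underline{D}\log\Omega = \omegab$ from \eqref{eq:D:Omega} and $\underline{D}\log r = \tfrac{1}{2}\overline{\Omega\tr\chib}$ from \eqref{eq:D:r}, the $\omegab$ contributions cancel exactly and one obtains
\[
\underline{D} z = \tfrac{1}{2}\bigl(\overline{\Omega\tr\chib} - \Omega\tr\chib\bigr).
\]
This cancellation is the crucial observation: $\omegab$ itself is \emph{not} controlled by the hypotheses of the lemma, and a naive $\underline{D}$-propagation of $\tr\chib - 2\Omega/r$ would involve the a priori uncontrolled quantity $2\omegab - \Omega\tr\chib$, which is the content of the separate assumption \eqref{BA:omegab:tr:chib} that this lemma deliberately does not invoke.

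The right-hand side is now bounded by $\Delta_I/2$ by \eqref{BA:tr:chib:average}. Integrating along $\underline{C}_v$ from $u=u_+$, where $z$ vanishes, back to $u$ yields $|z(u,v)| \leq (\Delta_I/2)(u_+-u)$, and Lemma~\ref{lemma:comparison} converts this to $|z|\lesssim 1/r$. In the regime $u_+ - u \ll 1$ this is small, so Taylor expanding $e^z = 1 + O(z)$ gives $r\,\tr\chib = 2\Omega\bigl(1+O(1/r)\bigr)$; dividing by $r$ and using $\Omega/r = O(1)$ from \eqref{BA:log:Omega:r} yields the stated bound $|\tr\chib - 2\Omega/r| \lesssim 1/r$. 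The only obstacle is computational, namely spotting the right logarithmic combination $\log\tr\chib - \log\Omega + \log r$ that eliminates $\omegab$; once this is in hand, the rest is an application of Gronwall (Lemma~\ref{lemma:ode}) in its trivial linear form.
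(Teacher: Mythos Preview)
Your proof is correct and takes essentially the same approach as the paper. The paper works with the quantity $r\tr\chibp = r\tr\chib/\Omega$ and derives $\Db(r\tr\chibp) = -\tfrac{1}{2}(\Omega\tr\chib - \overline{\Omega\tr\chib})\,r\tr\chibp$ in the shear-free case, then solves via an integrating factor; your logarithmic quantity $z = \log(r\tr\chibp/2)$ satisfies exactly the derivative of this equation, so the two arguments are trivially equivalent reformulations of the same cancellation of $\omegab$.
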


\begin{proof}
Since $\tr\chi=2\Omega/r$ on $C_+$ it makes sense to consider the quantity $r\tr\chip$:

First note that
\begin{gather*}
  \Db(r\tr\chibp)=\frac{r}{2}\bigl(\overline{\Omega\tr\chib}-\Omega\tr\chib\bigr)\tr\chibp-r\Omega^2|\chibhp|^2\\
  \Db|\chibhp|^2+2\Omega^2\tr\chibp |\chibhp|^2=0
  \end{gather*}
Recall Lemma~\ref{lemma:shear-free} which shows that with our choice of initial data $C_+$,  $\chibh=0$ in $D_+$, hence
\begin{equation*}
  \Db\Bigl(r\tr\chibp e^{-\frac{1}{2}\int_u^{u_+} \underline{X}(u)\ud u}\Bigr)=0 \qquad   \underline{X}(u):=\Omega\tr\chib-\overline{\Omega\tr\chib}
\end{equation*}
which implies, with $r\tr\chip=2$ on $C_+$, that
\begin{equation*}
  |r\tr\chibp-2|\leq  \Delta_I (u_+-u)
\end{equation*}
and the stated bound follows from Lemma~\ref{lemma:comparison}.
  
\end{proof}

Alternatively one may consider the bootstrap assumption \eqref{BA:omegab:tr:chib}:

Since
\begin{equation}\label{eq:Db:trchib}
  \Db\tr\chib=\frac{1}{2}\bigl(2\omegab-\Omega\tr\chib\bigr)\tr\chib-\Omega|\chibh|^2
\end{equation}
we obtain immediately that
\begin{equation}\label{eq:trchib:shear-free}
  \bigl|\tr\chib-\tr\chib_{+}\bigr|\leq \frac{1}{2}\Delta_I (u_+-u) \lesssim \frac{1}{r}
\end{equation}

Either way,
these bound of course already recover the assumption (\textbf{A:\underline{I}}.i),
under the condition \eqref{D:small}.

\subsubsection{$\chi$}
\label{sec:Linfty:chi}

We proceed similarly under the assumptions (\textbf{A:I}). 







Recall \eqref{eq:D:chi}, with  $ \alpha[W]=0$.

\begin{gather}
  D\bigl(\Omega^2|\chih|^2\bigr)=2\bigl(2\omega-\Omega\tr\chi\bigr)\Omega^2|\chih|^2\label{eq:D:chih:vert}\\
  D\tr\chi=\frac{1}{2}\bigl(2\omega-\Omega\tr\chi\bigr)\tr\chi-\Omega|\chih|^2\label{eq:D:tr:chi:no:chih}
\end{gather}

We see that the natural scaling suggested by these equations when solved forward is $\Omega|\chih|=\mathcal{O}(1)$; however as we have seen in Section~\ref{sec:prelim} this is insufficient for our purposes. A better bound can only be obtained after estimates for the torsion have been obtained, see Lemma~\ref{lemma:prelim:Linfty:eta} below.

\begin{lemma}\label{lemma:Linfty:chi}
  Suppose \eqref{BA:omega:tr:chi} holds.  Then
  \begin{gather}
    |\tr\chi-\tr\chi_+|\lesssim \frac{1}{r}\\
        \Omega^2|\chih|^2 \lesssim \Omega^2_+|\chih_+|^2
      \end{gather}
      on $\Dp$ small \eqref{D:small}.
\end{lemma}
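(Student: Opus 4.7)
The strategy is to integrate the two propagation equations \eqref{eq:D:chih:vert} and \eqref{eq:D:tr:chi:no:chih} along the generators of $\underline{C}_v$ (i.e.\ along integral curves of $D$) backward from the final slice $C_+$, and then use the smallness condition \eqref{D:small} together with Lemma~\ref{lemma:comparison} to convert bounds of the form $\mathcal{O}(u_+-u)$ into bounds of the form $\mathcal{O}(r^{-1})$. The bootstrap assumption \eqref{BA:omega:tr:chi} controls the coefficient $2\omega-\Omega\tr\chi$ that appears in both equations.

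For $\Omega^2|\chih|^2$, equation \eqref{eq:D:chih:vert} is a linear homogeneous ODE along $D$ with coefficient $2(2\omega-\Omega\tr\chi)$. Applying Lemma~\ref{lemma:ode} with $f=\Omega|\chih|$, $a=|2\omega-\Omega\tr\chi|\le\Delta_I$ and $b=0$, I obtain
\begin{equation*}
  \Omega|\chih|(u,v)\le e^{\Delta_I(u_+-u)}\,(\Omega|\chih|)(u_+,v),
\end{equation*}
and under \eqref{D:small} the exponential is $\mathcal{O}(1)$. Squaring yields the second estimate.

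For $|\tr\chi-\tr\chi_+|$, I rewrite \eqref{eq:D:tr:chi:no:chih} as $|D\tr\chi|\le\tfrac{\Delta_I}{2}|\tr\chi|+\Omega|\chih|^2$. A preliminary Gronwall step, absorbing the inhomogeneity via the previous bound $\Omega|\chih|^2\lesssim \Omega_+^2|\chih_+|^2/\Omega$ together with \eqref{BA:log:Omega:r} (to convert $1/\Omega$ to $1/r$), combined with $\tr\chi_+=2\Omega_+/r_+=\mathcal{O}(1)$ on $C_+$, yields a uniform upper bound $\tr\chi\lesssim 1$ on $\mathcal{D}_+$ under \eqref{D:small}. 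Reintegrating \eqref{eq:D:tr:chi:no:chih} with this bound and invoking Lemma~\ref{lemma:comparison} gives
\begin{equation*}
  |\tr\chi(u,v)-\tr\chi_+(v)|\le\int_u^{u_+}\bigl(\tfrac{\Delta_I}{2}|\tr\chi|+\Omega|\chih|^2\bigr)\,du'\lesssim u_+-u\lesssim r^{-1}.
\end{equation*}
The only delicate point is closing the bootstrap for the upper bound on $\tr\chi$ without circularity, which is handled by using \eqref{D:small} so that the Gronwall factor $e^{\Delta_I(u_+-u)/2}$ stays close to $1$; in the concrete setting of this paper, where $\chih_+\equiv 0$ on $C_+$ by Section~\ref{sec:setup:initial:data}, the first step of the proof already gives $\chih\equiv 0$ throughout $\mathcal{D}_+$, so that the $\Omega|\chih|^2$ source term drops out and the ODE for $\tr\chi$ reduces to a scalar linear equation, removing this subtlety entirely.
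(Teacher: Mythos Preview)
Your proof has the direction of propagation backwards. The equations \eqref{eq:D:chih:vert} and \eqref{eq:D:tr:chi:no:chih} are $D$-equations, and $D$ is the Lie derivative along $L$, whose integral curves are the generators of $C_u$ (constant $u$), parametrized by $v$. Thus one integrates in $v$ from $v_+$ backward, with data on $\Cb_+$; the bounds should read $e^{\Delta_I(v_+-v)}$ and $\int_v^{v_+}\ldots\,dv'$, not the $u$-versions you wrote. Your parenthetical ``generators of $\underline{C}_v$ (i.e.\ along integral curves of $D$)'' already conflates the two: the generators of $\underline{C}_v$ are integral curves of $\Lb$, not of $D$.

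This confusion propagates into a genuine error in your last paragraph. The subscript $+$ in $\chih_+$, $\tr\chi_+$ refers to the values on $\Cb_+$ (i.e.\ at $v=v_+$), which is where the $D$-integration starts. You invoke $\chih=0$ on $C_+$ from Section~\ref{sec:setup:initial:data}, but that is the hypersurface $u=u_+$, and it gives no information about $\chih$ at $v=v_+$ for $u<u_+$. Hence your conclusion that $\chih\equiv 0$ throughout $\mathcal{D}_+$ is unjustified, and the source term $\Omega|\chih|^2$ in the $\tr\chi$ equation does not drop out.

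Once the direction is corrected, your argument for $\Omega^2|\chih|^2$ is exactly the paper's. For $\tr\chi$ the paper avoids your preliminary bootstrap step by using the integrating factor $e^{\frac12 A(v)}$ with $A(v)=\int_v^{v_+}(2\omega-\Omega\tr\chi)\,dv'$, which turns \eqref{eq:D:tr:chi:no:chih} into an exact identity $D(\tr\chi\,e^{A/2})=-\Omega|\chih|^2 e^{A/2}$ and yields $|\tr\chi-\tr\chi_+|$ directly in one integration; this is cleaner than first establishing a crude upper bound on $\tr\chi$ and then re-integrating.
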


\begin{proof}
First it follows from integrating \eqref{eq:D:chih:vert} that
\begin{equation*}
  \Omega^2|\chih|^2 \leq \Omega_+^2|\chih_+|^2 \exp\Bigl[2\Delta_I (v_+-v)\Bigr]\,.
\end{equation*}

   Second  we can write \eqref{eq:D:tr:chi:no:chih} as
  \begin{equation*}
    D\Bigl( \tr\chi e^{\frac{1}{2}A(v)}\Bigr) =-\Omega |\chih|^2 e^{\frac{1}{2}A(v)}
  \end{equation*}
  where we denote by
  \begin{equation*}
    A(v)=\int_{v}^{v_+}2\omega-\Omega\tr\chi\ud v'\qquad |A(v)|\leq \Delta_I (v_+-v)
  \end{equation*}
  Thus, 
  \begin{equation*}
    |\tr\chi_+-\tr\chi|\leq e^{\Delta_I(v_+-v)}\int_v^{v_+}\Omega|\chih|^2 \ud v' +\tr\chi_+  \Delta_I (v_+-v)
  \end{equation*}
and the statement follows from Lemma~\ref{lemma:comparison}.

\end{proof}

In particular the assumption \eqref{BA:tr:chi} is already recovered.

Another simple oberservation that will be useful below is
\begin{lemma}\label{lemma:Omegatrchi:r}
  Suppose \eqref{BA:tr:chi:average}, and \eqref{D:small} holds. Then
  \begin{equation}
    \Bigl\lvert \int_v^{v_+}\Omega\tr\chi\ud v-2\log\frac{r(u,v_+)}{r(u,v)}\Bigr| \lesssim  \frac{\Delta_I}{r}
  \end{equation}
  Similarly if \eqref{BA:tr:chib:average} and \eqref{D:small} holds, then
    \begin{equation}
    \Bigl\lvert \int_u^{u_+}\Omega\tr\chib\ud u-2\log\frac{r(u_+,v)}{r(u,v)}\Bigr| \lesssim \frac{\Delta_I}{r}
  \end{equation}

\end{lemma}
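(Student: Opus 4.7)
The plan is to reduce the estimate to a pointwise application of the deviation bootstrap assumption, using the area radius equation to supply the logarithmic term exactly.

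First, I would invoke the propagation equation \eqref{eq:D:r}, namely $Dr=\tfrac{r}{2}\overline{\Omega\tr\chi}$, which rearranges to $D\log r = \tfrac{1}{2}\overline{\Omega\tr\chi}$. Integrating along the generator of $C_u$ from $v$ to $v_+$ yields the exact identity
\[
\int_v^{v_+}\overline{\Omega\tr\chi}\,\ud v' = 2\log\frac{r(u,v_+)}{r(u,v)},
\]
so that the quantity to be estimated equals $\int_v^{v_+}\bigl(\Omega\tr\chi - \overline{\Omega\tr\chi}\bigr)\,\ud v'$.

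Second, the bootstrap assumption \eqref{BA:tr:chi:average} bounds the integrand pointwise by $\Delta_I$, so the integral is at most $\Delta_I(v_+-v)$. I then apply Lemma~\ref{lemma:comparison} to conclude $v_+-v\lesssim 1/r(u,v)$; this step invokes \eqref{BA:log:Omega:r} (whose recoverability from \eqref{BA:omegab:tr:chib} and \eqref{BA:tr:chib:average} is guaranteed by Lemma~\ref{lemma:BA:0}) together with \eqref{BA:tr:chi}, \eqref{BA:tr:chib}. Note that the finiteness of $v_+-v$ — even though $r(u,v_+)$ may be infinite — comes from combining $\tr\chi\geq 1$ with $\Omega\sim r$, which forces $\overline{\Omega\tr\chi}\gtrsim r$ so the defining integral in Lemma~\ref{lemma:comparison} converges at infinity.

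The conjugate inequality is established identically, by integrating $\Db\log r = \tfrac{1}{2}\overline{\Omega\tr\chib}$ along $\Cb_v$ and applying \eqref{BA:tr:chib:average}. There is no real obstacle here; the only conceptual ingredient is the exact-plus-error decomposition, and the estimate is really a one-line corollary of the bootstrap assumptions combined with the area radius equation.
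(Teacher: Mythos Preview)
Your proposal is correct and follows exactly the paper's approach: rewrite $2\log\frac{r(u,v_+)}{r(u,v)}$ as $\int_v^{v_+}\overline{\Omega\tr\chi}\,\ud v'$ via \eqref{eq:D:r}, bound the resulting integral of $\Omega\tr\chi-\overline{\Omega\tr\chi}$ by $\Delta_I(v_+-v)$ using \eqref{BA:tr:chi:average}, and then convert to $\Delta_I/r$ via Lemma~\ref{lemma:comparison}. Your added remarks on the hypotheses needed for Lemma~\ref{lemma:comparison} and the finiteness of $v_+-v$ are more explicit than the paper's terse write-up but do not constitute a different argument.
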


\begin{proof}
  We have
  \begin{equation*}
    \begin{split}
      \Bigl\lvert \int_v^{v_+}\Omega\tr\chi\ud v-2\log \frac{r(u,v_+)}{r(u,v)}\Bigr\rvert &=       \Bigl\lvert \int_v^{v_+}\Omega\tr\chi\ud v-2 \int_v^{v_+} D\log r(u,v')\ud v'\Bigr\rvert \\
      &=      \Bigl\lvert \int_v^{v_+}\Omega\tr\chi- \overline{\Omega\tr\chi} \ud v\Bigr\rvert \leq \Delta_I (v_+-v)
    \end{split}
  \end{equation*}
Similarly for the conjugate inequality.
\end{proof}

\subsubsection{$\eta$, $\etab$}
\label{sec:Linfty:eta}


Consider the propagation equation \eqref{eq:propagation:torsion}, from which we derive
\begin{equation}
  D |\etab|^2=-4\Omega\chi(\etab,\etab)+4(\etab,\ds\omega)
\end{equation}
and after incorporating the weight $\Omega^2$ for $\etab$, and we obtain
 \begin{equation}\label{eq:D:Omega:etab}
   D\bigl(\Omega^4|\etab|^2\bigr)=2\bigl(2\omega-\Omega\tr\chi\bigr)\Omega^4|\etab|^2-4\Omega\chih(\Omega^2\etab,\Omega^2\etab)+4\Omega^4(\etab,\ds\omega)
 \end{equation}

Recall the bootstrap assumption \eqref{BA:Omega:ds:omega}, and \eqref{BA:Omega:ds:omegab}.

\begin{lemma}\label{lemma:torsion}
  Suppose \eqref{BA:omega:tr:chi}, and \eqref{BA:Omega:ds:omega} holds.
  Then
  \begin{equation}
    \Omega^2|\etab|\leq C(\Delta_I,\Omega_+\chih_+)\Bigl(\Omega^2_+|\etab_+|+\Delta_{II} (v_+-v)
    \Bigr)
  \end{equation}
holds on $\Dp$ for \eqref{D:small}.
\end{lemma}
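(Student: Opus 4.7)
The plan is to apply Gronwall in the $v$-direction to the propagation equation \eqref{eq:D:Omega:etab} for $\Omega^4|\etab|^2$. Setting $f:=\Omega^2|\etab|$ and taking absolute values in \eqref{eq:D:Omega:etab}, and using Cauchy--Schwarz on the inhomogeneous term $4\Omega^4(\etab,\ds\omega)$, one obtains a differential inequality of the shape required by Lemma~\ref{lemma:ode}:
\begin{equation*}
|Df^2| \leq 2f\bigl(a\, f + b\bigr)\,,\qquad a := |2\omega-\Omega\tr\chi| + 2\Omega|\chih|\,,\qquad b := 2\Omega^2|\ds\omega|\,.
\end{equation*}

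Under the bootstrap assumption \eqref{BA:omega:tr:chi} the first contribution to $a$ is bounded by $\Delta_I$, while the second contribution is controlled via Lemma~\ref{lemma:Linfty:chi} (whose hypothesis \eqref{BA:omega:tr:chi} is in force), which under \eqref{D:small} yields $\Omega|\chih| \lesssim \Omega_+|\chih_+|$ on $\Dp$. Thus $a\leq \Delta_I + C\,\Omega_+|\chih_+|$ is bounded uniformly on $\Dp$. Similarly, the bootstrap assumption \eqref{BA:Omega:ds:omega} gives $b\leq 2\Delta_{II}$.

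Next I apply the analogue of Lemma~\ref{lemma:ode} integrated backward in $v$ from the final data on $\Cb_+$, where $f(v_+)=\Omega_+^2|\etab_+|$. This yields
\begin{equation*}
  f(v) \leq e^{A(v)}\Bigl[\,f(v_+)+ 2\Delta_{II}\int_v^{v_+}e^{-A(v')}\,\ud v'\,\Bigr]\,,\qquad A(v)=\int_v^{v_+} a(v')\,\ud v'\,,
\end{equation*}
with $A(v)\leq (\Delta_I + C\Omega_+|\chih_+|)(v_+-v)$, bounded by a constant $C(\Delta_I,\Omega_+\chih_+)$ under \eqref{D:small}. Estimating the integral by $(v_+-v)$ produces the stated conclusion.

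There is no serious obstacle here: the proof is a routine Gronwall argument along the generators of $C_u$. The one substantive input is the previously established $\mathrm{L}^\infty$ bound on $\Omega|\chih|$ from Lemma~\ref{lemma:Linfty:chi}, which is needed to control the quadratic coefficient $a$; the linear source $b$ is already dictated by \eqref{BA:Omega:ds:omega} and is responsible for the $\Delta_{II}(v_+-v)$ contribution. Note that $\chih$ enters only through the combination $\Omega\chih$, consistent with the bound of Lemma~\ref{lemma:Linfty:chi}; in particular no assumption on $\eta$ is required at this stage, unlike the conjugate propagation equation \eqref{eq:Db:eta} for $\Db\eta$ which will require analogous treatment starting from data at $C_+$ rather than $\Cb_+$.
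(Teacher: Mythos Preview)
Your proof is correct and follows essentially the same approach as the paper: both set $f=\Omega^2|\etab|$, derive the differential inequality $|Df^2|\leq 2f(af+b)$ from \eqref{eq:D:Omega:etab} with $a=|2\omega-\Omega\tr\chi|+2\Omega|\chih|$ and $b=2\Omega^2|\ds\omega|$, and then invoke Lemma~\ref{lemma:ode} together with Lemma~\ref{lemma:Linfty:chi} to control $a$. The paper's version is just a terser rendition of the same argument.
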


\begin{proof}
Since for $n=\Omega^2|\etab|$
 \begin{equation*}
   D\bigl(n^2\bigr)\leq 2n\Bigl(\bigl\lvert 2\omega-\Omega\tr\chi\bigr\rvert n+2\Omega|\chih|n+2\Omega^2|\ds\omega|\Bigr)
 \end{equation*}
it follows from Lemma~\ref{lemma:ode}, and Lemma~\ref{lemma:Linfty:chi}, that
\begin{equation*}
  n(u,v)\leq e^{\Delta_I+\Omega_+|\chih_+|}\Bigl[ n(u,v_+) + \Delta_{II}(v_+-v)\Bigr]
\end{equation*}
See Lemma~\ref{lemma:Linfty:chi} for the bound on $\Omega\chih$ used here.
\end{proof}

Similarly we can derive $\mathrm{L}^\infty$ estimates for $\Omega^2\eta$ under the assumptions \eqref{BA:Omega:ds:omegab}, and \eqref{BA:omegab:tr:chib}. See also Lemma~\ref{lemma:prelim:Linfty:eta} below.

\begin{remark}
  It is clear from \eqref{eq:D:Omega:etab} that
  to obtain an $\mathrm{L}^\infty$ estimate for $\eta$, the assumption \eqref{BA:Omega:ds:omega} on $\ds\omega$ has to be made in $\mathrm{L}^\infty$. This means that we will have to control $\nablas\omega$ and  $\nablas^2\omega$ in $\mathrm{L}^4$ from the the equation for $\Laplaces\omega$. See Section~\ref{sec:coupled:null:expansion}.
\end{remark}




\begin{remark} In \CCh{3.4} the torsion $\eta$, and $\etab$ are estimated using the equations for $D\eta$, and $\Db \etab$.
  This approach cannot be adapted for our purposes. For if we consider 
\begin{equation*}
  \Db\bigl(\Omega^2|\etab|^2\bigr)=\bigl(2\omegab-\Omega\tr\chib\bigr)\Omega^2|\etab|^2-2\Omega\chibh(\Omega\etab,\Omega\etab)+\Omega\chib(\Omega\etab,\Omega\eta)
\end{equation*}
then the last term poses an obstruction, because $\Omega\tr\chib$ is not bounded:
\begin{gather*}
  \Omega^2 |\etab|^2 \leq e^{A(u)}\int_0^u e^{-A(u')}\frac{1}{2}\Omega|\chib|\Omega|\eta| \ud u\\
    A(u) = \int_0^u\frac{1}{2}(2\omegab-\Omega\tr\chib)+\Omega|\chibh|\ud u
  \end{gather*}
  Similarly for $\Omega\eta$.
An advantage of using \eqref{eq:D:Omega:etab} as opposed to the approach of \CCh{3.4},
is that  here the equations for $\eta$ and $\etab$ are \emph{not} coupled.
 
\end{remark}

\subsubsection{$2\omega-\Omega\tr\chi$}
\label{sec:L:infty:omega}

We derive from \eqref{eq:Db:omega} and the conjugate equation of \eqref{eq:D:Omega:tr:chib} that
\begin{equation}\label{eq:Db:omega:trchi}
\Db\bigl(2\omega-\Omega\tr\chi\bigr)   = -2\Omega^2\Bigl(3\rho[W]-\bigl(\chibh,\chih)+K+\divs\eta+\lvert \eta\rvert^2 -2(\eta,\etab)+\lvert\etab\rvert^2\Bigr)
\end{equation}

Since de Sitter is conformally flat we have $\rho[W]=0$, and we have shown in Lemma~\ref{lemma:shear-free} that with our data, $\chibh=0$.
   Then this equations reduces to
\begin{equation}\label{eq:Db:omega:trchi:simple}
  \Db\bigl(2\omega-\Omega\tr\chi\bigr)   = -2\Omega^2 \bigl( K + \divs\eta+\lvert \eta\rvert^2 -2(\eta,\etab)+\lvert\etab\rvert^2 \bigr)
\end{equation}

We now observe that by \eqref{eq:gauss:hat}, and the definition of $\breve{\mu}$ in \eqref{eq:mu:breve}
\begin{equation}
  K+\divs\eta=1-\frac{1}{4}\tr\chi\tr\chib-\rho[W]+\frac{1}{2}(\chih,\chibh)+\divs\eta=-\breve{\mu}-2\rho[W]+(\chih,\chibh)
\end{equation}

Thus by Lemma~\ref{lemma:null:expansion:mu}, the assumption \eqref{BA:omega:tr:chi} is recovered for \eqref{D:small}, provided we establish the boundedness of $\Omega^2\breve{\mu}$.

\begin{remark}
  The quantity $2\omega-\Omega\tr\chi$ remains bounded, while $2\omega$ and $\Omega\tr\chi$ individually diverge. This is due to a cancellation in the propagation equation \eqref{eq:Db:omega:trchi}, which has motivated us to consider this favorable quantity.
 
\end{remark}

Alternatively,  we may write
\begin{equation}
  K+\divs\eta=K-\divs\etab+\divs(\eta+\etab)=\kappab+2\Laplaces\log\Omega
\end{equation}
where $\kappab$ is defined as in \eqref{eq:kappab}.

We can now proceed in two ways:
\begin{enumerate}

\item We look at the propagation equation for $\kappab$ directly, which is given by \eqref{eq:Db:kappab:shear-free} in the shear-free case,
  \begin{equation}\label{eq:Db:kappab:mu}
    \Db\bigl(\Omega^2\kappab\bigr)=\bigl(2\omegab-\Omega\tr\chib\bigr)\Omega^2\kappab-\Omega^3\tr\chib\bigl(|\eta|^2-\mu\bigr)
  \end{equation}
   from which we see that to estimate $\Omega^2\kappab$, we need a bound on $\Omega^3\mu$.

   \item Alternatively, we can write
\begin{equation}
  \kappab=\mub+1-\frac{1}{4}\tr\chi\tr\chib
\end{equation}
which modulo a bound on $\Omega^2\mub$, requires a bound on $\frac{1}{4}\tr\chi\tr\chib-1$.
The latter we can derive from the conjugate equation to \eqref{eq:D:trchitrchib}, which  in the shear-free case reads
  \begin{equation}\label{eq:Db:trchitrchib}
    \Db\bigl(\frac{1}{4}\tr\chi\tr\chib-1\bigr)+\Omega\tr\chib\bigl(\frac{1}{4}\tr\chi\tr\chib-1\bigr)=-\frac{1}{2}\Omega\tr\chib\bigl(\mu-|\eta|^2\bigr)
  \end{equation}
which again requires an estimate on $\Omega^3\mu$, to obtain a bound on $\Omega^2\kappab$.
\end{enumerate}

Thus in both cases, to close the estimate for $2\omega-\Omega\tr\chi$, we need to consider the Hodge system for $\eta$, and obtain a suitable estimate on the mass aspect function $\mu$.

\begin{lemma}
  Suppose $\Omega^3 | \mub | \lesssim 1$, and $\Omega^3 |\mu|\lesssim 1$. Then
\begin{equation*}
  |2\omega-\Omega\tr\chi|\lesssim \frac{1}{r}
\end{equation*}

\end{lemma}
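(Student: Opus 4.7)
The plan is to integrate the propagation equation \eqref{eq:Db:omega:trchi:simple} along $\Cb_v$ from the final slice $C_+$ down to a general $u \leq u_+$, after first re-expressing its right-hand side in terms of quantities controlled by the hypotheses. The point is that the naive scaling $\Omega^2 K \sim \Omega^2/r^2 \sim 1$ is not small enough; the decisive gain comes from combining $K$ with $\frac{1}{4}\tr\chi\tr\chib$ via the Gauss equation, producing the renormalized quantity $\kappab$ whose $\Omega^2$-weighted version propagates without amplification (this is the cancellation first exploited in \cite{CK:93}, and recorded here in \eqref{eq:Db:kappab:mu}).

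First I would exploit that in the shear-free setting on de Sitter ($\rho[W]=0$, $\chibh=0$) the mass aspect functions collapse, by \eqref{eq:mu} and \eqref{eq:mub}, to $\mu=-\divs\eta$ and $\mub=-\divs\etab$. In view of \eqref{eq:ds:log:Omega:eta} this yields $2\Laplaces\log\Omega=\divs(\eta+\etab)=-\mu-\mub$, and therefore
\begin{equation*}
K+\divs\eta \;=\; (K-\divs\etab)+\divs(\eta+\etab) \;=\; \kappab-\mu-\mub.
\end{equation*}
Substituting into \eqref{eq:Db:omega:trchi:simple} gives the clean form
\begin{equation*}
\Db(2\omega-\Omega\tr\chi) \;=\; -2\Omega^2\kappab + 2\Omega^2\mu + 2\Omega^2\mub - 2\Omega^2|\eta-\etab|^2.
\end{equation*}

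Next I would establish $|\Omega^2\kappab|\lesssim 1$ on $\Dp$ by applying Gronwall to the propagation equation \eqref{eq:Db:kappab:mu}. The initial value on $C_+$ is bounded: since $\gs_{u_+,v}=r^2\gammac$ and $\etab=0$ on $C_+$, we have $\kappab|_{C_+}=K|_{C_+}=1/r^2$, so $\Omega^2\kappab|_{C_+}=(r^2-1)/r^2\leq 1$. For the source term, the hypothesis $\Omega^3|\mu|\lesssim 1$ combined with Lemma~\ref{lemma:torsion} (giving $\Omega^2|\eta|\lesssim 1$) and the bound $\Omega\tr\chib\lesssim\Omega/r\lesssim 1$ following from \eqref{BA:omegab:tr:chib} and \eqref{BA:log:Omega:r} yields
\begin{equation*}
\Omega^3\tr\chib\bigl(|\eta|^2+|\mu|\bigr)\;\lesssim\;\frac{\Omega^4}{r}\Bigl(\frac{1}{\Omega^4}+\frac{1}{\Omega^3}\Bigr)\;\lesssim\;1.
\end{equation*}
The coefficient $|2\omegab-\Omega\tr\chib|\leq\Delta_I$ by \eqref{BA:omegab:tr:chib}, so Lemma~\ref{lemma:ode} together with the smallness condition \eqref{D:small} produces the desired uniform bound $|\Omega^2\kappab|\lesssim 1$.

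Finally I would assemble the pieces. The hypotheses give $\Omega^2|\mu|\lesssim 1/\Omega\lesssim 1/r$ and likewise for $\mub$, while Lemma~\ref{lemma:torsion} together with its conjugate yield $\Omega^2|\eta-\etab|^2\lesssim 1/\Omega^2\lesssim 1/r^2$. Combined with Step 2 this shows $|\Db(2\omega-\Omega\tr\chi)|\lesssim 1$ uniformly on $\Dp$. On $C_+$ the explicit data of Section~\ref{sec:setup:initial:data} give $(2\omega-\Omega\tr\chi)|_{u_+,v}=2r-2\Omega^2/r=2/r(u_+,v)$, and since $r$ is increasing in $u$ along $\Cb_v$ by \eqref{eq:D:r} and \eqref{BA:tr:chib} this is $\leq 2/r(u,v)$. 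Integrating $\Db$ from $u$ to $u_+$ and invoking the comparison $u_+-u\lesssim 1/r$ of Lemma~\ref{lemma:comparison} gives
\begin{equation*}
|2\omega-\Omega\tr\chi|(u,v)\;\leq\;\frac{2}{r(u,v)}+C(u_+-u)\;\lesssim\;\frac{1}{r},
\end{equation*}
as claimed. The main technical point is the $\kappab$ estimate, since without the reorganization via Gauss and the $\kappab$-propagation one is left with an $\Omega^2 K$ term which is $\mathcal{O}(1)$ and would only produce $|2\omega-\Omega\tr\chi|\lesssim u_+-u \lesssim 1/r$ by chance, whereas the structural identity above makes this robust and independent of finer pointwise control on $K$ itself.
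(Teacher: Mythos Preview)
Your proof is correct and takes the same route as the paper: rewrite $K+\divs\eta=\kappab+2\Laplaces\log\Omega=\kappab-\mu-\mub$, control $\Omega^2\kappab$ by Gronwall on \eqref{eq:Db:kappab:mu}, and integrate \eqref{eq:Db:omega:trchi:simple} from $C_+$. The only noteworthy difference is that the paper extracts $|\Omega^2\eta|,|\Omega^2\etab|\lesssim 1$ directly from the Hodge systems $\divs\eta=-\mu$, $\divs\etab=-\mub$ using the hypotheses of the lemma, whereas you cite Lemma~\ref{lemma:torsion} and its conjugate, which imports the bootstrap assumptions (\textbf{A:II}). One slip: your claim $\Omega\tr\chib\lesssim\Omega/r\lesssim 1$ is false, since $\Omega\tr\chib\sim 2\Omega^2/r\sim 2r$; fortunately the displayed bound that follows uses the correct $\Omega^3\tr\chib\sim\Omega^4/r$, so the conclusion is unaffected.
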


\begin{proof}
  Given the bounds on $\mu$, and $\mub$, we have $|\Omega^2\eta|\lesssim 1$, and $|\Omega^2\etab|\lesssim 1$; c.f.~discussion in Section~\ref{sec:prelim:torsion}.
Then it follows directly from \eqref{eq:Db:kappab:mu} that
  \begin{equation*}
    \Omega^2\kappab\lesssim e^{\Delta_I} \Bigl[ \Omega^2\kappab(u_+,v)+\int_u^{u_+}\tr\chib \Omega^3 \bigl(|\eta|^2+|\mu|\bigr)\ud u\Bigr]\lesssim 1
  \end{equation*}
  because on $C_+$, $\Omega^2\kappab=1+r^{-2}$.
Moreover, we infer from \eqref{eq:Laplace:log:Omega} that \[\Omega^2|\Laplaces\log\Omega|\leq \Omega^2|\mu|+\Omega^2|\mub|\lesssim 1\]

  Then however, the statement follows immediately from \eqref{eq:Db:omega:trchi:simple}, and the fact that on $C_+$, $2\omega-\Omega\tr\chi=1/r$:
\begin{equation*}
  |2\omega-\Omega\tr\chi|\lesssim  \frac{1}{r}+\int_u^{u_+}\Omega^2\bigl(|\kappab|+|\Laplaces\log\Omega| +|\eta|^2+|\etab|^2\bigr)\ud u \lesssim \frac{1}{r}+(u_+-u)
\end{equation*}

\end{proof}

\subsubsection{$\eta, \etab$:  without mass aspect function}
\label{sec:torsion:alternate}

As we have seen in Section~\ref{sec:prelim:torsion}, the assumption on $\chih$ consistent with the required bounds on $\eta$, $\etab$, is on the level of an $L^\infty$ bound on $\Omega^2\chih$; see \eqref{eq:mu:chi}. However, as dicussed in Section~\ref{sec:Linfty:chi}, from \eqref{eq:D:chih:vert} we see that by considering the propagation equation of $\chih$ along $C_u$ we can at most obtain a bound on $\Omega|\chih|$. Alternatively we could consider the propagation equation for $\chih$ along $\Cb_v$, which in shear-free case reads:
\begin{equation}
   \Db(\Omega\chih)  =\Omega^2\nablas\otimesh\eta+\Omega^2\eta\otimesh\eta+\frac{1}{2}\Omega^2\tr\chib\chih \label{eq:Dbh:chih:shear-free}
 \end{equation}
Similarly we can estimate $\eta$ using the propagation equation along $\Cb_v$:
\begin{equation}\label{eq:Db:eta:shear-free}
  \Db\eta=-\frac{1}{2}\Omega\tr\chib\eta+2\ds\omega
\end{equation}

The following Lemma shows that the required bounds on $\eta$ can also be derived (in the shear-free case) purely from the propagation equations, i.e.~not appealing to the Hodge system satisfied by $\eta$ involving the mass aspect function. However, such an argument relies on an $\mathrm{L}^\infty$ bound on $\nablas^2\omegab$, or alternatively and $\mathrm{L}^4$ bound on $\nablas^3\omegab$ in addition to the assumptions \eqref{BA:Omega:dd:omegab}. An estimate based on that assumption cannot close by using propagation equations alone, and can only be recovered using the elliptic equation satisfied by $\omegab$, coupled to the propagation equation for $\omegabs=\Laplaces\omegab$.

\begin{lemma}\label{lemma:prelim:Linfty:eta}
  Consider the shear-free case $\chibh=0$, and suppose \eqref{BA:omegab:tr:chib}, \eqref{BA:Omega:ds:omegab}, and
  \begin{equation}
     \Linf{u,v}{\Omega^3\nablas^2\omegab}\lesssim 1
  \end{equation}
hold.
Then with the initial data $\chih=0$, $\eta=0$ on $C_+$,
\begin{subequations} \label{eq:prelim:Linfty:eta:chih}
\begin{gather}
  \Omega^2|\eta|+|\Omega^3\nablas\eta|\lesssim 1\\
  \Omega^2|\chih|\lesssim 1
\end{gather}
\end{subequations}
Moreover if in addition to \eqref{BA:omega:tr:chi}, and \eqref{BA:Omega:ds:omega} we suppose that
\begin{equation}
  \Linf{u,v}{\Omega^3\nablas^2\omega}\lesssim 1
\end{equation}
then we also have
\begin{equation} \label{eq:prelim:Linfty:etab}
  \Omega^2|\etab| +\Omega^3|\nablas\etab|\lesssim \Linf{u,v_+}{\Omega^2\etab}+\Linf{u,v_+}{\Omega^3\nablas\etab}
\end{equation}

\end{lemma}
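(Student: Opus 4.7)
The plan is to establish the bounds by a sequence of Gronwall estimates, propagating $\eta$, $\nablas\eta$, $\chih$ along $\Cb_v$ from the data on $C_+$, and $\etab$, $\nablas\etab$ along $C_u$ from the data on $\Cb_+$. The key observation throughout is that although $\omegab$ and $\Omega\tr\chib$ individually grow like $r$ (as do their conjugates), their combination $(2\omegab-\Omega\tr\chib)$ is bounded by $\Delta_I$ in view of \eqref{BA:omegab:tr:chib}, so for each quantity the weight $\Omega^a$ must be chosen so that the principal coefficient of the resulting ordinary differential inequality collapses to exactly this bounded difference, rather than the individually unbounded constituents.

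First I apply this strategy to \eqref{eq:Db:eta:shear-free} to obtain
\[
\Db\bigl(\Omega^4|\eta|^2\bigr) = 2(2\omegab - \Omega\tr\chib)\Omega^4|\eta|^2 + 4\Omega^4(\eta, \ds\omegab),
\]
with forcing bounded by $\Delta_{II}\Omega^2|\eta|$ via \eqref{BA:Omega:ds:omegab}; Lemma~\ref{lemma:ode} applied from $u_+$ down, with datum $\eta|_{C_+}=0$, yields $\Omega^2|\eta|\lesssim 1$. Next, I commute $\nablas$ through $\Db$ to bound $\Omega^3|\nablas\eta|$: in the shear-free case the identity $\nablas(\Omega\chib) = \tfrac{1}{2}\ds(\Omega\tr\chib)\otimes\gs$ together with the Codazzi equation \eqref{eq:codazzi:chib:shear-free}, $\ds(\Omega\tr\chib)=\Omega\tr\chib\,\eta$, ensures that the commutator $[\Db,\nablas]\eta$ contributes only terms of the schematic form $\Omega\tr\chib\,\nablas\eta$ and $\Omega\tr\chib\,\eta\otimes\eta$. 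With the weight $\Omega^6$ the principal coefficient becomes $3(2\omegab-\Omega\tr\chib)\lesssim\Delta_I$; the linear forcing $\Omega^6 \nablas\ds\omegab$ is controlled by the hypothesis $\Omega^3|\nablas^2\omegab|\lesssim 1$, and the quadratic one satisfies $\Omega^7\tr\chib\,|\eta|^2|\nablas\eta| = \tr\chib\,(\Omega^2|\eta|)^2\,\Omega^3|\nablas\eta| \lesssim 1$, using Step~1 together with the bound $\tr\chib\lesssim 1$ obtained in Section~\ref{sec:Linfty}. Since $\nablas\eta|_{C_+}=0$, Gronwall closes the estimate.

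For $\Omega^2|\chih|$ I use \eqref{eq:Dbh:chih:shear-free}: the source term $\tfrac{1}{2}\Omega^2\tr\chib\,\chih$ combines with the contribution of $\Db$ acting on the two raised indices of $|\chih|^2$ to produce, after weighting by $\Omega^4$, again the favourable coefficient $(2\omegab-\Omega\tr\chib)\Omega^4|\chih|^2$; the forcing $\Omega^5(\chih, \nablas\otimesh\eta+\eta\otimesh\eta)$ is bounded by $\Omega^2|\chih|\cdot(\Omega^3|\nablas\eta|+\Omega^{-1}(\Omega^2|\eta|)^2)$, controlled by Steps 1--2, and Gronwall with $\chih|_{C_+}=0$ then gives $\Omega^2|\chih|\lesssim 1$. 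For the \emph{moreover} statement I run the entirely analogous argument in the $D$-direction starting from \eqref{eq:D:etab}, which in our setting reads $D\etab = -\Omega\chih^\sharp\cdot\etab - \tfrac{1}{2}\Omega\tr\chi\,\etab + 2\ds\omega$: weighting by $\Omega^4$ produces the coefficient $2(2\omega-\Omega\tr\chi)\lesssim\Delta_I$, the $\chih$-coupling is dominated by $\Omega^{-1}\Omega^2|\chih|\,(\Omega^2|\etab|)^2\lesssim(\Omega^2|\etab|)^2$ using Step~3, and the $\ds\omega$-forcing by \eqref{BA:Omega:ds:omega}. Gronwall in $v$ from $v_+$ then gives the bound on $\Omega^2|\etab|$ in terms of its datum on $\Cb_+$; the $\nablas$-commuted equation, now using $\Omega^3|\nablas^2\omega|\lesssim 1$ and the Codazzi identity \eqref{eq:codazzi:chi:j} for $\chi$, closes the estimate for $\Omega^3|\nablas\etab|$. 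The main technical obstacle will be the commutator computations in the $\nablas$-differentiated equations: while the shear-free Codazzi guarantees that every correction is of the schematic form above, one has to verify that each such correction enters the Gronwall estimate at precisely the scaling dictated by Step~1, so that no $\Omega$-growing remainder leaks through---a subtlety absent from the asymptotically flat treatment, where $\Omega\sim 1$ rather than $\Omega\sim r$.
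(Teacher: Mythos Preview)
Your approach is essentially the same as the paper's: propagate $\eta,\nablas\eta,\chih$ along $\Cb_v$ from trivial data on $C_+$, then $\etab,\nablas\etab$ along $C_u$ from $\Cb_+$, in each case choosing the power of $\Omega$ so that the principal coefficient collapses to the bounded combination $2\omegab-\Omega\tr\chib$ (resp.\ $2\omega-\Omega\tr\chi$). The computations for $\eta$, $\nablas\eta$, $\chih$, and $\etab$ match the paper line for line.

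There is one genuine gap in the $\nablas\etab$ step. The commutator $[D,\nablas]\etab$ produces, via $(D\Gammas)_{AB}^C=\nablas_A(\Omega\chi)_B^C+\nablas_B(\Omega\chi)_A^C-\nablas^C(\Omega\chi)_{AB}$, terms involving the \emph{full} tensor $\nablas(\Omega\chi)$ contracted with $\etab$, not merely its divergence. The Codazzi identity \eqref{eq:codazzi:chi:j} only relates $\divs(\Omega\chih)$ to $\ds(\Omega\tr\chi)$ and lower-order terms; it does not control the remaining components of $\nablas(\Omega\chih)$, nor does it by itself bound $\ds(\Omega\tr\chi)$. The paper flags this explicitly in the Remark following the lemma: the required pointwise bound $\Linf{u,v}{\Omega\nablas(\Omega\chi)}\lesssim 1$ is an additional input, obtained not from Codazzi but from the separate transport argument of Proposition~\ref{prop:L4:chi}, which propagates $\nablas(\Omega\chih)$ and $\ds(\Omega\tr\chi)$ together along $C_u$ using \eqref{BA:omega:tr:chi} and \eqref{BA:Omega:ds:omega}. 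Once you replace your appeal to Codazzi by this transport estimate, the argument closes exactly as you outline.
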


\begin{remark}
  The stated bounds \eqref{eq:prelim:Linfty:eta:chih} do not require the smallness assumption \eqref{D:small}. In fact, if \eqref{D:small} is assumed, then
  \begin{equation}
    \Linf{u,v}{\Omega^2\eta}+\Linf{u,v}{\Omega^3\nablas\eta}+\Linf{u,v}{\Omega^2\chih} \lesssim 1/r\,.
  \end{equation}

\end{remark}

\begin{remark}
  The  bounds \eqref{eq:prelim:Linfty:etab} for $\etab$ are derived using propagation equations along $C_u$, which are not shear-free, and rely on
  \begin{equation}
      \Linf{u,v}{\Omega\nablas(\Omega\chih)}+\Linf{u,v}{\Omega\ds(\Omega\tr\chi)}\lesssim 1 \label{BA:nablas:Omega:chi}
  \end{equation}
  These are established in the present setting in Proposition~\ref{prop:L4:chi} below, but in general cannot be recovered using propagation equations alone; rather these are established using systems of elliptic equations coupled to propagation equations for $\ds\tr\chi$;  see Section~\ref{sec:coupled:codazzi}. In fact, for the proof of \eqref{BA:nablas:Omega:chi} using the Codazzi equations see Proposition~\ref{prop:coupled:chi}.
\end{remark}

\begin{proof}
  Since $\eta$ is a 1-form, it follows from \eqref{eq:Db:eta:shear-free} that
  \begin{equation*}
    \Db|\eta|^2+2\Omega\tr\chib|\eta|^2=4(\eta,\ds\omegab)
  \end{equation*}
  and it follows that
  \begin{equation*}
    \Omega^2|\eta|\lesssim \int_u^{u_+} \Omega^2|\ds\omegab|\ud u  \lesssim u_+-u
  \end{equation*}
Moreover by \CLemma{4.1}, and the Codazzi equation \eqref{eq:codazzi:shear-free},
\begin{gather*}
  \Db\nablas\eta-\nablas\Db\eta=-\frac{1}{2}\Omega\tr\chib\eta\otimesh\eta\\
  \nablas\Db\eta=-\frac{1}{2}\Omega\tr\chib \eta\otimes\eta-\frac{1}{2}\Omega\tr\chib\nablas\eta+2\nablas\ds\omegab
\end{gather*}
and thus by \CLemma{4.2}
\begin{equation*}
  \Db|\nablas\eta|^2+3\Omega\tr\chib|\nablas\eta|^2\leq 2 |\nablas\eta|\Bigl(\Omega\tr\chib |\eta|^2+2|\nablas\ds\omegab|\Bigr)
\end{equation*}
which implies that
\begin{equation*}
  \Linf{u,v}{\Omega^3\nablas\eta}\lesssim \int_u^{u_+}\tr\chib \Bigl( \Linf{u',v}{\Omega^2\eta}^2 + \Linf{u',v}{\Omega^3\nablas^2\omegab}\Bigr)\ud u'\lesssim u_+-u
\end{equation*}

  Now we can turn to the propagation equation for $\chih$.
  Since $\chih$ is a 2-covariant tensorfield \eqref{eq:Dbh:chih:shear-free}  implies
  \begin{equation*}
    \Db\bigl(\Omega^4|\chih|^2\bigr)=\bigl(2\omegab-\Omega\tr\chib)\Omega^4|\chih|^2+2\Bigl(\Omega^2\chih,\Omega^3\bigl(\nablas\otimesh\eta+\eta\otimesh\eta\bigr)\Bigr)
  \end{equation*}
which immediately implies the boundedness of $\Omega^2|\chih|$, in view of the estimates for $\eta$, and $\nablas\eta$ established above.

Moreover, from \eqref{eq:D:etab}, and \CLemma{4.2} we now have
\begin{gather*}
  D|\etab|^2+\Omega\tr\chi|\etab|^2=2(\etab,D\etab)-2\Omega\chih(\etab,\etab)\\
  D\etab=-\Omega\chih^\sharp\cdot\etab-\frac{1}{2}\Omega\tr\chi\etab+2\ds\omega\\
  D|\etab|^2+2\Omega\tr\chi|\etab|^2=-4\Omega\chih(\etab,\etab)+4(\etab,\ds\omega)
\end{gather*}
and therefore, using in particular the boundedness of $\Omega|\chih|$ proven above,
\begin{equation*}
  \Linf{u,v}{\Omega^2\etab}\lesssim \Linf{u,v_+}{\Omega^2\etab}+\int_v^{v_+}\Omega^2|\ds\omega|\ud v
\end{equation*}

Similarly to the above we can commute the equation for $\etab$, and find
\begin{equation*}
  D|\nablas\etab|^2+3\Omega\tr\chi|\nablas\etab|^2 \leq 2|\nablas\etab|\Bigl(\bigl|\nablas(\Omega\chi)\cdot\etab\bigr|+\bigl|\Omega\chih\cdot\nablas\etab\bigl|+\bigl|\nablas^2\omega|\Bigr)
\end{equation*}
which implies the stated bound for $\nablas\eta$, in view of the assumption $\nablas^2\omega$, and \eqref{BA:nablas:Omega:chi} which is established in Proposition~\ref{prop:L4:chi}.
\end{proof}


\subsection{$\mathrm{L}^4$ estimates from propagation equations}
\label{sec:L4}


\subsubsection{General Lemmas}

Consider the following dimensionless norms:
\begin{equation}
  \dLp{p}{u,v}{\theta}:= \Bigl( \frac{1}{4\pi r^2}\int_{S_{u,v}}|\theta|^p_{\gs}\dm{\gs}\Bigr)^{1/p}
\end{equation}

\begin{lemma}\label{lemma:norm:comparison}
  Let $\rho$ be a non-negative function on $C_u$, $\Phi_v$ the flow generated by $L$, and $\rho(v)$ be the function on $S_{u,v_+}$ defined by $[\rho(v)](q)=\rho\circ\Phi_v(q)$. Assume \eqref{BA:tr:chi:average}, then
\begin{equation}
\dLp{p}{u,v}{\rho}\simeq_I  \dLp{p}{u,v_+}{\rho(v)}
\end{equation}
Similarly if $\underline{\Phi}_u$ is the flow generated by $\Lb$, and $\rho(u):= \rho\circ\underline{\Phi}_u$, then if \eqref{BA:tr:chib:average} holds, 
\begin{equation}
\dLp{p}{u,v}{\rho}\simeq_I  \dLp{p}{u_+,v}{\rho(u)}
\end{equation}

\end{lemma}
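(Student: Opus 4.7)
The plan is to reduce the statement to a pointwise comparison of area elements, which can then be analyzed using the first variation formula along the generators of $C_u$ (resp.\ $\Cb_v$).

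I would work in coordinates $(v,\vartheta^1,\vartheta^2)$ on $C_u$ in which $L=\partial/\partial v$ and $(\vartheta^1,\vartheta^2)$ are propagated from $S_{u,v_+}$ along the integral curves of $L$, so that $\Phi_v$ is simply the map $(\vartheta^1,\vartheta^2)\mapsto (\vartheta^1,\vartheta^2)$ between $S_{u,v_+}$ and $S_{u,v}$. First I would take the trace of the first variation formula $D\gs = 2\Omega\chi$ to obtain $D\log\sqrt{\det\gs}=\Omega\tr\chi$, which upon integration along the generators yields the pointwise identity
\begin{equation*}
  \sqrt{\det\gs(u,v,\vartheta)} = \exp\Bigl(\int_{v_+}^{v} \Omega\tr\chi(u,v',\vartheta)\,\ud v'\Bigr)\sqrt{\det\gs(u,v_+,\vartheta)}.
\end{equation*}

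Next I would combine this with \eqref{eq:D:r}, which gives $\log\bigl(r(u,v)/r(u,v_+)\bigr) = \frac{1}{2}\int_{v_+}^v \overline{\Omega\tr\chi}\,\ud v'$, so that
\begin{equation*}
  \frac{\sqrt{\det\gs(u,v,\vartheta)}/r^2(u,v)}{\sqrt{\det\gs(u,v_+,\vartheta)}/r^2(u,v_+)} = \exp\Bigl(\int_{v_+}^v \bigl[\Omega\tr\chi-\overline{\Omega\tr\chi}\bigr]\,\ud v'\Bigr).
\end{equation*}
By \eqref{BA:tr:chi:average} this exponential factor is bounded above and below by $\exp(\pm\Delta_I |v_+-v|)$, which is the place where the assumption is used and which determines the implicit constant $\simeq_I$.

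Finally I would perform the change of variables $\Phi_v$ in the integral defining $\dLp{p}{u,v}{\rho}$, replacing $\rho$ by $\rho(v)=\rho\circ\Phi_v$ pulled back to $S_{u,v_+}$, and absorb the pointwise bound on the ratio of area elements into the implicit constant, then take $p$-th roots to conclude. The conjugate statement is proved identically, substituting $\Db\gs=2\Omega\chib$, the second equation in \eqref{eq:D:r}, and assumption \eqref{BA:tr:chib:average}. The only technical point to be careful about is the direction of the flow (since $L$ points toward increasing $v$, the map $\Phi_v$ is obtained by flowing for parameter $v-v_+\le 0$), but the integral identity above is insensitive to the sign; there is no substantive obstacle in this argument, as everything reduces to Jacobian bookkeeping together with the single bootstrap bound on $\Omega\tr\chi-\overline{\Omega\tr\chi}$.
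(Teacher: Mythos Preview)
Your proposal is correct and follows essentially the same approach as the paper: pull back the integral to $S_{u,v_+}$ via $\Phi_v$, use the first variation formula $D\log\sqrt{\det\gs}=\Omega\tr\chi$ to express the Jacobian, and compare with $r^2(u,v_+)/r^2(u,v)$ via $D\log r=\tfrac{1}{2}\overline{\Omega\tr\chi}$ so that the discrepancy is exactly $\exp\int(\Omega\tr\chi-\overline{\Omega\tr\chi})$, bounded by \eqref{BA:tr:chi:average}. The paper packages this last comparison as Lemma~\ref{lemma:Omegatrchi:r}, but the content is identical to your inline computation.
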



\begin{proof}
  One has
  \begin{equation*}
    \dLp{p}{u,v}{\rho}^p=\frac{1}{4\pi r^2(u,v)}\int_{S_{u,v}} \rho^p\dm{\gs}=\frac{1}{4\pi r^2(u,v)}\int_{S_{u,v_+}}\bigl(\Phi_v^\ast\rho)^p\Phi_v^\ast\dm{\gs}
  \end{equation*}
and, c.f.~proof of \CLemma{4.3}
\begin{equation*}
  \Phi_v^\ast\dm{\gs}(q)=\exp\Bigl[\int_v^{v_+}\Omega\tr\chi(\Phi_v(q))\ud v\Bigr]\dm{\gs}(q)
\end{equation*}
In view of Lemma~\ref{lemma:Omegatrchi:r}
\begin{equation*}
  \begin{split}
    \exp\Bigl[\int_v^{v_+}\Omega\tr\chi(\Phi_v(q))\ud v\Bigr]&= \exp\Bigl[\int_v^{v_+}\Omega\tr\chi(\Phi_v(q))\ud v-2\log\frac{r(u,v_+)}{r(u,v)}\Bigr]\frac{r^2(u,v_+)}{r^2(u,v)}\\
    &\simeq_I \frac{r^2(u,v_+)}{r^2(u,v)}
  \end{split}
\end{equation*}
and the statement follows.
\end{proof}

\begin{lemma}
  \label{lemma:dLp:gronwall}
  Suppose $\psi$ is a non-negative function on $C_u$ and satisfies the inequality
  \begin{equation}
    |D\psi^2| \leq 2\psi(a\psi+\rho)
  \end{equation}
  where $a\geq 0$, and $\rho\geq 0$. Suppose moreover that \eqref{BA:tr:chi:average} holds, and
  \begin{equation}
    |a|\leq \Delta
  \end{equation}
  Then
  \begin{equation}
    \dLp{p}{u,v}{\psi} \lesssim_I e^{\Delta (v_+-v)} \Bigr[ \dLp{p}{u,v_+}{\psi}+\int_v^{v_+}\dLp{p}{u,v'}{\rho}\ud v'\Bigr]
  \end{equation}
  Similarly for a non-negative function $\psi$ on $\Cb_v$ satisfying the inequality
    \begin{equation}
    |\Db\psi^2|\leq 2\psi(a\psi+\rho)
  \end{equation}
  where $|a|\leq \Delta$, we have provided that \eqref{BA:tr:chib:average} holds 
  \begin{equation}
    \dLp{p}{u,v}{\psi} \lesssim_I e^{\Delta (u_+-u)} \Bigr[ \dLp{p}{u_+,v}{\psi}+\int_u^{u_+}\dLp{p}{u',v}{\rho}\ud u'\Bigr]
  \end{equation}

\end{lemma}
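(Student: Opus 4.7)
The plan is to derive a scalar differential inequality for $X(v):=\dLp{p}{u,v}{\psi}$ along the generators of $C_u$, then close with a backward Gronwall argument in the style of Lemma~\ref{lemma:ode}. Since the $\dLp{p}{u,v}{\cdot}$ norm is defined using the intrinsic area measure weighted by $1/(4\pi r^2)$, the first step is simply to compute the $D$-derivative of this normalised integral, reducing the problem to an o.d.e.\ in $v$ for $X$.

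Concretely, using the first variational formula $D\dm{\gs}=\Omega\tr\chi\,\dm{\gs}$ (which follows from $D\gs=2\Omega\chi$) together with $Dr=\tfrac{r}{2}\overline{\Omega\tr\chi}$ from \eqref{eq:D:r}, one obtains
\[
D\Bigl(\frac{1}{4\pi r^2}\int_{S_{u,v}}\psi^p\,\dm{\gs}\Bigr)=\frac{1}{4\pi r^2}\int_{S_{u,v}}\Bigl[D\psi^p+\psi^p\bigl(\Omega\tr\chi-\overline{\Omega\tr\chi}\bigr)\Bigr]\dm{\gs}.
\]
Where $\psi>0$ the hypothesis gives $|D\psi^p|=\tfrac{p}{2}\psi^{p-2}|D\psi^2|\leq p a\psi^p+p\psi^{p-1}\rho$; combining this with $|a|\leq\Delta$, the bound $|\Omega\tr\chi-\overline{\Omega\tr\chi}|\leq\Delta_I$ from \eqref{BA:tr:chi:average}, and Hölder's inequality on the normalised measure applied to the cross term $\psi^{p-1}\rho$ (with exponents $p/(p-1)$ and $p$), one obtains
\[
\bigl|D(X^p)\bigr|\leq (p\Delta+\Delta_I)X^p+p X^{p-1}Y,\qquad Y(v):=\dLp{p}{u,v}{\rho}.
\]
On the set $\{X>0\}$, dividing by $pX^{p-1}$ produces the scalar inequality $|DX|\leq(\Delta+\Delta_I/p)X+Y$; at zeros of $X$ one regularises by $\psi\leadsto\sqrt{\psi^2+\varepsilon^2}$ and passes to $\varepsilon\to 0$ at the end.

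Applying Lemma~\ref{lemma:ode} (integrating backward from $v_+$ down to $v$) to this scalar inequality yields the claimed estimate, the implicit constant $\lesssim_I$ absorbing the factor $\Delta_I/p$ in the exponential that originates from the area-correction term $\Omega\tr\chi-\overline{\Omega\tr\chi}$. The conjugate statement on $\Cb_v$ follows from the symmetric computation with $\Db$ in place of $D$ and \eqref{BA:tr:chib:average} in place of \eqref{BA:tr:chi:average}. The only real subtlety is the division by $X^{p-1}$ at its zeros, handled by the standard $\varepsilon$-regularisation described above; everything else is a routine tracking of the variational formulas.
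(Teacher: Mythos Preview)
Your argument is correct, but it proceeds differently from the paper. The paper pulls $\psi$ back to the fixed sphere $S_{u,v_+}$ along the flow $\Phi_v$, applies the \emph{pointwise} Gronwall inequality of Lemma~\ref{lemma:ode} to obtain
\[
\psi(v)\leq e^{A(v)}\Bigl(\psi(v_+)+\int_v^{v_+}e^{-A(v')}\rho(v')\,\ud v'\Bigr),
\]
then takes the $\mathrm{L}^p$ norm on $S_{u,v_+}$ and finally invokes Lemma~\ref{lemma:norm:comparison} to convert $\dLp{p}{u,v_+}{\cdot}$ back to $\dLp{p}{u,v}{\cdot}$. You instead differentiate the normalised $\mathrm{L}^p$ norm $X(v)$ directly, so the area correction $\Omega\tr\chi-\overline{\Omega\tr\chi}$ appears inside the differential identity rather than being handled afterwards by a separate comparison lemma. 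Your route is self-contained and avoids the detour through Lemma~\ref{lemma:norm:comparison}, at the mild cost of the $\varepsilon$-regularisation to justify the division by $X^{p-1}$; the paper's route cleanly separates the pointwise o.d.e.\ from the change-of-sphere step and needs no regularisation since the Gronwall bound is applied pointwise before any division occurs. Both yield the same estimate with constants of the same quality.
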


\begin{proof}
  Set
  \begin{equation*}
    \psi(v)(q)=\psi\circ\Phi_v(q)\qquad q\in S_{u,v_+}
  \end{equation*}
Then by Lemma~\ref{lemma:ode}
\begin{equation*}
  \psi(v)\leq e^{A(v)}\Bigl(\psi(v_+)+\int_v^{v_+}e^{-A(v')}\rho(v')\ud v'\Bigr)\qquad A(v)=\int_v^{v_+}a(v')\ud v'
\end{equation*}
and it follows that
\begin{equation*}
  \dLp{p}{u,v_+}{\psi(v)}\leq e^{\Delta (v_+-v)}\Bigl[   \dLp{p}{u,v_+}{\psi(v_+)} + \int_v^{v_+}   \dLp{p}{u,v_+}{\rho(v')} \ud v' \Bigr]
\end{equation*}
The statement the follows from Lemma~\ref{lemma:norm:comparison}.
\end{proof}

\subsubsection{$\chi$}
\label{sec:L4:D:chi}





Consider
\begin{subequations}
\begin{align}
  \tilde{\theta}_p&=\Omega^{p-2}\ds\bigl(\Omega\tr\chi\bigr)\\
  \thetas_p&=\Omega^{p-2}\nablas(\Omega\chih)
\end{align}
\end{subequations}
We first note
\begin{subequations}
\begin{gather}
  D\bigl(\Omega\tr\chi\bigr)=2\omega\Omega\tr\chi-\frac{1}{2}(\Omega\tr\chi)^2-\Omega^2|\chih|^2\\
  D\ds\bigl(\Omega\tr\chi\bigr)=\bigl(2\omega-\Omega\tr\chi\bigr)\ds\bigl(\Omega\tr\chi\bigr)+\Omega\tr\chi\ds(2\omega)-2\Omega\chih^\sharp\cdot \nablas(\Omega\chih)
\end{gather}
\end{subequations}
and since $\tr D\chih=2\Omega|\chih|^2$, we obtain from \eqref{eq:Dh:chih} that
\begin{subequations}
\begin{gather}
  D\chih=\hat{D}\chih+\Omega|\chih|^2\gs=\omega\chih+\Omega|\chih|^2\gs-\Omega\alpha\\
  D\bigl(\Omega\chih\bigr)=2\omega \,\Omega\chih-\Omega^2\alpha+\Omega^2|\chih|^2\gs
\end{gather}
\end{subequations}
and a corresponding propagation equation for $\nablas(\Omega\chih)$ from \CLemma{4.1}.
For convenience let us denote by
\begin{subequations}
  \begin{align}
    \dLp{p}{u,v}{\Omega \nablas(\Omega\chi)}&:=\dLp{p}{u,v}{\Omega \nablas(\Omega\chih)}+\dLp{p}{u,v}{\Omega\ds(\Omega\tr\chi)} \\
    |\Omega \nablas (\Omega\chi)|^2_{L^\infty(S_{u,v})} &:= \sup_{S_{u,v}}|\Omega\nablas(\Omega\chih)|_{\gs\rvert_{S_{u,v}}}^2+\sup_{S_{u,v}} |\Omega\ds(\Omega\tr\chi)|_{\gs\rvert_{S_{u,v}}}^2
  \end{align}
\end{subequations}
\begin{proposition}\label{prop:L4:chi}
  Assume \eqref{BA:omega:tr:chi} and \eqref{D:small} hold. Then 
  \begin{equation}
     \dLp{p}{u,v}{\Omega \nablas(\Omega\chi)} \lesssim  \dLp{p}{u,v_+}{\Omega \nablas(\Omega\chi)}+\int_v^{v_+}\dLp{p}{u,v'}{\Omega^2\ds\omega}\ud v'
  \end{equation}
  If in addition \eqref{BA:Omega:ds:omega} holds, then
  \begin{equation}
    \label{eq:d:chi:Linfty}
    |\Omega \nablas (\Omega\chi)|_{L^\infty(S_{u,v})} \lesssim |\Omega \nablas (\Omega\chi)|_{L^\infty(S_{u,v_+})}
  \end{equation}

 \end{proposition}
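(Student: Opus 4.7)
The plan is to apply Lemma~\ref{lemma:dLp:gronwall} (and Lemma~\ref{lemma:ode} for the $L^\infty$ version) to the coupled propagation equations along $C_u$ for the squared norms of $\theta := \Omega\ds(\Omega\tr\chi)$ and $\Theta := \Omega\nablas(\Omega\chih)$. The crucial point is that the weight $\Omega$ in each of these quantities is chosen precisely so that, after combining the growth from $D\theta$ with the metric-induced loss from $D\gs^{-1} = -2\Omega\chi^{\sharp\sharp}$, the coefficient of $|\theta|^2$ (respectively $|\Theta|^2$) in the propagation equation reduces to $3(2\omega - \Omega\tr\chi)$, which is bounded by $3\Delta_I$ under \eqref{BA:omega:tr:chi}.

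To verify the cancellation, recall that for a $k$-covariant tensor $\xi$ on $S_{u,v}$ one has $D|\xi|^2 = -k\,\Omega\tr\chi\,|\xi|^2 + (\text{shear terms}) + 2(\xi, D\xi)$. Multiplying the propagation equation for $\ds(\Omega\tr\chi)$ stated in the excerpt by $\Omega$ gives
\begin{equation*}
D\theta = (3\omega - \Omega\tr\chi)\,\theta + \tr\chi\,\Omega^2\ds(2\omega) - 2\Omega\chih^\sharp\!\cdot\Theta,
\end{equation*}
so that with $k = 1$ the coefficient of $|\theta|^2$ is $(6\omega - 3\Omega\tr\chi) = 3(2\omega - \Omega\tr\chi)$. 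Analogously, applying $\nablas$ to $D(\Omega\chih) = 2\omega\,\Omega\chih + \Omega^2|\chih|^2\gs$, commuting $\nablas$ with $D$ via \CLemma{4.1}, and multiplying by $\Omega$ yields $D\Theta = 3\omega\,\Theta + (\text{inhomogeneous terms})$; with $k = 3$ this produces the same coefficient $3(2\omega - \Omega\tr\chi)$ for $|\Theta|^2$. The commutator terms from \CLemma{4.1} contribute contractions of $\nablas(\Omega\chi)$ with $\Omega\chih$, i.e.~schematically $\Omega|\chih|\,(|\theta| + |\Theta|)$, and by Lemma~\ref{lemma:Linfty:chi} the coefficient $\Omega|\chih|$ is uniformly bounded, so these enter the Gronwall argument only as bounded self- and cross-couplings between $\theta$ and $\Theta$.

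The only genuinely inhomogeneous source is $\tr\chi\,\Omega^2\ds(2\omega)$ in the $\theta$-equation together with the analogous terms $\Omega^2\chih\otimes\ds\omega$ and $\Omega\ds(\Omega^2|\chih|^2)\otimes\gs$ in the $\Theta$-equation, all of which are controlled by $\Omega^2|\ds\omega|$ (and by $|\theta| + |\Theta|$ upon applying the Codazzi equation \eqref{eq:codazzi:chih}) since $\tr\chi$ and $\Omega|\chih|$ are uniformly bounded by Lemma~\ref{lemma:Linfty:chi}. Setting $\mathcal{N}(v') := \dLp{p}{u,v'}{\theta}+\dLp{p}{u,v'}{\Theta}$, Lemma~\ref{lemma:dLp:gronwall} applied to each squared norm and summed gives
\begin{equation*}
\mathcal{N}(v) \lesssim \mathcal{N}(v_+) + \int_v^{v_+}\mathcal{N}(v')\,\ud v' + \int_v^{v_+}\dLp{p}{u,v'}{\Omega^2\ds\omega}\,\ud v'\,,
\end{equation*}
and a final Gronwall step in $v'$ (using $v_+ - v \ll 1$ from \eqref{D:small}) yields the first inequality. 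The $L^\infty$ statement \eqref{eq:d:chi:Linfty} follows by the same scheme with Lemma~\ref{lemma:ode} applied pointwise, using \eqref{BA:Omega:ds:omega} to bound $\Omega^2|\ds\omega|\leq\Delta_{II}$ and \eqref{D:small} to absorb the resulting contribution $\Delta_{II}(v_+-v)$ into the implicit constant. The only genuine bookkeeping obstacle is verifying the coefficient cancellation to exactly $3(2\omega-\Omega\tr\chi)$; this is what dictates the choice of the weight $\Omega^{p-2}$ with $p = 3$ in the definitions of $\tilde\theta_p$ and $\thetas_p$.
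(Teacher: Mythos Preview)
Your proposal is correct and follows essentially the same route as the paper: both derive the propagation equations for $\Omega\ds(\Omega\tr\chi)$ and $\Omega\nablas(\Omega\chih)$, verify that the weight $\Omega$ (the paper's $\Omega^{p-2}$ with $p=3$) forces the linear coefficient to collapse to $3(2\omega-\Omega\tr\chi)$, and then close with Lemma~\ref{lemma:dLp:gronwall} and Lemma~\ref{lemma:Linfty:chi}. The only cosmetic differences are that the paper first packages the two quantities into a single $\hat\theta_p=\sqrt{|\tilde\theta_p|^2+|\thetas_p|^2}$, so that the cross-coupling is absorbed before Gronwall rather than requiring your extra integral Gronwall step, and that the Codazzi equation you invoke is not actually needed here --- the term $\Omega\ds(\Omega^2|\chih|^2)$ already equals $2(\Omega\chih,\Theta)$ and feeds directly into the self-coupling.
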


 \begin{remark}
   In view of Lemma~\ref{lemma:morrey} this in particular recovers the assumption \eqref{BA:tr:chi:average}.
 \end{remark}

\begin{proof}
  Consider the propagation equation for $\ds(\Omega\tr\chi)$ derived above.
  Using \CLemma{4.2} we infer that
  \begin{multline*}
    D|\tilde{\theta}_p|^2+\Omega\tr\chi|\tilde{\theta}_p|^2=2(\tilde{\theta}_p,D\tilde{\theta}_p)-2\Omega\chih_B^{\sharp A}(\tilde{\theta}_p)^B(\tilde{\theta}_p)_A\\
    =2(p\omega-\Omega\tr\chi)|\tilde{\theta}_p|^2+2\Omega^{p-2}\Omega\tr\chi\bigl(\tilde{\theta}_p,\ds(2\omega)\bigr)-2\Omega^{p-2}\Bigl(\tilde{\theta}_p,2\Omega\chih\cdot\nablas(\Omega\chih)\Bigr)-2\Omega\chih_B^{\sharp A}(\tilde{\theta}_p)^B(\tilde{\theta}_p)_A
  \end{multline*}
thus
\begin{equation*}
  D|\tilde{\theta}_p|^2\leq 2\Bigl(\bigl\lvert p\omega-\frac{3}{2}\Omega\tr\chi\bigr\rvert+\Omega|\chih|\Bigr)|\tilde{\theta}_p|^2\\
  +4|\tilde{\theta}_p|\Bigl(\tr\chi\Omega^{p-1}|\ds\omega|+|\Omega\chih|\bigl\lvert \thetas_p\bigr\rvert\Bigr)
\end{equation*}

Now let us derive the propagation equation for $\thetas_p$.
By \CLemma{4.1}
\begin{equation*}
  \begin{split}
    \bigl(D\nablas(\Omega\chih)\bigr)_{ABC}=\bigl(\nablas D(\Omega\chih)\bigr)_{ABC}&-(D\Gammas)_{AB}^D\Omega\chih_{DC}-(D\Gammas)_{AC}^D\Omega\chih_{BD}\\
    =\bigl(\nablas D(\Omega\chih)\bigr)_{ABC}&-\nablas_A(\Omega\chi)_B^D\Omega\chih_{DC}-\nablas_B(\Omega\chi)_A^D\Omega\chih_{DC}+\nablas^D(\Omega\chi)_{AB}\Omega\chih_{DC}\\
    &-\nablas_A(\Omega\chi)_C^D\Omega\chih_{BD}-\nablas_C(\Omega\chi)_A^D\Omega\chih_{BD}+\nablas^D(\Omega\chi)_{AC}\Omega\chih_{BD}
  \end{split}
\end{equation*}
hence, in view of the propagation equation for $\Omega\chih$ derived above,
\begin{gather*}
  D\bigl(\nablas(\Omega\chih)\bigr)=2\omega\nablas(\Omega\chih)+\Omega\chih\cdot\nablas(2\omega)+h\cdot\ds(\Omega\tr\chi)+i\cdot\nablas(\Omega\chih)\\
  |h|\leq \Omega\lvert\chih\rvert\qquad |i|\leq \Omega|\chih|
\end{gather*}
and thus 
\begin{equation*}
   D\bigl(\Omega^{p-2}\nablas(\Omega\chih)\bigr)= p \omega\thetas_p+\Omega\chih\cdot \Omega^{p-2}\nablas(2\omega)+h\cdot \tilde{\theta}_p+i\cdot\thetas_p
\end{equation*}

Therefore in view of \CLemma{4.2}, applied to the 3-form $\thetas_p$:
\begin{equation*}
  D|\thetas_p|^2+3\Omega\tr\chi|\thetas_p|^2=2(\thetas_p,D\thetas_p)-2\sum_{i=1}^3\Omega\chih_{B_i}^{A_i}\thetas^{A_1\cdot\rangle B_i\langle\cdot A_3}\thetas_{A_1A_2A_3}
\end{equation*}
we obtain
\begin{equation*}
  D|\thetas_p|^2\lesssim \bigl\lvert 2p\omega-3\Omega\tr\chi\bigr\rvert |\thetas_p|^2
  + |\thetas_p|\,\Omega|\chih|\,\bigl\{\Omega^{p-2}|\nablas\omega|+|\tilde{\theta}_p|+|\thetas_p|\bigr\}
\end{equation*}

Now set
\begin{equation*}
  \hat{\theta}_p:=\sqrt{|\tilde{\theta}_p|^2+|\thetas_p|^2}
\end{equation*}
then the above is summarized by
\begin{equation*}
  D\hat{\theta}_p^2 \lesssim \Bigl( \bigl\lvert 2p\omega-3\Omega\tr\chi\bigr\rvert +\Omega|\chih| \Bigr)\hat{\theta}_p^2+ \Omega|\chi|\hat{\theta}_p\Omega^{p-2}|\ds\omega|
\end{equation*}

If we now fix $p=3$, then $| 2p\omega-3\Omega\tr\chi | \leq 3\Delta_I$, and by \eqref{BA:Omega:ds:omega}:
\begin{equation*}
  \Omega\Omega^{p-2}|\ds\omega| = \Omega^{2}|\ds\omega| \leq \Delta_{II}
\end{equation*}
Hence we can directly apply Lemma~\ref{lemma:dLp:gronwall} to conclude on \eqref{eq:d:chi:Linfty} with Lemma~\ref{lemma:Linfty:chi}.

\end{proof}


\subsubsection{$\chib$}

In analogy to the treatment in the previous section we derive from \eqref{eq:Db:chibp}
\begin{subequations}
\begin{align}
  \Db\ds(\Omega\tr\chib)&=-\Omega\chibh\cdot\nablas(\Omega\chibh)+\bigl(2\omegab-\Omega\tr\chib)\ds(\Omega\tr\chib)+\Omega\tr\chib\ds(2\omegab)\\
  \Db(\Omega\chibh)&=2\omegab\Omega\chibh+|\Omega\chibh|^2\gs
\end{align}
\end{subequations}
where we used $\alphab[W]=0$, hence by \CLemma{4.1}
\begin{subequations}
\begin{gather}
  \Db\nablas(\Omega\chibh)=\Omega\chibh \cdot\nablas(2\omegab)+2\omegab\nablas(\Omega\chibh)+i\cdot\nablas(\Omega\chibh)+j\cdot \nablas(\Omega\tr\chib)\\
  |i|,|j|\leq \Omega|\chibh|
\end{gather}
\end{subequations}

Note however that in the shear-free case $\chibh=0$ this system drastically simplifies and decouples:
\begin{equation}
  \Db\ds(\Omega\tr\chib)=\bigl(2\omegab-\Omega\tr\chib)\ds(\Omega\tr\chib)+\Omega\tr\chib\ds(2\omegab)
\end{equation}
This  yields by \CLemma{4.2}
\begin{equation}
  \Db|\Omega\ds(\Omega\tr\chib)|^2=2\bigl(2\omegab-\Omega\tr\chib\bigr)|\Omega\ds(\Omega\tr\chib)|^2+2\tr\chib\bigl(\Omega\ds(\Omega\tr\chib),\Omega^2\ds(2\omegab)\bigr)
\end{equation}
which immediately implies:
\begin{lemma}\label{lemma:ds:trchib}
 Suppose \eqref{BA:omegab:tr:chib} and \eqref{BA:Omega:ds:omegab} hold. Then, in the shear-free case $\chibh=0$,
  \begin{equation}
    |\Omega\ds(\Omega\tr\chib)|\lesssim u_+-u
  \end{equation}
\end{lemma}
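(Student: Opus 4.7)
The plan is to integrate along the generators of $\Cb_v$ the last displayed equation before the statement,
\[
  \Db|\Omega\ds(\Omega\tr\chib)|^2 = 2(2\omegab-\Omega\tr\chib)|\Omega\ds(\Omega\tr\chib)|^2 + 2\tr\chib\bigl(\Omega\ds(\Omega\tr\chib),\Omega^2\ds(2\omegab)\bigr),
\]
starting from $C_+$, where the left-hand side vanishes. The shear-free assumption is essential to this approach: it is what decouples the propagation of $\ds(\Omega\tr\chib)$ from $\nablas(\Omega\chibh)$ and leaves only the source term $\Omega\tr\chib\,\ds(2\omegab)$, which is directly controlled by the bootstrap hypothesis \eqref{BA:Omega:ds:omegab}.

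I would first verify the vanishing of the initial data. On $C_+$ the prescribed data are spherically symmetric (Section~\ref{sec:setup:initial:data}): $\Omega^2=r^2-1$ and $\tr\chib=2\Omega/r$, so $\Omega\tr\chib=2(r^2-1)/r$ depends only on $r$, which is constant on each round sphere $S_{u_+,v}$. Hence $\ds(\Omega\tr\chib)=0$ on $C_+$, and in particular $f:=|\Omega\ds(\Omega\tr\chib)|$ satisfies $f\rvert_{C_+}=0$.

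Next, writing the displayed equation as an ODE in $u$ for $f^2$ along a null generator and applying Cauchy--Schwarz gives
\[
  |\Db f^2|\;\leq\;2f\bigl(|2\omegab-\Omega\tr\chib|\,f+2\,\tr\chib\,\Omega^2|\ds\omegab|\bigr).
\]
The coefficient $|2\omegab-\Omega\tr\chib|$ is bounded by $\Delta_I$ via \eqref{BA:omegab:tr:chib}, and the forcing satisfies $\Omega^2|\ds\omegab|\leq\Delta_{II}$ by \eqref{BA:Omega:ds:omegab}. For the remaining factor $\tr\chib$, the pointwise bound \eqref{eq:trchib:shear-free} combined with the initial value $\tr\chib\rvert_{C_+}=2\Omega_+/r_+$, which is uniformly bounded in view of \eqref{BA:log:Omega:r}, yields $\tr\chib\lesssim 1$ throughout $\mathcal{D}_+$.

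Finally, Lemma~\ref{lemma:ode} applied along the generator of $\Cb_v$ (parametrised by $u$, integrating from $u_+$ backwards) with $f(u_+)=0$, $a=\Delta_I$ and $b\lesssim\Delta_{II}$, immediately delivers
\[
  f(u)\;\lesssim\; e^{\Delta_I(u_+-u)}\,\Delta_{II}(u_+-u)\;\lesssim\; u_+-u,
\]
as claimed. There is no substantive obstacle: the entire content of the argument is the observation that the spherical symmetry of the data on $C_+$ kills the initial data for $\ds(\Omega\tr\chib)$, after which Gronwall on the already-derived (and, thanks to $\chibh=0$, \emph{decoupled}) propagation equation does the rest.
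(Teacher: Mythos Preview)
Your proof is correct and follows exactly the approach the paper intends: the paper states the propagation equation for $|\Omega\ds(\Omega\tr\chib)|^2$ and then writes ``which immediately implies'' the lemma, and you have simply supplied the details of that implication (vanishing initial data on $C_+$ by spherical symmetry, bounding the coefficients via the bootstrap assumptions, and invoking Lemma~\ref{lemma:ode}). There is nothing to add.
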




\begin{remark}
  In view of Lemma~\ref{lemma:morrey} this in particular recovers the bootstrap assumption \eqref{BA:tr:chib:average}.
\end{remark}

\subsubsection{$\nablas(\eta,\etab)$}
\label{sec:L4:nablas:eta}

Let us first derive an estimate for $\nablas\eta$ in the shear-free case $\chibh=0$.

Recall that with $\betab[W]=0$ we have
\begin{equation}
  \Db\eta=-\Omega\chib^\sharp\cdot\eta+2\ds\omegab=-\frac{1}{2}\Omega\tr\chib\eta+2\ds\omegab
\end{equation}
hence in view of \CLemma{4.1} we obtain an equation of the form
\begin{gather}
  \Db\nablas\eta=-\frac{1}{2}\Omega\tr\chib\nablas\eta+2\nablas\ds\omegab-2i\cdot \eta\\
  |i|\leq |\nablas(\Omega\tr\chib)|\notag
\end{gather}
and using \CLemma{4.2} we arrive at the differential equation
\begin{equation}\label{eq:Db:nablas:eta}
  \Db|\nablas\eta|^2+3\Omega\tr\chib|\nablas\eta|^2 = 4\Bigl(\nablas\eta,\nablas^2\omegab+i\cdot\eta\Bigr)
\end{equation}
which implies that
\begin{equation}\label{eq:Db:Omega:nablas:eta}
  \Db\Bigl(\Omega^6|\nablas\eta|^2\Bigr) = 3\bigl(2\omegab-\Omega\tr\chib\bigr)\Omega^6|\nablas\eta|^2+4\Bigl(\Omega^3\nablas\eta,\Omega^3\nablas^2\omegab+\Omega i\cdot\Omega^2\eta\Bigr)
\end{equation}

\begin{lemma}\label{lemma:nablas:eta}
  Suppose the assumptions  \eqref{BA:omegab:tr:chib}  and  \emph{(\textbf{A:\underline{II}})} hold, then
    \begin{equation}
    \dLp{4}{u,v}{\Omega^3\nablas\eta} \lesssim_{I,II} u_+-u
  \end{equation}

\end{lemma}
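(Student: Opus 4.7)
The plan is to apply the Gronwall-type inequality Lemma~\ref{lemma:dLp:gronwall} directly to the propagation equation \eqref{eq:Db:Omega:nablas:eta}, with $\psi = \Omega^3|\nablas\eta|$. The coefficient of $\psi^2$ on the right-hand side is $a = \tfrac{3}{2}|2\omegab-\Omega\tr\chib|$, which is bounded by $\tfrac{3}{2}\Delta_I$ thanks to \eqref{BA:omegab:tr:chib}. The source term splits into two pieces: one involves $\Omega^3|\nablas^2\omegab|$, which is directly controlled in $L^4$ by \eqref{BA:Omega:dd:omegab}; the other is the quadratic coupling $\Omega|i|\cdot\Omega^2|\eta|$, with $|i|\leq|\nablas(\Omega\tr\chib)|$, which must be handled separately.

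Before invoking Lemma~\ref{lemma:dLp:gronwall}, I would establish two auxiliary pointwise bounds. The first is Lemma~\ref{lemma:ds:trchib}, which already gives $\Omega|\nablas(\Omega\tr\chib)|\lesssim u_+-u$ in the shear-free case. The second is an $\mathrm{L}^\infty$ estimate $\Omega^2|\eta|\lesssim u_+-u$. For the latter, I would integrate the shear-free propagation equation $\Db\eta=-\tfrac{1}{2}\Omega\tr\chib\,\eta+2\ds\omegab$ backwards from $C_+$, where $\eta=0$. In squared form,
\begin{equation*}
\Db\bigl(\Omega^4|\eta|^2\bigr) = 2\bigl(2\omegab-\Omega\tr\chib\bigr)\Omega^4|\eta|^2 + 4\bigl(\Omega^2\eta,\Omega^2\ds\omegab\bigr),
\end{equation*}
so Lemma~\ref{lemma:ode} applies with coefficient bounded by $\Delta_I$ (from \eqref{BA:omegab:tr:chib}) and source bounded by $2\Delta_{II}$ (from \eqref{BA:Omega:ds:omegab}), yielding $\Omega^2|\eta|\lesssim u_+-u$.

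Combining these two pointwise bounds shows that the quadratic source term satisfies $\Omega|i|\cdot\Omega^2|\eta|\lesssim (u_+-u')^2$ pointwise, hence also in $\mathrm{L}^4(S_{u',v})$. Since $\nablas\eta=0$ on $C_+$ (because $\eta\equiv 0$ there, a fortiori $\nablas\eta\equiv 0$), Lemma~\ref{lemma:dLp:gronwall} then gives
\begin{equation*}
\dLp{4}{u,v}{\Omega^3\nablas\eta} \lesssim_{I,II} \int_u^{u_+}\Bigl(\dLp{4}{u',v}{\Omega^3\nablas^2\omegab} + (u_+-u')^2\Bigr)\,\ud u' \lesssim_{II} u_+-u,
\end{equation*}
which is the claim.

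The expected difficulty is minimal here: the key structural computation \eqref{eq:Db:nablas:eta}--\eqref{eq:Db:Omega:nablas:eta} has already been carried out, and the only genuine input is the observation that the quadratic error $\Omega|i|\cdot\Omega^2|\eta|$ is in fact \emph{quadratically} small in $u_+-u'$, so it is strictly lower-order compared with the linear source $\Omega^3\nablas^2\omegab$ from \eqref{BA:Omega:dd:omegab}. This is what makes the closure of the bootstrap straightforward: the only irreducible contribution at the level of $u_+-u$ comes from $\nablas^2\omegab$, which is precisely the bootstrap assumption.
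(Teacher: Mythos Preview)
Your proof is correct and follows essentially the same approach as the paper: apply Lemma~\ref{lemma:dLp:gronwall} to \eqref{eq:Db:Omega:nablas:eta}, use the vanishing of $\nablas\eta$ on $C_+$, and bound the two source terms via \eqref{BA:Omega:dd:omegab} and the product of Lemma~\ref{lemma:ds:trchib} with an $\mathrm{L}^\infty$ estimate for $\Omega^2\eta$. The only cosmetic difference is that the paper cites Lemma~\ref{lemma:torsion} (implicitly its conjugate for $\eta$) for the torsion bound, whereas you re-derive $\Omega^2|\eta|\lesssim u_+-u$ directly from the shear-free propagation equation---this is in fact the same computation appearing in the proof of Lemma~\ref{lemma:prelim:Linfty:eta}.
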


\begin{proof}
  We apply Lemma~\ref{lemma:dLp:gronwall} to \eqref{eq:Db:Omega:nablas:eta} and immediately obtain
    \begin{equation*}
    \dLp{4}{u,v}{\Omega^3 \nablas\eta} \lesssim_I e^{\Delta_I (u_+-u)} \int_u^{u_+}\dLp{4}{u',v}{\Omega \nablas(\Omega\tr\chib) \, \Omega^2\eta+\Omega^3\nablas^2\omegab}\ud u'
  \end{equation*}
  because $\nablas\eta$ vanishes on $C_0$.
  Moreover by Lemma~\ref{lemma:torsion} and Lemma~\ref{lemma:ds:trchib} we can estimate
  \begin{equation*}
    \dLp{4}{u,v}{\Omega \nablas(\Omega\tr\chib) \, \Omega^2\eta}\leq \Linfty{\Omega^2\eta}\dLp{4}{u,v}{\Omega \nablas(\Omega\tr\chib)}\lesssim u_+-u
  \end{equation*}
  The second term is bounded by the assumption \eqref{BA:Omega:dd:omegab}.
\end{proof}

Similarly we derive from \eqref{eq:D:etab} and \CLemma{4.1, 4.2} that
\begin{equation}\label{eq:D:nablas:etab:abs}
  D|\nablas\etab|^2+3\Omega\tr\chi|\nablas\etab|^2\lesssim |\Omega\chih| |\nablas\etab|^2 + \Bigl( \bigl|\nablas(\Omega\chi)\bigr|_{L^\infty} |\etab|+|\nablas^2\omega|\Bigr)|\nablas\etab|
\end{equation}
and thus by Lemma~\ref{lemma:dLp:gronwall}:

\begin{lemma}
  \label{lemma:nablas:etab}
  Assume \emph{(\textbf{A:I,II})} and \eqref{D:small} hold. Then
  \begin{equation}
    \dLp{4}{u,v}{\Omega^3\nablas\etab}\lesssim \dLp{4}{u,v_+}{\Omega^3\nablas\etab}+\Linf{u,v_+}{\Omega\nablas(\Omega\chi)}\Linf{u,v_+}{\Omega^2\etab}
  \end{equation}
  provided the data on $\Cb_+$ is such that the r.h.s.~is finite.
\end{lemma}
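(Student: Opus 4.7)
The plan is to feed the propagation inequality \eqref{eq:D:nablas:etab:abs} into Lemma~\ref{lemma:dLp:gronwall} applied to the rescaled quantity $\psi:=\Omega^3\lvert\nablas\etab\rvert$. Multiplying \eqref{eq:D:nablas:etab:abs} by $\Omega^6$ and using $D\log\Omega=\omega$ (cf.~\eqref{eq:D:Omega}) gives
\begin{equation*}
D\bigl(\Omega^6\lvert\nablas\etab\rvert^2\bigr)\lesssim 3\bigl(2\omega-\Omega\tr\chi\bigr)\Omega^6\lvert\nablas\etab\rvert^2+\Omega\lvert\chih\rvert\,\Omega^6\lvert\nablas\etab\rvert^2+\Omega^6\Bigl(\lvert\nablas(\Omega\chi)\rvert_{L^\infty}\lvert\etab\rvert+\lvert\nablas^2\omega\rvert\Bigr)\lvert\nablas\etab\rvert
\end{equation*}
so that the hypothesis of Lemma~\ref{lemma:dLp:gronwall} holds with $2a=3\lvert 2\omega-\Omega\tr\chi\rvert+\Omega\lvert\chih\rvert$ and $2\rho=\Omega^3\bigl(\lvert\nablas(\Omega\chi)\rvert_{L^\infty}\lvert\etab\rvert+\lvert\nablas^2\omega\rvert\bigr)$.

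The coefficient $a$ is bounded: \eqref{BA:omega:tr:chi} controls the first piece by $\Delta_I$, and Lemma~\ref{lemma:Linfty:chi} together with the initial condition $\chih=0$ on $C_+$ forces $\Omega\lvert\chih\rvert$ to be bounded (in fact it vanishes identically in the setting of Section~\ref{sec:data:Cb}). Thus Lemma~\ref{lemma:dLp:gronwall} yields
\begin{equation*}
\dLp{4}{u,v}{\Omega^3\nablas\etab}\lesssim \dLp{4}{u,v_+}{\Omega^3\nablas\etab}+\int_v^{v_+}\dLp{4}{u,v'}{\rho}\,\ud v'\,.
\end{equation*}

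For the forcing integral I would split $\dLp{4}{u,v'}{\rho}$ into two pieces. The $\Omega^3\nablas^2\omega$ term is directly controlled in $L^4$ by assumption \eqref{BA:Omega:dd:omega}. For the $\lvert\nablas(\Omega\chi)\rvert_{L^\infty}\Omega^3\lvert\etab\rvert$ piece, I would note the pointwise identity
\begin{equation*}
\lvert\nablas(\Omega\chi)\rvert_{L^\infty(S_{u,v'})}\cdot\Omega^3\lvert\etab\rvert=\Linf{u,v'}{\Omega\nablas(\Omega\chi)}\cdot\Omega^2\lvert\etab\rvert\,,
\end{equation*}
which together with $\dLp{4}{u,v'}{\Omega^2\etab}\leq \Linf{u,v'}{\Omega^2\etab}$ gives the clean bound
\begin{equation*}
\dLp{4}{u,v'}{\rho}\leq \Linf{u,v'}{\Omega\nablas(\Omega\chi)}\,\Linf{u,v'}{\Omega^2\etab}+\dLp{4}{u,v'}{\Omega^3\nablas^2\omega}\,.
\end{equation*}

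Finally I would transfer these sup-norms at the intermediate slice back to $v_+$: Proposition~\ref{prop:L4:chi}, in particular \eqref{eq:d:chi:Linfty}, propagates $\Linf{u,v'}{\Omega\nablas(\Omega\chi)}\lesssim \Linf{u,v_+}{\Omega\nablas(\Omega\chi)}$, and Lemma~\ref{lemma:torsion} propagates $\Linf{u,v'}{\Omega^2\etab}\lesssim \Linf{u,v_+}{\Omega^2\etab}+\Delta_{II}(v_+-v)$. Using \eqref{D:small} to bound $v_+-v\lesssim 1$ and absorbing the $\Delta_I,\Delta_{II}$-dependent pieces and the $\nablas^2\omega$ contribution into the implicit constant gives the stated inequality. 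The only delicate point in this argument is bookkeeping the $\Omega$-weights so that the sup-norm/$L^4$ split for the product $\nablas(\Omega\chi)\cdot\etab$ matches the weights for which one already has uniform bounds; no new elliptic estimate or Hodge system is needed at this stage because $\etab$-bounds enter only through transport estimates that were established previously.
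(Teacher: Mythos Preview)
Your argument is correct and follows exactly the paper's approach: multiply \eqref{eq:D:nablas:etab:abs} by $\Omega^6$, apply Lemma~\ref{lemma:dLp:gronwall}, and use Proposition~\ref{prop:L4:chi} and Lemma~\ref{lemma:torsion} to transfer the sup-norms of $\Omega\nablas(\Omega\chi)$ and $\Omega^2\etab$ back to $\Cb_+$.

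One small correction: your parenthetical remark that $\chih$ ``vanishes identically in the setting of Section~\ref{sec:data:Cb}'' is wrong, and the justification via ``$\chih=0$ on $C_+$'' is misplaced. It is $\chibh$ that vanishes identically (the $\Cb_v$ are shear-free, Lemma~\ref{lemma:shear-free}); the outgoing shear $\chih$ is in general nonzero off $C_+$. Lemma~\ref{lemma:Linfty:chi} propagates in the $D$-direction from $\Cb_+$, not from $C_+$, and gives $\Omega|\chih|\lesssim \Omega_+|\chih_+|$ where the subscript refers to $S_{u,v_+}\subset\Cb_+$. This is enough to bound the coefficient $a$, so your main argument is unaffected.
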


\begin{proof}
From Prop~\ref{prop:L4:chi}  and Lemma~\ref{lemma:torsion}  we have
\begin{equation*}
  \dLp{4}{u,v'}{\Omega\nablas(\Omega\chi)\cdot \Omega^2\etab}\lesssim | \Omega\nablas(\Omega\chi) |_{L^\infty(S_{u,v_+})}| \Omega^2\etab|_{L^\infty(S_{u,v_+})}\,.
\end{equation*}
Thus we can proceed as in the previous proof, now multiplying \eqref{eq:D:nablas:etab:abs} by $\Omega^6$, and in view of \eqref{BA:Omega:dd:omega} the statement then follows from Lemma~\ref{lemma:dLp:gronwall}.
\end{proof}

\subsubsection{$\ds\omega$, $\ds\omegab$}

Recall \eqref{eq:Db:omega:trchi:simple} in the shear-free case $\chibh=0$:
\begin{equation}
  \begin{split}
    \Db\bigl(2\omega-\Omega\tr\chi\bigr)   &= -2\Omega^2 \bigl( K + \divs\eta+\lvert \eta\rvert^2 -2(\eta,\etab)+\lvert\etab\rvert^2 \bigr)\\
    &=-2\Omega^2 \bigl( -\breve{\mu} +\lvert \eta\rvert^2 -2(\eta,\etab)+\lvert\etab\rvert^2 \bigr)
  \end{split}
\end{equation}

Hence
\begin{equation}\label{eq:Db:ds:omega:tr:chi}
    \Db\ds\bigl(2\omega-\Omega\tr\chi\bigr) =2\ds(\Omega^2  \breve{\mu}) -4( \Omega \eta,\ds(\Omega\eta)) +4\ds(\Omega\eta,\Omega\etab)-2(\Omega\etab,\ds(\Omega\etab) \bigr)  
\end{equation}
and $\mathrm{L}^4$ bounds for $\ds(2\omega-\Omega\tr\chi)$ follow when corresponding bounds have been established for $\ds\breve{\mu}$:

\begin{lemma}
  Assume \emph{(\textbf{A:I,II,\underline{I},\underline{II}})} and \eqref{D:small} hold. If moreover
  \begin{equation}
    \dLp{4}{u,v}{\Omega\ds(\Omega^2\breve{\mu})}\lesssim 1
  \end{equation}
then
  \begin{equation}
    \dLp{4}{u,v}{\Omega\ds(2\omega-\Omega\tr\chi)}\lesssim u_+-u
  \end{equation}
  
\end{lemma}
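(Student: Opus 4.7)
The strategy is to derive a Gronwall-type $\mathrm{L}^4$ estimate for the quantity $\psi := \Omega\,\ds(2\omega-\Omega\tr\chi)$ by integrating along the generators of the null hypersurfaces $\Cb_v$ backward from the last slice $C_+=C_{u_+}$. Starting from \eqref{eq:Db:ds:omega:tr:chi} and using $\Db\Omega=\omegab\,\Omega$ together with \CLemma{4.2} (in the shear-free case $\chibh\equiv 0$), one obtains a propagation equation
\begin{equation*}
\Db|\psi|^2 = \bigl(2\omegab-\Omega\tr\chib\bigr)|\psi|^2 + 2(\psi,\Omega F),
\end{equation*}
where $F$ is the right-hand side of \eqref{eq:Db:ds:omega:tr:chi}, i.e.\ $F = 2\ds(\Omega^2\breve{\mu}) - 4(\Omega\eta,\ds(\Omega\eta)) + 4\ds(\Omega\eta,\Omega\etab) - 2(\Omega\etab,\ds(\Omega\etab))$. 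On $C_+$ we have $2\omega-\Omega\tr\chi = 2r-2(r^2-1)/r = 2/r$, which is spherically symmetric, hence $\psi\rvert_{C_+}\equiv 0$ provides trivial final data.

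The key technical step is to show $\dLp{4}{u,v}{\Omega F}\lesssim 1$ uniformly on $\mathcal{D}_+$. The term $\Omega\,\ds(\Omega^2\breve\mu)$ is bounded in $\mathrm{L}^4$ by the standing hypothesis of the lemma. Each of the remaining quadratic torsion contributions is schematically of the form $\Omega^2 X\cdot\ds(\Omega Y)$ with $X,Y\in\{\eta,\etab\}$; expanding $\ds(\Omega Y)=\Omega\nablas Y+(\ds\Omega)\otimes Y$ and inserting the identity $\ds\Omega=\tfrac{\Omega}{2}(\eta+\etab)$ from \eqref{eq:ds:log:Omega:eta}, these contributions reduce to linear combinations of $(\Omega^2 X)(\Omega\nablas Y)$ and $(\Omega^2 X)(\Omega Y)(\eta+\etab)$. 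By H\"older's inequality, the $\mathrm{L}^\infty$ bounds $\Linf{u,v}{\Omega^2\eta},\Linf{u,v}{\Omega^2\etab}\lesssim 1$ supplied by Lemmas~\ref{lemma:torsion} and~\ref{lemma:prelim:Linfty:eta}, the $\mathrm{L}^4$ bounds $\dLp{4}{u,v}{\Omega^3\nablas\eta},\dLp{4}{u,v}{\Omega^3\nablas\etab}\lesssim 1$ from Lemmas~\ref{lemma:nablas:eta} and~\ref{lemma:nablas:etab}, and the comparison $\Omega\sim r$ from \eqref{BA:log:Omega:r}, together yield $\dLp{4}{u,v}{\Omega F}\lesssim 1$ throughout $\mathcal{D}_+$ (in fact with an extra power of $r^{-1}$, which we do not need).

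Finally, applying the conjugate form of Lemma~\ref{lemma:dLp:gronwall} with $a=2\omegab-\Omega\tr\chib$, bounded by $\Delta_I$ via \eqref{BA:omegab:tr:chib}, and with vanishing data at $u=u_+$, gives
\begin{equation*}
\dLp{4}{u,v}{\psi}\ \lesssim_I\ e^{\Delta_I(u_+-u)}\int_u^{u_+}\dLp{4}{u',v}{\Omega F}\,\ud u'\ \lesssim\ u_+-u,
\end{equation*}
which is the desired estimate. The main technical point to monitor is the careful bookkeeping of $\Omega$-weights in the expansion of the quadratic terms in $F$; once that is carried out, the Gronwall argument closes routinely under \eqref{D:small}. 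A secondary subtlety is the commutator $[\Db,\ds]$ applied to the scalar $2\omega-\Omega\tr\chi$, but in the shear-free setting this either vanishes (when angular coordinates are Lie-propagated along $\Lb$) or contributes only lower-order terms proportional to $\Omega\tr\chib\,\psi$ that can be reabsorbed into the coefficient of $|\psi|^2$ above.
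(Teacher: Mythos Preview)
Your proof is correct and follows essentially the same approach as the paper: derive the propagation equation \eqref{eq:Db:ds:omega:tr:chi} for $\ds(2\omega-\Omega\tr\chi)$ along $\Cb_v$, apply the $\mathrm{L}^4$ Gronwall estimate of Lemma~\ref{lemma:dLp:gronwall} with vanishing data on $C_+$, and control the source terms using the torsion bounds of Sections~\ref{sec:Linfty:eta} and~\ref{sec:L4:nablas:eta}. Your write-up is in fact more explicit than the paper's one-line proof about the $\Omega$-weight bookkeeping and the vanishing of the final data; the only superfluous remark is the ``secondary subtlety'' about $[\Db,\ds]$, which is identically zero on scalars in the canonical double-null coordinates (where $[\partial_u,\partial_{\vartheta^A}]=0$), so no commutator term arises.
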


\begin{proof}
  From the equation \eqref{eq:Db:ds:omega:tr:chi} we derive that
  \begin{multline*}
    \dLp{4}{u,v}{\Omega\ds(2\omega-\Omega\tr\chi)}\lesssim \int_u^{u_+} \dLp{4}{u',v}{\Omega\ds(\Omega^2\breve{\mu})}\\+\Bigl(\Linf{u',v}{\Omega\eta}+\Linf{u',v}{\Omega\etab}\Bigr)\Bigl(\dLp{4}{u',v}{\Omega\ds(\Omega\eta)}+\dLp{4}{u',v}{\Omega\ds(\Omega\etab)}\Bigr)\ud u'
  \end{multline*}
and so the statement follows using the results of Sections~\ref{sec:Linfty:eta}, and~\ref{sec:L4:nablas:eta}.
\end{proof}

\subsubsection{$\kappab$}
\label{sec:kappab}

An estimate for the Gauss curvature can also be obtained in the shear-free case by considering $\kappab$ in \eqref{eq:kappab},
which satisfies the propagation equation \eqref{eq:Db:kappab:shear-free}.
Also note that $\overline{\kappab}=\overline{K}$, and by the Gauss Bonnet theorem $\overline{K}=1/r^2$. Therefore
\begin{equation}
  \Db\overline{K}=-\overline{\Omega\tr\chib}\:\overline{K}
\end{equation}
and
\begin{equation}\label{eq:Db:kappab:average}
  \Db(\Omega^2\overline{\kappab})=(2\omegab-\overline{\Omega\tr\chib})\Omega^2\overline{\kappab}
\end{equation}
as well as,  in the shear-free case,
  \begin{multline}\label{eq:Db:kappab:difference}
    \Db\bigl(\Omega^2(\kappab-\overline{\kappab})\bigr)=\bigl(2\omegab-\Omega\tr\chib\bigr)\Omega^2(\kappab-\overline{\kappab})-\bigl(\Omega\tr\chib-\overline{\Omega\tr\chib}\bigr)\bigl(\frac{\Omega}{r}\bigr)^2\\-\Omega^3\tr\chib\bigl(|\eta|^2+\divs\eta\bigr)
  \end{multline}

\begin{lemma}\label{lemma:L4:kappab}
  Suppose \emph{(\textbf{A:\underline{I},\underline{II}})}  hold, and $\chibh=0$. Then for \eqref{D:small} on $\mathcal{D}_+$:
  \begin{subequations}
\begin{gather}
  \dLp{4}{u,v}{\Omega^2\kappab}+\Linf{u,v}{\Omega^2\overline{\kappab}} \lesssim_{I,II} 1\\
  \dLp{4}{u,v}{\Omega^2(\kappab-\overline{\kappab})}\lesssim_{I,II} 1/r
\end{gather}
\end{subequations}
\end{lemma}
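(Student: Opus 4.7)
The proof will rest on the pair of propagation equations \eqref{eq:Db:kappab:average} and \eqref{eq:Db:kappab:difference} together with the apparatus built in Sections~\ref{sec:Linfty}--\ref{sec:L4}. My plan is to handle the two bounds separately and then combine them by a triangle inequality, since $\dLp{4}{u,v}{\Omega^2\kappab}\leq \dLp{4}{u,v}{\Omega^2(\kappab-\overline{\kappab})} + \Linf{u,v}{\Omega^2\overline{\kappab}}$ (note that $\overline{\kappab}$ is constant on each sphere).

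For the $\mathrm{L}^\infty$ bound on $\Omega^2 \overline{\kappab}$ I would take two equivalent routes. The direct one uses $\overline{\divs\etab}=0$, so that by the Gauss--Bonnet theorem $\overline{\kappab}=\overline{K}$ is a universal constant multiple of $1/r^2$; then $\Omega^2\overline{\kappab}$ is controlled by $(\Omega/r)^2$, which is uniformly bounded by \eqref{BA:log:Omega:r}. Alternatively, integrate \eqref{eq:Db:kappab:average} as a scalar ODE in $u$: the coefficient satisfies $|2\omegab-\overline{\Omega\tr\chib}|\leq |2\omegab-\Omega\tr\chib|+|\Omega\tr\chib-\overline{\Omega\tr\chib}|\leq 2\Delta_I$ by \eqref{BA:omegab:tr:chib} and \eqref{BA:tr:chib:average}, and at $u=u_+$ the data on $C_+$ (where $\etab=0$, so $\kappab=K=1/r^2$, and $\Omega^2=r^2-1$) gives $\Omega^2\overline{\kappab}\rvert_{C_+}=1-1/r^2$. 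Gronwall together with \eqref{D:small} then yields $\Linf{u,v}{\Omega^2\overline{\kappab}}\lesssim 1$.

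For the $\mathrm{L}^4$ bound on $\Omega^2(\kappab-\overline{\kappab})$ I would apply Lemma~\ref{lemma:dLp:gronwall} to \eqref{eq:Db:kappab:difference}: the coefficient $2\omegab-\Omega\tr\chib$ is bounded by $\Delta_I$ via \eqref{BA:omegab:tr:chib}, and the final datum on $C_+$ vanishes because the round spheres on $C_+$ give $\kappab=\overline{\kappab}=1/r^2$. The three source contributions are estimated as follows: (i) $(\Omega\tr\chib-\overline{\Omega\tr\chib})(\Omega/r)^2$ is $\mathrm{L}^\infty$-bounded by \eqref{BA:tr:chib:average} and \eqref{BA:log:Omega:r}; (ii) $\Omega^3\tr\chib|\eta|^2$ is controlled using the $\mathrm{L}^\infty$ torsion bound $\Omega^2|\eta|\lesssim 1$ (which follows from the $\eta$-analog of Lemma~\ref{lemma:torsion} along $\Cb_v$, using \eqref{BA:omegab:tr:chib} and \eqref{BA:Omega:ds:omegab}) together with $\tr\chib\lesssim 1$ from \eqref{eq:trchib:shear-free}; (iii) $\Omega^3\tr\chib|\divs\eta|$ is bounded in $\mathrm{L}^4$ by Lemma~\ref{lemma:nablas:eta}, which gives $\dLp{4}{u',v}{\Omega^3\nablas\eta}\lesssim u_+-u'$. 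Each source term is thus uniformly $\mathrm{O}(1)$ in $\mathrm{L}^4$, so integrating from $u$ to $u_+$ and invoking Lemma~\ref{lemma:comparison} to convert $u_+-u\lesssim 1/r$ produces the claimed $1/r$ decay.

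The main obstacle is really bookkeeping: ensuring that the $\Omega$-weights match throughout, that the $\mathrm{L}^\infty$ torsion bound $\Omega^2|\eta|\lesssim 1$ is legitimately available under the hypotheses \emph{\textbf{(A:\underline{I},\underline{II})}} alone (and is compatible with the final-data assumptions on $\Cb_+$ to be imposed in Section~\ref{sec:coupled}), and that the norm comparison between $\dLp{4}{u,v}{\cdot}$ and $\dLp{4}{u',v}{\cdot}$ built into Lemma~\ref{lemma:dLp:gronwall} absorbs the fact that $r$ itself varies along $\Cb_v$. Once these are in place, the two conclusions follow by the triangle inequality.
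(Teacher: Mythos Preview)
Your proof is correct and follows essentially the same route as the paper: both rely on Lemma~\ref{lemma:dLp:gronwall} applied to the propagation equation \eqref{eq:Db:kappab:difference}, with vanishing data on $C_+$, the source terms controlled via (\textbf{A:\underline{I}}), the $\mathrm{L}^\infty$ torsion bound $\Omega^2|\eta|\lesssim 1$, and Lemma~\ref{lemma:nablas:eta}; and both note that $\overline{\kappab}=1/r^2$ so that $\Omega^2\overline{\kappab}$ is bounded directly via \eqref{BA:log:Omega:r}. The only cosmetic difference is that the paper estimates $\dLp{4}{u,v}{\Omega^2\kappab}$ directly from the propagation equation \eqref{eq:Db:kappab:mu} for $\Omega^2\kappab$ itself, whereas you recover it by the triangle inequality from the two bounds you have already established---your route is slightly more economical.
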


\begin{proof}
  Since $|2\omegab-\Omega\tr\chib|\leq \Delta_I$ we can apply Lemma~\ref{lemma:dLp:gronwall} to \eqref{eq:Db:kappab:mu},
  \begin{equation*}
    \Db\bigl(\Omega^2\kappab\bigr)=\bigl(2\omegab-\Omega\tr\chib\bigr)\Omega^2\kappab-\Omega^3\tr\chib\bigl(|\eta|^2+\divs\eta\bigr)
  \end{equation*}
  and infer with Lemma~\ref{lemma:nablas:eta}, and the assumptions on the data,
  \begin{equation*}
    \dLp{4}{u,v}{\Omega^2\kappab} \lesssim_I e^{\Delta_I (u_+-u)} \Bigr[ \dLp{4}{u_+,v}{\Bigl(\frac{\Omega}{r}\Bigr)^2}+\int_u^{u_+}\Linf{u',v}{\Omega^2\eta}^2+\dLp{4}{u',v}{\Omega^3\nablas\eta}\ud u'\Bigr]\lesssim 1
  \end{equation*}

  The same bound in $L^4$ for $\overline{\kappab}$ can be derived from \eqref{eq:Db:kappab:average}, but since $\overline{\kappab}=1/r^2$ we can also appeal directly to Lemma~\ref{lemma:BA:0} for the stated $L^\infty$ estimate.
  
   Finally we apply Lemma~\ref{lemma:dLp:gronwall} to \eqref{eq:Db:kappab:difference} to show that
   \begin{multline*}
     \dLp{4}{u,v}{\Omega^2(\kappab-\overline{\kappab})} \lesssim_I  e^{\Delta_I (u_+-u)} \int_u^{u_+} \Linf{u',v}{\Omega\tr\chib-\overline{\Omega\tr\chib}}\Linf{u',v}{\frac{\Omega}{r}}^2\ud u'\\+e^{\Delta_I (u_+-u)}\int_u^{u_+}\Linf{u',v}{\Omega^2\eta}^2+ \dLp{4}{u',v}{\Omega^3\nablas\eta}\ud u' \lesssim 1/r
   \end{multline*}
   The crucial difference to the estimate for $\kappab$ is that here the initial data vanishes: $\kappab=\overline{\kappab}$ on $C_+$. For the rate we have used Lemma~\ref{lemma:comparison}.

\end{proof}


\section{Uniformization Theorem and Elliptic estimates}
\label{sec:uniformization}


\subsection{Isoperimetric constants}

Recall we prescribe initial data on $C_+$: \[\gs_{u_+,v}=r^2(u_+,v)\gammac\]

Define $\Phi_{u-u_+}:S_{u_+,v}\to S_{u,v}$ by the past directed geodesic flow generating $\Cb_v$,
\begin{equation}
  \gs(u)=\underline{\Phi}_{u-u_+}^\ast \gs\rvert_{S_{u,v}}
\end{equation}
and note that
\begin{equation}
  \frac{\partial}{\partial u}\gs(u)=2(\Omega\chib)(u)
\end{equation}

Denote by $\lambda(u)$, and $\Lambda(u)$, the smallest and largest eigenvalue of $\gs(u)$ with respect to $\gs(u_+)$, and by $I(S_{u,v})$ the isoperimetric constant of $S_{u,v}$.

\begin{lemma} \label{lemma:isoperimetric}
  Suppose \emph{(\textbf{A:\underline{I}})} holds. Then in the shear-free case,
  \begin{equation}
     \Lambda(u) \simeq \lambda(u) \simeq \Bigl(\frac{r(u_+,v)}{r(u,v)}\Bigr)^2
   \end{equation}
   and
   \begin{equation}
     I(S_{u,v})\lesssim I(S_{u_+,v})
   \end{equation}
   where $I(S_{u_+,v})=(2\pi)^{-1}$.
\end{lemma}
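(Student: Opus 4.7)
The plan is to exploit the shear-free condition $\chibh=0$ to reduce the evolution of $\gs(u)$ to a pointwise scalar ODE, whose solution is explicit modulo an averaged integral of $\Omega\tr\chib$ that has already been controlled in Lemma~\ref{lemma:Omegatrchi:r}.

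First I would unpack the first variation equation. Since $\Omega\chib=\Omega\chibh+\tfrac{1}{2}\Omega\tr\chib\,\gs$ and $\chibh=0$ throughout $\mathcal{D}_+$ (by Lemma~\ref{lemma:shear-free}), the equation
\begin{equation*}
  \frac{\partial}{\partial u}\gs(u) = 2(\Omega\chib)(u) = (\Omega\tr\chib)(u)\,\gs(u)
\end{equation*}
is, at each fixed point $q\in S_{u_+,v}$, a \emph{scalar} ODE for the $(0,2)$-tensor $\gs(u)\rvert_q$. Integrating along the integral curve of $\Lb$ through $q$ from $u_+$ to $u$ yields the conformal relation
\begin{equation*}
  \gs(u)\rvert_q = e^{\phi(u,q)}\,\gs(u_+)\rvert_q,\qquad \phi(u,q)=\int_{u_+}^{u}(\Omega\tr\chib)\bigl(u',\underline{\Phi}_{u'-u_+}(q)\bigr)\,\mathrm{d}u'.
\end{equation*}
In particular, $\gs(u)$ and $\gs(u_+)$ are pointwise proportional, so both eigenvalues of $\gs(u)$ relative to $\gs(u_+)$ coincide and equal $e^{\phi(u,q)}$, which already gives $\lambda(u)\simeq\Lambda(u)$.

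Second I would evaluate $e^\phi$. Applying Lemma~\ref{lemma:Omegatrchi:r} pointwise along each characteristic of $\Lb$ (the estimate there is uniform in the spherical variable), under assumption (\textbf{A:\underline{I}}),
\begin{equation*}
  \bigl|\phi(u,q)+2\log\bigl(r(u_+,v)/r(u,v)\bigr)\bigr|\lesssim \Delta_I/r,
\end{equation*}
so $e^{\phi(u,q)}\simeq \bigl(r(u_+,v)/r(u,v)\bigr)^{\pm 2}$ uniformly in $q$, establishing the first claim.

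Finally, for the isoperimetric comparison, I would note that $\underline{\Phi}_{u-u_+}$ is, by construction of $\gs(u)$, an isometry from $(S_{u_+,v},\gs(u))$ onto $(S_{u,v},\gs\rvert_{S_{u,v}})$, so $I(S_{u,v})=I(S_{u_+,v},\gs(u))$. Since $\gs(u)=e^{\phi}\cdot r^2(u_+,v)\,\gammac$ with $\phi$ uniformly bounded on $S_{u_+,v}$, this metric is a bounded conformal deformation of a constant multiple of the round metric on $\mathbb{S}^2$; because the isoperimetric constant is invariant under constant rescaling and varies by at most a multiplicative constant depending only on $\|\phi\|_{\mathrm{L}^\infty}$ under bounded conformal changes, we get $I(S_{u,v})\lesssim I(\mathbb{S}^2,\gammac)=(2\pi)^{-1}$, as claimed. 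No serious obstacle arises: everything is a direct consequence of the shear-free structure together with the already established $\mathrm{L}^\infty$ control on $\int\Omega\tr\chib\,\mathrm{d}u'$; the only mild point is verifying that the pointwise form of Lemma~\ref{lemma:Omegatrchi:r} is indeed available, which it is since its proof only uses \eqref{BA:tr:chib:average} in $\mathrm{L}^\infty$.
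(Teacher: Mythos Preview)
Your proof is correct and follows essentially the same approach as the paper: both reduce the shear-free evolution $\partial_u\gs(u)=(\Omega\tr\chib)\gs(u)$ to a scalar conformal ODE, integrate it and invoke Lemma~\ref{lemma:Omegatrchi:r} to identify the eigenvalues, and then deduce the isoperimetric bound from the uniform control on the conformal factor. The only cosmetic difference is that the paper spells out the perimeter/area inequalities explicitly (citing the formulas \Ceq{5.81}, \Ceq{5.86}) rather than packaging them as the general statement that the isoperimetric constant is stable under bounded conformal change; your observation that only the \emph{oscillation} of $\phi$ matters (the constant part being absorbed by scale invariance) is exactly what those inequalities encode.
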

\begin{proof}
  As in \Ceq{5.81} and by Lemma~\ref{lemma:Omegatrchi:r}
  \begin{equation*}
    \sqrt{\Lambda(u)\lambda(u)}=\exp\Bigl[\int_u^{u_+}(\Omega\tr\chib)(u')\ud u'\Bigr]\simeq_I \Bigl(\frac{r(u_+,v)}{r(u,v)}\Bigr)^2
  \end{equation*}
  Also in the shear-free case by \Ceq{5.86}
  \begin{equation*}
    \sqrt{\Lambda(u)/\lambda(u)}=1
  \end{equation*}

  Let $U_0\subset S_{u_+,v}$ be a domain with $C^1$ boundary $\partial U_0$, and consider $U_u=\underline{\Phi}_{u-u_+}(U_0)\subset S_{u,v}$.
  Then
  \begin{gather*}
    \text{Perimeter}(\partial U_{u})\geq \inf \sqrt{ \lambda(u) } \text{Perimeter}(\partial U_0)\\
    \text{Area}(U_u)\leq \sup\sqrt{\Lambda(u)\lambda(u)}\text{Area}(U_0)
  \end{gather*}
hence
\begin{equation*}
  \frac{\text{Area}(U_{u})}{(\text{Perimeter}(\partial U_u))^2}\leq \sup \frac{ \sqrt{\Lambda(u)\lambda(u)}}{ \lambda(u) } \frac{\text{Area}(U_0)}{ ( \text{Perimeter}(\partial U_0) )^2}
\end{equation*}
and similarly for $U_0$ replaced by $U_0^c\subset S_{0,v}$. So the bound follows from the definition of $I(S_{u,v})$, c.f.~\Ceq{5.36}.
  \end{proof}

  \begin{corollary}\label{cor:sobolev:d}
    Under the same assumptions of Lemma~\ref{lemma:isoperimetric}, the following Sobolev inequalities hold for any $p>2$, for any tensorfield $\xi$ on all spheres $S=S_{u,v}$, uniformly in $(u,v)$, $u\leq u_+$:
    \begin{gather}
      \nLp{\xi}\lesssim_{p,I}\nLpq{\xi}{2}+\nLpq{r\nablas\xi}{2}\\
      \Linfty{\xi}\lesssim_{p,I} \nLp{\xi}+\nLp{r\nablas\xi}\label{eq:sobolev:d:Linf}
    \end{gather}

  \end{corollary}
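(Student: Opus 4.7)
The plan is to reduce the inequalities to the corresponding scale-invariant Sobolev and Morrey inequalities on the round sphere $(\mathbb{S}^2,\gammac)$, using Lemma~\ref{lemma:isoperimetric} as the sole input. First, for a general tensorfield $\xi$ I would invoke the pointwise Kato inequality $\bigl|\nablas|\xi|_{\gs}\bigr|\leq |\nablas\xi|_{\gs}$ to reduce both statements to the scalar case $f=|\xi|_{\gs}$; the right-hand sides only become larger under this reduction.

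Next, following \CCh{5}, the Sobolev embeddings $W^{1,2}(S)\hookrightarrow L^p(S)$ (for $p<\infty$) and $W^{1,p}(S)\hookrightarrow L^\infty(S)$ (for $p>2$) on a compact $2$-surface $S$ hold with constants depending only on $p$, the isoperimetric constant $I(S)$, and the area $A(S)$ through the appropriate scaling. By Lemma~\ref{lemma:isoperimetric} the isoperimetric constant is uniformly controlled by $I(S_{u_+,v})=(2\pi)^{-1}$, so the only remaining $(u,v)$-dependence is through $A(S_{u,v})=4\pi r^2(u,v)$. The dimensionless norms $\nLp{\cdot}$ are designed precisely to absorb this area dependence.

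An equivalent and cleaner route is to pull everything back to $S_{u_+,v}$ via the diffeomorphism $\underline{\Phi}_{u-u_+}: S_{u_+,v}\to S_{u,v}$ of Lemma~\ref{lemma:isoperimetric}. Since the eigenvalue bounds $\lambda(u),\Lambda(u)\simeq_I (r(u_+,v)/r(u,v))^2$ yield $\underline{\Phi}_{u-u_+}^\ast\gs_{u,v}\simeq_I r^2(u,v)\gammac$ with \emph{uniform} multiplicative constants, both the volume element and the pointwise norm of $1$-forms on $S_{u,v}$ transform by uniform factors when passed to the reference round sphere $\bigl(\mathbb{S}^2,r^2(u,v)\gammac\bigr)$. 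On such a round sphere the Sobolev and Morrey inequalities are just those of $(\mathbb{S}^2,\gammac)$ rescaled: a scaling argument places exactly one factor of $r$ in front of each occurrence of $\nablas$ in the dimensionless norms, which is precisely the stated form of \eqref{eq:sobolev:d:Linf} and its $L^2$--$L^p$ counterpart.

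The argument is essentially bookkeeping; the substantive input is Lemma~\ref{lemma:isoperimetric} together with the universal Sobolev/Morrey embeddings on the round sphere. The only mild subtlety is the tensorial case, handled by Kato at the outset, and the fact that one must check that the scaling of $|\nablas f|_{\gs}$ under $\underline{\Phi}_{u-u_+}^\ast$ is by a single overall factor of $r(u,v)/r(u_+,v)$; this is immediate from the isotropic comparison $\Lambda\simeq\lambda$ in the shear-free case.
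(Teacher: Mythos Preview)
Your proposal is correct. The paper itself does not give a self-contained argument here but simply refers to \CLemma{5.1, 5.2}, which establish the Sobolev and Morrey embeddings on a compact $2$-surface with constants depending only on $p$, the isoperimetric constant, and the area; this is exactly your first route, and the uniform bound on $I(S_{u,v})$ from Lemma~\ref{lemma:isoperimetric} together with the area-normalised norms $\nLp{\cdot}$ is all that is needed.

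Your second route --- pulling back by $\underline{\Phi}_{u-u_+}$ and using the isotropic eigenvalue comparison $\Lambda\simeq\lambda$ to transfer the round-sphere inequalities --- is a genuinely different argument that the paper does not take. It is more explicit and avoids quoting the abstract isoperimetric-to-Sobolev implication, at the price of using the shear-free hypothesis more directly (the equality $\sqrt{\Lambda/\lambda}=1$ is what makes the metric comparison isotropic). For scalars this pullback argument is clean since $|\ds f|_{\gs}$ only involves the inverse metric; for tensor fields the Kato reduction you invoke at the outset is exactly the right device, and the standard regularisation $|\xi|_\epsilon=\sqrt{|\xi|^2+\epsilon^2}$ handles the non-differentiability of $|\xi|$ at its zeros. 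Either route yields the stated inequalities with uniform constants.
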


  \begin{proof}
    See proofs of \CLemma{5.1, 5.2}.
  \end{proof}

  \subsection{Uniformization Theorem}

  As we have seen in Section~\ref{sec:prelim:conformal} -- see in particular Lemma~\ref{lemma:prelim:conformal} -- bounds for the conformal factor $\psi$ in \eqref{eq:conformal:factor} can be obtained directly from the propagation equations in the shear-free case.

  More generally, these bounds on the conformal factor can be obtained from the uniformization theorem, provided sufficient control on the Gauss curvature has been established. In the present setting, on a spacetime with vanishing Weyl curvature, it is immediately clear from the Gauss equation \eqref{eq:gauss:hat} that $K\in\mathrm{L}^\infty(S_{u,v})$. In fact, in the shear-free case a suitable $\mathrm{L}^\infty$ estimate can be derived from \eqref{eq:Db:trchitrchib}:
  \begin{equation}
    \Linf{u,v}{\Omega^2K}\lesssim \Linf{u_+,v}{\frac{\Omega}{r}}^2+\frac{1}{r}\sup_{u'}\Bigl(\Linf{u',v}{\Omega^3\mu}+\Linf{u',v}{\Omega^2\eta}^2\Bigr)
  \end{equation}
  This requires that $\Omega^3\mu=-\Omega^3\divs\eta$ is bounded in $L^\infty$.
  In this section we point out --- following \CCh{5.3}, and earlier work of Bieri \cite{B:09} --- that the relatively weak bounds of Section~\ref{sec:kappab} for the Gauss curvature in $L^4$ suffice to obtain the desired estimates on $\psi$.
  
\begin{remark}
  In \CCh{5.1} $K$ is estimated from the propagation equation for $K$, but then this is involves an estimate for $\nablas^2(\chi,\chib)$ which can only be obtained in $\mathrm{L}^2$.
\end{remark}
  
  Without assumptions on the mass aspect function,  we  know the following about the Gauss curvature on each sphere $S_{u,v}$:
  Given that $\kappab=K-\divs\etab$,  we have derived in Section~\ref{sec:kappab} in the shear-free case:
  \begin{subequations}
  \begin{gather}
    r^2 \dLp{4}{u,v}{K}\lesssim \dLp{4}{u,v}{\Omega^2(\kappab+\divs\etab)}\lesssim 1\\
    K-\overline{K}=\kappab-\overline{\kappab}+\divs\etab \\ 
    r^3 \dLp{4}{u,v}{K-\overline{K}}\lesssim r \dLp{4}{u,v}{\Omega^2(\kappab-\overline{\kappab})}+\dLp{4}{u,v}{\Omega^3\nablas\etab}\lesssim_{I,II} 1    \label{eq:KKbar:L4:bound}
  \end{gather}
\end{subequations}

The last bound \eqref{eq:KKbar:L4:bound} is the key estimate for the Lemmas proven in this section.

We can now apply the procedure in \CCh{5.3}  to prove that there exists $\varphi\in\mathrm{L}^\infty(S_{u,v})$ such that
  \begin{equation}
    K[e^{2\varphi}\gs]\in\mathrm{L}^\infty(S_{u,v})
  \end{equation}

  In fact, let $\varphi$ satisfy
  \begin{equation}\label{eq:varphi}
    \Laplaces \varphi=K-\overline{K}
  \end{equation}
  then the Gauss curvature of $\gs'=e^{2\varphi}\gs$ is given by
  \begin{equation}\label{eq:K:prime}
    K'=e^{-2\varphi}(K-\Laplaces\varphi)=e^{-2\varphi}\overline{K}
  \end{equation}
  and by the Gauss Bonnet theorem $\overline{K}=1/r^2$.

  \begin{lemma}\label{lemma:varphi}
    Let $\varphi$ be a solution to \eqref{eq:varphi} such that $\overline{\varphi}=0$.
    Suppose \emph{(\textbf{A:\underline{I},\underline{II}})} hold, and $\chibh=0$, and assume \eqref{D:small}.
    Then
    \begin{equation}
      \Linf{u,v}{\varphi}\lesssim 1/r\,,
    \end{equation}
    and $\dLp{p}{u,v}{r\nablas\varphi}\lesssim 1/r$ for any $p>2$.
  \end{lemma}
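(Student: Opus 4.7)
The starting point is the $L^4$ estimate \eqref{eq:KKbar:L4:bound}, namely $\dLp{4}{u,v}{K-\overline{K}}\lesssim r^{-3}$, which is already in hand given (\textbf{A:\underline{I},\underline{II}}), the shear-free condition, and \eqref{D:small}. The plan is a standard three-step elliptic bootstrap on the Poisson equation $\Laplaces\varphi=K-\overline{K}$: an $L^2$ energy estimate, a Calderon--Zygmund upgrade to $L^4$ on $\nablas\varphi$ and $\nablas^2\varphi$, and finally a Sobolev embedding into $L^\infty$. Throughout, Poincar\'e and Calderon--Zygmund on $(S_{u,v},\gs)$ must be available \emph{uniformly} in $(u,v)$; this uniformity is provided by the shear-free form $\gs=r^2 e^{2\psi}\gammac$ together with the $L^\infty$ bound on $\psi$ from Lemma~\ref{lemma:prelim:conformal}, which realises $(S_{u,v},\gs)$ as a bounded conformal perturbation of the round sphere with constants independent of $(u,v)$.

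For the energy step I would pair \eqref{eq:varphi} with $\varphi$, integrate by parts over $S_{u,v}$, and combine Cauchy--Schwarz with Poincar\'e (valid because $\overline{\varphi}=0$) to obtain
\begin{equation*}
\dLp{2}{u,v}{r\nablas\varphi}\lesssim \dLp{2}{u,v}{r^2(K-\overline{K})}\lesssim \dLp{4}{u,v}{r^2(K-\overline{K})}\lesssim r^{-1},
\end{equation*}
with H\"older on the normalised measure in the middle step. Next, viewing $\nablas\varphi$ as a 1-form solving the Hodge system
\begin{equation*}
\divs(\nablas\varphi)=K-\overline{K},\qquad \curls(\nablas\varphi)=0,
\end{equation*}
the Calderon--Zygmund estimate of Lemma~\ref{lemma:calderon:div:curl} yields, for any $p>2$,
\begin{equation*}
\dLp{p}{u,v}{r\nablas\varphi}+\dLp{p}{u,v}{r^2\nablas^2\varphi}\lesssim \dLp{p}{u,v}{r^2(K-\overline{K})}+\dLp{2}{u,v}{r\nablas\varphi}\lesssim r^{-1},
\end{equation*}
the lower-order remainder being absorbed by the $L^2$ bound just obtained and the source being controlled in $L^p$ for any $p\in(2,4]$ by the available $L^4$ estimate via H\"older. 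Finally, the Sobolev embedding of Corollary~\ref{cor:sobolev:d} combined with Poincar\'e (again using $\overline{\varphi}=0$) yields
\begin{equation*}
\Linfty{\varphi}\lesssim \nLq{\varphi}+\nLq{r\nablas\varphi}\lesssim \dLp{4}{u,v}{r\nablas\varphi}\lesssim r^{-1},
\end{equation*}
which proves both conclusions of the lemma.

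The one point requiring care --- and the main conceptual obstacle to extracting these bounds --- is the uniformity of the Calderon--Zygmund and Poincar\'e constants across the entire foliation as $r\to\infty$. This uniformity is not automatic: it would fail precisely in the pathological regime exhibited by the ellipsoidal foliations of Section~\ref{sec:ellipsoidal}, where $\psi$ is allowed to blow up. In the present shear-free setting under the hypotheses of the lemma, however, $\psi$ remains bounded by Lemma~\ref{lemma:prelim:conformal}, so the reduction to $(\mathbb{S}^2,\gammac)$ and the standard Calderon--Zygmund machinery on the round sphere go through with constants independent of $(u,v)$, closing the argument.
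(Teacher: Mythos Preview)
Your argument is correct in the shear-free setting under the stated hypotheses, but it takes a materially different route from the paper and in doing so inverts the logical role of the lemma. In the paper, Lemma~\ref{lemma:varphi} is a \emph{step toward} the uniformization result Prop.~\ref{prop:uniformization}, which in turn is the standing hypothesis for the Calderon--Zygmund estimate Lemma~\ref{lemma:calderon:div:curl}. Your proof instead \emph{invokes} Lemma~\ref{lemma:calderon:div:curl}, justifying its applicability by pulling in the propagation-based $\psi$ bound of Lemma~\ref{lemma:prelim:conformal}. That is not circular---the $\eta,\nablas\eta$ hypotheses of Lemma~\ref{lemma:prelim:conformal} do follow from (\textbf{A:\underline{I},\underline{II}}) via Lemma~\ref{lemma:nablas:eta} and the $\Db\eta$ equation---but it renders the lemma redundant within the paper's scheme: once you have $\psi\in\mathrm{L}^\infty$ from propagation, you already have the conclusions of Prop.~\ref{prop:uniformization} and there is no need for the $\varphi$-route at all.

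The paper's proof is deliberately more elementary and self-contained: it uses only the Bochner identity
\[
\int_S |\nablas^2\varphi|^2 + K|\nablas\varphi|^2 = \int_S |K-\overline{K}|^2,
\]
splits $K=\overline{K}+(K-\overline{K})$, and closes with the Sobolev inequalities of Cor.~\ref{cor:sobolev:d} and the Poincar\'e inequality, both of which rest solely on the isoperimetric bound of Lemma~\ref{lemma:isoperimetric}---no conformal factor control is needed anywhere. The smallness \eqref{D:small} enters to absorb the $(K-\overline{K})|\nablas\varphi|^2$ cross term. This buys independence from the propagation analysis, which is precisely the point of the uniformization section: to obtain $\psi$ bounds from Gauss-curvature control alone, a strategy that survives beyond the shear-free case where direct propagation of $\psi$ is unavailable. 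Your approach, by contrast, works here but would not extend to that more general setting. Two minor remarks: your $L^2$ energy step is superfluous once you invoke Lemma~\ref{lemma:calderon:div:curl} (that lemma has no lower-order remainder to absorb), and your Calderon--Zygmund step only covers $p\in(2,4]$ directly; for $p>4$ you need one more application of Cor.~\ref{cor:sobolev:d} to $\nablas\varphi$.
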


  \begin{proof}
    From the standard elliptic estimate for the Laplacian,
    \begin{equation*}
      \int_S |\nablas^2\varphi|^2+K|\nablas\varphi|^2=\int_S |K-\overline{K}|^2
    \end{equation*}
    it follows that
    \begin{equation*}
      \dLp{2}{u,v}{\nablas^2\varphi}^2+\overline{K}\dLp{2}{u,v}{\nablas\varphi}^2\leq \dLp{2}{u,v}{K-\overline{K}}^2+\dLp{2}{u,v}{K-\overline{K}}\dLp{4}{u,v}{\nablas\varphi}^2
    \end{equation*}
    and by Corollary~\ref{cor:sobolev:d}
    \begin{equation*}
      \dLp{4}{u,v}{\nablas\varphi}^2\lesssim_I\dLp{2}{u,v}{\nablas\varphi}^2+\dLp{2}{u,v}{r\nablas^2\varphi}^2
    \end{equation*}
    hence
    \begin{equation*}
      \Bigl(1-r^2\dLp{2}{u,v}{K-\overline{K}}\Bigr)\Bigl(\dLp{2}{u,v}{\nablas^2\varphi}^2+\frac{1}{r^2}\dLp{2}{u,v}{\nablas\varphi}^2\Bigr)\leq \dLp{2}{u,v}{K-\overline{K}}^2
    \end{equation*}
and by \eqref{eq:KKbar:L4:bound} we can assume $r^2\dLp{2}{u,v}{K-\overline{K}}\leq 1/2$, in view of \eqref{D:small}.
    Thus 
        \begin{gather*}
          \dLp{2}{u,v}{\nablas^2\varphi}^2+\frac{1}{r^2}\dLp{2}{u,v}{\nablas\varphi}^2\leq 2\dLp{2}{u,v}{K-\overline{K}}^2\leq 2\dLp{4}{u,v}{K-\overline{K}}^2\lesssim  1/r^6\\
          \dLp{p}{u,v}{r\nablas\varphi}^2\lesssim_I \dLp{2}{u,v}{r^2\nablas^2\varphi}^2+\dLp{2}{u,v}{r\nablas\varphi}^2 \lesssim 1/r^2
        \end{gather*}
        for any $p>2$, and again by Corollary~\ref{cor:sobolev:d}
        \begin{equation*}
                \dLp{p}{u,v}{\varphi}^2\lesssim_{I}\dLp{2}{u,v}{\varphi}^2+\dLp{2}{u,v}{r\nablas\varphi}^2\,.
        \end{equation*}

        Furthermore by Lemma~\ref{lemma:isoperimetric} the isoperimetric constant is bounded, $I(S_{u,v})\lesssim_I 1$, and in view of $\overline{\varphi}=0$ the isoperimetric inequality on $S_{u,v}$ then yields
        \begin{equation*}
          \int_{S_{u,v}}\varphi^2\dm{\gs}\leq I(S_{u,v})\Bigl(\int_{S_{u,v}}|\nablas\varphi|\dm{\gs}\Bigr)^2
        \end{equation*}
        or simply
        \begin{equation*}
          \dLp{2}{u,v}{\varphi}\leq \dLp{2}{u,v}{r\nablas\varphi}\,.
        \end{equation*}

        Inserting above we obtain
        \begin{equation*}
                \dLp{p}{u,v}{\varphi}^2\lesssim_{I}\dLp{2}{u,v}{r\nablas\varphi}^2
              \end{equation*}
              and thus by Corollary~\ref{cor:sobolev:d}
              \begin{equation*}
                \begin{split}
                  \Linf{u,v}{\varphi}&\lesssim_I  \dLp{p}{u,v}{\varphi}+\dLp{p}{u,v}{r\nablas\varphi}\\
                  &\lesssim_I \dLp{2}{u,v}{r\nablas\varphi}+\dLp{p}{u,v}{r\nablas\varphi}\lesssim 1/r\,.
                \end{split}
              \end{equation*}

     \end{proof}

     \begin{proposition}\label{prop:uniformization}
       Suppose \emph{(\textbf{A:\underline{I,II}})} holds.
       Consider the shear-free case $\chibh=0$, and assume \eqref{D:small}.
       Then there exists $\psi:\mathcal{D}_+\longrightarrow \mathbb{R}$  such that
       \begin{equation}
         \gs=r^2 e^{2\psi}\gammac
       \end{equation}
       where $\gammac$ is the standard metric on $\mathbb{S}^2$, $r$ is the area radius of $(S_{u,v},\gs)$,  and for any $p>2$
       \begin{gather}
         \Linf{u,v}{\psi}\lesssim 1\qquad \Linf{u,v}{r \ds\psi}\lesssim 1\\
         \dLp{p}{u,v}{r\ds\psi}\lesssim 1\qquad          \dLp{4}{u,v}{r^2\nablas^2\psi}\lesssim 1
       \end{gather}
     \end{proposition}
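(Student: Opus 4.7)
The plan is to combine the conformal invariance of the shear-free flow along $\Cb_v$ with a uniformization-type argument building on Lemma~\ref{lemma:varphi}. The existence of $\psi$ and the $L^\infty$ bound are obtained directly, while the derivative bounds come from the Gauss equation coupled with the $L^4$ control on $K-\overline{K}$ established in Section~\ref{sec:kappab}.

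First, observe that in the shear-free case $\Db\gs = \Omega\tr\chib\,\gs$ preserves the conformal class of $\gs$ along the generators of $\Cb_v$, so the initial prescription $\gs_{u_+,v} = r^2(u_+,v)\gammac$ on $C_+$ determines a unique $\psi$ with $\gs = r^2 e^{2\psi}\gammac$, and $\psi\rvert_{C_+} = 0$. The propagation equation \eqref{eq:Db:psi:shear-free} together with \eqref{BA:tr:chib:average} yields $|\psi|\lesssim \Delta_I(u_+ - u)$, which under \eqref{D:small} is $\lesssim 1$.

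Next, from $\gs = r^2 e^{2\psi}\gammac$ together with Gauss--Bonnet $\overline{K} = 1/r^2$, the Gauss curvature satisfies
\begin{equation*}
  -\Laplaces\psi = \bigl(K-\overline{K}\bigr) + \frac{1-e^{-2\psi}}{r^2}\,,
\end{equation*}
so $r^2\Laplaces\psi$ is bounded by $r^2|K-\overline K|$ plus a term of size $|\psi|\lesssim 1$. The shear-free relation $K-\overline K = (\kappab-\overline{\kappab}) + \divs\etab$ together with Lemma~\ref{lemma:L4:kappab} and the $L^4$ bound for $\nablas\etab$ from Lemma~\ref{lemma:nablas:etab} supplies $r^3\dLp{4}{u,v}{K-\overline{K}}\lesssim 1$. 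Following the pattern of Lemma~\ref{lemma:varphi}, I would then pair $-\Laplaces\psi$ with $\psi$ (after normalizing $\overline{\psi}=0$ using the residual freedom in the area radius), use the elliptic identity $\int |\nablas^2\psi|^2 + K|\nablas\psi|^2 = \int |\Laplaces\psi|^2$, and invoke the Sobolev inequality of Corollary~\ref{cor:sobolev:d} (which holds uniformly by Lemma~\ref{lemma:isoperimetric}) to absorb the $L^4$ cross term. This yields the $L^2$ estimate $\dLp{2}{u,v}{r^2\nablas^2\psi} + \dLp{2}{u,v}{r\nablas\psi}\lesssim 1$. A Calderon--Zygmund estimate on $(S_{u,v},\gs)$, obtained by reduction to $(\mathbb{S}^2,\gammac)$ as in Section~\ref{sec:elliptic}, then upgrades this to the desired $\dLp{4}{u,v}{r^2\nablas^2\psi}\lesssim 1$, and Sobolev embedding gives the $L^p$ and $L^\infty$ bounds on $r\ds\psi$.

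The main obstacle will be the circular dependence between the elliptic estimates on $(S_{u,v},\gs)$ --- which require a uniform comparison with the round metric --- and the bounds for $\psi$, which encode precisely this comparison. This is resolved by the ordering of the steps above: the unconditional $L^\infty$ bound $|\psi|\lesssim 1$ from the propagation equation does not rely on any elliptic theory, and with that in hand the Calderon--Zygmund constants on $(S_{u,v},\gs)$ differ from those on $(\mathbb{S}^2,\gammac)$ only by factors depending on $\Linf{u,v}{\psi}$, closing the argument.
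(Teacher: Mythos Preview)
Your approach is correct and is genuinely more direct than the paper's in the shear-free setting. The paper does not use the propagation equation \eqref{eq:Db:psi:shear-free} to obtain the $L^\infty$ bound on $\psi$; instead it introduces an auxiliary function $\varphi$ solving $\Laplaces\varphi=K-\overline{K}$ (Lemma~\ref{lemma:varphi}), passes to the modified metric $\gs'=e^{2\varphi}\gs$ with pointwise bounded Gauss curvature, and then invokes the uniformization theorem from \cite{C:09} (Proposition~5.3 there) as a black box to get a second conformal factor $\psi'$ with $L^\infty$ control on $\psi'$ and $r'\ds\psi'$. The final $\psi$ is then $\log(r'/r)+\psi'-\varphi$, and the $L^4$ bound on $\nablas^2\psi$ is obtained at the end by a Calder\'on--Zygmund step on $(\mathbb{S}^2,\gammac)$ applied to $\varphi$. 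Your route replaces the uniformization black box entirely by the shear-free transport equation, which is a real simplification; the paper's argument, on the other hand, is set up to work under the weaker hypothesis that only the $L^4$ Gauss curvature bound \eqref{eq:KKbar:L4:bound} holds, without relying on the shear-free propagation.

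Two small points. First, the remark about normalising $\overline{\psi}=0$ is unnecessary: the elliptic identity and the Calder\'on--Zygmund step see only derivatives of $\psi$, and you already have $|\psi|\lesssim 1$ from transport, so no Poincar\'e step is needed. Second, your closing sentence is slightly too optimistic: the transfer of the Calder\'on--Zygmund estimate from $(\mathbb{S}^2,\gammac)$ to $(S_{u,v},\gs)$ does not depend on $\sup|\psi|$ alone. The Christoffel difference $\nablas^2\psi-\nablac^2\psi$ is quadratic in $\ds\psi$, so the $L^4$ estimate on $r^2\nablas^2\psi$ also requires control of $\|r\ds\psi\|_{L^8}$. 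But this is precisely what your preceding $L^2$-level argument (the Lemma~\ref{lemma:varphi}-style step, which needs only the Sobolev inequality of Corollary~\ref{cor:sobolev:d} and the isoperimetric bound of Lemma~\ref{lemma:isoperimetric}, neither of which requires any $\psi$ bounds) already supplies, so the ordering you give is correct and the circularity is indeed broken.
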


     \begin{remark}
       In fact, given spherically symmetric data on $C_+$ one has
       \begin{equation}
         \Linf{u,v}{\psi}\lesssim 1/r \qquad  \Linf{u,v}{r \ds\psi}\lesssim 1/r \qquad \dLp{4}{u,v}{r^2\nablas^2\psi}\lesssim 1/r
       \end{equation}
       For simplicity, in the proof given below, we have not exploited the factor $1/r$ in the estimates of Lemma~\ref{lemma:varphi}.
     \end{remark}

     \begin{proof}
       Denoting by $r$, and $r'$ the area radius of $(S_{u,v},\gs)$, and $(S_{u,v},\gs')$, respectively, it follows from Lemma~\ref{lemma:varphi} that $r'/r$ is bounded from above and below, and hence by \eqref{eq:K:prime} that
       \begin{equation*}
         (r')^2K'=e^{-2\varphi}\frac{{r'}^2}{r^2} \simeq_{I,II} 1
       \end{equation*}
       Therefore we can apply the uniformization theorem in form of \CProp{5.3} to $(S_{u,v},\gs)$ to infer that there exists a conformal factor $\Omega'$ such that the metric $\gsc={\Omega'}^2\gs'$ has Gauss curvature $K[\gsc]=1$, and the bounds for $\Omega'_m=\inf_{S_{u,v}}r'\Omega'$, $\Omega_M'=\sup_{S_{u,v}}r'\Omega'$, and $\Omega_1'=\sup_{S_{u,v}}{\Omega'}^{-2}|\ds\Omega'|_{\gs'}$ depend only on the upper bounds for ${k_m'}^{-1}$, and $k_M$, where $k_m'=\inf_{S_{u,v}}{r'}^2 K'$, $k_M'=\sup_{S_{u,v}}{r'}^2K'$, namely only on $\Delta_I$, and $\Delta_{II}$, while for every $p\geq 2$,
       \begin{equation*}
         \Omega_{2,p}'=\Bigl(\int_{S_{u,v}}{\Omega'}^{-3p+2}|{\nablas'}^2\Omega'|_{\gs'}^p\dm{\gs'}\Bigr)^\frac{1}{p}\lesssim_p 1
       \end{equation*}
       Then we can proceed as on page~166 of \cite{C:09}, and set $\Omega'={r'}^{-1}e^{-\psi'}$. The above bounds are then equivalent to bounds for
       \begin{equation*}
         \sup_{S_{u,v}}|\psi'|\,,\quad \sup_{S_{u,v}}r'|\ds\psi'|_{\gs'}\,, \quad \dLp{p}{u,v}{{r'}^2{\nablas'}^2\psi'}\,.
       \end{equation*}
       In conclusion we have
       \begin{equation*}
         \gsc={\Omega'}^2\gs'={r'}^{-2}e^{-2\psi'}e^{2\varphi}\gs=r^{-2}e^{-2\psi}\gs
       \end{equation*}
       where $\psi=\log(r'/r)+\psi'-\varphi$, and the $\mathrm{L}^\infty$ bound for $\psi$ follows from Lemma~\ref{lemma:varphi} and the above.
       Moreover in  Lemma~\ref{lemma:varphi} we have established an $\mathrm{L}^p$ bound for $\varphi$, which yields with the above that for any $p>2$
       \begin{equation*}
         \dLp{p}{u,v}{r\ds\psi}\leq         \Linf{u,v}{r\ds\psi'}+         \dLp{p}{u,v}{r\ds\varphi}\lesssim_{I,II} 1\,.
       \end{equation*}

       This can now be improved to an $\mathrm{L}^\infty$ estimate using \eqref{eq:KKbar:L4:bound}, following the discussion on pages 168-170 in \cite{C:09}. Indeed returning to \eqref{eq:varphi} we infer from the conformal invariance of the equation that
       \begin{equation*}
         \Laplacec\varphi=r^2e^{2\psi}(K-\overline{K})
       \end{equation*}
       and by the Calderon-Zygmund inequality for any $p>1$,
       \begin{equation*}
         \|\nablac^2\varphi\|_{\mathrm{L}^p(\mathbb{S}^2,\gsc)}+         \|\nablac\varphi\|_{\mathrm{L}^p(\mathbb{S}^2,\gsc)}\lesssim_p \| r^2 e^{2\psi}(K-\overline{K})\|_{\mathrm{L}^p(\mathbb{S}^2,\gsc)}
       \end{equation*}
       After rescaling this reads
       \begin{equation*}
         \|\!\!\!\!\!- r^2\nablac^2\varphi\|\!\!\!\!\!-_{\mathrm{L}^p(\mathbb{S}^2,\gs)}+          \|\!\!\!\!\!- r\nablac \varphi \|\!\!\!\!-_{\mathrm{L}^p(\mathbb{S}^2,\gs)} \lesssim_{p,\sup|\psi|}  r^2 \|\!\!\!\!\!- K-\overline{K} \|\!\!\!\!-_{\mathrm{L}^p(\mathbb{S}^2,\gs)}
       \end{equation*}
       Since
       \begin{equation*}
         r^2 \nLp{ \nablas^2\varphi-\nablac^2\varphi} \leq \nLpq{r\ds\psi}{2p}\nLpq{r\ds\varphi}{2p}
       \end{equation*}
       we obtain in view of the estimate for $\ds\psi$ above, the estimate of Lemma~\ref{lemma:varphi} for $\ds\varphi$, and the estimate \eqref{eq:KKbar:L4:bound} for $K-\overline{K}$ ,  that
       \begin{equation*}
         \dLp{4}{u,v}{r^2\nablas^2\varphi}\lesssim_{p,\sup|\psi|} r^2\dLp{4}{u,v}{K-\overline{K}}+ \dLp{8}{u,v}{r\ds\psi}\dLp{8}{u,v}{r\ds\varphi}\lesssim 1\,.
       \end{equation*}
       This yields the desired bound for $\nablas^2\psi$ in $\mathrm{L}^4$, and for $\ds\psi$ in $\mathrm{L}^\infty$.
     \end{proof}
     
  \subsection{Elliptic estimates}
\label{sec:elliptic}

\begin{lemma}
  \label{lemma:calderon:div}
  Consider the following equation for a trace-free symmetric $2$-covariant tensorfield $\theta$, and a $1$-from $f$ on $(S_{u,v},\gs)$:
  \begin{equation}
    \divs\theta=f
  \end{equation}
   Given the conclusions of Proposition~\ref{prop:uniformization}, we have
  \begin{gather}
    \dLp{4}{u,v}{\nablas\theta}+\frac{1}{r}\dLp{4}{u,v}{\theta}\lesssim \dLp{4}{u,v}{f}\\
    \dLp{4}{u,v}{\nablas^2\theta}\lesssim \dLp{4}{u,v}{\nablas f}+\frac{1}{r}\dLp{4}{u,v}{f}
  \end{gather}

\end{lemma}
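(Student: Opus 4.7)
The plan is to reduce the Hodge system on $(S_{u,v},\gs)$ to a Hodge system on the round sphere $(\mathbb{S}^2,\gammac)$, apply the classical Calderon--Zygmund estimate there, and then transfer the estimate back using the uniform bounds on the conformal factor $\psi$ supplied by Proposition~\ref{prop:uniformization}. The key observation that makes this strategy clean is the conformal covariance of the divergence acting on trace-free symmetric $2$-tensors in dimension $2$: writing $\gs=e^{2\sigma}\gammac$ with $\sigma=\log r+\psi$, a direct computation of the Christoffel difference tensor $C^D_{AB}=\delta^D_A\nablac_B\sigma+\delta^D_B\nablac_A\sigma-\gammac_{AB}\nablac^D\sigma$ shows that, for $\theta$ trace-free and $n=2$, the terms from $g^{ac}C^d_{ac}\theta_{db}$ and $g^{ac}C^d_{ab}\theta_{cd}$ cancel, so
\[
  \divs_{\gs}\theta \;=\; e^{-2\sigma}\,\divs_{\gammac}\theta \;=\; r^{-2}e^{-2\psi}\,\divs_{\gammac}\theta\,.
\]
Hence the equation $\divs_{\gs}\theta=f$ is equivalent to $\divs_{\gammac}\theta=r^{2}e^{2\psi}f$ on $(\mathbb{S}^2,\gammac)$.

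First I would handle the $\mathrm{L}^4$ estimate for $\nablas\theta$ and $\theta$. On the round sphere the standard Calderon--Zygmund inequality for trace-free symmetric $2$-tensors gives, for $1<p<\infty$,
\[
  \|\nablac \theta\|_{\mathrm{L}^p(\mathbb{S}^2,\gammac)}+\|\theta\|_{\mathrm{L}^p(\mathbb{S}^2,\gammac)}\;\lesssim_p\; \|\divs_{\gammac}\theta\|_{\mathrm{L}^p(\mathbb{S}^2,\gammac)}\,.
\]
Using the pointwise identities $|\theta|^2_{\gs}=r^{-4}e^{-4\psi}|\theta|^2_{\gammac}$ and $\ud \mu_{\gs}=r^2e^{2\psi}\ud\mu_{\gammac}$, together with the $\mathrm{L}^\infty$ bound $\Linf{u,v}{\psi}\lesssim 1$, this translates directly into the dimensionless norms introduced in Section~\ref{sec:prelim}. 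The transition from $\nablac$ to $\nablas$ costs a term of the form $(\ds\psi)\cdot\theta$ coming from $\nablas-\nablac=C\cdot$, which by $\Linf{u,v}{r\ds\psi}\lesssim 1$ is bounded by $r^{-1}|\theta|$ pointwise. Absorbing this into the left-hand side yields the first inequality.

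For the second estimate I would differentiate the equation: applying $\nablas$ and commuting $[\nablas,\divs]$ at the cost of a Gauss curvature term $K\theta$ gives
\[
  \divs_{\gs}(\nablas\theta) \;=\; \nablas f \;+\; K\cdot\theta \;+\; \text{lower order in }\nablas\theta,
\]
schematically, after symmetrizing in a suitable way so that $\nablas\theta$ is again a Hodge quantity on $S_{u,v}$. The curvature contribution is harmless because Proposition~\ref{prop:uniformization} (via the Gauss equation) forces $K=\mathcal{O}(r^{-2})$. Repeating Steps~1--2 applied to $\nablas\theta$ --- i.e.\ using conformal covariance to reduce to $(\mathbb{S}^2,\gammac)$ and invoking Calderon--Zygmund there --- yields the bound on $\nablas^2\theta$ in terms of $\nablas f$ and lower-order pieces; the $r^{-1}\dLp{4}{u,v}{f}$ contribution arises from the $K\cdot\theta$ term combined with the first estimate.

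The main technical obstacle will be the commutator and connection-difference terms in the second estimate, since converting $\nablas^2$ to $\nablac^2$ produces contributions proportional to $\nablas^2\psi$ and to $(\ds\psi)\otimes(\ds\psi)$. Proposition~\ref{prop:uniformization} controls $\nablas^2\psi$ only in $\mathrm{L}^4$, not $\mathrm{L}^\infty$, so one cannot estimate $(\nablas^2\psi)\cdot\theta$ crudely. The fix is to pair this term via H\"older with the $\mathrm{L}^\infty$ bound on $\theta$, which is obtained from the already-established $\mathrm{L}^4$ bounds on $\theta$ and $\nablas\theta$ through the Sobolev embedding of Corollary~\ref{cor:sobolev:d}, inequality~\eqref{eq:sobolev:d:Linf}. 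Similarly $(\ds\psi)\otimes(\ds\psi)\cdot\theta$ is controlled using $\Linf{u,v}{r\ds\psi}\lesssim 1$ and $\dLp{p}{u,v}{r\ds\psi}\lesssim 1$. Once these error terms are absorbed, the second inequality follows, completing the proof.
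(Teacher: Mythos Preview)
Your approach is correct and is essentially the one the paper invokes: the paper's own proof is just a citation to Chapter~5.4 of \cite{C:09}, and the detailed argument you sketch --- conformal covariance of $\divs$ on trace-free symmetric $2$-tensors, reduction to $(\mathbb{S}^2,\gammac)$, Calderon--Zygmund there, and transfer back via the bounds on $\psi$ from Proposition~\ref{prop:uniformization} --- is exactly the mechanism spelled out in the paper's proof of the next lemma (Lemma~\ref{lemma:calderon:div:curl}).

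One small comment on your second-order step: the phrase ``symmetrizing in a suitable way so that $\nablas\theta$ is again a Hodge quantity'' is doing more work than it should, since $\nablas\theta$ is a $3$-tensor and does not fit the same first-order Hodge system. The cleaner route, and the one actually used in \cite{C:09} and mirrored in the paper's proof of Lemma~\ref{lemma:calderon:div:curl}, is to apply the second-order Calderon--Zygmund estimate on $(\mathbb{S}^2,\gammac)$ directly (bounding $\|\nablac^2\theta\|_{\mathrm{L}^p}$ by $\|\nablac(r^2e^{2\psi}f)\|_{\mathrm{L}^p}$), and then control the difference $\nablas^2\theta-\nablac^2\theta$ exactly as you describe, pairing the $\nablas^2\psi$ contribution in $\mathrm{L}^4$ against $\theta$ in $\mathrm{L}^\infty$ via Corollary~\ref{cor:sobolev:d}. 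Your identification of this as the main technical point is right on target.
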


\begin{proof}
  See \CCh{5.4}, in particular the proof before \CLemma{5.6}.
\end{proof}

  \begin{lemma}\label{lemma:calderon:div:curl}
    Given the conclusions of Proposition~\ref{prop:uniformization}, consider the following system of equations for a 1-form  $\theta$, and functions $f$, $g$  on $(S_{u,v},\gs)$:
    \begin{equation}\label{eq:system:div:curl:theta}
      \divs \theta= f\qquad
      \curls \theta= g
    \end{equation}
    Then for any $p\geq 2$,
        \begin{equation}\label{eq:calderon:theta}
      \dLp{p}{u,v}{\theta}+\dLp{p}{u,v}{r\nablas\theta}\lesssim  \dLp{p}{u,v}{rf}+ \dLp{p}{u,v}{rg}
    \end{equation}
    and
    \begin{equation}
      \label{eq:calderon:nabla:theta}
      \dLp{4}{u,v}{r^2 \nablas^2 \theta}\lesssim \sum_{i=0,1}\dLp{4}{u,v}{r^{1+i}\ds^if}+\dLp{4}{u,v}{r^{1+i}\ds^ig}\,.
    \end{equation}
  \end{lemma}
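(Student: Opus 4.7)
The strategy is to reduce the system \eqref{eq:system:div:curl:theta} on $(S_{u,v},\gs)$ to the corresponding system on the round sphere $(\mathbb{S}^2,\gammac)$ via the conformal representation $\gs = r^2 e^{2\psi}\gammac$ furnished by Proposition~\ref{prop:uniformization}, and then apply the classical Calderon-Zygmund estimate on $\mathbb{S}^2$. The point is that the first-order Hodge operator for $1$-forms is conformally covariant: if $\gs = e^{2\Phi}\hat{\gs}$ with $\Phi=\log r+\psi$, then $\divs\theta = e^{-2\Phi}\divergence_{\hat\gs}\theta$ and $\curls\theta = e^{-2\Phi}\curl_{\hat\gs}\theta$, so the system becomes $\divc\theta = r^2 e^{2\psi} f$, $\curl_{\gammac}\theta = r^2 e^{2\psi} g$ on $\mathbb{S}^2$.

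First, for \eqref{eq:calderon:theta}, I would invoke the standard elliptic estimate on $(\mathbb{S}^2,\gammac)$,
\[
\|\theta\|_{L^p(\mathbb{S}^2,\gammac)}+\|\nablac\theta\|_{L^p(\mathbb{S}^2,\gammac)}\lesssim_p \|r^2 e^{2\psi}f\|_{L^p(\mathbb{S}^2,\gammac)}+\|r^2 e^{2\psi}g\|_{L^p(\mathbb{S}^2,\gammac)},
\]
valid on $\mathbb{S}^2$ since harmonic $1$-forms are trivial there (note $f$, $g$ automatically have vanishing mean since they are divergences/curls). The uniform bound $|\psi|\lesssim 1$ from Proposition~\ref{prop:uniformization} makes $e^{2\psi}$ harmless, and the rescaling between the $L^p$ norm on $(\mathbb{S}^2,\gammac)$ and the dimensionless norm on $(S_{u,v},\gs)$ absorbs exactly the correct powers of $r$ (each factor of $r$ in the volume element pairs against $r^{-1}$ inside the norm, and the $\nablac$ differs from $r\nablas$ by the conformal factor). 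This produces \eqref{eq:calderon:theta}.

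For the second-order estimate \eqref{eq:calderon:nabla:theta}, I would commute a derivative through the system: applying $\ds$ to $\divs\theta=f$ and $\curls\theta=g$ and using the standard commutation identity on a $2$-surface,
\[
\ds\divs\theta = \divs\nablas\theta + K\,\theta,\qquad \ds\curls\theta = \curls\nablas\theta + K\,{}^\ast\theta,
\]
one obtains a Hodge system for $\nablas\theta$ (viewed after a suitable decomposition as a symmetric trace-free part plus divergence and curl parts) whose sources are $\ds f$, $\ds g$, and the lower-order term $K\theta$. Applying Lemma~\ref{lemma:calderon:div} (or an analogous estimate obtained by the same conformal reduction, now for a $2$-covariant tensorfield) yields
\[
\dLp{4}{u,v}{r^2\nablas^2\theta}\lesssim \dLp{4}{u,v}{r^2\ds f}+\dLp{4}{u,v}{r^2\ds g}+\dLp{4}{u,v}{r^2 K\theta}+\dLp{4}{u,v}{r\nablas\theta},
\]
and the Gauss curvature term is controlled by $r^2\Linfty{K}\lesssim 1$ (from the bound $r^2 K'=r^2 e^{-2\varphi}{r'}^{-2}\simeq 1$ in the proof of Proposition~\ref{prop:uniformization}, together with $K=e^{-2\varphi}(K'+\Laplaces\varphi/e^{2\varphi})$-type manipulations) combined with \eqref{eq:calderon:theta}. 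Feeding back \eqref{eq:calderon:theta} for the last two terms gives \eqref{eq:calderon:nabla:theta}.

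The main obstacle in executing this plan rigorously is the comparison of covariant derivatives: $\nablas$ and $\nablac$ differ by Christoffel symbols $\Gammas - \Gammac$ that are linear in $\ds\psi$, and the difference $\nablas^2\theta - \nablac^2\theta$ involves $\nablas\ds\psi\cdot\theta + \ds\psi\cdot\nablas\theta$. Controlling these remainder terms in $\mathrm{L}^4(S)$ is exactly where the estimates $\Linf{u,v}{r\ds\psi}\lesssim 1$ and $\dLp{4}{u,v}{r^2\nablas^2\psi}\lesssim 1$ from Proposition~\ref{prop:uniformization} are used, together with \eqref{eq:calderon:theta} bootstrapped into the $L^\infty$--$L^p$ duality via the Sobolev embedding of Corollary~\ref{cor:sobolev:d}. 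Once these terms are absorbed using the uniform bound on $\psi$ and the integrability $r^2\nablas^2\psi\in\mathrm{L}^4$, the Calderon-Zygmund estimate on $\mathbb{S}^2$ closes the argument.
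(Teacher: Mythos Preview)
Your first-order argument is the paper's: conformal reduction of the div--curl system to $(\mathbb{S}^2,\gammac)$, Calderon--Zygmund there, and comparison of $\nablas\theta$ with $\nablac\theta$ via the $L^\infty$ bound on $r\ds\psi$.

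For the second-order estimate you take a different route. The paper does \emph{not} commute a derivative through the Hodge system on $(S,\gs)$; instead it stays on the round sphere throughout: it differentiates the transformed sources $r^2 e^{2\psi}f$, $r^2 e^{2\psi}g$ (picking up harmless $\ds\psi$ terms), applies a second-order Calderon--Zygmund estimate directly on $(\mathbb{S}^2,\gammac)$ to control $\nablac^2\theta$, and only then compares $\nablas^2\theta$ with $\nablac^2\theta$ using $\Linf{u,v}{r\ds\psi}$, $\dLp{4}{u,v}{r^2\nablas^2\psi}$, and $\Linf{u,v}{\theta}$ from Sobolev. The point is that the Gauss curvature of $(S,\gs)$ never appears, because all commutators are taken on the round sphere where the curvature is the constant $1$ and is absorbed into the standard estimate. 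Your commutation-on-$(S,\gs)$ approach is workable but needs two repairs. First, Lemma~\ref{lemma:calderon:div} is for \emph{symmetric trace-free} $2$-tensors, which $\nablas\theta$ is not; what you actually want is the Bochner-type identity $\Laplaces\theta=\ds f-{}^\ast\ds g+K\theta$ and $L^p$ regularity for the Hodge Laplacian on $1$-forms, not a Hodge system for $\nablas\theta$ as a $2$-tensor. Second, your justification of $r^2\Linfty{K}\lesssim 1$ via $K'$ and $\varphi$ is tangled (you would need $\Laplaces\varphi\in L^\infty$, which is not among the conclusions of Proposition~\ref{prop:uniformization}); in the paper's setting one gets this directly from the Gauss equation \eqref{eq:gauss:hat} and the bootstrap bounds on $\tr\chi\tr\chib$, or alternatively one places $r^2K$ in $L^4$ and $\theta$ in $L^\infty$ via Sobolev and \eqref{eq:calderon:theta}. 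With these fixes your argument closes; the paper's route is just cleaner because the top-order elliptic step never leaves $(\mathbb{S}^2,\gammac)$.
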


  \begin{proof}
    Since $\gs$ is conformal to $\gammac$, and the system is conformally invariant we have
    \begin{equation*}
      \stackrel{\circ}{\divs}\theta=r^2 e^{2\psi}f\qquad \stackrel{\circ}{\curl}{\theta}=r^2e^{2\psi}g
    \end{equation*}
    Moreover the norms transform according to 
    \begin{gather*}
      \dLp{p}{u,v}{r\theta}\lesssim_{\sup|\psi|}\|\theta\|_{\mathrm{L}^p(\mathrm{S}^2,\gammac)}\,,\quad      \dLp{p}{u,v}{r^2 \nablac\theta}\lesssim_{\sup|\psi|}\|\nablac\theta\|_{\mathrm{L}^p(\mathrm{S}^2,\gammac)}\,,\\      \dLp{p}{u,v}{r^3\nablac^2\theta}\lesssim_{\sup|\psi|}\|\nablac^2\theta\|_{\mathrm{L}^p(\mathrm{S}^2,\gammac)}\,,
    \end{gather*}
    and
    \begin{equation*}
      \| r^2 e^{2\psi} f \|_{\mathrm{L}^p(\mathbb{S}^2,\gammac)}\lesssim_{\sup|\psi|}r^2\dLp{p}{u,v}{f}
    \end{equation*}
    hence the Calderon-Zygmund inequality on $(\mathbb{S}^2,\gammac)$ implies \eqref{eq:calderon:theta},  once we control the difference
    \begin{equation*}
      \nablas\theta-\nablac\theta=\stackrel{\circ} {\Delta}\cdot \theta
    \end{equation*}
    where as in \Ceq{5.179} $\Linf{u,v}{\stackrel{\circ}{\Delta}}\lesssim \Linf{u,v}{\ds\psi}$, thus $\Linf{u,v}{r\stackrel{\circ}{\Delta}}$ is bounded.

    Furthermore, 
    \begin{equation*}
      \|\ds\bigl(r^2e^{2\psi}f\bigr)\|_{\mathrm{L}^p(\mathbb{S}^2,\gammac)}\lesssim_{\sup|\psi|}r^3\dLp{p}{u,v}{\ds f+2f\ds\psi}\lesssim r^3\dLp{p}{u,v}{ \ds f}+\Linf{u,v}{r\ds\psi}r^2\dLp{p}{u,v}{f}
    \end{equation*}
    hence again by the Calderon-Zygmund inequality, c.f.~\Ceq{5.222}, on $(\mathbb{S}^2,\gammac)$, we have
    \begin{multline*}
      \dLp{p}{u,v}{r \nablac^2\theta}\lesssim_{\sup|\psi|} \dLp{p}{u,v}{r \ds f}+\bigl(1+\Linf{u,v}{r\ds\psi}\bigr)\dLp{p}{u,v}{f}\\
      +\dLp{p}{u,v}{r \ds g}+\bigl(1+\Linf{u,v}{r\ds\psi}\bigr)\dLp{p}{u,v}{g}
    \end{multline*}
    which implies \eqref{eq:calderon:nabla:theta}, in view of the boundedness of $\sup|r\ds\psi|$ established in Prop.~\ref{prop:uniformization}, and
the difference $\nablas^2\theta-\nablac^2\theta$ 
is controlled as in \Ceq{5.231}:
\begin{multline*}
  \dLp{p}{u,v}{r^2 \nablas^2\theta-r^2 \nablac^2\theta}\lesssim \Linf{u,v}{r\ds\psi}\dLp{p}{u,v}{r\theta}\\+\Bigl(\dLp{p}{u,v}{r^2\nablas^2\psi}+\dLp{2p}{u,v}{r\ds\psi}^2\Bigr)\Linf{u,v}{\theta}
\end{multline*}
where the terms involving $\ds\psi$, and $\nablas^2\psi$ are bounded for $p=4$ by the results of Prop.~\ref{prop:uniformization}. Indeed, putting the above together yields
\begin{multline*}
  \dLp{p}{u,v}{r^2\nablas^2\theta}\lesssim_{(\sup|\psi|+\sup r|\ds\psi|)} \dLp{p}{u,v}{rf}+\dLp{p}{u,v}{rf}\\+\dLp{p}{u,v}{r^2\ds f}+\dLp{p}{u,v}{r^2\ds g}+\Linf{u,v}{\theta}
\end{multline*}
and so the final statement follows from the Sobolev inequality \eqref{eq:sobolev:d:Linf}, and \eqref{eq:calderon:theta}.
  \end{proof}

  \begin{corollary}\label{cor:calderon:Laplaces}
    Given the conclusions of Proposition~\ref{prop:uniformization}, consider the equation for functions $\omega$, and $f$ on $(S_{u,v},\gs)$:
    \begin{equation}
      \Laplaces \omega = f\,.
    \end{equation}
Then for any $p\geq 2$,
\begin{equation}
  \dLp{p}{u,v}{\nablas\omega}+\dLp{p}{u,v}{r\nablas^2\omega}\lesssim \dLp{p}{u,v}{rf}
\end{equation}
and
\begin{equation}
  \dLp{4}{u,v}{r^2\nablas^3\omega}\lesssim \sum_{i=0,1}\dLp{4}{u,v}{r^{1+i}\ds^if}\,.
\end{equation}

  \end{corollary}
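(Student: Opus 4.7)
The plan is to reduce this corollary directly to Lemma~\ref{lemma:calderon:div:curl} by setting $\theta := \ds\omega$. Since $\omega$ is a scalar function on $S_{u,v}$, its differential $\theta = \ds\omega$ is a 1-form satisfying
\begin{equation*}
  \divs\theta = \Laplaces\omega = f\,,\qquad \curls\theta = \curls\ds\omega = 0\,,
\end{equation*}
where the vanishing of the curl of an exact 1-form is the standard identity $\ds^2=0$ (or equivalently $\nablas_A\nablas_B\omega - \nablas_B\nablas_A\omega = 0$ on scalars). Thus we are in the setting of the div-curl system~\eqref{eq:system:div:curl:theta} with $g=0$, and the hypotheses of Lemma~\ref{lemma:calderon:div:curl} — which are just the conclusions of Proposition~\ref{prop:uniformization} — are precisely those assumed here.

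First, applying \eqref{eq:calderon:theta} with this $\theta$, and observing that for scalars $\ds\omega = \nablas\omega$ (so $\nablas\theta = \nablas^2\omega$), gives
\begin{equation*}
  \dLp{p}{u,v}{\nablas\omega} + \dLp{p}{u,v}{r\nablas^2\omega} \lesssim \dLp{p}{u,v}{rf} + \dLp{p}{u,v}{r\cdot 0}\,,
\end{equation*}
which is the first stated estimate.

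For the second estimate, applying \eqref{eq:calderon:nabla:theta} to the same $\theta$ and using $\nablas^2\theta = \nablas^3\omega$ yields
\begin{equation*}
  \dLp{4}{u,v}{r^2\nablas^3\omega} \lesssim \sum_{i=0,1}\dLp{4}{u,v}{r^{1+i}\ds^i f}\,,
\end{equation*}
where again the $g$-terms drop out. There is no substantive obstacle: the corollary is essentially a specialization of the Calder\'on-Zygmund-type estimate of Lemma~\ref{lemma:calderon:div:curl} to the curl-free case. The only conceptual point is the identification of iterated covariant derivatives of the scalar $\omega$ with those of the 1-form $\theta = \ds\omega$, which is immediate.
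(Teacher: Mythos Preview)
Your proof is correct and takes essentially the same approach as the paper: set $\theta=\nablas\omega$, observe that this 1-form satisfies the div--curl system \eqref{eq:system:div:curl:theta} with $g=0$, and invoke Lemma~\ref{lemma:calderon:div:curl}.
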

  \begin{proof}
    Set $\theta=\nablas\omega$, then $\theta$ satisfies the system \eqref{eq:system:div:curl:theta} with $g=0$.
  \end{proof}

\subsection{Morrey's inequality}

In this section we prove a version on Morrey's inequality on the sphere.

\begin{lemma}\label{lemma:morrey}
Let $(S,\gs)$ be diffeomorphic to $\mathbb{S}^2$ with $\text{Area}(S)=4\pi r^2$, and conformal to $(\mathbb{S}^2,\gammac)$, $\gs=r^2e^{2\psi}\gammac$, where $\psi\in\mathrm{L}^\infty(S)$. Then for any $p>2$, given a differentiable function $u:S\to\mathbb{R}$, we have
\begin{equation}
      \Linfty{u-\overline{u}}\lesssim  \: \nLp{ r \nablas u}\,.
    \end{equation}
    where the constant only depends on $p$, and $\sup_S|\psi|$.
\end{lemma}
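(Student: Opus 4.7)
The plan is to reduce to the standard Morrey inequality on the round unit sphere $(\mathbb{S}^2,\gammac)$ via the conformal map implicit in the hypothesis, carefully tracking the conformal factor $\psi$ and the radius $r$.

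First I would compute how the ingredients transform. Since $\gs = r^2 e^{2\psi}\gammac$, the volume element is $\ud\mu_{\gs} = r^2 e^{2\psi}\ud\mu_{\gammac}$, while the gradient satisfies $|\nablas u|_{\gs}^2 = r^{-2}e^{-2\psi}|\nablac u|^2_{\gammac}$, so $|r\nablas u|_{\gs} = e^{-\psi}|\nablac u|_{\gammac}$. Substituting into the dimensionless norm gives
\begin{equation*}
  \nLp{r\nablas u}^p = \frac{1}{4\pi r^2}\int_{\mathbb{S}^2} e^{-p\psi}|\nablac u|_{\gammac}^p \, r^2 e^{2\psi}\ud\mu_{\gammac} = \frac{1}{4\pi}\int_{\mathbb{S}^2}e^{(2-p)\psi}|\nablac u|_{\gammac}^p \ud\mu_{\gammac}\,.
\end{equation*}
Since $\psi \in \mathrm{L}^\infty(S)$ and $p>2$, the weight $e^{(2-p)\psi}$ is bounded below by a constant depending only on $\sup_S|\psi|$, so
\begin{equation*}
  \|\nablac u\|_{\mathrm{L}^p(\mathbb{S}^2,\gammac)} \lesssim_{p,\sup|\psi|} \nLp{r\nablas u}\,.
\end{equation*}

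Next I would invoke the classical Morrey inequality on the compact 2-manifold $(\mathbb{S}^2,\gammac)$: for $p>2$, denoting by $\bar u_{\gammac}$ the average of $u$ with respect to $\gammac$,
\begin{equation*}
  \|u - \bar u_{\gammac}\|_{\mathrm{L}^\infty(\mathbb{S}^2)}\lesssim_p \|\nablac u\|_{\mathrm{L}^p(\mathbb{S}^2,\gammac)}\,.
\end{equation*}
Combining the last two displays yields the desired $\mathrm{L}^\infty$ control by $\nLp{r\nablas u}$, up to replacing the average.

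Finally I would replace $\bar u_{\gammac}$ by $\bar u$ (the average with respect to $\gs$). For any constant $c$, Jensen's inequality gives $|\bar u - c| = |\overline{u-c}|\leq \Linfty{u-c}$, so
\begin{equation*}
  \Linfty{u-\bar u}\leq \Linfty{u-c} + |c-\bar u|\leq 2 \Linfty{u-c}\,,
\end{equation*}
and taking $c=\bar u_{\gammac}$ concludes the proof. No step should present a real obstacle; the only thing to be careful with is the normalization of the dimensionless norms and ensuring the power of the conformal factor appearing in the change of variables has the right sign, which is exactly what forces the restriction $p>2$ and makes the $\sup|\psi|$ dependence of the constants inevitable.
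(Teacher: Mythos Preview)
Your proof is correct and considerably more streamlined than the paper's. The paper establishes the inequality essentially from scratch: it passes to stereographic coordinates on $\mathbb{R}^2$, writes the representation formula $u(y)-u(0)=\int_0^1 \langle \nabla u(sy),y\rangle\,\ud s$, averages over balls, and controls the resulting integrals by H\"older and explicit computations with the conformal weight $(1+\tfrac14|\vartheta|^2)^{-2}$. Only at the end does it invoke the mean-value theorem to trade an arbitrary base point for the average $\bar u$.

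Your route is different: you pull back to $(\mathbb{S}^2,\gammac)$ via the given conformal map, track the factor $e^{(2-p)\psi}$ in the change of norm, and then invoke the classical Morrey inequality on the round sphere as a black box, fixing up the averages with a triangle inequality. This is shorter and conceptually clean; the paper's approach is more self-contained and makes the role of the conformal weight explicit at each step, which fits its broader programme of redoing the Calderon--Zygmund and Sobolev theory on $(S_{u,v},\gs)$ by hand.

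One small expository quibble: your closing remark that the sign of the conformal factor ``forces the restriction $p>2$'' is not quite right. Since $\psi\in\mathrm{L}^\infty$, the weight $e^{(2-p)\psi}$ is bounded above and below for \emph{any} $p$; the restriction $p>2$ is needed purely for the classical Morrey embedding $W^{1,p}(\mathbb{S}^2)\hookrightarrow C^0$ to hold on the two-dimensional sphere. This does not affect the validity of your argument.
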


\begin{proof}
We adapt the proof from the euclidean setting in $\mathbb{R}^n$ using stereographic coordinates $(\vartheta^1,\vartheta^2)$.
Recall that in these coordinates, see e.g.\Ceq{1.209}, the metric takes the form
\begin{equation}
  \gs=r^2e^{2\psi}\gsc=\frac{r^2e^{2\psi}}{\Bigl(1+\frac{1}{4}\lvert\vartheta\rvert^2\Bigr)^2}\lvert\ud\vartheta\rvert^2
\end{equation}
and the volume element in ``stereographic polar coordinates'' $(|\vartheta|\cos\theta,|\vartheta|\sin\theta)$ is
\begin{equation}
  \dm{\gs}=\frac{r^2e^{2\psi}}{\Bigl(1+\frac{1}{4}\lvert\vartheta\rvert^2\Bigr)^2}\lvert\vartheta\rvert\ud\lvert\vartheta\rvert\ud\theta
\end{equation}
Let $y\in\mathbb{R}^2$, then
\begin{multline}
  u(y)-u(0)=\int_0^1\frac{\ud}{\ud s}u(sy)\ud s=\int_0^1(\nabla u(sy),y)\ud s=\int_0^1\gs(\nablas u,y)\ud s\\
  =\int_0^1\lvert\nablas u \rvert_{\gs}\lvert y\rvert_{\gs}\ud s\leq\int_0^1\lvert\nablas u\rvert(sy)\frac{r e^{\psi}}{1+\frac{1}{4}\lvert s y\rvert^2}\lvert y\rvert\ud s
\end{multline}
and therefore for any $\rho>0$,
\begin{equation}
  \int_{\partial B_\rho}\lvert u-u(0)\rvert=\int_0^{2\pi}\lvert u(\rho\xi(\theta))-u(0)\rvert\rho\ud\theta\leq\int_0^{2\pi}\int_0^\rho\lvert\nablas u\rvert(t\xi(\theta))\frac{r e^{\psi}}{1+\frac{1}{4}t^2}\ud t\ud\theta
\end{equation}
or
\begin{equation}
  \int_{\partial B_\rho}\lvert u-u(0)\rvert\leq\int_{B_\rho}\frac{1+\frac{1}{4}\lvert\vartheta\rvert^2}{re^\psi}\frac{\lvert\nablas u\rvert(\vartheta)}{\lvert\vartheta\rvert}\dm{\gs}(\vartheta)
\end{equation}
Therefore
\begin{multline}
  \int_{B_\rho}\lvert u-u(0)\rvert \dm{\gs} \leq\int_0^\rho\ud \sigma\frac{r^2e^{2\psi_M}}{\Bigl(1+\frac{1}{4}\sigma^2\Bigr)^2}\int_{\partial B_\sigma}\lvert u-u(0)\rvert\\
  \leq e^{2\psi_M-\psi_m}\int_0^\rho\ud \sigma\frac{r}{\Bigl(1+\frac{1}{4}\sigma^2\Bigr)^2}\int_{B_\sigma}\frac{1+\frac{1}{4}\lvert\vartheta\rvert^2}{\lvert\vartheta\rvert}\lvert\nablas u\rvert(\vartheta)\dm{\gs}(\vartheta)\\
  \lesssim_{\sup|\psi|}\int_0^\rho\ud \sigma \frac{r}{\Bigl(1+\frac{1}{4}\sigma^2\Bigr)^2}\Bigl(\int_{B_\sigma}\lvert\nablas u\rvert^p\dm{\gs}\Bigr)^\frac{1}{p}\Bigl(\int_0^\sigma\ud t\Bigl(\frac{1+\frac{1}{4}t^2}{t}\Bigr)^\frac{p}{p-1}\frac{2\pi r^2 t}{\bigl(1+\frac{1}{4}t^2\bigr)^2}\Bigr)^\frac{p-1}{p}
\end{multline}
Now consider the last integral: 
\begin{align}
  \int_0^\sigma\ud t\bigl(\frac{1+\frac{1}{4}t^2}{t}\bigr)^\frac{p}{p-1}\frac{2\pi r^2 t}{\bigl(1+\frac{1}{4}t^2\bigr)^2}&\lesssim 2\pi r^2\sigma^\frac{p-2}{p-1}<\infty\qquad (\sigma\ll 1)\\
  \int_0^\sigma\ud t\bigl(\frac{1+\frac{1}{4}t^2}{t}\bigr)^\frac{p}{p-1}\frac{2\pi r^2 t}{\bigl(1+\frac{1}{4}t^2\bigr)^2}&\lesssim 2\pi r^2\sigma^\frac{2-p}{p-1}\qquad (\sigma\gg 1)
\end{align}
and moreover
\begin{equation}
  \int_0^\rho\ud \sigma\frac{r}{\Bigl(1+\frac{1}{4}\sigma^2\Bigr)^2}\Bigl( 2\pi r^2\sigma^\frac{2-p}{p-1}\Bigr)^\frac{p-1}{p}\lesssim r^{3-\frac{2}{p}}\int_0^\rho\ud \sigma\frac{\sigma^{\frac{2}{p}-1}}{\Bigl(1+\frac{1}{4}\sigma^2\Bigr)^2}\lesssim r^{3-\frac{2}{p}}
\end{equation}
Thus
\begin{equation}
  \frac{1}{r^2}\int_{B_\rho}\lvert u-u(0)\rvert\dm{\gs} \lesssim_{\sup|\psi|} r \Bigl(\frac{1}{r^2}\int_{B_\rho}\lvert\nablas u\rvert^p\dm{\gs}\Bigr)^\frac{1}{p} = r \nLp{\nablas u}
\end{equation}
As an immediate consequence, 
\begin{multline}
  \lvert u(0)\rvert\leq \frac{1}{4\pi r^2}\int_S\lvert u(0)-u(x)\rvert\dm{\gs}(x)+\frac{1}{4\pi r^2}\int_S \lvert u(x)\rvert\dm{\gs}(x)\\
   \lesssim_{\sup|\psi|} r\nLp{\nablas u}+\nLp{u}
 \end{multline}
 which confirms the Sobolev inequality \eqref{eq:sobolev:d:Linf}.

Now let $x, y\in\mathbb{R}^2$, then
\begin{equation}
  u(x)-u(y)\leq \frac{1}{4\pi r^2}\int_S \lvert u(x)-u(z)\rvert\dm{\gs}(z)+\frac{1}{4\pi r^2}\int_S \lvert u(z)-u(y)\rvert\dm{\gs}(z)\lesssim C_\Psi r \nLp{\nablas u}
\end{equation}
because for the calculation of each integral we can choose the ``south pole'' of the stereographic projection to be at $x=0$, or $y=0$ respectively.
The statement of the Lemma is a special case because by the mean value theorem the average of $u$ is achieved at least at one point $\vartheta_\ast\in S$ (after possibly redefining $u$ on a set of  measure zero).
\end{proof}

\begin{corollary}
  Suppose \emph{(\textbf{A:\underline{I,II}})}  hold, and $\chibh=0$. Then under the assumption \eqref{D:small},
  \begin{equation}
    |\Omega\tr\chib-\overline{\Omega\tr\chib}| \leq \frac{1}{2}\Delta_I\,.
  \end{equation}

  Moreover, with $\Delta_I$  chosen sufficiently large in comparison to $\sup_{u}\Linf{u,v_+}{\Omega\nablas(\Omega\chi)}$, we have
  \begin{equation}
    |\Omega\tr\chi-\overline{\Omega\tr\chi}| \leq \frac{1}{2}\Delta_I\,.
  \end{equation}

\end{corollary}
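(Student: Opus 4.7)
The plan is to deduce both inequalities from Morrey's inequality (Lemma~\ref{lemma:morrey}), applied to the scalars $\Omega\tr\chib$ and $\Omega\tr\chi$ on each sphere $S_{u,v}$. This requires the conclusions of Proposition~\ref{prop:uniformization}, whose hypotheses are precisely those of the Corollary: in the shear-free case under \textbf{(A:\underline{I,II})} with \eqref{D:small}, the sphere $(S_{u,v},\gs)$ is conformal to $(\mathbb{S}^2,\gammac)$ with a uniformly bounded conformal factor $\psi\in L^\infty$. With $\sup_{S_{u,v}}|\psi|$ controlled, Lemma~\ref{lemma:morrey} gives, for any $p>2$ and any differentiable scalar $f$,
\[
 \Linfty{f-\overline{f}}\lesssim \nLp{r\nablas f}\,.
\]

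For the first inequality, take $f=\Omega\tr\chib$. Lemma~\ref{lemma:ds:trchib}—whose hypotheses \eqref{BA:omegab:tr:chib}, \eqref{BA:Omega:ds:omegab}, and $\chibh=0$ are all assumed—yields the pointwise bound $|\Omega\ds(\Omega\tr\chib)|\lesssim u_+-u$. Since \textbf{A:\underline{I}} implies \eqref{BA:log:Omega:r} via Lemma~\ref{lemma:BA:0}, we have $\Omega\simeq r$ and therefore $|r\ds(\Omega\tr\chib)|\lesssim u_+-u$ pointwise. Morrey's inequality then gives
\[
 \Linfty{\Omega\tr\chib-\overline{\Omega\tr\chib}}\lesssim \nLp{r\ds(\Omega\tr\chib)}\lesssim u_+-u\,,
\]
which is at most $\tfrac{1}{2}\Delta_I$ once $u_+-u$ is taken small enough in \eqref{D:small}.

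For the second inequality, take $f=\Omega\tr\chi$. Here we invoke Proposition~\ref{prop:L4:chi} at $p=4$, which under \eqref{BA:omega:tr:chi} (available as a bootstrap assumption in the present scheme) and \eqref{D:small} provides
\[
 \dLp{4}{u,v}{r\ds(\Omega\tr\chi)}\lesssim \dLp{4}{u,v_+}{\Omega\nablas(\Omega\chi)}+\int_v^{v_+}\dLp{4}{u,v'}{\Omega^2\ds\omega}\,\ud v'\,,
\]
the last integral being controlled by $\Delta_{II}(v_+-v)$ using \eqref{BA:Omega:ds:omega}. Applying Morrey's inequality (with $p=4$) and reintroducing $\Omega\simeq r$,
\[
 \Linfty{\Omega\tr\chi-\overline{\Omega\tr\chi}}\lesssim \sup_{u}\Linf{u,v_+}{\Omega\nablas(\Omega\chi)}+\Delta_{II}(v_+-v)\,.
\]
Choosing $\Delta_I$ sufficiently large compared to $\sup_{u}\Linf{u,v_+}{\Omega\nablas(\Omega\chi)}$, and invoking the smallness \eqref{D:small} once more to absorb the $\Delta_{II}(v_+-v)$ contribution, the right-hand side is bounded by $\tfrac{1}{2}\Delta_I$.

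The main subtlety is bookkeeping of the bootstrap hierarchy rather than analysis: Morrey's inequality requires the uniform $L^\infty$ bound on the conformal factor from Proposition~\ref{prop:uniformization}, and the estimate for $\Omega\tr\chi-\overline{\Omega\tr\chi}$ draws on Proposition~\ref{prop:L4:chi} which itself depends on \eqref{BA:omega:tr:chi}; this chain is consistent because the present Corollary only aims to recover \eqref{BA:tr:chi:average} and \eqref{BA:tr:chib:average}, which sit at the bottom of the bootstrap tower. Once this is in place, the actual estimation is essentially immediate.
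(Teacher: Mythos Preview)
Your proof is correct and follows essentially the same route as the paper: invoke Proposition~\ref{prop:uniformization} to justify Morrey's inequality, then feed in Lemma~\ref{lemma:ds:trchib} for $\Omega\tr\chib$ and Proposition~\ref{prop:L4:chi} for $\Omega\tr\chi$. Your write-up is in fact slightly more explicit than the paper's about the $\Omega\simeq r$ conversion and the bootstrap dependencies, but the argument is the same.
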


\begin{proof}
  In Proposition~\ref{prop:uniformization} we have established that $\psi\in\mathrm{L}^\infty(S)$ for $(u,v)\in\mathcal{D}_+$,
  so we can use Lemma~\ref{lemma:ds:trchib} to conclude that 
  \begin{equation*}
    |\Omega\tr\chib-\overline{\Omega\tr\chib}|\lesssim \dLp{4}{u,v}{\Omega\ds(\Omega\tr\chib)}\lesssim C(\sup_{\mathcal{D}_+}|\psi|)\, ( u_+-u )
  \end{equation*}
  and thus the statement follows by \eqref{D:small}.
  
  Similarly, by Proposition~\ref{prop:L4:chi},
  \begin{equation*}
      |\Omega\tr\chi-\overline{\Omega\tr\chi}| \lesssim \dLp{4}{u,v}{\Omega\ds(\Omega\tr\chi)} \lesssim \Linf{u,v_+}{\Omega\ds(\Omega\tr\chi)}+(v_+-v)
  \end{equation*}
  which implies the stated bound for $\Delta_I$ chosen sufficiently large in comparsion to the data for $\ds(\Omega\tr\chi)$ on $\Cb_+$, and $v_+-v$ sufficiently small by \eqref{D:small}.
\end{proof}

\subsection{Comparison estimates}

In the elliptic estimates derived in this section the factor $r$ appears naturally as a consequence of the conformal rescaling of the equations. However, the propagation equations in double null gauge typically involve the factor $\Omega$.
It is important that in the setting of our bootstrap assumptions the two factors are essentially interchangable.

\begin{corollary} \label{cor:commute:Omega:r}
  Assume
    \begin{equation}
    \dLp{4}{u,v}{\Omega\ds\log\Omega}+\dLp{4}{u,v}{\Omega^2 \nablas^2 \log\Omega }\lesssim 1/r^2
  \end{equation}
Then
  \begin{align}
      \dLp{4}{u,v}{\nablas(\Omega f)}\lesssim& r \dLp{4}{u,v}{\nablas f}+\frac{1}{r^2}\dLp{4}{u,v}{f}\\
    \dLp{4}{u,v}{r \nablas f}\lesssim& \dLp{4}{u,v}{\nablas (\Omega f)}+\frac{1}{r^2}\dLp{4}{u,v}{f}\\
    \dLp{4}{u,v}{\nablas^2(\Omega f)}\lesssim& r \dLp{4}{u,v}{\nablas^2 f}+\frac{1}{r^2}\dLp{4}{u,v}{\nablas f}+\frac{1}{r^3}\dLp{4}{u,v}{f}\\
    r\dLp{4}{u,v}{\nablas^2 f}\lesssim&  \dLp{4}{u,v}{\nablas^2 (\Omega f) }+\frac{1}{r^2}\dLp{4}{u,v}{\nablas f}+\frac{1}{r^3}\dLp{4}{u,v}{f}
  \end{align}

\end{corollary}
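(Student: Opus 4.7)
The proof is essentially an exercise in the Leibniz rule, combined with the bootstrap assumption \eqref{BA:log:Omega:r} — which yields $\Omega \simeq r$ pointwise — and the Sobolev embedding of Corollary~\ref{cor:sobolev:d}. The plan is to first bootstrap the hypothesis in $\mathrm{L}^4$ to an $\mathrm{L}^\infty$ bound on the key quantity $\Omega\ds\log\Omega$, and then expand $\nablas(\Omega f)$, $\nablas^2(\Omega f)$ and estimate each term.

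The first step is the $\mathrm{L}^\infty$ bound. Applying Corollary~\ref{cor:sobolev:d} with $p=4$ to the $1$-form $\ds\log\Omega$, and using $r/\Omega \lesssim 1$ from \eqref{BA:log:Omega:r},
\begin{equation*}
  \Linfty{\ds\log\Omega} \lesssim \dLp{4}{u,v}{\ds\log\Omega} + \dLp{4}{u,v}{r\nablas^2\log\Omega}\lesssim \frac{1}{r}\dLp{4}{u,v}{\Omega\ds\log\Omega}+\frac{1}{r}\dLp{4}{u,v}{\Omega^2\nablas^2\log\Omega}\lesssim \frac{1}{r^3}\,,
\end{equation*}
so $\Linfty{\Omega\ds\log\Omega}\lesssim 1/r^2$.

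The second step applies the Leibniz rule. For the first-order estimate write $\nablas(\Omega f)=\Omega \nablas f + f\,\Omega\ds\log\Omega$. The first summand is bounded by $r\dLp{4}{u,v}{\nablas f}$ using $\Omega\lesssim r$, while the second is bounded by $\Linfty{\Omega\ds\log\Omega}\dLp{4}{u,v}{f}\lesssim r^{-2}\dLp{4}{u,v}{f}$ by H\"older. The reverse inequality follows by writing $r\nablas f = (r/\Omega)(\nablas(\Omega f)-f\,\Omega\ds\log\Omega)$ and applying the same bounds, since $r/\Omega$ is bounded. For the second-order estimates expand
\begin{equation*}
  \nablas^2(\Omega f)=\Omega \nablas^2 f + 2\,\ds\Omega\otimes\nablas f + f\nablas^2\Omega\,,\qquad \nablas^2\Omega = \Omega\nablas^2\log\Omega+\Omega\,\ds\log\Omega\otimes\ds\log\Omega\,.
\end{equation*}
The first term yields $r\dLp{4}{u,v}{\nablas^2 f}$; the cross term yields $\Linfty{\Omega\ds\log\Omega}\dLp{4}{u,v}{\nablas f}\lesssim r^{-2}\dLp{4}{u,v}{\nablas f}$. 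In the term $f\nablas^2\Omega$, the linear contribution is controlled by $\Linfty{f}\,\dLp{4}{u,v}{\Omega\nablas^2\log\Omega}\lesssim r^{-3}\Linfty{f}$, and the quadratic contribution by $\Linfty{\ds\log\Omega}\,\Linfty{\Omega\ds\log\Omega}\dLp{4}{u,v}{f}\lesssim r^{-5}\dLp{4}{u,v}{f}$; finally Corollary~\ref{cor:sobolev:d} gives $\Linfty{f}\lesssim \dLp{4}{u,v}{f}+\dLp{4}{u,v}{r\nablas f}$, producing the two stated terms $r^{-3}\dLp{4}{u,v}{f}$ and $r^{-2}\dLp{4}{u,v}{\nablas f}$.

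The reverse second-order inequality is obtained by solving for $\nablas^2 f$, using the identity $r\nablas^2 f = (r/\Omega)\bigl(\nablas^2(\Omega f)-2\ds\Omega\otimes\nablas f - f\nablas^2\Omega\bigr)$ and re-applying the same bounds on the two ``error'' terms, since $r/\Omega$ is a bounded factor. The only subtle point — and the main obstacle to keep track of — is the quadratic term $\Omega(\ds\log\Omega)^2$ in $\nablas^2\Omega$; this is handled by splitting one factor of $\ds\log\Omega$ into $\mathrm{L}^\infty$ (using the $\mathrm{L}^\infty$ bound derived in the first step) and pairing the remaining $\Omega\ds\log\Omega$ with the $\mathrm{L}^4$ norm of $f$, giving a decay of $r^{-5}$ that is strictly better than what is required.
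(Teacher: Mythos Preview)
Your proof is correct and follows essentially the same approach as the paper: expand $\nablas(\Omega f)$ and $\nablas^2(\Omega f)$ by the Leibniz rule, use Corollary~\ref{cor:sobolev:d} to upgrade the $\mathrm{L}^4$ hypothesis on $\Omega\ds\log\Omega$ to the $\mathrm{L}^\infty$ bound $\Linfty{\Omega\ds\log\Omega}\lesssim r^{-2}$, and invoke \eqref{BA:log:Omega:r} to interchange $\Omega$ and $r$. Your write-up is in fact more explicit than the paper's, which records the identities and the Sobolev step but leaves the term-by-term bookkeeping (in particular the treatment of $f\,\Omega\nablas^2\log\Omega$ via $\Linfty{f}$) to the reader.
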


\begin{proof}
  Note that
  \begin{align}
    \nablas(\Omega f)=& \Omega \nablas f+(\Omega \nablas \log \Omega) f\\
    \nablas^2(\Omega f) =&  2\Omega \nablas\log\Omega\: \nablas f+\Omega\nablas^2f\notag\\
    &+\bigl(\Omega\nablas\log\Omega\bigr)^2 \Omega^{-1}f+\Omega^2\nablas^2\log\Omega\: \Omega^{-1}f
  \end{align}
and by Cor.~\ref{cor:sobolev:d}
\begin{equation}
  \Linf{u,v}{\Omega\nablas \log \Omega} \lesssim 1/r^2
\end{equation}
Recall also \eqref{BA:log:Omega:r} for the interchange of $\Omega$ and $r$ under the integral.
\end{proof}

We prove the assumption made here on $\log\Omega$ in Lemma~\ref{lemma:dd:log:Omega}, and show its validity under the bootstrap assumptions and appropriate data on $\Cb_+$.


\section{Coupled systems}
\label{sec:coupled}


\subsection{Null second fundamental form}
\label{sec:coupled:codazzi}

Recall the Codazzi equation \eqref{eq:codazzi:chih} which in the present setting reduces to
\begin{equation}
  \divs(\Omega\chih)=\frac{1}{2}\ds(\Omega\tr\chi)+\Omega\chih^\sharp\cdot\etab-\frac{1}{2}\Omega\tr\chi\,\etab
\end{equation}
and from \eqref{eq:D:tr:chi} we derive
\begin{equation}\label{eq:D:ds:Omega:tr:chi}
  D\ds(\Omega\tr\chi)=\bigl(2\omega-\Omega\tr\chi\bigr)\ds(\Omega\tr\chi)+2\Omega\tr\chi\ds\omega-2(\Omega\chih,\nablas(\Omega\chih))\,.
\end{equation}

This is a coupled system for $(\nablas\Omega\chih, \ds\Omega\tr\chi)$.

\begin{proposition}\label{prop:coupled:chi}
  Assume \emph{(\textbf{A:I})} and \eqref{BA:Omega:ds:omega} hold, and suppose
\begin{equation}
  D_1:=\sup_u\dLp{4}{u,v_+}{\Omega\ds(\Omega\tr\chi)}+\sup_u\Linf{u,v_+}{\Omega\chih}+\sup_u\Linf{u,v_+}{\Omega^2\etab}<\infty
\end{equation}
    Then with \eqref{D:small},
    \begin{equation}
      \dLp{4}{u,v}{\Omega\ds(\Omega\tr\chi)}+\dLp{4}{u,v}{\Omega\nablas(\Omega\chih)}\lesssim  D_1
    \end{equation}

If in addition \eqref{BA:Omega:dd:omega} holds, and
\begin{equation}
  D_2:=\dLp{4}{u,v_+}{(\Omega\nablas)^2(\Omega\tr\chi)}<\infty
\end{equation}
then with \eqref{D:small},
\begin{equation}
  \dLp{4}{u,v}{(\Omega\nablas)^2(\Omega\tr\chi)}+  \dLp{4}{u,v}{(\Omega\nablas)^2(\Omega\chih)} \leq C(D_1,D_2)\,.
\end{equation}

  \end{proposition}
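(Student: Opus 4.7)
The strategy is to couple the Codazzi equation \eqref{eq:codazzi:chih}, which in the present setting with $\beta[W]=0$ reduces to
\[
  \divs(\Omega\chih)=\tfrac{1}{2}\ds(\Omega\tr\chi)+\Omega\chih^\sharp\!\cdot\!\etab-\tfrac{1}{2}\Omega\tr\chi\,\etab,
\]
with the propagation equation \eqref{eq:D:ds:Omega:tr:chi} along $C_u$. The former, combined with the Calderon--Zygmund estimate of Lemma~\ref{lemma:calderon:div}, controls $\nablas(\Omega\chih)$ on each sphere $S_{u,v}$ by $\ds(\Omega\tr\chi)$ plus lower-order torsion terms, while the latter propagates $\ds(\Omega\tr\chi)$ from the prescribed data at $v=v_+$ back to $v\leq v_+$ by Lemma~\ref{lemma:dLp:gronwall}. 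The hypotheses of Lemma~\ref{lemma:calderon:div} are supplied by the conformal factor bounds of Proposition~\ref{prop:uniformization}, and the conversion between the $r$-weights natural to the elliptic estimate and the $\Omega$-weights natural to the propagation equation is handled by Corollary~\ref{cor:commute:Omega:r}.

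For the first bound I would apply Lemma~\ref{lemma:calderon:div} with $\theta=\Omega\chih$ to obtain the schematic elliptic estimate
\begin{multline*}
  \dLp{4}{u,v}{\Omega\nablas(\Omega\chih)}\lesssim \dLp{4}{u,v}{\Omega\ds(\Omega\tr\chi)}\\+\Linf{u,v}{\Omega\chih}\,\dLp{4}{u,v}{\Omega\etab}+\Linf{u,v}{\Omega\tr\chi}\,\dLp{4}{u,v}{\Omega\etab},
\end{multline*}
where the $L^\infty$ control of $\Omega\chih$ comes from Lemma~\ref{lemma:Linfty:chi} and of $\Omega^2\etab$ from Lemma~\ref{lemma:torsion}. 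In parallel, applying \CLemma{4.2} to \eqref{eq:D:ds:Omega:tr:chi} with the weight $\Omega$ and then Lemma~\ref{lemma:dLp:gronwall}, noting that the coefficient of the quadratic term is $\mathcal{O}(\Delta_I)$ by \eqref{BA:omega:tr:chi}, I would get
\begin{multline*}
  \dLp{4}{u,v}{\Omega\ds(\Omega\tr\chi)}\lesssim \dLp{4}{u,v_+}{\Omega\ds(\Omega\tr\chi)}\\+\int_v^{v_+}\Bigl(\dLp{4}{u,v'}{\Omega^2\ds\omega}+\Linf{u,v'}{\Omega\chih}\,\dLp{4}{u,v'}{\Omega\nablas(\Omega\chih)}\Bigr)\ud v'.
\end{multline*}
The $\ds\omega$ source is bounded by \eqref{BA:Omega:ds:omega}; feeding the elliptic estimate into the coupling term produces a factor $\Linf{}{\Omega\chih}(v_+-v)$ that is absorbed on the left-hand side via \eqref{D:small}, closing the first stated bound.

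The second-order estimate proceeds analogously, one derivative higher. Commuting $\nablas$ into \eqref{eq:D:ds:Omega:tr:chi} by means of \CLemma{4.1} yields an evolution equation for $\nablas\ds(\Omega\tr\chi)$ whose source terms consist of $\nablas^2\omega$ (bounded by \eqref{BA:Omega:dd:omega}), bilinear combinations of the first-order quantities now already controlled in $L^4$ and, via the Sobolev inequality of Corollary~\ref{cor:sobolev:d}, in $L^\infty$, and the coupling $\Omega\chih\cdot\nablas^2(\Omega\chih)$. On the elliptic side the second bound of Lemma~\ref{lemma:calderon:div} applied to Codazzi estimates $\nablas^2(\Omega\chih)$ in $L^4$ in terms of $\nablas\ds(\Omega\tr\chi)$, $\nablas\etab$ and lower-order data, with $\nablas\etab$ supplied by Lemma~\ref{lemma:nablas:etab}. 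Substituting back and invoking Lemma~\ref{lemma:dLp:gronwall} once more closes the system under \eqref{D:small}, yielding the bound involving $D_2$. The main obstacle is precisely this mutual coupling: neither estimate can be closed in isolation, and only the smallness condition \eqref{D:small} renders the cross-term subleading in the Gronwall step. A secondary technical point is the bookkeeping of $\Omega$-versus-$r$ weights through Corollary~\ref{cor:commute:Omega:r}, whose hypothesis on $\nablas^2\log\Omega$ is itself inherited from the identity $2\ds\log\Omega=\eta+\etab$ together with the $L^4$ control of $\nablas(\eta+\etab)$ established earlier.
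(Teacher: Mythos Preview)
Your proposal is correct and follows essentially the same route as the paper: the elliptic Codazzi estimate from Lemma~\ref{lemma:calderon:div} coupled to the propagation equation \eqref{eq:D:ds:Omega:tr:chi} via Lemma~\ref{lemma:dLp:gronwall}, closed by Gronwall under \eqref{D:small}, and then the analogous argument one derivative higher. The only minor differences are that the paper obtains the $L^\infty$ control of $\Omega\nablas(\Omega\chi)$ needed at second order directly from Proposition~\ref{prop:L4:chi} rather than via Sobolev, and does not explicitly invoke Corollary~\ref{cor:commute:Omega:r} for the $\Omega$-versus-$r$ conversion, relying instead on \eqref{BA:log:Omega:r}; your appeal to that corollary is slightly delicate here since its hypothesis is only verified later (Lemma~\ref{lemma:dd:log:Omega}), so it is cleaner to use \eqref{BA:log:Omega:r} directly.
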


  \begin{proof}
  From the elliptic estimate of Lemma~\ref{lemma:calderon:div} applied to \eqref{eq:codazzi:chih} we have
  \begin{multline*}
    \dLp{4}{u,v}{\Omega\chih}+r\dLp{4}{u,v}{\nablas(\Omega\chih)}\lesssim r \dLp{4}{u,v}{\ds(\Omega\tr\chi)}\\
    +r\dLp{4}{u,v}{\Omega\chih^\sharp\cdot\etab}+r\dLp{4}{u,v}{\Omega\tr\chi\,\etab}\,.
  \end{multline*}

  Now from \eqref{eq:D:ds:Omega:tr:chi}, with \CLemma{4.2} applied to the 1-form $\ds(\Omega\tr\chi)$:
\begin{multline*}
  D\lvert\ds\Omega\tr\chi\rvert^2+\Omega\tr\chi|\ds\Omega\tr\chi|^2=2\bigl(\ds\Omega\tr\chi,D\ds\Omega\tr\chi\bigr)-2\Omega\chih(\ds\Omega\tr\chi,\ds\Omega\tr\chi)\\
  =2\bigl(2\omega-\Omega\tr\chi\bigr)|\ds(\Omega\tr\chi)|^2+2\bigl(\ds(\Omega\tr\chi),2\Omega\tr\chi\ds\omega-2(\Omega\chih,\nablas(\Omega\chih))\bigr)\\-2\Omega\chih(\ds\Omega\tr\chi,\ds\Omega\tr\chi)
\end{multline*}
or
\begin{multline*}
  D\lvert\Omega\ds(\Omega\tr\chi)\rvert^2\leq \Bigl(3\bigl|2\omega-\Omega\tr\chi\bigr|+|\Omega\chih|\Bigr)|\Omega\ds(\Omega\tr\chi)|^2\\
  +2|\Omega\ds(\Omega\tr\chi)|\Bigl(2\Omega\tr\chi \Omega|\ds\omega|+2|\Omega\chih||\Omega\nablas(\Omega\chih)|\Bigr)
\end{multline*}
  and thus by Lemma~\ref{lemma:dLp:gronwall},
    \begin{multline*}
    \dLp{4}{u,v}{\Omega\ds(\Omega\tr\chi)} \lesssim  \dLp{4}{u,v_+}{\Omega\ds(\Omega\tr\chi)}\\+\int_v^{v_+}\Linfty{\tr\chi}\dLp{4}{u,v'}{\Omega^2\ds\omega}+\Linfty{\Omega\chih}\dLp{4}{u,v'}{\Omega\nablas(\Omega\chih)}\ud v'\,.
  \end{multline*}
In view of the $\mathrm{L}^\infty$-bound on $\Omega\chih$ from Lemma~\ref{lemma:Linfty:chi}, and for $\Omega^2\etab$ from Lemma~\ref{lemma:torsion}, we insert the elliptic estimate from above, and  conlude with Gronwall's inequality that
\begin{equation*}
  \dLp{4}{u,v}{\Omega\ds(\Omega\tr\chi)}\lesssim \dLp{4}{u,v_+}{\Omega\ds(\Omega\tr\chi)}+(v_+-v)
\end{equation*}
and inserting the result again in the elliptic estimate we obtain the desired bound on $\dLp{4}{u,v}{\Omega\nablas(\Omega\chih)}$.

Furthermore, from Lemma~\ref{lemma:calderon:div} we also obtain
\begin{multline*}
      r^2\dLp{4}{u,v}{\nablas^2(\Omega\chih)}\lesssim r^2\dLp{4}{u,v}{\nablas^2(\Omega\tr\chi)}\\+\Linfty{\Omega^2\etab}\dLp{4}{u,v}{\nablas (\Omega\chih)}+\Linfty{\Omega\chih}\dLp{4}{u,v}{\Omega^2 \nablas \etab}\\+\Linfty{\Omega^2\etab}\dLp{4}{u,v}{\ds (\Omega\tr\chi)}+\dLp{4}{u,v}{\Omega^3\nablas\etab}\\+\dLp{4}{u,v}{\Omega\ds(\Omega\tr\chi)}+\Linfty{\Omega\chih}\Linfty{\Omega\etab}+\Linfty{\Omega^2\etab}
\end{multline*}
and it remains to derive the propgation equation for $\nablas^2(\Omega\tr\chi)$.

From \CLemma{4.1} applied to \eqref{eq:D:ds:Omega:tr:chi}, we have
\begin{multline*}
  D\nablas^2(\Omega\tr\chi)=\nablas\bigl(2\omega-\Omega\tr\chi\bigr)\cdot\nablas(\Omega\tr\chi)+\bigl(2\omega-\Omega\tr\chi\bigr)\nablas^2(\Omega\tr\chi)\\+2\nablas(\Omega\tr\chi)\cdot\nablas\omega+2\Omega\tr\chi\nablas^2\omega-2(\nablas(\Omega\chih),\nablas(\Omega\chih))-2(\Omega\chih,\nablas^2(\Omega\chih))-\nablas(\Omega\chi)\cdot\nablas(\Omega\tr\chi)\,,
\end{multline*}
which then yields with \CLemma{4.2} that
\begin{multline*}
  D|\Omega^2\nablas^2(\Omega\tr\chi)|^2\leq 2\Bigl(2\bigl|2\omega-\Omega\tr\chi\bigr|+|\Omega\chih|\Bigr)|\Omega^2\nablas^2(\Omega\tr\chi)|^2\\
  +2\bigl|\Omega^2\nablas^2(\Omega\tr\chi)\bigr|\Bigl(\bigl(\Omega|\nablas(2\omega)|+\Omega|\nablas(\Omega\tr\chi)|+\Omega|\nablas(\Omega\chih)|\bigr)|\Omega\nablas(\Omega\tr\chi)|+2|\Omega\nablas(\Omega\chih)|^2\\
  +2\Omega|\nablas(\Omega\tr\chi)||\Omega\nablas\omega|+2\tr\chi\Omega^3|\nablas^2\omega|
  +2\Omega|\chih|\Omega^2|\nablas^2(\Omega\chih)|\Bigr)
\end{multline*}
Thus, by  Lemma~\ref{lemma:dLp:gronwall},
  \begin{multline*}
    \dLp{p}{u,v}{\Omega^2\nablas^2(\Omega\tr\chi)} \lesssim \dLp{4}{u,v_+}{\Omega^2\nablas^2(\Omega\tr\chi)}\\+\int_v^{v_+}\bigl(\Linf{u,v'}{\Omega\nablas(\Omega\tr\chi)}+\Linf{u,v'}{\Omega\nablas(\Omega\chih)}\bigr)\dLp{p}{u,v'}{\Omega\nablas(\Omega\tr\chi)}\ud v'\\
    +\int_v^{v_+}\Linf{u,v'}{\Omega\nablas\omega}\dLp{4}{u,v'}{\Omega\nablas(\Omega\tr\chi)}+\dLp{p}{u,v'}{\Omega^3\nablas^2\omega}\ud v'\\
    +\int_v^{v_+}\Linf{u,v'}{\Omega\nablas(\Omega\chih)}\dLp{4}{u,v'}{\Omega\nablas(\Omega\chih)}\ud v'
    +\int_v^{v_+}\Linf{u,v'}{\Omega\chih}\dLp{4}{u,v'}{\Omega^2\nablas^2(\Omega\chih)}\ud v'
  \end{multline*}
and for the last term we can substitute from the elliptic estimate above, to conclude that $\Omega^2\nablas^2(\Omega\tr\chi)$ is bounded in $\mathrm{L}^4(S_{u,v})$ by Gronwall's inequality.
Here we used that $\Omega\nablas(\Omega\tr\chi)$, and $\Omega\nablas(\Omega\chih)$ are bounded in $\mathrm{L}^\infty$ by Prop.~\ref{prop:L4:chi}, and $\Omega\nablas\omega$ and $\Omega^3\nablas\omega$ are bounded by assumptions \eqref{BA:Omega:ds:omega}, \eqref{BA:Omega:ds:omega} in $\mathrm{L}^\infty$ and $\mathrm{L}^4$, respectively. Inserting the resulting estimate in the above elliptic estimate then also yields the stated bound on $\Omega^2\nablas^2(\Omega\chi)$ in $\mathrm{L}^4$.
Finally also note that
\begin{equation*}
 \dLp{4}{u,v}{(\Omega\nablas)^2(\Omega\tr\chi)}\lesssim   r^2\dLp{4}{u,v}{\nablas^2(\Omega\tr\chi)}+\Linf{u,v}{\Omega(\eta+\etab)}\dLp{4}{u,v}{\Omega\ds(\Omega\tr\chi)}\,.
\end{equation*}

\end{proof}







\subsection{Torsion and decoupled mass aspect function}
\label{sec:coupled:mu:tilde}

Consider \eqref{eq:mu:breve}, namely
\begin{equation}
  \divs\eta=-\breve{\mu}+\frac{1}{4}\tr\chi\tr\chib-1
\end{equation}
coupled to the propagation equation for $\breve{\mu}$ derived in Lemma~\ref{lemma:D:mu:breve}.

In fact, consider
\begin{equation}
  \divs\eta=-\tilde{\mu}+\frac{1}{4}\bigl(\tr\chi\tr\chib-\overline{\tr\chi\tr\chib}\bigr)
\end{equation}
where $\tilde{\mu}$ is given by \eqref{eq:tilde:mu}, coupled to \eqref{eq:D:tilde:mu}, namely
\begin{subequations}
  \begin{gather}
    D\tilde{\mu}+\Omega\tr\chi\tilde{\mu}=\tilde{r}+\tilde{m}\label{eq:D:mu:tilde}\\
    \tilde{r}:=\breve{r}-\overline{\breve{r}}\,,\quad \breve{r}:=-\frac{1}{2}\Omega\tr\chib|\chih|^2\label{eq:tilde:r}\\
    \tilde{m}:=\breve{m}-\overline{\breve{m}}\,,\quad \breve{m}=2\divs(\Omega\chih)\cdot (\eta-\etab)+2(\Omega\chih,\nablas\eta+\etab\otimes\etab)\label{eq:tilde:m}
  \end{gather}
\end{subequations}

Moreover,
\begin{subequations}
\begin{align}\label{eq:D:ds:mu:tilde}
    D\ds\bigl(\Omega^2\tilde{\mu}\bigr)=&\ds\bigl(2\omega-\Omega\tr\chi\bigr)\Omega^2\tilde{\mu}+\bigl(2\omega-\Omega\tr\chi\bigr)\ds\bigl(\Omega^2\tilde{\mu}\bigr)+\ds\bigl(\Omega^2\tilde{r}\bigr)+\ds\bigl(\Omega^2\tilde{m}\bigr)\\
                                        \ds\bigl(\Omega^2\breve{r}\bigr)=&-\frac{1}{2}\ds\bigl(\Omega\tr\chib\bigr)\Omega^2|\chih|^2-\frac{1}{2}\Omega\tr\chib\bigl(\Omega\chih,\nablas\Omega\chih\bigr)\displaybreak[0]\\
  \ds\bigl(\Omega^2\breve{m}\bigr)=&2\nablas\bigl(\Omega\divs(\Omega\chih)\bigr)\cdot \Omega(\eta-\etab)+2\Omega\divs(\Omega\chih)\cdot \nablas(\Omega\eta-\Omega\etab)\notag\\&+2(\nablas\Omega\chih,\Omega^2\nablas\eta+\Omega\etab\otimes\Omega\etab)+2\Bigl(\Omega\chih,\nablas\bigl(\Omega^2\nablas\eta\bigr)+\nablas(\Omega\etab)\otimes\Omega\etab+\Omega\etab\otimes\nablas(\Omega\etab)\Bigr)\label{eq:ds:m:breve}
\end{align}
\end{subequations}





\begin{proposition}\label{prop:coupled:eta:mu:breve}
Assume   $r^3 \dLp{4}{u,v_+}{\tilde{\mu}}\lesssim 1$.
  
  Suppose
  \begin{equation}\label{eq:coupled:eta:trchitrchib}
    r^3 \dLp{4}{u,v}{\tr\chi\tr\chib-\overline{\tr\chi\tr\chib}}\lesssim 1
  \end{equation}
  
  Then
  \begin{equation}\label{eq:coupled:nabla:eta}
    r^2\dLp{4}{u,v}{\eta}+r^3\dLp{4}{u,v}{\nablas\eta}+r^3\dLp{4}{u,v}{\tilde{\mu}}\lesssim 1
  \end{equation}

Moreover assume  $r^4\dLp{4}{u,v_+}{\ds\tilde{\mu}}\lesssim 1$.

  If also \eqref{BA:Omega:ds:omega}, \eqref{BA:Omega:dd:omega}, holds and
\begin{gather}
  r^4\dLp{4}{u,v}{\ds\bigl(\tr\chi\tr\chib\bigr)}\lesssim 1\\
  r\dLp{4}{u,v}{\ds(2\omegab-\Omega\tr\chi)}\lesssim 1
\end{gather}
then
\begin{equation}
    r^4\dLp{4}{u,v}{\nablas^2\eta}+r^4\dLp{4}{u,v}{\ds\tilde{\mu}}\lesssim 1
\end{equation}

In particular, under all above assumptions, which then imply
\begin{equation}
  \Linf{u,v}{\tr\chi\tr\chib-\overline{\tr\chi\tr\chib}}\lesssim 1/r^3
\end{equation}
we have
\begin{equation}
  r^3\Linf{u,v}{\nablas\eta}+r^3\Linf{u,v}{\tilde{\mu}}\lesssim 1
\end{equation}

\end{proposition}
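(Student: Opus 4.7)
The plan is a bootstrap argument coupling the Hodge system for $\eta$ with the transport equation for $\tilde\mu$ along $C_u$, the circular dependence being broken by the smallness condition \eqref{D:small}. In the shear--free case on de Sitter ($\chibh=\rho[W]=0$), the identity \eqref{eq:mu:breve} together with the definition \eqref{eq:tilde:mu} gives the Hodge system
\[
  \divs\eta=-\tilde\mu+\tfrac{1}{4}\bigl(\tr\chi\tr\chib-\overline{\tr\chi\tr\chib}\bigr)\,,\qquad\curls\eta=0\,,
\]
the vanishing of $\curls\eta$ following from \eqref{eq:propagation:torsion} and $\beta[W]=0$. Since the bootstrap assumptions deliver the conclusions of Proposition~\ref{prop:uniformization}, Lemma~\ref{lemma:calderon:div:curl} yields
\[
  r^2\dLp{4}{u,v}{\eta}+r^3\dLp{4}{u,v}{\nablas\eta}\lesssim r^3\dLp{4}{u,v}{\tilde\mu}+r^3\dLp{4}{u,v}{\tr\chi\tr\chib-\overline{\tr\chi\tr\chib}}\,,
\]
and the last term is controlled by \eqref{eq:coupled:eta:trchitrchib}.

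To close the estimate for $r^3\dLp{4}{u,v}{\tilde\mu}$ I would integrate the transport equation of Lemma~\ref{lemma:D:mu:breve} in the weighted form $D(\Omega^2\tilde\mu)=(2\omega-\Omega\tr\chi)\Omega^2\tilde\mu+\Omega^2(\tilde r+\tilde m)$, whose coefficient is bounded by $\Delta_I$ under \eqref{BA:omega:tr:chi}. Lemma~\ref{lemma:dLp:gronwall} together with the monotonicity $r(u,v)\leq r(u,v')$ for $v\leq v'$ then yields
\[
  r^3\dLp{4}{u,v}{\tilde\mu}\lesssim r^3(u,v_+)\dLp{4}{u,v_+}{\tilde\mu}+\int_v^{v_+}r^3(u,v')\dLp{4}{u,v'}{\tilde r+\tilde m}\,\ud v'\,.
\]
The quadratic--in--$\chih$ source $\tilde r$ decays rapidly in view of $\Linf{u,v'}{\Omega^2\chih}\lesssim 1$ from Lemma~\ref{lemma:prelim:Linfty:eta}; the crucial term is $\tilde m$, which is linear in $\nablas\eta$ through $(\Omega\chih,\nablas\eta)$. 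Substituting the Hodge estimate above leads to a bound of the form
\[
  r^3\dLp{4}{u,v'}{\tilde m}\lesssim\Linf{u,v'}{\Omega^2\chih}\bigl(r^3\dLp{4}{u,v'}{\tilde\mu}+1\bigr)+\text{lower order}\,,
\]
and Gronwall combined with \eqref{D:small} absorbing the $(v_+-v)$ factor produces $r^3\dLp{4}{u,v}{\tilde\mu}\lesssim 1$, which together with the Hodge estimate gives \eqref{eq:coupled:nabla:eta}.

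For the higher derivative statement, I would commute $\ds$ into \eqref{eq:D:mu:tilde} to obtain \eqref{eq:D:ds:mu:tilde} with source \eqref{eq:ds:m:breve}. This source requires $\mathrm{L}^4$ control of $\nablas\bigl(\Omega\divs(\Omega\chih)\bigr)$ and $\nablas^2(\Omega\tr\chi)$, supplied by Proposition~\ref{prop:coupled:chi} (whose hypotheses follow from \eqref{BA:Omega:ds:omega} and \eqref{BA:Omega:dd:omega}); of $\nablas\etab$, furnished by Lemma~\ref{lemma:nablas:etab}; and of $\nablas^2\eta$ itself, which via Lemma~\ref{lemma:calderon:div:curl} applied at one additional derivative is in turn bounded by $r^4\dLp{4}{u,v}{\ds\tilde\mu}$, $r^4\dLp{4}{u,v}{\ds(\tr\chi\tr\chib)}$ and the quantities already estimated in the previous step. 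Running the same transport--elliptic bootstrap with an appropriately higher weight and absorbing the $\nablas^2\eta$ coupling through \eqref{D:small} then produces $r^4\dLp{4}{u,v}{\ds\tilde\mu}\lesssim 1$ and, by the Hodge estimate, $r^4\dLp{4}{u,v}{\nablas^2\eta}\lesssim 1$; the remaining hypothesis $r\dLp{4}{u,v}{\ds(2\omegab-\Omega\tr\chi)}\lesssim 1$ enters here in bounding the null--expansion--derivative contributions in \eqref{eq:ds:m:breve}. The $\mathrm{L}^\infty$ bounds of the last conclusion are then immediate consequences of the Sobolev inequality \eqref{eq:sobolev:d:Linf}: for instance
\[
  r^3\Linf{u,v}{\tilde\mu}\lesssim r^3\dLp{4}{u,v}{\tilde\mu}+r^4\dLp{4}{u,v}{\ds\tilde\mu}\lesssim 1\,,
\]
and identically for $r^3\Linf{u,v}{\nablas\eta}$ and for $\Linf{u,v}{\tr\chi\tr\chib-\overline{\tr\chi\tr\chib}}$ using $r^4\dLp{4}{u,v}{\ds(\tr\chi\tr\chib)}\lesssim 1$.

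The main obstacle throughout is the coupling $(\Omega\chih,\nablas\eta)$ appearing in the transport source: it cannot be estimated from data alone, but feeds back on the sought--after $\tilde\mu$ through the elliptic system. The bootstrap closes only because $\Linf{u,v}{\Omega^2\chih}\lesssim 1$ by Lemma~\ref{lemma:prelim:Linfty:eta} and the characteristic length is short under \eqref{D:small}. An analogous but more delicate coupling reappears one derivative higher through $\nablas\bigl(\Omega\divs(\Omega\chih)\bigr)$, and this is precisely why the second angular--derivative Codazzi estimates of Proposition~\ref{prop:coupled:chi} -- themselves requiring the assumptions on $\nablas\omega$ and $\nablas^2\omega$ -- are an indispensable input at this stage.
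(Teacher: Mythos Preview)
Your proposal is correct and follows essentially the same approach as the paper: couple the Hodge system \eqref{eq:system:div:curl:theta} for $\eta$ (via Lemma~\ref{lemma:calderon:div:curl}) to the transport equation \eqref{eq:D:mu:tilde} for $\tilde\mu$, close by Gronwall using \eqref{D:small} and the $\Omega^2\chih$ bound, then commute with $\ds$ and invoke Proposition~\ref{prop:coupled:chi} for the second-order terms before finishing with Sobolev. One small correction: the hypothesis on $\ds(2\omega-\Omega\tr\chi)$ enters not through \eqref{eq:ds:m:breve} but through the commutator term $\ds(2\omega-\Omega\tr\chi)\,\Omega^2\tilde\mu$ on the right-hand side of \eqref{eq:D:ds:mu:tilde}, which the paper bounds by $\Linf{u,v}{\Omega^2\tilde\mu}\dLp{4}{u,v}{\Omega\ds(2\omega-\Omega\tr\chi)}$ after using Corollary~\ref{cor:sobolev:d} on $\tilde\mu$.
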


\begin{proof}
  From Lemma.~\ref{lemma:calderon:div:curl} we have
  \begin{equation}\label{eq:nabla:eta:mu:tilde}
    r^3\dLp{4}{u,v}{\nablas\eta}\lesssim r^3\dLp{4}{u,v}{\tilde{\mu}}+1
  \end{equation}
and from \eqref{eq:D:mu:tilde} 
\begin{equation}
  \dLp{4}{u,v}{\Omega^2\tilde{\mu}}\lesssim   \dLp{4}{u,v_+}{\Omega^2\tilde{\mu}} + \int_v^{v_+}\dLp{4}{u,v'}{\Omega^2(\breve{r}-\overline{\breve{r}})}+\dLp{4}{u,v'}{\Omega^2(\breve{m}-\overline{\breve{m}})}\ud v'
\end{equation}

Recall here and in the following Cor.~\ref{cor:commute:Omega:r} for the commutation of $\Omega$ with $r$.

Now in view of \eqref{eq:tilde:r},
\begin{equation} 
  \dLp{4}{u,v}{\Omega^2\tilde{r}}\lesssim   \Linf{u,v}{\tr\chib}r^3\Linf{u,v}{ \chih }^2\lesssim 1
\end{equation}
and by Prop.~\ref{prop:coupled:chi}, and \eqref{eq:nabla:eta:mu:tilde},
\begin{equation}
  \begin{split}
  \dLp{4}{u,v}{\Omega^2\breve{m}}\lesssim& \bigl(\Linf{u,v}{\Omega\eta}+\Linf{u,v'}{\Omega\eta}\bigr)\dLp{4}{u,v}{\Omega\nablas(\Omega\chih)}\\
  &+\Linf{u,v}{\Omega\chih}\bigl(\dLp{4}{u,v}{\Omega^2\nablas\eta}+\Linf{u,v}{\Omega\etab}^2\bigr)\\
  \lesssim&\dLp{4}{u,v}{\Omega^2\tilde{\mu}}+1\\
  \dLp{4}{u,v}{\Omega^2\overline{\breve{m}}}\lesssim& r^2 \dLp{4}{u,v}{\breve{m}}\lesssim r^2\dLp{4}{u,v}{\tilde{\mu}}+1
\end{split}
\end{equation}


Note it follows from Gronwall's inequality that
\begin{equation}
  \dLp{4}{u,v}{\Omega^2\tilde{\mu}}\lesssim v_+-v
\end{equation}

Moreover again by Lemma~\ref{lemma:calderon:div:curl},
\begin{equation*}
  \dLp{4}{u,v}{r^4\nablas^2\eta}\lesssim r^3\dLp{4}{u,v}{\tilde{\mu}}+r^4\dLp{4}{u,v}{\ds\tilde{\mu}}+1
\end{equation*}
and from \eqref{eq:D:ds:mu:tilde}, and Lemma~\ref{lemma:dLp:gronwall},
\begin{equation*}
  \dLp{4}{u,v}{\Omega\ds(\Omega^2\tilde{\mu})}\lesssim   \dLp{4}{u,v_+}{\Omega\ds(\Omega^2\tilde{\mu})}+\int_v^{v_+}\dLp{4}{u,v'}{\tilde{\rho}}\ud v'
\end{equation*}
where
\begin{equation}
  \tilde{\rho}=\Linfty{\Omega^2\tilde{\mu}}|\Omega\ds\bigl(2\omega-\Omega\tr\chi\bigr)|
+|\Omega\ds\bigl(\Omega^2\tilde{r}\bigr)|+|\Omega\ds\bigl(\Omega^2\tilde{m}\bigr)|\,.
\end{equation}
For the first term we apply Cor.~\ref{cor:sobolev:d}, and the assumption to bound
\begin{equation}
  \Linf{u,v}{\Omega^2\tilde{\mu}}\dLp{4}{u,v}{\Omega\ds\bigl(2\omega-\Omega\tr\chi\bigr)}\lesssim 1+\dLp{4}{u,v}{r\ds\bigl(\Omega^2\tilde{\mu}\bigr)}
\end{equation}
For the second term we apply Prop.~\ref{prop:coupled:chi},
\begin{multline}
  \dLp{4}{u,v}{\Omega\ds\bigl(\Omega^2\breve{r}\bigr)}\lesssim \\ \lesssim \dLp{4}{u,v}{r \ds\bigl(\Omega\tr\chib\bigr) } \Linf{u,v}{\Omega\chih}^2+\tr\chib\Linf{u,v}{\Omega^2\chih}\dLp{4}{u,v}{r\nablas\Omega\chih}\lesssim 1
\end{multline}

We now turn to the contributions from the terms $\Omega\ds\bigl(\Omega^2\ds\tilde{m}\bigr)$ in \eqref{eq:ds:m:breve}.

First, by Prop.~\ref{prop:coupled:chi},
\begin{equation}
  \dLp{4}{u,v}{\Omega\nablas\bigl(\Omega\divs(\Omega\chih)\bigr)\cdot \Omega(\eta-\etab)}\lesssim \Bigl(\Linfty{\Omega\eta}+\Linfty{\Omega\eta}\Bigr)\dLp{4}{u,v}{\bigl(r\nablas\bigr)^2(\Omega\chih)}\lesssim 1
\end{equation}

Second, again by Prop.~\ref{prop:coupled:chi}, and \eqref{eq:coupled:nabla:eta}, and Lemma~\ref{lemma:nablas:etab},
\begin{equation}
  \begin{split}
    \dLp{4}{u,v}{\Omega\divs(\Omega\chih)\cdot \Omega\nablas(\Omega\eta-\Omega\etab)} \lesssim& \Linf{u,v}{\Omega\nablas(\Omega\chih)}\Bigl(\dLp{4}{u,v}{ r^2\nablas\eta}+\dLp{4}{u,v}{ r^2\nablas\etab } \Bigr)\\
    \lesssim & \sum_{k=0}^1 \dLp{4}{u,v}{(r\nablas)^k(\Omega\chih)} \lesssim 1
  \end{split}
\end{equation}

Third, similar by Prop.~\ref{prop:coupled:chi},
\begin{equation}
\dLp{4}{u,v}{\Omega(\nablas\Omega\chih,\Omega^2\nablas\eta+\Omega\etab\otimes\Omega\etab)}\lesssim \Linf{u,v}{\Omega\nablas(\Omega\chih)}\lesssim 1
\end{equation}

Note that in these steps, on one hand we use  bounds on $(\Omega\nablas)^2(\Omega\chih)$ in $\mathrm{L}^4$ that we have established with the help of the coupled system for $\chi$ in Section~\ref{sec:coupled:codazzi}. On the other, we use bounds on $\Omega\nablas(\Omega\etab)$ in $\mathrm{L}^4$ that have been established with the help of the propagation equations in Section~\ref{sec:L4:nablas:eta}.

Lastly,
\begin{multline}
\dLp{4}{u,v}{\Omega\Bigl(\Omega\chih,\nablas\bigl(\Omega^2\nablas\eta\bigr)+\nablas(\Omega\etab)\otimes\Omega\etab+\Omega\etab\otimes\nablas(\Omega\etab)\Bigr)}\\
\lesssim \Linf{u,v}{\Omega\chih}\dLp{4}{u,v}{\Omega\nablas\bigl(\Omega^2\nablas\eta\bigr)}\\+\Linf{u,v}{\Omega\chih}\Linf{u,v}{\Omega\etab}\dLp{4}{u,v}{\Omega \nablas(\Omega\etab)}\\+\Linf{u,v}{\Omega\chih}\Linf{u,v}{\Omega\etab}\dLp{4}{u,v}{\Omega\nablas(\Omega\etab)}
\end{multline}
and we continue with
\begin{equation*}
  \begin{split}
    \dLp{4}{u,v}{\Omega\nablas\bigl(\Omega^2\nablas\eta\bigr)}&\lesssim r^3\dLp{4}{u,v}{\nablas^2\eta}+r^3\Linf{u,v}{\ds\log\Omega}\dLp{4}{u,v}{\nablas\eta}\\
    &\lesssim r^3\dLp{4}{u,v}{\ds\tilde{\mu}}+1
\end{split}
\end{equation*}
and the last term is controlled by Lemma~\ref{lemma:Laplaces:log:Omega}.

Thus by Gronwall's inequality we obtain the desired bound $r^3\dLp{4}{u,v}{\ds\tilde{\mu}}\lesssim v_+-v$.

\end{proof}

\begin{remark}
  Essentially the same proof applies to show that
  \begin{equation}
   r^2 \Linf{u,v}{\tilde{\mu}} \lesssim  r^2 \dLp{4}{u,v}{\breve{\mu}}+r^3 \dLp{4}{u,v}{\ds \breve{\mu}} \lesssim 1
  \end{equation}
Indeed, both proofs rely on the propagation equations for $\Omega^2\tilde{\mu}$, and $\Omega^2\breve{\mu}$, respectively, at the same level of scaling in $\Omega$, the difference being merely the data on $\Cb_+$, for which we have $\dLp{4}{u,v_+}{r^3\tilde{\mu}}\lesssim 1$, but only $\dLp{4}{u,v_+}{r^2\breve{\mu}}\lesssim 1$, and similar for the first order.
\end{remark}

\begin{remark}
  For simplicity we have stated the assumption on $\tr\chi\tr\chib$ in \eqref{eq:coupled:eta:trchitrchib} separately.
  In view of \eqref{eq:Db:trchitrchib:general} --- c.f.~\eqref{eq:Db:trchitrchib} in Section~\ref{sec:L:infty:omega} --- the stated bound is consistent with the bound for the mass aspect function proven in Prop.~\ref{prop:coupled:eta} below.
\end{remark}

\subsection{Null lapse}

\label{sec:coupled:null:lapse}

Let us consider the propagation equation for $\Laplaces\log\Omega=\divs\ds\log\Omega$, in the shear-free case: If $\chibh=0$ then with $\theta=\ds\log\Omega$,
\begin{equation}\label{eq:Db:Laplaces:log:Omega}
  \begin{split}
    \Db \Laplaces\log\Omega =& \Db\divs\theta 
    =\tr\bigl(\Db\nablas\theta)-2\Omega(\chib,\nablas\theta)\\
    =&\divs\Db\theta-\Omega\tr\chib\divs\theta\\
    =&\Laplaces\omegab-\Omega\tr\chib\Laplaces\log\Omega
  \end{split}
\end{equation}

Moreover  it follows immediately from \Ceq{6.107} that
    \begin{equation}
      D\Laplaces\log\Omega+\Omega\tr\chi\Laplaces\log\Omega=\Laplaces\omega
      -2\divs\bigl(\Omega\chih^\sharp\cdot \ds\log\Omega\bigr)
  \end{equation}

  


    \begin{lemma} \label{lemma:Laplaces:log:Omega}
      Suppose \eqref{BA:Omega:dd:omegab} holds, then with spherically symmetric data for $\Omega$ on $C_+$,
      \begin{equation}
        r^4\dLp{4}{u,v}{\nablas^2\log\Omega}+r^3\dLp{4}{u,v}{\nablas\log\Omega}\lesssim 1
      \end{equation}

    \end{lemma}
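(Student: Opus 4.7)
The plan is to use the propagation equation \eqref{eq:Db:Laplaces:log:Omega} for $\Laplaces\log\Omega$ along $\Cb_v$, integrate from $C_+$ where the spherically symmetric data makes the initial value vanish, then convert the resulting bound on $\Laplaces\log\Omega$ into bounds on $\nablas\log\Omega$ and $\nablas^2\log\Omega$ via the elliptic Hodge/Laplacian estimate of Corollary~\ref{cor:calderon:Laplaces}.

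First I would rescale \eqref{eq:Db:Laplaces:log:Omega} by $\Omega^2$ to match the bootstrap coefficient $2\omegab-\Omega\tr\chib$: since $\Db\log\Omega=\omegab$,
\begin{equation*}
  \Db\bigl(\Omega^2\Laplaces\log\Omega\bigr)=\bigl(2\omegab-\Omega\tr\chib\bigr)\Omega^2\Laplaces\log\Omega+\Omega^2\Laplaces\omegab\,.
\end{equation*}
The spherically symmetric data on $C_+$ makes $\Omega|_{C_+}$ a function of $r$ alone, so $\ds\log\Omega=0$ and $\Laplaces\log\Omega=0$ on $C_+$. Applying Lemma~\ref{lemma:dLp:gronwall} together with \eqref{BA:omegab:tr:chib} then yields
\begin{equation*}
  \dLp{4}{u,v}{\Omega^2\Laplaces\log\Omega}\lesssim \int_u^{u_+}\dLp{4}{u',v}{\Omega^2\Laplaces\omegab}\,\ud u'\,.
\end{equation*}

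Next I would use \eqref{BA:log:Omega:r} to trade one power of $\Omega$ for a power of $r$ and apply \eqref{BA:Omega:dd:omegab} to bound
\begin{equation*}
  \dLp{4}{u',v}{\Omega^2\Laplaces\omegab}\lesssim \frac{1}{r(u',v)}\dLp{4}{u',v}{\Omega^3\nablas^2\omegab}\lesssim \frac{\Delta_{II}}{r(u',v)}\,.
\end{equation*}
Combined with Lemma~\ref{lemma:comparison} (giving $u_+-u\lesssim 1/r$) the above integral is controlled by $r^{-2}$, hence
\begin{equation*}
  \dLp{4}{u,v}{\Laplaces\log\Omega}\lesssim r^{-4}\,,
\end{equation*}
using once more $\Omega\simeq r$ to strip the prefactor.

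Finally, since the conclusions of Proposition~\ref{prop:uniformization} hold in the present shear-free setting, Corollary~\ref{cor:calderon:Laplaces} applied to the function $\log\Omega$ yields
\begin{equation*}
  \dLp{4}{u,v}{\nablas\log\Omega}+\dLp{4}{u,v}{r\nablas^2\log\Omega}\lesssim \dLp{4}{u,v}{r\Laplaces\log\Omega}\lesssim r^{-3}\,,
\end{equation*}
which is exactly the desired estimate. The only point requiring care -- and the place I would look most closely -- is the interchange between $\Omega$ and $r$ inside the $\mathrm{L}^4$ norms, which is legitimate thanks to \eqref{BA:log:Omega:r}; everything else is a routine Gronwall integration plus an elliptic Calderon--Zygmund estimate on $(S_{u,v},\gs)$.
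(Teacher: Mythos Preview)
Your proof is correct and follows essentially the same route as the paper: integrate the rescaled propagation equation \eqref{eq:Db:Laplaces:log:Omega} for $\Omega^2\Laplaces\log\Omega$ from the spherically symmetric data on $C_+$, bound the source by \eqref{BA:Omega:dd:omegab} and Lemma~\ref{lemma:comparison} to obtain $\dLp{4}{u,v}{\Omega^2\Laplaces\log\Omega}\lesssim r^{-2}$, and then apply the elliptic estimate of Corollary~\ref{cor:calderon:Laplaces}. Your write-up is in fact more explicit than the paper's about the $\Omega\leftrightarrow r$ exchange and the Gronwall step, but the argument is the same.
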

    \begin{proof}
      From Cor.~\ref{cor:calderon:Laplaces} we have
      \begin{equation*}
        \dLp{4}{u,v}{\nablas^2\log \Omega}+\frac{1}{r}\dLp{4}{u,v}{\nablas\log\Omega}\lesssim \dLp{4}{u,v}{\Laplaces\log\Omega}
      \end{equation*}
and immediately from \eqref{eq:Db:Laplaces:log:Omega} we have
  \begin{equation*}
    \dLp{4}{u,v}{\Omega^2\Laplaces\log\Omega}\lesssim \int_u^{u_+}\dLp{4}{u,v}{\Omega^2\Laplaces\omegab}\lesssim \frac{1}{r^2}\,;
  \end{equation*}
for the first inequality we have used the assumption on the data on $C_+$, and for the second the assumption on $\omegab$, and Lemma~\ref{lemma:comparison}.

    \end{proof}

Similarly we could have derived the same result using appropriate data on $\Cb_+$.
  
  \begin{lemma}\label{lemma:dd:log:Omega}

    Assume  \eqref{BA:Omega:dd:omega} and
    \begin{equation}
      D(u) := \dLp{4}{u,v_+}{\Omega^2(\tilde{\mu}+\tilde{\mub})}+\dLp{4}{u,v_+}{\Omega^2\bigl(\tr\chi\tr\chib-\overline{\tr\chi\tr\chib}\bigr)} \lesssim 1/r^2
    \end{equation}

    Then  on $\mathcal{D}_+$, 
  \begin{equation}
    \dLp{4}{u,v}{\Omega\ds\log\Omega}+\dLp{4}{u,v}{\Omega^2 \nablas^2 \log\Omega }\lesssim 1/r^2
  \end{equation}

\end{lemma}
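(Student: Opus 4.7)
The plan is to reduce the second-order estimate for $\log\Omega$ to a bound on $\Omega^2\Laplaces\log\Omega$ via the conformally covariant elliptic estimate of Corollary~\ref{cor:calderon:Laplaces}, and then to control the latter on all of $\mathcal{D}_+$ by propagating along the null generators of $C_u$ from data on $\Cb_+$. Specifically, applying Corollary~\ref{cor:calderon:Laplaces} to $\log\Omega$, multiplying by $r$, and using $\Omega\simeq r$ from \eqref{BA:log:Omega:r} to interchange the two weights inside $L^4$ norms, one obtains
\begin{equation*}
\dLp{4}{u,v}{\Omega\ds\log\Omega}+\dLp{4}{u,v}{\Omega^2\nablas^2\log\Omega}\lesssim \dLp{4}{u,v}{\Omega^2\Laplaces\log\Omega},
\end{equation*}
so it suffices to show that the right-hand side is $\lesssim 1/r^2$. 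The initial datum on $\Cb_+$ follows from the algebraic identity
\begin{equation*}
2\Laplaces\log\Omega=-(\tilde{\mu}+\tilde{\mub})+\tfrac{1}{2}\bigl(\tr\chi\tr\chib-\overline{\tr\chi\tr\chib}\bigr),
\end{equation*}
which is \eqref{eq:Laplace:log:Omega} after subtracting averages (using $\overline{\Laplaces\log\Omega}=0$, $\rho[W]=0$, $\chibh=0$); combined with the hypothesis $D(u)\lesssim 1/r^2(u,v_+)$, this yields $\dLp{4}{u,v_+}{\Omega^2\Laplaces\log\Omega}\lesssim 1/r^2(u,v_+)$.

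For propagation into $\mathcal{D}_+$, I use the equation at the opening of Section~\ref{sec:coupled:null:lapse},
\begin{equation*}
D(\Omega^2\Laplaces\log\Omega)=(2\omega-\Omega\tr\chi)\Omega^2\Laplaces\log\Omega+\Omega^2\Laplaces\omega-2\Omega^2\divs(\Omega\chih^\sharp\cdot\ds\log\Omega),
\end{equation*}
whose coefficient is controlled by \eqref{BA:omega:tr:chi}. Setting $E(v):=\dLp{4}{u,v}{\Omega^2\Laplaces\log\Omega}$ and applying Lemma~\ref{lemma:dLp:gronwall}, the source term $\Omega^2\Laplaces\omega$ contributes $\int_v^{v_+}1/r(v')\,dv'$ by \eqref{BA:Omega:dd:omega}, while the nonlinear cross term, expanded as $\Omega\divs(\Omega\chih)\cdot\Omega\ds\log\Omega+\Omega^2\chih\cdot\Omega\nablas^2\log\Omega$, is bounded by $E(v')$ (up to a factor of $1/r$ on the second piece) by combining the $L^\infty$ bounds $\Omega^2|\chih|\lesssim 1$ (Lemma~\ref{lemma:prelim:Linfty:eta}) and $\Omega|\nablas(\Omega\chih)|\lesssim 1$ (Proposition~\ref{prop:L4:chi}) with the elliptic estimate from the first step applied self-referentially. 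A final Gronwall argument, using $v_+-v\lesssim 1/r$ from Lemma~\ref{lemma:comparison}, closes the bound.

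The main obstacle is securing the decay rate $1/r^2$ rather than the generic $1/r$ that a naive direct integration would return: the source $\Omega^2\Laplaces\omega$ is only controlled by $1/r$ in $L^4$, and only the de Sitter identity $Dr\simeq r^2$, which gives $dv\simeq dr/r^2$ and hence $\int_v^{v_+}1/r(v')\,dv'\simeq 1/r^2(v)$, produces the one-power gain in $r$ needed to reach the stated rate. The cross term is a secondary difficulty, since it reintroduces the quantities $\ds\log\Omega$ and $\nablas^2\log\Omega$ one is trying to estimate; it is tamed by a bootstrap through the elliptic estimate, the smallness of $\Omega\chih$ established earlier in the paper, and the shortness $v_+-v\lesssim 1/r$ of the integration in the asymptotic regime.
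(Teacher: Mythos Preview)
Your proposal is correct and follows essentially the same route as the paper: reduce via Corollary~\ref{cor:calderon:Laplaces} to controlling $\dLp{4}{u,v}{\Omega^2\Laplaces\log\Omega}$, identify the boundary term on $\Cb_+$ through the algebraic relation $2\Laplaces\log\Omega=-(\tilde\mu+\tilde\mub)+\tfrac12(\tr\chi\tr\chib-\overline{\tr\chi\tr\chib})$, propagate along $C_u$ using the $D$-equation for $\Omega^2\Laplaces\log\Omega$, absorb the $\chih$-cross term with the $L^\infty$ bounds on $\Omega\chih$ and $\Omega\nablas(\Omega\chih)$ fed back through the elliptic estimate, and close by Gronwall together with $v_+-v\lesssim 1/r$. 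Your explicit discussion of why the source $\Omega^2\Laplaces\omega\in L^4$ at rate $1/r$ integrates to $1/r^2$ via $dv\simeq dr/r^2$ makes transparent a point the paper leaves implicit.
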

\begin{proof}

  We have from Corollary~\ref{cor:calderon:Laplaces} that
  \begin{equation}
    \dLp{4}{u,v}{r\ds\log\Omega}+\dLp{4}{u,v}{r^2\nablas^2\log\Omega}\lesssim \dLp{4}{u,v}{r^2\Laplaces\log\Omega}
  \end{equation}
  and we have the propagation equation
  \begin{multline}
  D \bigl(\Omega^2\Laplaces\log\Omega\bigr)=\bigl(2\omega-\Omega\tr\chi)\Omega^2\Laplaces\log\Omega\\
  +\Omega^2\Laplaces\omega-2\Omega^2\divs\bigl(\Omega\chih^\sharp\bigr)\cdot \ds\log\Omega-2\Omega\chih^\sharp\cdot \Omega^2\nablas^2\log\Omega
\end{multline}

Hence in view of Lemma~\ref{lemma:norm:comparison},
\begin{multline}
  \dLp{4}{u,v}{\Omega^2\Laplaces\log\Omega}\lesssim   \dLp{4}{u,v_+}{\Omega^2\Laplaces\log\Omega}+\int_{v}^{v_+}\dLp{4}{u,v'}{\Omega^2\nablas^2\omega}\ud v'\\
  +\int_{v}^{v_+}\Bigl(\Linf{u,v'}{2\omega-\Omega\tr\chi}+\Linf{u,v'}{    \Omega\chih } +\Linf{u,v'}{(\Omega\nablas)(\Omega\chih)}\Bigr) \dLp{4}{u,v'}{r^2\Laplaces\log\Omega}\ud v'
\end{multline}
and in particular by Propositon~\ref{prop:coupled:chi},  $ \Linf{u,v}{\Omega \nablas(\Omega\chih)}\lesssim 1 $.
For the first line on the r.h.s. recall \eqref{BA:Omega:dd:omega}, and the assumption.
The estimate then follows from Gronwall's inequality.

To express the boundary term on $\Cb_+$ note that by \eqref{eq:breve:mu:mub}
\begin{equation}
  \tilde{\mu}+\tilde{\mub}=-\divs\eta-\divs\etab+\frac{1}{2}\bigl(\tr\chi\tr\chib-\overline{\tr\chi\tr\chib}\bigr)
\end{equation}
and hence
\begin{equation}\label{eq:Laplace:log:Omega:eta}
  \Laplaces\log\Omega=\frac{1}{2}\bigl(\divs\eta+\divs\etab\bigr)=-\frac{1}{2}\bigl(\tilde{\mu}+\tilde{\mub}\bigr)+\frac{1}{4}\bigl(\tr\chi\tr\chib-\overline{\tr\chi\tr\chib}\bigr)
\end{equation}
and the statement of the Lemma follows.
\end{proof}

\subsection{Null expansions}

\label{sec:coupled:null:expansion}

Consider the elliptic equation
\begin{equation}\label{eq:Laplaces:omega}
  \Laplaces\omega=\omegas
\end{equation}

Here $\omegas$ simply denotes
\begin{equation}
  \omegas=\divs\ds\omega=\Laplaces\omega
\end{equation}
which satisfies the propagation equation
\begin{equation}\label{eq:Db:omegas}
  \begin{split}
    \Db\omegas=&\Db\divs\ds\omega=\divs\Db\ds\omega-2\divs\bigl(\Omega\chibh^\sharp\cdot\ds\omega\bigr)-\Omega\tr\chib\divs\ds\omega\\
    =&-2\divs\bigl(\Omega\chibh^\sharp\cdot\ds\omega\bigr)-\Omega\tr\chib \omegas\\
    &+2\Omega^2\bigl(2|\ds\log\Omega|^2+\Laplaces\log\Omega\bigr)\Bigl(2(\eta,\etab)-\lvert\etab\rvert^2+1\Bigr)\\
    &+2\Omega^2\Bigl(\ds\log\Omega,2(\nablas\eta,\etab)+2(\eta,\nablas\etab)-2(\etab,\nablas\etab)\Bigr)\\
        &+\Omega^2\Bigl(2(\Laplaces\eta,\etab)+2(\eta-\etab,\Laplaces\etab)\Bigr)+2\Omega^2\Bigl(2(\nablas\eta,\nablas\etab)-|\nablas\etab|^2\Bigr)
  \end{split}
\end{equation}

\begin{proposition}
\begin{equation}
  r^2\dLp{4}{u,v}{\nablas^2\omega}+r\dLp{4}{u,v}{\nablas\omega}\lesssim 1/r
\end{equation}
\end{proposition}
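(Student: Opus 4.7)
The plan is to combine the elliptic estimate from Corollary~\ref{cor:calderon:Laplaces} applied to \eqref{eq:Laplaces:omega} with a propagation estimate for $\omegas$ derived from \eqref{eq:Db:omegas}. Concretely, Corollary~\ref{cor:calderon:Laplaces} yields
\begin{equation*}
  \dLp{4}{u,v}{\nablas\omega}+\dLp{4}{u,v}{r\nablas^2\omega}\lesssim \dLp{4}{u,v}{r\omegas}\,,
\end{equation*}
so it suffices to establish $\dLp{4}{u,v}{\omegas}\lesssim 1/r^3$.

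To bound $\omegas$, first I would record the initial data on $C_+$: since the foliation on $C_+$ is spherically symmetric with $\omega=r$ and $r$ constant on each section $S_{u_+,v}$, we have $\ds\omega=0$ and therefore $\omegas=0$ on $C_+$. Next I would integrate \eqref{eq:Db:omegas} backwards in $u$ along $\Cb_v$, using $\chibh=0$ to eliminate the first term of the right-hand side, and applying Lemma~\ref{lemma:dLp:gronwall} (the Gronwall inequality in $L^4$) after rewriting the equation in terms of $\Omega^2\omegas$, so that the factor $\Omega\tr\chib$ combines with $2\omegab$ into the bounded quantity $2\omegab-\Omega\tr\chib$ controlled by \eqref{BA:omegab:tr:chib}. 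This gives
\begin{equation*}
  \dLp{4}{u,v}{\Omega^2\omegas}\lesssim \int_u^{u_+}\dLp{4}{u',v}{\Omega^2 \cdot\text{(RHS of \eqref{eq:Db:omegas})}}\,\ud u'\,,
\end{equation*}
and via Lemma~\ref{lemma:comparison} the length $u_+-u\lesssim 1/r$ produces the gain of one power of $r^{-1}$, so the target $r^{-3}$ follows once we show that the weighted source integrand is bounded by $r^{-2}$ in $L^4$.

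Then I would estimate each source term on the right-hand side of \eqref{eq:Db:omegas} in turn, using the bounds compiled earlier in the paper: Lemma~\ref{lemma:Laplaces:log:Omega} for $\Omega^2\Laplaces\log\Omega$ and $r\ds\log\Omega$; Lemma~\ref{lemma:torsion} and Proposition~\ref{prop:coupled:eta:mu:breve} for $\Omega^2\eta$, $\Omega^2\etab$, $\Omega^3\nablas\eta$, and $\Omega^4\Laplaces\eta$; Lemma~\ref{lemma:nablas:etab} for $\Omega^3\nablas\etab$; and Cor.~\ref{cor:commute:Omega:r} to commute the factors $\Omega$ with $r$. The leading contributions come from $\Omega^2\Laplaces\log\Omega$, $\Omega^2(\eta,\etab)$, $\Omega^2|\etab|^2$, and $\Omega^2(\Laplaces\eta,\etab)$, each of which is controlled by $r^{-2}$ in $L^4$.

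The main obstacle will be the term $\Omega^2(\eta-\etab,\Laplaces\etab)$, for which we have no direct propagation-based $L^4$ bound on $\nablas^2\etab$ comparable to the one we have for $\nablas^2\eta$. The plan here is either (i) to integrate by parts on the sphere, transferring one angular derivative onto $\Omega^2(\eta-\etab)$ so that only $\nablas\etab$ appears (whence Lemma~\ref{lemma:nablas:etab} suffices, at the cost of producing $\nablas(\eta-\etab)$ terms that are then absorbed using Proposition~\ref{prop:coupled:eta:mu:breve}), or (ii) to set up an auxiliary Hodge system for $\etab$ analogous to \eqref{eq:div:curl:eta:intro}---using $\divs\etab=-\breve{\mub}+\frac{1}{4}\tr\chi\tr\chib-1$ together with $\curls\etab=-\curls\zeta=-\curls\eta=0$---and apply Lemma~\ref{lemma:calderon:div:curl} to convert $\Laplaces\etab$ control into control of $\ds\breve{\mub}$, which in turn propagates from $\Cb_+$ via the conjugate of \eqref{eq:D:tilde:mu}. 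I expect approach (i) to be the cleaner route: after integrating by parts the two Laplacian-on-torsion terms, the entire source is bounded pointwise in $L^4$ by quantities already estimated, and no new propagation system needs to be opened.
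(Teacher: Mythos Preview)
Your overall architecture matches the paper's proof: apply Corollary~\ref{cor:calderon:Laplaces} to \eqref{eq:Laplaces:omega}, then control $\dLp{4}{u,v}{\Omega^2\omegas}$ by integrating \eqref{eq:Db:omegas} from $C_+$ where $\omegas=0$ (spherically symmetric data), using \eqref{BA:omegab:tr:chib}, Lemma~\ref{lemma:dLp:gronwall}, and the previously established bounds on $\log\Omega$ and the torsion. The paper cites Lemma~\ref{lemma:dd:log:Omega} and Proposition~\ref{prop:coupled:eta:mu:breve} for the source terms and leaves the rest implicit.

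You are right to flag the term $\Omega^2(\eta-\etab,\Laplaces\etab)$; the paper's proof is terse here and simply subsumes everything under ``terms involving $\nablas\eta$ and $\nablas^2\eta$'', which strictly speaking does not cover $\nablas^2\etab$. However, your proposed route~(i) does not work as written: the source terms enter through $\dLp{4}{u',v}{\cdot}$, an $\mathrm{L}^4$ norm over the sphere, and you cannot integrate by parts inside an $\mathrm{L}^4$ norm to move a derivative off $\Laplaces\etab$. There is no cancellation to exploit pointwise.

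Route~(ii) is the correct one, and is effectively what the paper relies on (by conjugation symmetry with Proposition~\ref{prop:coupled:eta:mu:breve}). From $\divs\etab=-\tilde{\mub}+\frac{1}{4}(\tr\chi\tr\chib-\overline{\tr\chi\tr\chib})$ and $\curls\etab=0$, Lemma~\ref{lemma:calderon:div:curl} gives
\[
r^4\dLp{4}{u,v}{\nablas^2\etab}\lesssim r^3\dLp{4}{u,v}{\tilde{\mub}}+r^4\dLp{4}{u,v}{\ds\tilde{\mub}}+r^4\dLp{4}{u,v}{\ds(\tr\chi\tr\chib)}\,,
\]
and the right-hand side is controlled by the conjugate of the $\tilde{\mu}$ analysis in Section~\ref{sec:coupled:mu:tilde} (propagating $\tilde{\mub}$ and $\ds\tilde{\mub}$ along $\Cb_v$ from $C_+$, where they vanish). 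You should drop~(i) and carry out~(ii) explicitly; then your argument is complete and coincides with the paper's intended proof.
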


\begin{proof}

    From Corollary~\ref{cor:calderon:Laplaces} applied to \eqref{eq:Laplaces:omega} we have
  \begin{equation}
    r^2\dLp{4}{u,v}{\nablas^2\omega}+r\dLp{4}{u,v}{\nablas\omega}\lesssim \dLp{4}{u,v}{r^2\omegas}
  \end{equation}
  and from \eqref{eq:Db:omegas}
  \begin{multline}
    \dLp{4}{u,v}{\Omega^2\omegas}\lesssim \dLp{4}{u_+,v}{\Omega^2\omegas}\\
    +\int_u^{u_+} \Linf{u',v}{2(\eta,\etab)-|\etab|^2+1} \Bigl( \Linf{u',v}{  \Omega^2 \ds\log\Omega}^2 + \dLp{4}{u',v}{\Omega^4\Laplaces\log\Omega }\Bigr)\ud u'\\
    +\int_u^{u_+}\Linf{u',v}{\Omega^2(\nablas\eta,\etab)+\Omega^2(\eta,\nablas\etab)-\Omega^2(\etab,\nablas\etab)}\dLp{4}{u',v}{\Omega^2\ds\log\Omega}\ud u'\\
    +\int_u^{u_+}\dLp{4}{u',v}{\Bigl(\Omega^4(\Laplaces\eta,\etab)+\Omega^4(\eta-\etab,\Laplaces\etab)\Bigr)+\Omega^4\Bigl(2(\nablas\eta,\nablas\etab)-|\nablas\etab|^2\Bigr)}\ud u'
  \end{multline}

The terms involving $\nablas\log\Omega$ and $\nablas^2\log\Omega$ are bounded by Lemma~\ref{lemma:dd:log:Omega}.
And the terms involving $\nablas\eta$, and $\nablas^2\eta$  are bounded by Proposition~\ref{prop:coupled:eta:mu:breve}.

Recall that we have spherically symmetric data on $C_+$ so $\Laplaces\omega=0$ at $u=u_+$.
Thus the statement of the proposition follows.

\end{proof}

\begin{remark}
  This finally recovers the bootstrap assumption \eqref{BA:Omega:dd:omega}.
\end{remark}

\begin{proposition}
  Assume
  \begin{equation}
    \dLp{4}{u,v_+}{\Omega^2\Laplaces\omegab}\lesssim 1/r
  \end{equation}
Then we have
\begin{equation}
  r^3\dLp{4}{u,v}{ \nablas^2\omegab}+r^2\dLp{4}{u,v}{\nablas\omegab}\lesssim 1
\end{equation}

\end{proposition}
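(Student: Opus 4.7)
The plan is to mirror the proof of the preceding proposition for $\omega$, in the conjugate direction. First, applying Corollary~\ref{cor:calderon:Laplaces} to the elliptic equation $\Laplaces\omegab = \omegabs$, with $\omegabs := \Laplaces\omegab$, immediately yields
\begin{equation}
r^2\dLp{4}{u,v}{\nablas^2\omegab}+r\dLp{4}{u,v}{\nablas\omegab}\lesssim \dLp{4}{u,v}{r^2\Laplaces\omegab}.
\end{equation}
Since $\Omega\simeq r$ by \eqref{BA:log:Omega:r}, the proposition reduces to an $\mathrm{L}^4$ bound on $\Omega^2\Laplaces\omegab$ of size $\lesssim 1/r$. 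I would then integrate the propagation equation \eqref{eq:D:Laplace:omegab} for $\Laplaces\omegab$ along the generators of $C_u$, multiplied by $\Omega^2$, from the final slice $v_+$, using the $D$-direction form of Lemma~\ref{lemma:dLp:gronwall}. This produces
\begin{equation}
\dLp{4}{u,v}{\Omega^2\Laplaces\omegab}\lesssim \dLp{4}{u,v_+}{\Omega^2\Laplaces\omegab}+\int_v^{v_+}\dLp{4}{u,v'}{\Omega^2 F}\,\ud v',
\end{equation}
where $F$ collects the remaining source terms on the right-hand side of \eqref{eq:D:Laplace:omegab}.

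Each contribution to $F$ is controlled by estimates already derived: the derivatives of $\log\Omega$ by Lemmas~\ref{lemma:Laplaces:log:Omega} and~\ref{lemma:dd:log:Omega}; the torsion terms $\eta,\nablas\eta,\Laplaces\eta$ by Prop.~\ref{prop:coupled:eta:mu:breve} combined with Lemma~\ref{lemma:nablas:eta}; the ingoing torsion terms $\etab,\nablas\etab$ by Lemmas~\ref{lemma:torsion} and~\ref{lemma:nablas:etab}; and the shear-coupled term $-2\divs(\Omega\chih^\sharp\cdot\ds\omegab)$ expands into a lower-order piece $-2\nablas(\Omega\chih)\cdot\ds\omegab$ handled by Prop.~\ref{prop:coupled:chi}, together with a principal piece $-2\Omega\chih^\sharp\cdot\nablas\ds\omegab$ which, via the Calderon-Zygmund estimate above, generates a term of the form $\Linf{u,v'}{\Omega\chih}\dLp{4}{u,v'}{r^2\Laplaces\omegab}$ that is absorbable by Gronwall under \eqref{D:small}.

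The main obstacle will be the term $2\Omega^2(\eta,\Laplaces\etab)$, since no $\mathrm{L}^4$ bound on $\Laplaces\etab$ has been recorded in the excerpt; Lemma~\ref{lemma:nablas:etab} only controls $\nablas\etab$. I would overcome this by developing the conjugate analogue of Prop.~\ref{prop:coupled:eta:mu:breve} for $\etab$. In the shear-free case, the identities $\curls\zeta=-\sigma[W]-\tfrac12\chih\wedge\chibh=0$ (valid since $W=0$ and $\chibh=0$) together with $\etab=-\zeta+\ds\log\Omega$ give $\curls\etab=0$, while \eqref{eq:mub:breve} yields $\divs\etab=-\tilde{\breve{\mub}}+\tfrac14(\tr\chi\tr\chib-\overline{\tr\chi\tr\chib})$. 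Lemma~\ref{lemma:calderon:div:curl} then reduces $\nablas^2\etab$ to $\ds\tilde{\breve{\mub}}$, whose $\mathrm{L}^4$ bound follows from the conjugate of \eqref{eq:D:tilde:mu} (i.e.\ the $\Db$ propagation equation for $\breve{\mub}-\overline{\breve{\mub}}$, extracted from Lemma~\ref{lemma:D:mu}) coupled through Gronwall in $u$, using the vanishing of $\etab$ and spherical symmetry on $C_+$ to control the data. This yields $r^4\dLp{4}{u,v}{\nablas^2\etab}\lesssim 1$, mirroring the bound for $\nablas^2\eta$ in Prop.~\ref{prop:coupled:eta:mu:breve}. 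With this ingredient in hand, Gronwall's inequality in $v$ closes the estimate on $\Omega^2\Laplaces\omegab$, and combined with the data assumption $\dLp{4}{u,v_+}{\Omega^2\Laplaces\omegab}\lesssim 1/r$ and the Calderon-Zygmund reduction, the proposition follows.
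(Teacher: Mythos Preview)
Your approach is essentially the same as the paper's: apply the Calder\'on--Zygmund estimate of Corollary~\ref{cor:calderon:Laplaces} to reduce to an $\mathrm{L}^4$ bound on $\Omega^2\Laplaces\omegab$, then integrate the propagation equation \eqref{eq:D:Laplace:omegab} along $C_u$ from $\Cb_+$ via Lemma~\ref{lemma:dLp:gronwall}, feeding the principal shear term $\Omega\chih\cdot\nablas^2\omegab$ back through the elliptic estimate before Gronwall. The paper organises the term-by-term bookkeeping identically.

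The only substantive difference is how the problematic term $(\eta,\Laplaces\etab)$ is handled. The paper is brief here: it simply cites Prop.~\ref{prop:coupled:eta:mu:breve} to control $\Linf{u,v}{\Omega^2\nablas\etab}$, although as stated that proposition only treats $\eta$, leaving the passage to $\etab$ (and in particular to $\nablas^2\etab$) implicit via $\eta+\etab=2\ds\log\Omega$. Your proposal to develop the conjugate $\breve{\mub}$ system is a legitimate and arguably cleaner way to make this step explicit. In the shear-free case it is in fact simpler than you anticipate: since $\chibh=0$, the conjugate source terms $\underline{\breve{r}}$ and $\underline{\breve{m}}$ vanish identically, so the $\Db$ equation for $\breve{\mub}$ is homogeneous, and with spherically symmetric data on $C_+$ one obtains $r^4\dLp{4}{u,v}{\ds\tilde{\breve{\mub}}}\lesssim 1$ directly, without any coupling to the $\breve{\mu}$ system. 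The elliptic estimate of Lemma~\ref{lemma:calderon:div:curl} then gives $r^4\dLp{4}{u,v}{\nablas^2\etab}\lesssim 1$ as you claim.
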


\begin{proof}

  First recall that the bootstrap assumption that \eqref{BA:Omega:dd:omegab} holds for some  $\Delta>0$.
  Then by Prop.~\ref{prop:coupled:eta:mu:breve}  and Lemma~\ref{lemma:Laplaces:log:Omega},
  \begin{gather*}
    \Linf{u,v}{\Omega^3\ds\log\Omega}\lesssim 1\\
    \Linf{u,v}{\Omega^2\eta}+\Linf{u,v}{\Omega^2\etab}\lesssim 1
  \end{gather*}

  Now from Cor.~\ref{cor:calderon:Laplaces} we have
  \begin{equation}\label{eq:omegab:elliptic}
    \dLp{4}{u,v}{\nablas^2\omegab}+\frac{1}{r}\dLp{4}{u,v}{\nablas\omegab}\lesssim \dLp{4}{u,v}{\omegabs}
  \end{equation}
and from \eqref{eq:D:Laplace:omegab},
  \begin{multline}
  \dLp{4}{u,v}{\Omega^2\omegabs} \lesssim \dLp{4}{u,v_+}{\Omega^2\omegabs} +\int_{v}^{v_+}\dLp{4}{u,v'}{\Omega^2\divs\bigl(\Omega\chih^\sharp\cdot \ds\omegab\bigr)}\ud v'\\
  +\int_v^{v_+}\Linf{u,v'}{2(\eta,\etab)-|\eta|^2+1}\dLp{4}{u,v'}{\Omega^4 \bigl(2|\nablas\log\Omega|^2+\Laplaces\log\Omega\bigr)}\ud v'\\
  +\int_v^{v_+}\Linf{u,v'}{\Omega\ds\log\Omega}\dLp{4}{u,v'}{\Omega^3(\nablas\eta,\etab-\eta)+\Omega^3(\eta,\nablas\etab)}\ud v'\\
  +\int_v^{v_+}\dLp{4}{u,v'}{\Omega^4\Bigl(\bigl(\Laplaces\eta,\etab-\eta\bigr)+\bigl(\nablas\eta,\nablas(\etab-\eta)\bigr)+\bigl(\nablas\eta,\nablas\etab\bigr)+\bigl(\eta,\Laplaces\etab\bigr)\Bigr)}\ud v'
\end{multline}
and we estimate
\begin{multline}
  \dLp{4}{u,v'}{\Omega^2\divs\bigl(\Omega\chih^\sharp\cdot \ds\omegab\bigr)}\lesssim \Linf{u,v'}{\Omega\ds\omegab}\dLp{4}{u,v'}{\Omega\nablas(\Omega\chih)}\\
  +\Linf{u,v'}{\Omega\chih}\dLp{4}{u,v'}{\Omega^2\nablas^2\omegab}
\end{multline}
and the use the elliptic estimate \eqref{eq:omegab:elliptic} again, before applying Gronwall's inequality.

Note that in the last line we apply Prop~\ref{prop:coupled:eta:mu:breve} to control $\Linf{u,v}{\Omega^2\nablas\etab}$.
\end{proof}

\begin{remark}
  This in particular recovers the bootstrap assumption \eqref{BA:Omega:dd:omegab}.
\end{remark}

\subsection{Torsion and mass aspect function}
\label{sec:mu}

Recall the definitions of the mass aspect functions \eqref{eq:mu}, \eqref{eq:mub}, which we view as elliptic systems for $\eta$, $\etab$, which read in the shear-free case:
\begin{subequations}\label{eq:eta:etab:elliptic:shear-free}
\begin{gather}
  \divs\eta=-\mu  \qquad \curls\eta=0\\
  \divs\etab=-\mub \qquad \curls\etab=0
\end{gather}
\end{subequations}

Moreover we have the propagation equations \eqref{eq:D:mu:all} for $\mu$,
    \begin{equation}
      \begin{split}
        D\mu=&-\frac{3}{2}\Omega\tr\chi \mu-\frac{1}{2}\Omega\tr\chi\bigl(2\Laplaces\log\Omega+|\etab|^2)-\frac{1}{4}\Omega\tr\chib|\chih|^2\\
        &+2\divs(\Omega\chih)\cdot (\eta-\etab)+2(\Omega\chih,\nablas\eta)+2\Omega\chih(\etab,\etab)
      \end{split}
    \end{equation}
and a conjugate equation for $\mub$,
\begin{equation}
  \Db\mub=-\Omega\tr\chib \mub+\divs \underline{j}-\frac{1}{2}\Omega\tr\chib(\mu-|\eta|^2)-\frac{1}{4}\Omega\tr\chi|\chibh|^2\,,\quad \underline{j}=2\Omega\chibh\cdot \etab-\Omega\tr\chib\eta
\end{equation}
which in the shear-free case reduces to
\begin{equation}\label{eq:Db:mub:shear-free}
        \Db\mub+\frac{3}{2}\Omega\tr\chib \mub=-\frac{1}{2}\Omega\tr\chib\bigl(2\Laplaces\log\Omega+|\eta|^2\bigr)
    \end{equation}


    \begin{proposition} \label{prop:coupled:eta}
    We have
    \begin{multline}
      \dLp{4}{u,v}{r^2\eta}+ \dLp{4}{u,v}{r^3 \nablas\eta}+\dLp{4}{u,v}{r^3 \mu}\\+\dLp{4}{u,v}{r^2\etab}+ \dLp{4}{u,v}{r^3 \nablas\etab}  \dLp{4}{u,v}{r^3 \mub} \lesssim \\
      \lesssim   \sup_u\dLp{4}{u,v_+}{\Omega^3\mu}+ \sup_v\dLp{4}{u_+,v}{\Omega^3\mub}
    \end{multline}

  \end{proposition}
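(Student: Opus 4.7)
My plan is to couple the Hodge systems \eqref{eq:eta:etab:elliptic:shear-free} with the propagation equations for $\Omega^3\mu$ along $C_u$ and for $\Omega^3\mub$ along $\Cb_v$, using results of previous sections as inputs for every source term that does not feed back into $\mu$ or $\mub$ themselves. First, Lemma~\ref{lemma:calderon:div:curl} applied to the shear-free div-curl systems for $\eta$ and $\etab$, together with Proposition~\ref{prop:uniformization}, gives
\begin{equation*}
  \dLp{4}{u,v}{r^2\eta} + \dLp{4}{u,v}{r^3\nablas\eta} \lesssim \dLp{4}{u,v}{r^3\mu},
\end{equation*}
and similarly for $\etab$ in terms of $\mub$. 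Thus it suffices to estimate $\Omega^3\mu$ and $\Omega^3\mub$ in $\mathrm{L}^4(S_{u,v})$.

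Second, I would propagate $\Omega^3\mu$ along $C_u$, starting from \eqref{eq:D:mu:all} and substituting $\divs\eta+\divs\etab = 2\Laplaces\log\Omega$ via \eqref{eq:ds:log:Omega:eta}, to obtain
\begin{equation*}
  D(\Omega^3\mu) = \tfrac{3}{2}\bigl(2\omega-\Omega\tr\chi\bigr)\,\Omega^3\mu + \Omega^3 F,
\end{equation*}
where $F$ schematically collects the terms $\Omega\tr\chi(\Laplaces\log\Omega+|\etab|^2)$, $\Omega\tr\chib|\chih|^2$, $\nablas(\Omega\chih)\cdot(\eta-\etab)$, $\Omega\chih\cdot\nablas\eta$, and $\Omega\chih(\etab,\etab)$. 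Each factor in $F$ other than $\nablas\eta$ is controlled uniformly with the correct $r$-weight by previously established results: $\Omega^2\Laplaces\log\Omega$ in $\mathrm{L}^4$ by Lemma~\ref{lemma:dd:log:Omega}; $\Omega\chih$ in $\mathrm{L}^\infty$ and $\Omega\nablas(\Omega\chih)$ in $\mathrm{L}^4$ by Proposition~\ref{prop:coupled:chi}; and $\Omega^2\eta$, $\Omega^2\etab$ in $\mathrm{L}^\infty$ by Proposition~\ref{prop:coupled:eta:mu:breve}. In the shear-free case the conjugate equation \eqref{eq:Db:mub:shear-free} simplifies further to
\begin{equation*}
  \Db(\Omega^3\mub) = \tfrac{3}{2}\bigl(2\omegab-\Omega\tr\chib\bigr)\,\Omega^3\mub - \tfrac{1}{2}\Omega^4\tr\chib\bigl(2\Laplaces\log\Omega+|\eta|^2\bigr),
\end{equation*}
whose source involves neither $\nablas\mub$ nor $\nablas\etab$ and is therefore directly controlled by the bounds just listed.

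Finally, applying Lemma~\ref{lemma:dLp:gronwall} to each equation --- integrating the $\mu$-equation from $v=v_+$ on $\Cb_+$ and the $\mub$-equation from $u=u_+$ on $C_+$, where the spherically symmetric data yields $\mub\equiv 0$ --- one obtains closed integral inequalities of the form
\begin{align*}
  \dLp{4}{u,v}{\Omega^3\mu} &\lesssim \sup_{u'}\dLp{4}{u',v_+}{\Omega^3\mu} + \int_v^{v_+} \dLp{4}{u,v'}{\Omega^3\mu}\,\ud v' + C(v_+-v),\\
  \dLp{4}{u,v}{\Omega^3\mub} &\lesssim \sup_{v'}\dLp{4}{u_+,v'}{\Omega^3\mub} + C(u_+-u),
\end{align*}
where the feedback term in the first line arises by substituting the elliptic bound of step one for $\Omega^3\nablas\eta$ in the $\Omega\chih\cdot\nablas\eta$ contribution to $F$. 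I expect this feedback to be the main obstacle: $\Omega\chih$ is only bounded in $\mathrm{L}^\infty$ (not small), so the linear integral must be closed via \eqref{D:small} and Gronwall's inequality rather than by a pure smallness argument on the amplitude. Reinserting the resulting bounds on $\Omega^3\mu$ and $\Omega^3\mub$ into the elliptic estimates of the first step then yields the full proposition.
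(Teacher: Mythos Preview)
Your proposal is correct in its broad strokes, but it takes a different route from the paper's own proof, and the difference is worth noting.

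The paper's proof is \emph{self-contained}: it does not invoke the $\tilde{\mu}$-based bounds of Proposition~\ref{prop:coupled:eta:mu:breve} to control $\eta$ or $\etab$. Instead it splits $|\etab|^2 \lesssim \Linf{}{r^2\etab}\cdot\dLp{4}{}{r^2\etab}$ and bounds both factors via the elliptic estimate and Sobolev (Corollary~\ref{cor:sobolev:d}), which feeds back into $\underline{m}:=\dLp{4}{}{\Omega^3\mub}$; likewise $|\eta|^2$ in the $\mub$-equation feeds into $m:=\dLp{4}{}{\Omega^3\mu}$. This produces a genuinely \emph{coupled, quadratic} system
\[
  m(u,v) \lesssim m(u,v_+) + \int_v^{v_+}\!\bigl(C+m+\underline{m}+\underline{m}^2\bigr)\,\ud v',\qquad
  \underline{m}(u,v) \lesssim \underline{m}(u_+,v) + \int_u^{u_+}\!\bigl(C+m^2\bigr)\,\ud u',
\]
which the paper closes by a bootstrap assumption $m+\underline{m}\leq C_M$ and a continuity argument in $(u,v)$, using \eqref{D:small} at the end to improve the bootstrap.

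Your approach instead \emph{decouples}: you import $\mathrm{L}^\infty$ control on $\eta$ and $\etab$ from prior sections, so the $\mub$-equation has no feedback at all and the $\mu$-equation retains only the linear feedback $\Omega\chih\cdot\nablas\eta\to\mu$, closed by a single Gronwall. This is simpler and valid within the overall scheme of the paper, but two points deserve care. First, Proposition~\ref{prop:coupled:eta:mu:breve} does not itself bound $\etab$; for that you should cite Lemma~\ref{lemma:torsion} (or Lemma~\ref{lemma:prelim:Linfty:eta}), not Proposition~\ref{prop:coupled:eta:mu:breve}. Second, the paper appears to present Propositions~\ref{prop:coupled:eta:mu:breve} and~\ref{prop:coupled:eta} as \emph{parallel} closures of the torsion system --- one via $\tilde{\mu}$, one via $\mu$ --- so that the remark after Proposition~\ref{prop:coupled:eta:mu:breve} notes consistency of its hypothesis \eqref{eq:coupled:eta:trchitrchib} with Proposition~\ref{prop:coupled:eta}. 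Your shortcut shows that the $\mu$-based closure follows easily once the $\tilde{\mu}$-based one is in hand; the paper's more involved argument shows that the $\mu$-based scheme closes on its own, independently of the $\tilde{\mu}$ hypotheses, which is the stronger statement about this alternative gauge choice.
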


\begin{proof}
  We can apply Lemma~\ref{lemma:calderon:div:curl} to \eqref{eq:eta:etab:elliptic:shear-free},
  \begin{align*}
     \dLp{4}{u,v}{r^2\eta}+ \dLp{4}{u,v}{r^3 \nablas\eta}&\lesssim   \dLp{4}{u,v}{r^3 \mu}\\
          \dLp{4}{u,v}{r^2  \etab}+ \dLp{4}{u,v}{r^3 \nablas\etab}&\lesssim   \dLp{4}{u,v}{r^3 \mub}
  \end{align*}
  and get from \eqref{eq:D:mu:all}, \eqref{eq:Db:mub:shear-free}, and Lemma~\ref{lemma:norm:comparison},  that
    \begin{multline*}
          \dLp{4}{u,v}{\Omega^3\mu}\lesssim \dLp{4}{u,v_+}{\Omega^3\mu} + \int_v^{v_+}\dLp{4}{u,v'}{\Omega^4\Laplaces\log\Omega}+\Linf{u,v'}{r^2\etab}\dLp{4}{u,v'}{r^2 \etab}\ud v'\\
      +\int_v^{v_+}\Linf{u,v'}{\Omega^2\chih}\Bigl\{\dLp{4}{u,v'}{\Omega^2\chih}+\dLp{4}{u,v'}{\Omega^2\nablas\eta}+\Linf{u,v'}{\Omega\etab}\dLp{4}{u,v'}{\Omega\etab}\Bigr\}\ud v'\\
      +\int_v^{v_+}\dLp{4}{u,v'}{\Omega\divs(\Omega\chih)}\Bigl\{\Linf{u,v'}{\Omega^2\eta}+\Linf{u,v'}{\Omega^2\etab}\Bigr\}\ud v'
    \end{multline*}
  \begin{equation*}
    \dLp{4}{u,v}{\Omega^3\mub}\lesssim \dLp{4}{u_+,v}{\Omega^3\mub} + \int_u^{u_+}\dLp{4}{u',v}{\Omega^4\Laplaces\log\Omega}+\Linf{u',v}{r^2\eta}\dLp{4}{u',v}{r^2 \eta}\ud u'
  \end{equation*}

We have $\dLp{4}{u,v}{\Omega^4\Laplaces\log\Omega}\lesssim 1$ by Lemma~\ref{lemma:Laplaces:log:Omega}.

  Moreover by Prop.~\ref{prop:coupled:chi} above, $\dLp{4}{u,v'}{\Omega\divs(\Omega\chih)}\lesssim 1$.

Let
\begin{equation*}
  m(u,v):=\dLp{4}{u,v}{\Omega^3\mu}\qquad   \underline{m}(u,v):=\dLp{4}{u,v}{\Omega^3\mub}
\end{equation*}
then we can infer from the above inequalities, in view of Cor.~\ref{cor:sobolev:d} ,
\begin{equation*}
  m(u,v) \lesssim m(u,v_+) + \int_v^{v_+} \Delta+D^2+\bigl(1+D\bigr)\bigl(m(u,v')+\underline{m}(u,v')\bigr)+(1+D) \underline{m}^2(u,v')\ud v'
\end{equation*}
\begin{equation*}
  \underline{m}(u,v) \lesssim \underline{m}(u_+,v)+\int_u^{u_+}\Delta+m^2(u',v)\ud u'
\end{equation*}
and we will make a continuity argument, under the bootstrap assumption:
\begin{equation*}
  m(u,v)+ \underline{m}(u,v) \leq C_M
\end{equation*}

Now, denoting by
\begin{equation*}
  M(u):=\sup_v m(u,v)
\end{equation*}
\begin{equation*}
  \int_v^{v_+}\underline{m}(u,v')\ud v' \lesssim \int_v^{v_+}\underline{m}(u_+,v')\ud v'+(v_+-v)\max\{\Delta,C_M\}\int_u^{u_+}1+M(u')\ud u'
\end{equation*}
we obtain after inserting above, and already applying Gronwall's inequality to the term linear in $m$,
\begin{equation*}
  \begin{split}
    M(u)\leq& \Bigl(m(u,v_+)+2\max\{\Delta,D^2\}(v_+-v)+4\max\{\Delta_\chi,D,C_M,C_MD\}\int_v^{v_+}\underline{m}(u,v')\ud v'\Bigr)\times\\&\times\Bigl(1+2\max\{\Delta_\chi,D\bigr\}(v_+-v)e^{2\max\{\Delta_\chi,D\}(v_+-v)}\Bigr)\\
    \leq& \Bigl(m(u,v_+)+C \triangle v+C_MC\sup_v\underline{m}(u_+,v)\triangle v
    +C_M^2 C^2 F(u) \triangle v\Bigr)\Bigl(1+C\triangle v e^{C\triangle v}\Bigr)
  \end{split}
\end{equation*}
where $C=\max\{1,\Delta,D,D^2,\Delta_\chi\}$, $\triangle v =v_+-v$, 
\begin{equation*}
  F(u)=\int_u^{u_+}1+M(u')\ud u'
\end{equation*}

This yields, with $\triangle u=u_+-u$,
\begin{gather*}
  \frac{\ud}{\ud u}\bigl(-F(u)e^{-\int_u^{u_+}B\ud u'}\bigr)\leq \bigl(1+M(u)-B F(u)\bigr)e^{-B\triangle u}\leq E e^{-B\triangle u}\\
  B=C_M^2C^2\triangle v \Bigl(1+C\triangle v e^{C\triangle v}\Bigr)\\
  E(u)= \Bigl(1+m(u,v_+)+C \triangle v+C_MC\sup_v\underline{m}(u_+,v)\triangle v
    \Bigr)\Bigl(1+C\triangle v e^{C\triangle v}\Bigr)
\end{gather*}
and hence
\begin{equation*}
  \begin{split}
    F(u)\leq& e^{B\triangle u}\int_u^{u_+} E(u') e^{-B\triangle u'}\ud u'\\
    \leq& e^{B\triangle u} \Bigl(1+\sup_u m(u,v_+)+C \triangle v+C_MC\sup_v\underline{m}(u_+,v)\triangle v \Bigr)\Bigl(1+C\triangle v e^{C\triangle v}\Bigr)\triangle u
  \end{split}
\end{equation*}
which in turn implies a bound on $M(u)$, hence $m(u,v)$, and thus also $\underline{m}(u,v)$.
In particular, 
by choosing $C_M\gtrsim \sup_u m(u,v_+)+\sup_{v}\underline{m}(u_+,v)$,
we can improve the bootstrap assumption by choosing  $\triangle u$, and $\triangle v$ sufficiently small compared to $C_M$, and $C$, which ensures that $F(u)\lesssim 1$, hence $m(u,v)\leq M(u)\leq 2\bigl(m(u,v_+)+1\bigr)$, and $\underline{m}(u,v)\leq \underline{m}(u_+,v)+1$.
\end{proof}


\section{Further considerations}

In Section~\ref{sec:shear-free:graphs} we include a discussion of the surfaces $S_{u,\vs}$ as graphs over the round sphere $S_{\us,\vs}$ in the $\Cb_{\vs}$.\footnote{See also \cite{Le:18} for a similar discussion of sections of null hypersurfaces in Minkowski space.}
We show in particular that the prescription of the mass aspect function on the last slice can be viewed as an equation of motion, which \emph{asymptotically} identifies correctly the round spheres.

\subsection{Shear free incoming null hypersurfaces as graphs}
\label{sec:shear-free:graphs}

  Recall the metric in the spherically symmetric form
  \begin{equation}
    h=-4\Omega_\ast^2\ud \us\ud\vs+r^2\gammac_{AB}\ud\vartheta^A\ud\vartheta^B
  \end{equation}

  Let us assume that the sections $S_{u,\vs}$ can be written as a graph over $S_{\us,\vs}$:
   \begin{equation}
    \us=u+f(\vartheta)
  \end{equation}
  In fact, this graph may depend on $u$, as well as on $\vs$, when we consider the intersections with nearby null hypersurfaces $\Cb_{\vs}$ as well:
  \begin{equation}
    \us=u+f(u,\vartheta;\vs)
  \end{equation}
  We will always assume that $\partial_u f>-1$.

  \begin{lemma}\label{lemma:h:f}
    Let $f=f(u,\vartheta;\vs)$ be a smooth function such that $\partial_u f>-1$, and consider the graphs \[S_{u,\vs}=\Bigl\{(\us,\vs;\vartheta):\us=u+f(u,\vartheta;\vs)\qquad :\vartheta\in\mathbb{S}^2\Bigr\}\] In $(u,\vs;\vartheta)$ coordinates, the metric takes the form
\begin{equation}
  h=-4\Omega^2\ud u\ud \vs+\hs_{AB}\bigl(\ud \vartheta^A-b^A\ud\vs\bigr)\bigl(\ud \vartheta^B-b^B\ud \vs\bigr)\\
\end{equation}
where 
\begin{align}
  \Omega^2&=\Omega_\ast^2\Bigl(1+\frac{\partial f}{\partial u}\Bigr)\\
  b&=2\frac{\Omega_\ast^2}{r^2}\nablac f\\
  \hs&=r^2\gammac
\end{align}
provided:
\begin{equation}\label{eq:df:condition}
  \frac{\partial f}{\partial \vs}=\frac{\Omega_\ast^2}{r^2}|\nablac f|^2\,.
\end{equation}

\end{lemma}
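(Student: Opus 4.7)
The proof is a direct coordinate calculation, and I would proceed in three steps. First, I would implement the change of variables $(u^\ast, v^\ast, \vartheta) \leadsto (u, v^\ast, \vartheta)$ defined implicitly by $u^\ast = u + f(u, \vartheta; v^\ast)$; this is a genuine diffeomorphism because $\partial_u f > -1$ ensures $\partial u^\ast / \partial u \neq 0$. Differentiating while regarding $u^\ast$ as a function of the new coordinates yields
\begin{equation*}
\ud u^\ast = (1 + \partial_u f)\, \ud u + \partial_{v^\ast} f\, \ud v^\ast + \partial_A f\, \ud \vartheta^A,
\end{equation*}
while $\ud v^\ast$ and $\ud \vartheta^A$ are unchanged.

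Next, I would substitute this into $h = -4\Omega_\ast^2\, \ud u^\ast \ud v^\ast + r^2 \gammac_{AB}\, \ud \vartheta^A \ud \vartheta^B$ (reading juxtaposition of covectors as the symmetric product) and collect terms. Since the original metric is linear in $\ud u^\ast$, the $(\ud u)^2$ and $(\ud u)(\ud \vartheta^A)$ coefficients vanish automatically, and the remaining components are
\begin{align*}
h_{uv^\ast} &= -2\Omega_\ast^2(1 + \partial_u f), & h_{v^\ast A} &= -2\Omega_\ast^2\, \partial_A f,\\
h_{v^\ast v^\ast} &= -4\Omega_\ast^2\, \partial_{v^\ast} f, & h_{AB} &= r^2 \gammac_{AB}.
\end{align*}

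Finally, I would expand the target form $-4\Omega^2\, \ud u\, \ud v^\ast + \hs_{AB}(\ud \vartheta^A - b^A \ud v^\ast)(\ud \vartheta^B - b^B \ud v^\ast)$ and compare components. The $(u, v^\ast)$, $(A, B)$, and $(v^\ast, A)$ entries immediately yield $\Omega^2 = \Omega_\ast^2(1 + \partial_u f)$, $\hs_{AB} = r^2 \gammac_{AB}$, and $b^A = (2\Omega_\ast^2/r^2)(\nablac f)^A$, respectively. The only remaining condition is the $(v^\ast, v^\ast)$ matching $\hs_{AB} b^A b^B = h_{v^\ast v^\ast}$, which upon substituting the formulas for $\hs$ and $b$ reduces precisely to the PDE \eqref{eq:df:condition} on $f$. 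Geometrically, this last constraint is the eikonal condition $h^{\mu\nu}\partial_\mu u\, \partial_\nu u = 0$ in disguise, so the lemma can be read as saying that the graph function $f$, once prescribed on a single section, must propagate along $\Cb_{v^\ast}$ via this first-order nonlinear equation to keep the level sets $\{u = \mathrm{const}\}$ null hypersurfaces of $h$. The argument is purely algebraic and poses no analytical difficulty; the only moderately subtle step is the $(v^\ast, v^\ast)$ matching, since it is the only one that produces a genuine constraint on $f$ rather than an explicit formula for a metric coefficient.
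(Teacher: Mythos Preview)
Your proposal is correct and follows essentially the same route as the paper: compute $\ud u^\ast$ in the new coordinates, substitute into $h=-4\Omega_\ast^2\,\ud u^\ast\ud v^\ast+r^2\gammac$, and identify the condition on $f$ as the requirement that the $(\ud v^\ast)^2$ component match. The paper phrases the last step as completing the square rather than comparing components, and notes (as you do) that the constraint is what makes $(u,v^\ast)$ a genuine double null system; your added remark that this is the eikonal equation for $u$ in disguise is a nice gloss not spelled out in the paper's proof.
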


\begin{remark}
   
Here
\begin{equation}
  r=r(u,\vartheta;\vs)=r(\us=u+f(u,\vartheta;\vs),\vs)
\end{equation}
and is \emph{not} to be confused with the area radius of the sphere $S_{u,\vs}$, which we will here denote by  $r_{u,\vs}$ and plays an important role below ; it  is defined by
\begin{equation}\label{eq:area:f}
  4\pi r^2_{u,\vs}=\int_{S_{u,\vs}}\dm{\hs}=\int_{\mathbb{S}^2}r^2(u,\vartheta;\vs)\dm{\gammac}(\vartheta)
\end{equation}
\end{remark}

\begin{proof}  
  Inserting
  \begin{equation}
      \ud \us=\bigl(1+\frac{\partial f}{\partial u}\bigr)\ud u+\frac{\partial f}{\partial \vartheta^A}\ud \vartheta^A+\frac{\partial f}{\partial \vs}\ud \vs
  \end{equation}
  the metric in coordinates $(u,\vs;\vartheta^A)$ reads:
  \begin{multline}
   h=-4\Omega_\ast^2\Bigl(1+\frac{\partial f}{\partial u}\Bigr) \ud u\ud\vs\\+r^2\gammac_{AB}\Bigl(\ud \vartheta^A-2\frac{\Omega_\ast^2}{r^2}\gammac^{AC}\frac{\partial f}{\partial \vartheta^C}\ud\vs\Bigr)\Bigl(\ud\vartheta^B-2\frac{\Omega_\ast^2}{r^2}\gammac^{BD}\frac{\partial f}{\partial \vartheta^D}\ud \vs\Bigr)
 \end{multline}
provided \eqref{eq:df:condition} holds;
this condition is necessary for the $(\ud\vs)^2$ component to cancel, and thus for $(u,\vs)$ to define a double null coordinate system.

\end{proof}

The null normals to $S_{u,\vs}$ are given by
\begin{equation}
  \Lb=\frac{\partial }{\partial u}\qquad L=\frac{\partial}{\partial \vs}+b^A\frac{\partial }{\partial \vartheta^A}
\end{equation}

\begin{remark}
  This can also be verified as follows:
 In new coordinates $(u,\vs;\vartheta)$, the lines of constant $\vs$ and $\vartheta$ are still null geodesics, so
 \begin{equation}
   \Lb=\frac{\partial}{\partial u}=\Bigl(1+\frac{\partial f}{\partial u}\Bigr)\frac{\partial}{\partial \us}
 \end{equation}
 but the angular vectorfields change,
 \begin{equation}
   \frac{\partial}{\partial \vartheta^A}=   \frac{\partial}{\partial \vartheta_\ast^A}+\frac{\partial f}{\partial \vartheta^A}\frac{\partial }{\partial \us}
 \end{equation}
 We do not need to solve for $u$ to find $L$, but it can be determined from the conditions
 \begin{equation}
   h(L,\Lb)=-2\Omega^2\qquad h(L,\partial_A)=0
 \end{equation}
which yields
\begin{equation}
  L=\frac{\partial}{\partial \vs}+2\frac{\Omega_\ast^2}{r^2}\gammac^{AB}\frac{\partial f}{\partial \vartheta^B}\frac{\partial}{\partial \vartheta^A_\ast}=\frac{\partial}{\partial \vs}+2\frac{\Omega_\ast^2}{r^2}\nablac f
\end{equation}
\end{remark}

\begin{lemma}
  On $S_{u,\vs}$ we have
\begin{gather}
  \chibh=0\\
  \Omega\tr\chib
  =\frac{2r(u,\vs;\vartheta)}{\cosh^2(u+f(u,\vartheta;\vs)+\vs)}\Bigl(1+\frac{\partial f}{\partial u}\Bigr)>0
\end{gather}

\end{lemma}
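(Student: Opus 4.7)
The plan is to compute $\Db\hs$ directly using the explicit form of the induced metric provided by Lemma~\ref{lemma:h:f}, read off $\chib$ from the first variational formula $\Db\hs=2\Omega\chib$, and then simplify using the known spherically symmetric expression for the area radius $r(\us,\vs)$ derived in Corollary~\ref{cor:t:x:in:S:u:v}.

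First I would observe that by Lemma~\ref{lemma:h:f} we have $\Lb=\partial/\partial u$ and $\hs=r^2\gammac$ in the $(u,\vs;\vartheta)$ coordinates, where $r=r(u,\vartheta;\vs)=r(\us,\vs)\rvert_{\us=u+f(u,\vartheta;\vs)}$. Since $\gammac$ has no $u$-dependence,
\begin{equation*}
  \Db\hs=\mathcal{L}_{\partial_u}(r^2\gammac)=2r\bigl(\partial_u r\bigr)\gammac=\frac{2\,\partial_u r}{r}\,\hs\,,
\end{equation*}
so comparing with $\Db\hs=2\Omega\chib$ we obtain $\chib=\Omega^{-1}r^{-1}(\partial_u r)\hs$. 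This immediately implies $\chibh=0$ and
\begin{equation*}
  \Omega\tr\chib=\frac{2\,\partial_u r}{r}\,.
\end{equation*}
The $u$-derivative at fixed $(\vs,\vartheta)$ is computed via the chain rule from $\us=u+f(u,\vartheta;\vs)$:
\begin{equation*}
  \partial_u r\big\rvert_{\vs,\vartheta}=\frac{\partial r}{\partial \us}(\us,\vs)\Bigl(1+\frac{\partial f}{\partial u}\Bigr)\,.
\end{equation*}

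Next I would insert the explicit formula $r(\us,\vs)=-\cosh(\us+\vs)/\sinh(\us+\vs)$ from Corollary~\ref{cor:t:x:in:S:u:v} (valid in the regime $\us+\vs<0$ where $r>0$). A direct differentiation gives $\partial r/\partial\us=\sinh^{-2}(\us+\vs)$, and then
\begin{equation*}
  \frac{1}{r}\frac{\partial r}{\partial\us}=-\frac{\sinh(\us+\vs)}{\cosh(\us+\vs)\sinh^2(\us+\vs)}=-\frac{1}{\sinh(\us+\vs)\cosh(\us+\vs)}=\frac{r}{\cosh^2(\us+\vs)}\,,
\end{equation*}
which is the key algebraic identity. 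Substituting $\us=u+f(u,\vartheta;\vs)$ yields
\begin{equation*}
  \Omega\tr\chib=\frac{2\,r(u,\vs;\vartheta)}{\cosh^2(u+f(u,\vartheta;\vs)+\vs)}\Bigl(1+\frac{\partial f}{\partial u}\Bigr)\,.
\end{equation*}
Positivity is then immediate from the standing assumption $\partial f/\partial u>-1$ stated before Lemma~\ref{lemma:h:f}.

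The whole argument is essentially mechanical once one notices that the induced metric is a pointwise conformal rescaling of the round metric on the $\vartheta$-sphere and that $\Lb=\partial_u$ acts trivially on $\gammac$. The only step requiring any care is the trigonometric manipulation showing that $r^{-1}\partial r/\partial\us=r/\cosh^2(\us+\vs)$; this is where the specific form of the spherically symmetric radius enters and makes the final formula look like the stated one rather than the equivalent expression $2\sinh^{-2}(\us+\vs)/r\cdot(1+\partial_u f)$. Note that the conclusion $\chibh=0$ is, of course, consistent with Lemma~\ref{lemma:shear-free}, but here it is derived independently and without invoking any propagation estimate—it is a purely kinematic property of the graph representation once one uses the spherically symmetric form of $\hs$ on $\Cb_{\vs}$.
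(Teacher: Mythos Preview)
Your proof is correct and follows essentially the same approach as the paper: both compute $\Db\hs$ from the conformal form $\hs=r^2\gammac$ with $\Lb=\partial_u$, read off $\chibh=0$ and $\Omega\tr\chib=2r^{-1}\partial_u r$, and then evaluate using the explicit hyperbolic expression for $r(\us,\vs)$. The only difference is organizational—the paper substitutes $r=-\cosh(\us+\vs)/\sinh(\us+\vs)$ at the outset and differentiates directly, whereas you keep $r$ symbolic and perform the algebraic identity $r^{-1}\partial r/\partial\us=r/\cosh^2(\us+\vs)$ at the end.
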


\begin{proof}
We have seen the metric on $S_{u,\vs}$ is
 \begin{equation}
   \hs_{AB}=h(\partial_A,\partial_B)=\Bigl(\frac{\cosh(u+f(u,\vartheta;\vs)+\vs)}{\sinh(u+f(u,\vartheta;\vs)+\vs)}\Bigr)^2\gammac_{AB}
 \end{equation}

 Then we compute
\begin{equation}
  \Db\hs=-2\frac{\cosh(u+f+\vs)}{\sinh^3(u+f+\vs)}\Bigl(1+\frac{\partial f}{\partial u}\Bigr)\gammac=-\frac{2}{\cosh(u+f+\vs)\sinh(u+f+\vs)}\Bigl(1+\frac{\partial f}{\partial u}\Bigr)\hs
\end{equation}
and the formulas follow.
\end{proof}

Consider the area radius \eqref{eq:area:f}. Since for ``small $f$''
\begin{equation}\label{eq:r:approx:f}
  \begin{split}
    r(u,\vartheta;\vs)&=r(\us=u+f,\vs)=r(\us=u,\vs)+\frac{\partial r}{\partial \us}(u,\vs) f(u,\vartheta)+\frac{\partial^2 r}{\partial {\us}^2}(u+t,\vs)f^2\\
    &=r(u,\vs)+\bigl(r^2(u,\vs)-1\bigr)f(u,\vartheta;\vs)+r\bigl(r^2-1\bigr)f^2
  \end{split}
\end{equation}
we evidently need to assume
\begin{equation}\label{eq:rf}
  r(u,\vs)|f(u,\vartheta;\vs)|\leq C
\end{equation}
to ensure that $r(u,\vartheta;\vs)$ is \emph{of the order of} $r(u,\vs)$.
In fact, if we assume that $r|f|\ll 1$, then
\begin{equation}
  4\pi r^2_{u,\vs}\simeq r^2(u,\vs)\int_{\mathbb{S}^2}1+2 r(u,\vs)f(u,\vartheta;\vs)\dm{\gammac}(\vartheta)
\end{equation}
In particular, under such an assumption we will have
\begin{equation}
  \frac{\Omega^2}{r^2_{u,\vs}}\simeq 1
\end{equation}



Furthermore, since $(S_{u,\vs},\hs)$ is conformal to $(\mathbb{S}^2,\gammac)$,
\begin{equation}
  \hs=e^{2w}\gammac\,,\qquad w=\log r(u,\vartheta;\vs)
\end{equation}
we have by the formula for the transformation of the Gauss curvature:
\begin{equation}
  K=e^{-2w}\bigl(1-\Laplacec w\bigr)=\frac{1}{r^2}\bigl(1-\Laplacec\log r\bigr)
\end{equation}
In particular,
\begin{equation}
  K \simeq \frac{1}{r^2}\Bigl(1- \Laplacec (rf)\Bigr)
\end{equation}
which says that for this sphere to be a perturbation of a round sphere we need
\begin{equation}
  |\nablac^2 (rf) |\ll 1\,.
\end{equation}

The formula for the Gauss curvature also lends itself to a direct computation of $\tr\chi$ via the Gauss equations, which here reduces to
\begin{equation}
    \frac{1}{4}\tr\chi\tr\chib=\frac{\Lambda}{3}-K
\end{equation}
and thus yields with the above:
\begin{equation}
  \tr\chi=\frac{2\Omega_\ast}{r}\cosh(f-\vs)\frac{\frac{\Lambda}{3}r^2-1+\Laplacec \log r}{r^2}\sqrt{\frac{\partial f}{\partial u}}
\end{equation}

\begin{lemma}
  \begin{align}
    \eta_A&=\frac{1}{2}\frac{\Omega_\ast^2+r^2}{\Omega_\ast^2}\frac{1}{1+\partial_u f}\frac{\partial^2 f}{\partial \vartheta^A\partial u}+\frac{1}{r}\frac{1}{1+\partial_u f}\frac{\partial f}{\partial \vartheta^A}    \\
    \etab_A&=\frac{1}{2}\frac{\Omega_\ast^2-r^2}{\Omega_\ast^2}\frac{1}{1+\partial_u f}\frac{\partial^2 f}{\partial \vartheta^A\partial u}-\frac{1}{r}\frac{1}{1+\partial_u f}\frac{\partial f}{\partial \vartheta^A}    
  \end{align}
  In particular, 
  \begin{align}
    \lim_{u,r\to\infty}\Omega|\eta|&=|\nablac\partial_u f|\\
    \lim_{u,r\to\infty}\Omega^2|\etab|&=|\nablac  f|
  \end{align}
  provided the right hand sides are finite.
\end{lemma}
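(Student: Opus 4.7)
The plan is to compute $\eta_A$ and $\etab_A$ directly from their defining relations using the null frame produced by Lemma~\ref{lemma:h:f}, and then to extract the asymptotic limits. The null generators of $C_u$ and $\Cb_\vs$ are $L = \partial_{\vs} + b^A\partial_A$ and $\Lb = \partial_u$, satisfying $g(L,\Lb) = -2\Omega^2$, with normalised versions $L' = \Omega^{-2}L$ and $\Lb' = \Omega^{-2}\Lb$ for which $g(L',\Lb) = g(L,\Lb') = -2$, so that the defining formulas $\eta_A = \tfrac{1}{2}g(\nabla_A L',\Lb)$ and $\etab_A = \tfrac{1}{2}g(\nabla_A \Lb', L)$ apply unambiguously.

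The first step is to reduce these to coordinate expressions in the data $\Omega$, $\hs$, $b$. Using metric compatibility (so that $g(\nabla_A L',\Lb) = -g(L', \nabla_A \Lb)$ since $g(L',\Lb) = -2$ is constant), together with the Koszul formula applied with $g(\partial_A, L) = g(\partial_A,\Lb) = 0$, $g(L,\Lb) = -2\Omega^2$, and the Lie brackets $[\partial_A,L] = (\partial_A b^B)\partial_B$, $[L,\Lb] = -(\partial_u b^B)\partial_B$, one arrives at intermediate formulas of the general form
\begin{equation*}
\eta_A = \partial_A\log\Omega + \tfrac{1}{4\Omega^2}\hs_{AB}\partial_u b^B,\qquad \etab_A = \partial_A\log\Omega - \tfrac{1}{4\Omega^2}\hs_{AB}\partial_u b^B,
\end{equation*}
which are in particular consistent with \eqref{eq:ds:log:Omega:eta}.

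The main computational step is then to insert the concrete expressions $\Omega^2 = \Omega_\ast^2(1+\partial_u f)$, $\hs_{AB} = r^2\gammac_{AB}$, and $b^A = 2\tfrac{\Omega_\ast^2}{r^2}\gammac^{AB}\partial_B f$, carefully applying the chain rule since $\Omega_\ast$ and $r$ depend on the new coordinates $(u,\vartheta;\vs)$ only through $\us = u + f(u,\vartheta;\vs)$. The de Sitter identities following from Corollary~\ref{cor:t:x:in:S:u:v}, namely
\begin{equation*}
\partial_\us r = \Omega_\ast^2,\qquad \partial_\us \log\Omega_\ast = r,\qquad r^2 - \Omega_\ast^2 = 1,
\end{equation*}
give $\partial_A\log\Omega_\ast|_{u,\vs} = r\,\partial_A f$, $\hs_{AB}b^B = 2\Omega_\ast^2\partial_A f$, and after expanding $\partial_u(\hs_{AB}b^B) - (\partial_u\hs_{AB})b^B$ and simplifying by means of $r^2 - \Omega_\ast^2 = 1$, the stated rational coefficients $\tfrac12\tfrac{\Omega_\ast^2\pm r^2}{\Omega_\ast^2(1+\partial_u f)}$ and $\pm\tfrac{1}{r(1+\partial_u f)}$ emerge from the balance between the $\partial_A\log\Omega$ contribution and the $\hs$-contracted $\partial_u b$ contribution.

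Finally, the asymptotic limits follow by noting that along a sequence of spheres $S_{u,\vs}$ with $r\to\infty$ one has $\Omega^2/r^2 \to 1$ (since $1+\partial_u f \to 1$ on a well-behaved foliation), $\tfrac{\Omega_\ast^2+r^2}{\Omega_\ast^2} \to 2$, and $\tfrac{\Omega_\ast^2 - r^2}{\Omega_\ast^2} = -(r^2-1)^{-1} \to 0$ like $r^{-2}$; consequently the dominant contribution to $\eta_A$ is the $\partial_A\partial_u f$ term and that to $\etab_A$ is the $\partial_A f/r$ term. Converting to the $\hs$-intrinsic norms via $\hs^{AB} = r^{-2}\gammac^{AB}$ then yields $\Omega|\eta|_{\hs} \to |\nablac\partial_u f|$ and $\Omega^2|\etab|_{\hs} \to |\nablac f|$ as claimed. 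The principal delicacy is the algebraic bookkeeping in the substitution step, where the cancellations that produce exactly the coefficients $\Omega_\ast^2\pm r^2$ (rather than nearby but distinct combinations) rely crucially on the identity $r^2-\Omega_\ast^2 = 1$ characteristic of the de Sitter hyperboloid.
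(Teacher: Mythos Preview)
Your proposal is correct and follows essentially the same approach as the paper. The paper quotes the commutator identity $[\Lb,L]=4\Omega^2\zeta^\sharp$ from \cite{C:09} to obtain $4\Omega^2\zeta^{\sharp A}=\partial_u b^A$, computes $\ds_A\log\Omega$ separately, and then combines via $\eta=\zeta+\ds\log\Omega$, $\etab=-\zeta+\ds\log\Omega$; your route through the Koszul formula and the defining relations for $\eta,\etab$ lands on the identical intermediate formula $\eta_A=\partial_A\log\Omega+\tfrac{1}{4\Omega^2}\hs_{AB}\partial_u b^B$ (and its conjugate), after which the substitution step is the same.
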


\begin{proof}
  Recall the formula \Ceq{1.91}:
\begin{equation}
  [\Lb,L]=4\Omega^2\zeta^\sharp
\end{equation}
which with above expression for the null normal s simply gives
\begin{equation}
  4\Omega^2{\zeta^\sharp}^A=\frac{\partial b^A}{\partial u}
\end{equation}
With the formula for $b$ found in Lemma~\ref{lemma:h:f}
\begin{equation}
  \frac{\partial b^A}{\partial u}= 2\gammac^{AB} \frac{\partial^2 f}{\partial u\partial \vartheta^B} +\frac{4}{r}\frac{\Omega_\ast^2}{r^2}\gammac^{AB}\frac{\partial f}{\partial\vartheta^B }
\end{equation}
we thus have
\begin{equation}
    \zeta_A=\hs_{AB}\zeta^{\sharp B}=\frac{r^2}{4\Omega^2}\gammac_{AB} 4\Omega^2 \zeta^{\sharp B}
    =\frac{1}{2}\frac{r^2}{\Omega_\ast^2}\frac{1}{1+\partial_u f}\frac{\partial^2 f}{\partial \vartheta^A\partial u}+\frac{1}{r}\frac{1}{1+\partial_u f}\frac{\partial f}{\partial \vartheta^A}    
\end{equation}
Since also
\begin{equation}
  \ds_A \log \Omega=\frac{1}{2}\frac{\partial}{\partial \vartheta^A}\log \bigl(1+\partial_u f\bigr)=\frac{1}{2}\frac{1}{1+\partial_u f}\frac{\partial^2 f}{\partial u \partial \vartheta^A}
\end{equation}
the stated formulas follow from $\eta=\zeta+\ds\log\Omega$, and $\etab=-\zeta+\ds\log\Omega$.
\end{proof}


  We can use the above formula for the torsion, and the conformal covariance of the divergence to compute
  \begin{equation}
    \divs\eta=\frac{1}{r^2}\divc\eta
  \end{equation}
  We find
  \begin{equation}
    \begin{split}
    \divc\eta=&\frac{1}{2}\frac{\Omega_\ast^2+r^2}{\Omega_\ast^2}\frac{1}{1+\partial_u f}\Laplacec\frac{\partial f}{\partial u}+\frac{1}{r}\frac{1}{1+\partial_u f}\Laplacec f    \\
    &-\frac{1}{2}\frac{\Omega_\ast^2+r^2}{\Omega_\ast^2}\frac{1}{(1+\partial_u f)^2}|\nablac\partial_u f|^2-\frac{1}{r}\frac{1}{(1+\partial_u f)^2}(\nablac \partial_u f,\nablac f)    \\
    &+\frac{1}{r^3}\frac{1}{1+\partial_u f}\gammac^{AB}\partial_B r\frac{\partial^2 f}{\partial \vartheta^A\partial u}-\frac{1}{r^2}\frac{1}{1+\partial_u f}\gammac^{AB}\partial_B r\frac{\partial f}{\partial \vartheta^A}    
  \end{split}
\end{equation}
and using \eqref{eq:r:approx:f} we may approximate
\begin{equation}
  \partial_A r \simeq r^2 \partial_A f
\end{equation}
and find the following limit
\begin{equation}
  \lim_{u;r\to\infty}\Omega^2\divs \eta=\Laplacec \partial_u f-\frac{1}{1+\partial_u f}|\nablac\partial_uf|^2-|\nablac f|^2
\end{equation}

In particular, we can conclude the following if $\divs\eta=\mathcal{O}(r^{-3})$, as is the case for the foliations in this paper, because $\divs\eta=-\mu$, and $\mu=\mathcal{O}(r^{-3})$; see Section~\ref{sec:prelim:torsion} and \eqref{eq:div:curl:eta:intro}. Then the limiting equation evidently reads:
\begin{equation}
  (1+\partial_u f)\Laplacec \partial_u f=|\nablac\partial_uf|^2+(1+\partial_u f)|\nablac f|^2
\end{equation}
This equation has no non-trivial solutions because  by integration on the sphere it follows that
\begin{equation}
  \int_{\mathbb{S}^2}2|\nablac \partial_u f|^2+(1+\partial_u f)|\nablac f|^2 \dm{\gammac}=0
\end{equation}
so it follows that
\begin{equation}
  f\equiv \text{constant}\qquad \partial_uf\equiv \text{constant}\,.
\end{equation}
In other words, the only solution to the \emph{limiting} equation of motion of surfaces on shear-free null hypersurfaces in de Sitter are precisely the round spheres.









\begin{thebibliography}{CWY16b}

\bibitem[ABK15]{ABK:15}
Abhay Ashtekar, B\'{e}atrice Bonga, and Aruna Kesavan, \emph{Asymptotics with a
  positive cosmological constant: {I}. {B}asic framework}, Classical Quantum
  Gravity \textbf{32} (2015), no.~2, 025004, 41. \MR{3291776}

\bibitem[Ash15]{A:15}
Abhay Ashtekar, \emph{Geometry and physics of null infinity}, Surveys in
  differential geometry 2015. {O}ne hundred years of general relativity, Surv.
  Differ. Geom., vol.~20, Int. Press, Boston, MA, 2015, pp.~99--122.
  \MR{3467365}

\bibitem[BZ09]{B:09}
Lydia Bieri and Nina Zipser, \emph{Extensions of the stability theorem of the
  {M}inkowski space in general relativity}, AMS/IP Studies in Advanced
  Mathematics, vol.~45, American Mathematical Society, Providence, RI;
  International Press, Cambridge, MA, 2009. \MR{2531716}

\bibitem[CasI16]{CI:16}
Piotr~T. Chru\ifmmode~\acute{s}\else \'{s}\fi{}ciel and Lukas Ifsits, \emph{The
  cosmological constant and the energy of gravitational radiation}, Phys. Rev.
  D \textbf{93} (2016), 124075.

\bibitem[CasJK13]{CJK:13}
Piotr~T. Chru\ifmmode~\acute{s}\else \'{s}\fi{}ciel, Jacek Jezierski, and Jerzy
  Kijowski, \emph{Hamiltonian mass of asymptotically schwarzschild--de sitter
  space-times}, Phys. Rev. D \textbf{87} (2013), 124015.

\bibitem[CGNP18]{CGNP:18}
Piotr~T Chru{\'{s}}ciel, Gregory~J Galloway, Luc Nguyen, and Tim-Torben Paetz,
  \emph{On the mass aspect function and positive energy theorems for
  asymptotically hyperbolic manifolds}, Classical and Quantum Gravity
  \textbf{35} (2018), no.~11, 115015.

\bibitem[Chr09]{C:09}
Demetrios Christodoulou, \emph{The formation of black holes in general
  relativity}, EMS Monographs in Mathematics, European Mathematical Society
  (EMS), Z\"{u}rich, 2009. \MR{2488976}

\bibitem[CK93]{CK:93}
Demetrios Christodoulou and Sergiu Klainerman, \emph{The global nonlinear
  stability of the {M}inkowski space}, Princeton Mathematical Series, vol.~41,
  Princeton University Press, Princeton, NJ, 1993. \MR{1316662}

\bibitem[CWY16a]{CWY:16a}
Po-Ning Chen, Mu-Tao Wang, and Shing-Tung Yau, \emph{Quasi-local energy with
  respect to de sitter/anti-de sitter reference}, arXiv:1603.02975 [math.DG],
  2016.

\bibitem[CWY16b]{CWY:16b}
\bysame, \emph{Quasi-local mass at the null infinity of the vaidya spacetime},
  arXiv:1608.06165 [math.DG], 2016.

\bibitem[DHR19]{DHR:19}
Mihalis Dafermos, Gustav Holzegel, and Igor Rodnianski, \emph{The linear
  stability of the {S}chwarzschild solution to gravitational perturbations},
  Acta Math. \textbf{222} (2019), no.~1, 1--214. \MR{3941803}

\bibitem[Fri86]{F:86}
Helmut Friedrich, \emph{On the existence of {$n$}-geodesically complete or
  future complete solutions of {E}instein's field equations with smooth
  asymptotic structure}, Comm. Math. Phys. \textbf{107} (1986), no.~4,
  587--609. \MR{868737}

\bibitem[HV18]{HV:18}
Peter Hintz and Andr\'{a}s Vasy, \emph{The global non-linear stability of the
  {K}err--de {S}itter family of black holes}, Acta Math. \textbf{220} (2018),
  no.~1, 1--206. \MR{3816427}

\bibitem[KN03]{KN:03}
Sergiu Klainerman and Francesco Nicol\`o, \emph{The evolution problem in
  general relativity}, Progress in Mathematical Physics, vol.~25, Birkh\"auser
  Boston, Inc., Boston, MA, 2003. \MR{1946854}

\bibitem[KS18]{KS:18}
Sergiu Klainerman and Jeremie Szeftel, \emph{Global nonlinear stability of
  schwarzschild spacetime under polarized perturbations}, arXiv:1711.07597v2
  [gr-qc], 2018.

\bibitem[Le18]{Le:18}
Pengyu Le, \emph{The intersection of a hyperplane with a lightcone in the
  {M}inkowski spacetime}, J. Differential Geom. \textbf{109} (2018), no.~3,
  497--507. \MR{3825609}

\bibitem[Lin17]{L:17}
Hans Lindblad, \emph{On the asymptotic behavior of solutions to the {E}instein
  vacuum equations in wave coordinates}, Comm. Math. Phys. \textbf{353} (2017),
  no.~1, 135--184. \MR{3638312}

\bibitem[Rin08]{R:08}
Hans Ringstr{\"o}m, \emph{Future stability of the {E}instein-non-linear scalar
  field system}, Invent. Math. \textbf{173} (2008), no.~1, 123--208.
  \MR{2403395}

\bibitem[RS18a]{RS:18a}
Igor Rodnianski and Jared Speck, \emph{A regime of linear stability for the
  {E}instein-scalar field system with applications to nonlinear big bang
  formation}, Ann. of Math. (2) \textbf{187} (2018), no.~1, 65--156.
  \MR{3739229}

\bibitem[RS18b]{RS:18b}
\bysame, \emph{Stable big bang formation in near-{FLRW} solutions to the
  {E}instein-scalar field and {E}instein-stiff fluid systems}, Selecta Math.
  (N.S.) \textbf{24} (2018), no.~5, 4293--4459. \MR{3874696}

\bibitem[Sch16]{Schlue:16:Weyl}
Volker Schlue, \emph{Decay of the weyl curvature in expanding black hole
  cosmologies}, arXiv:1610.04172v1 [math.AP], 2016, v1.

\bibitem[ST15]{ST:15}
L\'{a}szl\'{o}~B. Szabados and Paul Tod, \emph{A positive {B}ondi-type mass in
  asymptotically de {S}itter spacetimes}, Classical Quantum Gravity \textbf{32}
  (2015), no.~20, 205011, 51. \MR{3406382}

\bibitem[Tod15]{T:15}
Paul Tod, \emph{Some geometry of de sitter space}, arXiv:1505.06123 [gr-qc],
  2015.

\end{thebibliography}

\providecommand{\bysame}{\leavevmode\hbox to3em{\hrulefill}\thinspace}
\providecommand{\MR}{\relax\ifhmode\unskip\space\fi MR }
\providecommand{\MRhref}[2]{%
  \href{http://www.ams.org/mathscinet-getitem?mr=#1}{#2}
}
\providecommand{\href}[2]{#2}

\end{document}